\DeclareSymbolFontAlphabet{\mathbb}{AMSb}
\DeclareSymbolFontAlphabet{\mathbbl}{bbold}
\newcommand{\SetSSYT}{\ensuremath\mathrm{SetSSYT}}
\newcommand{\SetSYT}{\ensuremath\mathrm{SetSYT}}
\newcommand{\SetSSMT}{\ensuremath\mathrm{SetSSMT}}
\numberwithin{equation}{section}
\theoremstyle{definition}
\newtheorem* {theorem*}{Theorem}
\newtheorem* {conjecture*}{Conjecture}
\newtheorem{theorem}{Theorem}[section]
\newtheorem{problem}[theorem]{Problem}
\theoremstyle{definition}
\newtheorem{observation}[theorem]{Observation}
\newtheorem* {example*}{Example}
\newtheorem{lemma}[theorem]{Lemma}
\theoremstyle{definition}
\newtheorem{definition}[theorem]{Definition}
\theoremstyle{definition}
\newtheorem* {notation}{Notation}
\newtheorem{conjecture}[theorem]{Conjecture}
\newtheorem{proposition}[theorem]{Proposition}
\newtheorem{corollary}[theorem]{Corollary}
\newtheorem*{remark*}{Remark}
\theoremstyle{definition}
\newtheorem{remark}[theorem]{Remark}
\theoremstyle{definition}
\newtheorem {example}[theorem]{Example}
\theoremstyle{definition}
\theoremstyle{definition}
\theoremstyle{definition}
\def\({\left(}
\def\){\right)}
\newcommand{\sP}{\mathscr{P}}
\newcommand{\sQ}{\mathscr{Q}}
\newcommand{\sI}{\mathscr{I}}
\newcommand{\sJ}{\mathscr{J}}
\newcommand{\sK}{\mathscr{K}}
\newcommand{\CC}{\mathbb{C}}
\newcommand{\QQ}{\mathbb{Q}}
\newcommand{\cP}{\mathcal{P}}
\newcommand{\cQ}{\mathcal{Q}}
\newcommand{\cO}{\mathcal{O}}
\def\Hom{\mathrm{Hom}}
\def\End{\mathrm{End}}
\def\CC{\mathbb{C}}
\def\ZZ{\mathbb{Z}}
\def\diag{\mathrm{diag}}
\newcommand{\cL}{\mathcal{L}}
\def\fk{\mathfrak}
\def\barr{\begin{array}}
\def\earr{\end{array}}
\def\ba{\begin{aligned}}
\def\ea{\end{aligned}}
\def\be{\begin{equation}}
\def\ee{\end{equation}}
\def\qquand{\qquad\text{and}\qquad}
\def\quand{\quad\text{and}\quad}
\def\cH{\mathcal H}
\def\hs{\hspace{0.5mm}}
\def\id{\mathrm{id}}
\def\ben{\begin{enumerate}}
\def\een{\end{enumerate}}
\def\cE{\mathcal E}
\def\hs{\hspace{0.5mm}}
\def\D{\hat D}
\def\Des{\mathrm{Des}}
\def\c{{\tt c}}
\def\t{{\tt t}}
\def\r{{\tt r}}
\newcommand{\Sp}{\textsf{Sp}}
\newcommand{\cA}{\mathcal{A}}
\def\iF{\hat F}
\def\arcstart{\ \xy<0cm,-.15cm>\xymatrix@R=.1cm@C=.3cm }
\newcommand{\arcstartc}[1]{\ \xy<0cm,-.15cm>\xymatrix@R=.1cm@C=#1cm}
\def\cG{\mathcal{G}}
\def\antipode{{\tt S}}
\def\Gr{\mathrm{Gr}}
\def\htimes{\mathbin{\hat\otimes}}
\newcommand{\Sym}{\textsf{Sym}}
\newcommand{\QSym}{\textsf{QSym}}
\def\zetaq{\zeta_{\textsf{Q}}}
\newcommand{\coPeak}{\Pi\Sym}
\newcommand{\mcoPeak}{\mathfrak{m}\coPeak}
\newcommand{\omcoPeak}{\mathfrak{m}\bar{\Pi}\Sym}
\def\PSet{\textsf{Set}(\PP)}
\def\MSet{\textsf{Set}(\MM)}
\def\tA{\cA}
\def\tGamma{\Gamma^{(\beta)}}
\def\tL{\cL}
\def\tE{\cE}
\def\tOmega{\Omega^{(\beta)}}
\def\ttheta{\Theta^{(\beta)}}
\def\oOmega{\bar{\Omega}^{(\beta)}}
\def\oE{\bar{\cE}}
\def\oK{\bar{K}}
\def\whSym
\def\whQSym
\newcommand{\mSym}{\whSym}
\newcommand{\mQSym}{\whQSym}
\def\mGSym{\m\Gamma\Sym}
\def\omGSym{\m\bar\Gamma\Sym}
\def\LPSet{\textsf{LPoset}}
\def\mLPSet{\mathfrak{m}\textsf{LPoset}}
\def\XX{\mathbb{X}}
\def\zetaLP{\zeta_{<}}
\def\dzetaLP{\bar{\zeta}_{>}}
\def\uzetaLP{\zeta_{>|<}}
\def\sort{\textsf{sort}}
\def\m{\mathfrak{m}}
\def\D{\textsf{D}}
\def\SD{\textsf{SD}}
\def\PeakSet{\mathrm{Peak}}
\def\ValSet{\mathrm{Val}}
\def\GS{G\hspace{-0.2mm}S}
\def\GQ{G\hspace{-0.2mm}Q}
\def\GP{G\hspace{-0.2mm}P}
\def\bGQ{\GQ^{(\beta)}}
\def\bGP{\GP^{(\beta)}}
\def\bGS{\GS^{(\beta)}}
\def\fkD{\fk D}
\def\fkM{\fk M}
\def\sign{\mathrm{sign}}
\def\ss{/\hspace{-1mm}/}
\def\OG{\mathrm{OG}}
\def\LG{\mathrm{LG}}
\def\SPart{\textsf{SPart}}
\def\mSPart{\m\textsf{SPart}}
\def\IC{\mathsf{Rem}}
\def\cV{\mathcal{V}}
\def\GSp{\GP^{\Sp}}
\def\GO{\GP^{\textsf{O}}}
\def\MM{\mathbb{M}}
\def\PP{\ZZ_{>0}}
\def\NN{\ZZ_{\geq 0}}
\def\diag{\mathfrak{d}}
\begin{document}
\title{Enriched set-valued $P$-partitions
and
shifted stable Grothendieck polynomials}
\author{
Joel Brewster Lewis \\ Department of Mathematics  \\ George Washington University \\ {\tt jblewis@gwu.edu}
\and
Eric Marberg \\ Department of Mathematics \\  HKUST \\ {\tt eric.marberg@gmail.com}
}

\date{}

\maketitle

\begin{abstract}
We introduce an enriched analogue of Lam and Pylyavskyy's theory of set-valued $P$-partitions. An an application, we construct a $K$-theoretic version of Stembridge's Hopf algebra of peak quasisymmetric functions. We show that the symmetric part of this algebra is generated by Ikeda and Naruse's shifted stable Grothendieck polynomials. We give the first proof that the natural skew analogues of these power series are also symmetric. A central tool in our constructions is a ``$K$-theoretic'' Hopf algebra of labeled posets, which may be of independent interest. Our results also lead to some new explicit formulas for the involution $\omega$ on the ring of symmetric functions.
\end{abstract}

\setcounter{tocdepth}{2}
\tableofcontents

\section{Introduction}

Stanley developed the now-classical theory of \emph{(ordinary) $P$-partitions} in \cite{St1}. These are certain maps 
from (finite) labeled posets to the positive integers $\PP :=\{1,2,3,\dots\}$. They can be seen as generalizations 
of semistandard Young tableaux, and provide a streamlined description of the tableau generating functions for the skew Schur functions $s_{\lambda/\mu}$.  

Following Stanley's work, the theory of $P$-partitions has been generalized and extended in a number of ways.
Stembridge \cite{Stembridge1997a} introduced \emph{enriched $P$-partitions},
which are certain maps from labeled posets to the \emph{marked integers} $\MM := \{ 1' < 1 < 2'<2< \dots\}$.
They can be seen as generalizations of semistandard shifted (marked) tableaux, whose generating functions give
the \emph{Schur $P$- and $Q$-functions} $P_\lambda$ and $Q_\lambda$.
In \cite{LamPyl}, Lam and Pylyavskyy defined \emph{set-valued $P$-partitions},
which are maps that now take finite nonempty subsets of $\PP$ as values. 
These maps  are generalizations of semistandard set-valued tableaux,
which play a role in the generating functions
for the \emph{stable Grothendieck polynomials} $G^{(\beta)}_\lambda$ studied in \cite{Buch2002,BKSTY,FominKirillov94}.

Our goal in this article is to provide the enriched counterpart to Lam and Pylyavskyy's definition.
Stembridge \cite{Stembridge1997a} remarks that 
``almost every aspect of the theory of ordinary $P$-partitions has an enriched counterpart.''
In a very satisfying sense, it turns out that most features of enriched $P$-partitions
similarly have a set-valued extension. 
In particular, we introduce a theory of
\emph{enriched set-valued $P$-partitions}. These are certain maps that take finite nonempty subsets of $\MM$ as values.
They can be viewed as generalizations of semistandard shifted set-valued tableaux,
which appear in the combinatorial generating functions for Ikeda and Naruse's 
\emph{$K$-theoretic Schur $P$- and $Q$-functions} $\bGP_\lambda$ and $\bGQ_\lambda$ studied in \cite{HKPWZZ,IkedaNaruse,NN2017,NN2018,Naruse2018}.

One motivation for this project was to
compute $\omega(\bGP_\lambda)$ and $\omega(\bGQ_\lambda)$,
where $\omega$ denotes the usual involution of the ring of symmetric functions
that maps $s_\lambda \mapsto s_{\lambda^T}$ for all partitions $\lambda$.
It turns out that we can obtain explicit formulas for 
$\omega$
evaluated at $\bGP_\lambda$ and $\bGQ_\lambda$ by decomposing the latter power series
into quasisymmetric functions
attached to enriched set-valued $P$-partitions
on which $\omega$ acts in a more transparent manner.
Our results along these lines appear at end of this paper in Section~\ref{antipode-sect}.

There are a few other reasons to be interested in enriched set-valued analogues of $P$-partitions.  
%
%
%
They suggest a straightforward definition of $K$-theoretic Schur $P$- and $Q$-functions indexed by skew
shapes. These skew generalizations 
do not seem to have been considered previously;
we establish some of their fundamental properties,
like symmetry.  Our definition also indicates a good notion of \emph{$K$-theoretic Schur $S$-functions}.
The theory of enriched set-valued $P$-partitions leads, moreover, to the construction of a Hopf algebra $\mcoPeak$,
whose elements we call \emph{multipeak quasisymmetric functions}. This is a $K$-theoretic analogue of Stembridge's algebra of peak quasisymmetric functions \cite{Stembridge1997a}, and is perhaps of independent interest. 

In \cite{LamPyl}, Lam and Pylyavskyy  study a diagram of 
six Hopf algebras related to the $K$-theory of the Grassmannian. 
There are two shifted variants of this diagram, one for the maximal orthogonal Grassmannian and
one for the Lagrangian Grassmannian.
The planned sequel to this paper will study the ``shifted $K$-theoretic'' Hopf algebras in these diagrams.  The algebra $\mcoPeak$ and its dual
(a $K$-theoretic analogue of the peak algebra) 
will figure prominently in the shifted diagrams.

We now summarize our main results and outline the rest of this paper.
Section~\ref{prelim-sect} gives some background on Hopf algebras in the category of 
\emph{linearly compact modules} and on \emph{combinatorial Hopf algebras}.
In Section~\ref{mlpset-sect}, we introduce a ``$K$-theoretic'' Hopf algebra of labeled posets 
and use this object to recover several constructions of Lam and Pylyavskyy, including the stable Grothendieck polynomials.
Section~\ref{main-sect} contains our main definitions and results related
to enriched $P$-partitions and associated quasisymmetric functions.
We define the skew analogues of Ikeda and Naruse's $K$-theoretic Schur $P$- and $Q$-functions 
in Section~\ref{shifted-stable-sect}. In Section~\ref{sym-sect}, we prove the symmetry of these power series 
and characterize the subalgebra that they generate.
Finally, Section~\ref{antipode-sect} leverages our results to derive explicit 
formulas for some notable involutions on quasisymmetric functions.

\subsection*{Acknowledgements}

The first author was partially supported by an ORAU Powe award.
The second author was partially supported by Hong Kong RGC Grant ECS 26305218.
We are grateful to Zach Hamaker, Hiroshi Naruse, Brendan Pawlowski, and Alex Yong
for helpful comments.

\section{Preliminaries}\label{prelim-sect}
\subsection{Completions}\label{completions-sect}

In this article, we are often concerned with rings of formal power series of unbounded degree
that are ``too large'' to belong to the category of free modules.
To define monoidal structures on these objects, we need to work in 
the following slightly more exotic setting.

Fix an integral domain $R$ and write $\otimes = \otimes_R$ for the usual tensor product.
An \emph{$R$-algebra} is an $R$-module $A$ with $R$-linear
product $\nabla : A\otimes A \to A$
and unit $\iota : R\to A$ 
maps.
Dually, an \emph{$R$-coalgebra} is an $R$-module $A$ with $R$-linear 
coproduct $\Delta : A \to A\otimes A$
and 
counit $\epsilon : A \to R$ maps.
 The (co)product and (co)unit maps must satisfy several natural associativity axioms; see \cite[\S1]{GrinbergReiner}
 for the complete definitions.
 (Co)algebras form a category in which morphisms are $R$-linear maps commuting 
 with the (co)unit and (co)product maps.


An $R$-module $A$ that is simultaneously an $R$-algebra and an $R$-coalgebra is an \emph{$R$-bialgebra}
if the coproduct and counit maps are algebra morphisms (equivalently, the product and unit are coalgebra morphisms).
Suppose $A$ is an $R$-bialgebra with structure maps $\nabla$, $\iota$, $\Delta$, and $\epsilon$. 
Let $\End(A)$ denote the set of $R$-linear maps $A \to A$. This set is an $R$-algebra with 
product given by the linear map with
$
f\otimes g \mapsto \nabla \circ (f\otimes g) \circ \Delta
$
for $f,g \in \End(A)$
and unit 
given by the linear map with $1_R \mapsto \iota\circ \epsilon$. 
The bialgebra $A$ is a \emph{Hopf algebra} if  $\id : A \to A$ has a
(necessarily unique) two-sided multiplicative inverse 
$\antipode : A \to A$ in the algebra $\End(A)$,  in which case we call $\antipode$ the \emph{antipode} of $A$.

Consider a free $R$-module $A$, and fix a basis $\{a_i\}_{i \in I}$ for $A$.
Suppose that $B$ is an $R$-module and $\langle\cdot,\cdot\rangle : A \times B \to R$
is a \emph{nondegenerate} $R$-bilinear form, in the sense that
$b\mapsto \langle \cdot,b\rangle$ is a bijection $B\to \Hom_R(A,R)$.
Then for each $j \in I$, there exists a unique element $b_j \in B$ with
$\langle a_i, b_j \rangle = \delta_{ij}$ for all $i \in I$,
and we can identify $B$ 
with the product $\prod_{j \in I} Rb_j$, which we view as
the set of arbitrary $R$-linear combinations of the elements $\{b_j\}_{j\in I}$.
We refer to $\{b_i\}_{i \in I}$ as a \emph{pseudobasis} for $B$;
some authors call this a \emph{continuous basis}.

\begin{example}\label{lc-ex0}
Let $A = R[x]$ and $B=R\llbracket x \rrbracket$.
Define $\langle\cdot,\cdot\rangle : A \times B \to R$ to be the nondegenerate $R$-bilinear form with
$\left\langle \sum_{n\geq 0} r_n x^n, \sum_{n \geq 0} s_n x^n\right\rangle = \sum_{n\geq 0} r_n s_n$.
The set $\{x^n\}_{n\geq 0}$ is a basis for $A$ and a pseudobasis for $B$.
\end{example}

Endow $R$ with the discrete topology.
The \emph{linearly compact topology} on $B$ \cite[\S I.2]{Dieudonne} is the coarsest topology 
in which the maps $\langle a_i, \cdot \rangle : B \to R$ are all continuous.
If we identify $B \cong \prod_{j \in I} R b_j$ and give each $Rb_j$ the discrete topology, then
this is the usual product topology.
The linearly compact topology depends on $\langle\cdot,\cdot\rangle$ but not on the choice of basis for $A$.
It is discrete if $A$ has finite rank.
We refer to $B$, equipped with this topology, as a \emph{linearly compact $R$-module},
and say that $B$ is the \emph{dual} of $A$ with respect to the form $\langle\cdot,\cdot\rangle$. 



We will often abbreviate by writing ``LC-'' in place of ``linearly compact.''  LC-modules form a category in which morphisms are continuous $R$-linear maps.

\begin{example}\label{lc-ex1}
With $A = R[x]$ and $B = R\llbracket x \rrbracket$ as in Example~\ref{lc-ex0}, a basis of open subsets in the LC-topology on $R\llbracket x \rrbracket$ is given by sets of power series in $R\llbracket x \rrbracket$ whose coefficients are fixed in a finite set of degrees and unconstrained elsewhere.  We view formal power series rings in multiple variables as LC-modules similarly.
\end{example}


Suppose $A$ is a free $R$-module with basis $S$.
Let $B$ be the
$R$-module of arbitrary $R$-linear combinations of elements of $S$,
 equipped with the nondegenerate bilinear form
$ A \times B \to R$ making $S$ orthonormal.  We say that $B$
is the 
\emph{completion} of $A$ with respect to $S$.
This is a linearly compact $R$-module with $S$ as a pseudobasis.

Let $B$ and $B'$ be linearly compact $R$-modules dual to free $R$-modules $A$ and $A'$, and write $\langle\cdot,\cdot\rangle$ for both of the 
associated bilinear forms. 
Every $R$-linear map $\phi : A' \to A$ has a unique adjoint $\psi : B\to B'$ such that 
$\langle \phi(a'), b\rangle = \langle a',\psi(b)\rangle$ for all $a' \in A'$ and $b \in B$.
A linear map $B \to B'$ is continuous if and only if it arises as the adjoint 
of some linear map $A' \to A$.


The \emph{completed tensor product} of $B$ and $B'$ 
is the $R$-module 
\[B \htimes B' := \Hom_R(A\otimes A',R),\]
given the LC-topology from the tautological pairing 
$(A\otimes A') \times \Hom_R(A\otimes A',R) \to R$.
If $\{b_i\}_{i \in I}$ and $\{b'_j \}_{j \in J}$ are pseudobases for $B$ and $B'$,
then we can realize $B\htimes B'$ concretely as the linearly compact $R$-module
with the set of tensors $\{ b_i \otimes b_j'\}_{(i,j) \in I \times J}$ as a pseudobasis.
There is an inclusion 
$B\otimes B' \hookrightarrow B\htimes B'$, which is an isomorphism if and only if 
$A$ or $A'$ has finite rank.

\begin{example}
If we view $R\llbracket x \rrbracket$ and $R\llbracket y \rrbracket$ as
linearly compact $R$-modules as in Example~\ref{lc-ex1}
then $R\llbracket x \rrbracket \otimes R\llbracket y \rrbracket \neq R\llbracket x \rrbracket\htimes R\llbracket y \rrbracket \cong R\llbracket x,y \rrbracket$,
where we consider $R\llbracket x,y \rrbracket$ as the linearly compact $R$-module dual to $R[x,y]$.
\end{example}

If $\iota : B \to R$ and $\nabla : B \htimes B \to B$ are continuous linear maps,
then these maps are the adjoints of unique linear maps $\epsilon : R \to A$ and $\Delta : A \to A \otimes A$,
and we say that $(B,\nabla,\iota)$ is an \emph{LC-algebra}
if $(A,\Delta,\epsilon)$ is an $R$-coalgebra.
Similarly, we say that continuous linear maps
$\Delta : B \to B\htimes B$ and $\epsilon : B \to R$ 
make $B$ into an \emph{LC-coalgebra}
if $\Delta$ and $\epsilon$ are the adjoints of the product and unit 
maps of an $R$-algebra structure on $A$.
\emph{LC-bialgebras} and \emph{LC-Hopf algebras} are defined analogously.
In each case we say that the (co, bi, Hopf) algebra structures on $A$ and $B$ are duals of each other.
If $B$ is an LC-Hopf algebra then its antipode is defined to be the adjoint of the antipode of 
the Hopf algebra $A$.

One can  reformulate these definitions in terms of commutative diagrams; see \cite[\S2 and \S3]{Marberg2018}.
Linearly compact (co, bi, Hopf) algebras form a category in which morphisms are continuous linear maps 
commuting with (co)products and (co)units.
The completed tensor product of two linearly compact (co, bi, Hopf) algebras is
naturally a linearly compact (co, bi, Hopf) algebra.

\begin{example}\label{lc-ex4}
Again let $A = R[x]$ and $B = R\llbracket x \rrbracket$ but now suppose $R = \ZZ[\beta]$.
 Define $\iota : R \to B$ to be the obvious inclusion and let $\epsilon : B \to R$ be the map setting $x=0$. Let $\nabla : B \htimes B \to B$ be the usual multiplication map 
 but define $\Delta_\beta : B \to B\htimes B$ to be the continuous algebra homomorphism with
 $\Delta_\beta(x) = x\otimes 1 +1\otimes x + \beta  x \otimes x .$
 
  The operations $\iota$ and $\epsilon$ restrict to maps $R \to A$ and $A \to R$.
  Define $\Delta  : A \to A\otimes A$ as the linear map with
 $\Delta(x^n) = \sum_{i+j = n} x^i\otimes x^j$
 and let $\nabla_\beta : A \otimes A \to A$ be
 the commutative, associative, linear map whose
  $(n-1)$-fold iteration maps
\be\label{fold-eq}
\nabla_\beta^{(n - 1)} : x \otimes (x-\beta) \otimes (x-2\beta) \otimes \cdots \otimes (x-(n-1)\beta) \mapsto n! \cdot x^n
\ee
and which is such that $\nabla_\beta \circ (\id \otimes \iota) : A\otimes R \to A$ and $\nabla_\beta \circ (\iota \otimes \id) : R\otimes A \to A$ are the canonical isomorphisms. 
(The existence and uniqueness of $\nabla_\beta$ is not obvious, but follows as an interesting, fairly straightforward exercise.)

The triple $(A,\nabla_\beta,\iota)$ is automatically an algebra, and 
one can show that $\epsilon$ and $\Delta$ are algebra homomorphisms, so $(A,\nabla_\beta,\iota,\Delta,\epsilon)$ is a bialgebra.
One can check, moreover, that the dual of this bialgebra structure via the form in Example~\ref{lc-ex1} 
is precisely 
$(B,\nabla,\iota,\Delta_\beta,\epsilon)$, which is thus an LC-bialgebra.

There are several ways of seeing that $A$ is a Hopf algebra and computing its antipode $\antipode$. Since $\iota \circ \epsilon = \nabla_\beta \circ (\id \otimes \antipode) \circ \Delta$,
we must have $\antipode(x) = -x$. Using this and the fact that $\antipode$ is an algebra anti-automorphism, 
one can show  that 
\[\antipode(x^n) = (-1)^n x(x+\beta)^{n-1}\]
for $n>0$.
It follows by duality that $B$ is an LC-Hopf algebra whose antipode $\hat \antipode : B\to B$
is the continuous linear map with
$\hat \antipode(1) = 1$ and
\[\hat \antipode(x^m) = \sum_{n\geq m} (-1)^n \tbinom{n-1}{m-1}\beta^{n-m} x^n = (\tfrac{-x}{1+\beta x})^m\] for $m>0$.
It is interesting to note that $\nabla$ and $\Delta_\beta$ restrict to well-defined maps $A \otimes A \to A$ and $A \to A\otimes A$,
which give $A$ a second bialgebra structure. This bialgebra is not a Hopf algebra, however, 
since $\hat \antipode$ is not a map $A \to A$.
\end{example}

In a few places we will encounter the following construction, where the added complications of linear compactness seem a little superfluous.
Suppose $H$ is a $\NN$-graded connected\footnote{
A graded bialgebra $H = \bigoplus_{n\geq 0} H_n$ is 
\emph{connected} if the unit and counit maps restrict 
to inverse isomorphisms $R \cong H_0$. 
This condition is not essential, but recurs in many examples and is often convenient to assume.
Any graded connected bialgebra is automatically a Hopf algebra, and 
when defining such objects one just needs to specify the (co)product maps.} Hopf algebra, with 
finite graded rank and with a homogeneous basis $S$.
Let $\hat H$ be the completion of $H$ with respect to $S$.
There is an inclusion $H\hookrightarrow  \hat H$ and all Hopf structure maps 
automatically extend to continuous linear maps, making 
$\hat H$ into an LC-Hopf algebra (namely, the one dual to the graded dual of $H$).

Significantly, LC-Hopf algebras arising as completions
in this way
 often have interesting subalgebras that 
are not themselves completions.

\begin{example}
If we set $\beta=0$ in Example~\ref{lc-ex4}, then $A=\ZZ[x]$ becomes 
a $\NN$-graded connected Hopf algebra with finite graded rank.
The completion of this Hopf algebra with respect to $S = \{x^n\}_{n\geq 0}$
can be identified with
the (proper) LC-Hopf subalgebra of $B=\ZZ\llbracket x \rrbracket$
with pseudobasis $\{ n!\cdot x^n\}_{n\geq 0}$.
\end{example}

\subsection{Quasisymmetric functions}

Continue to let $R$ be an integral domain, and 
suppose $x_1$, $x_2$, \dots are commuting indeterminates.
A power series $f \in R\llbracket x_1,x_2,\dots\rrbracket$ is \emph{quasisymmetric} if
for any choice of exponents $a_1,a_2,\dots,a_k \in \PP$,
the coefficients of $x_1^{a_1}x_2^{a_2}\cdots x_{k}^{a_k}$ and $x_{i_1}^{a_1}x_{i_2}^{a_2}\cdots x_{i_k}^{a_k}$
in $f$ are equal for all $i_1<i_2<\dots<i_k$.

\begin{definition}
Let $\mQSym_R$ denote the $R$-module of all
quasisymmetric power series in $R\llbracket x_1,x_2,\dots \rrbracket$.
Let $\QSym_R$ denote the submodule of power series in $\mQSym_R$ of bounded degree.
\end{definition}

A \emph{composition} is a finite sequence of positive integers 
$\alpha = (\alpha_1,\alpha_2,\dots,\alpha_l)$.
If $n =  \alpha_1 + \alpha_2 + \dots+ \alpha_l$
 then we write $\alpha \vDash n$ and and set $|\alpha| := n$.
The \emph{monomial quasisymmetric function}
of a nonempty composition $\alpha = (\alpha_1,\alpha_2,\dots,\alpha_l)$
is
\[
M_\alpha := \sum_{i_1<i_2<\dots<i_l} x_{i_1}^{\alpha_1} x_{i_2}^{\alpha_2}\cdots x_{i_l}^{\alpha_l} \in \QSym_R.
\]
When $\alpha =\emptyset$ is empty, we set $M_\emptyset := 1$.
Then $\QSym_R$ is a graded ring that is free as an $R$-module with the  set of
power series $M_\alpha$
as a homogeneous basis.
 We identify $\mQSym_R$ with the corresponding completion.
 This makes $\mQSym_R$ into a linearly compact $R$-module, whose
topology coincides with the subspace topology induced by the LC-module $R\llbracket x_1,x_2,\dots \rrbracket$.

Let $\alpha'\alpha''$ denote the concatenation of two compositions $\alpha'$ and $\alpha''$. 
There is a unique $R$-linear map $\Delta : \QSym_R \to \QSym_R \otimes \QSym_R$ such that 
$
\Delta(M_\alpha) = \sum_{\alpha =\alpha'\alpha''} M_{\alpha'} \otimes M_{\alpha''}
$
for each composition $\alpha$.
Let $\epsilon : \QSym_R \to R$ be the linear map with $M_{\emptyset} \mapsto 1$ and $M_\alpha\mapsto 0$
for all nonempty compositions $\alpha$.
With this coproduct and counit, 
 $\QSym_R$ becomes a graded, connected Hopf algebra \cite[\S5.1]{GrinbergReiner}.
(For a discussion of its antipode, see Section~\ref{antipode-subsection}.)
 
Since $\QSym_R$ has finite graded rank, 
its product and coproduct extend to continuous linear maps $\mQSym_R \htimes \mQSym_R \to\mQSym_R$
and
$\mQSym_R \to \mQSym_R \htimes \mQSym_R$ making $\mQSym_R$ into an LC-Hopf algebra.
This algebra has an important universal property, which we presently describe.

Suppose $H $
is a linearly compact $R$-bialgebra
with product $\nabla$, coproduct $\Delta$, unit $\iota$, and counit $\epsilon$.
Let $\XX(H)$ denote the set of continuous algebra morphisms $\zeta : H \to R\llbracket t\rrbracket$
with $\zeta(\cdot )|_{t=0}=\epsilon$.
This set is a monoid under the \emph{convolution product}
$ \zeta * \zeta' := \nabla_{R\llbracket t\rrbracket} \circ (\zeta \htimes \zeta')\circ \Delta$ 
with unit element $\iota\circ \epsilon$.
If $H$ has an antipode $\antipode$ then 
$\zeta\circ\antipode$ is the inverse of $\zeta \in \XX(H)$ under $*$, so
in this case $\XX(H)$ is a group.  This leads to a natural extension of Aguiar, Bergeron, and Sottile's notion of a \emph{combinatorial Hopf algebra}
\cite{ABS} to linearly compact modules.

\begin{definition}
If $H$ is an LC-bialgebra (respectively, LC-Hopf algebra)
and $\zeta \in \XX(H)$, then we refer to $(H,\zeta)$
as a \emph{combinatorial LC-bialgebra} (respectively, \emph{combinatorial LC-Hopf algebra}).
Such pairs form a category in which morphisms $(H,\zeta) \to (H',\zeta')$
are LC-bialgebra morphisms $\phi : H \to H'$ with $\zeta = \zeta'\circ \phi$.
\end{definition}

We view $\mQSym_R$ as a combinatorial LC-Hopf algebra with respect to
the \emph{universal zeta function} $\zetaq : \mQSym_R \to R\llbracket t\rrbracket$ 
given by $\zetaq(f) = f(t,0,0,\dots)$.
One has
$
\zetaq(M_\alpha)  = t^{|\alpha|} 
$
for $\alpha\in\{\emptyset,(1),(2),(3),\dots\}$ 
and $\zetaq(M_\alpha)=0$
for all other compositions $\alpha$.

The next theorem, which is a mild generalization of \cite[Thm.~4.1]{ABS},
shows that $(\mQSym_R, \zetaq)$ is a final object in the category of combinatorial LC-bialgebras.
Given an LC-bialgebra $H$, a character $\zeta \in \XX(H)$, and a nontrivial composition $\alpha
=(\alpha_1,\alpha_2,\dots,\alpha_k)$,
let $\zeta_\alpha : H \to R$ denote the map
sending $h \in H$ to the coefficient of 
$t^{\alpha_1}\otimes t^{\alpha_2}\otimes \cdots \otimes t^{\alpha_k}$
in $\zeta^{\otimes k} \circ \Delta^{(k-1)}(h) \in R\llbracket t\rrbracket^{\htimes k}$,
where $\Delta^{(0)} := \id$.
When $\alpha=\emptyset$ is empty,
let $\zeta_\emptyset = \epsilon$.

\begin{theorem}
\label{abs-thm}
If $(H,\zeta)$ is a combinatorial LC-bialgebra
then there exists a  unique morphism $\Phi : (H,\zeta) \to (\mQSym_R,\zetaq)$, given explicitly 
by the map with
$\Phi(h) = \sum_\alpha \zeta_\alpha(h) M_\alpha$
for $h \in H$,
where the sum is over all compositions $\alpha$.
\end{theorem}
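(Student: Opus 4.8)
The plan is to follow the classical strategy of Aguiar–Bergeron–Sottile \cite[Thm.~4.1]{ABS}, adapted to the linearly compact setting. The key point is that everything on the right-hand side lives in the graded Hopf algebra $\QSym_R$ (not just its completion), so we only need to borrow the universal property of $\QSym_R$ among \emph{graded} connected Hopf algebras and then extend by continuity. Concretely, I would first check that the proposed formula $\Phi(h) = \sum_\alpha \zeta_\alpha(h) M_\alpha$ defines a map $H \to \mQSym_R$ at all: for fixed $h$, the coefficient $\zeta_\alpha(h)$ is extracted from $\zeta^{\otimes k}\circ \Delta^{(k-1)}(h)$, a well-defined element of $R\llbracket t\rrbracket^{\htimes k}$ since $\zeta$ and $\Delta$ are continuous, so each coefficient is a well-defined element of $R$ and the (generally infinite) sum $\sum_\alpha \zeta_\alpha(h)M_\alpha$ converges in the LC-topology on $\mQSym_R$ because $\{M_\alpha\}$ is a pseudobasis.

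Next I would verify that $\Phi$ is a morphism of combinatorial LC-bialgebras, i.e. that it is a continuous bialgebra map and satisfies $\zeta = \zetaq \circ \Phi$. Continuity and $R$-linearity are immediate from the coefficientwise definition. The compatibility with counits is the observation that $\zeta_\emptyset = \epsilon$ by definition and $\epsilon_{\mQSym}(M_\alpha) = \delta_{\alpha,\emptyset}$. For the unit, $\Delta^{(k-1)}$ applied to $\iota(1)$ is grouplike, so $\zeta_\alpha(\iota(1)) = 0$ for nonempty $\alpha$ and $=1$ for $\alpha = \emptyset$, giving $\Phi(\iota(1)) = 1$. The multiplicativity $\Phi(\nabla(g\htimes h)) = \nabla_{\mQSym}(\Phi(g)\htimes\Phi(h))$ and comultiplicativity $\Delta_{\mQSym}\circ\Phi = (\Phi\htimes\Phi)\circ\Delta$ follow by the same bookkeeping as in the classical proof: expand both sides in the $M_\alpha$ pseudobasis, and match coefficients using that $\zeta$ is an algebra map (for multiplicativity, via the quasi-shuffle description of $M_{\alpha'}M_{\alpha''}$) and that $\Delta^{(k-1)}$ is coassociative (for comultiplicativity, via $\Delta(M_\alpha) = \sum_{\alpha = \alpha'\alpha''} M_{\alpha'}\otimes M_{\alpha''}$). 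Finally $\zetaq(\Phi(h)) = \sum_\alpha \zeta_\alpha(h)\zetaq(M_\alpha) = \sum_{n\geq 0}\zeta_{(n)}(h)\, t^n = \zeta(h)$, where the last equality is just the definition of $\zeta_{(n)}$ as the degree-$n$ coefficient of $\zeta(h)\in R\llbracket t\rrbracket$ (with $\zeta_\emptyset = \epsilon$ supplying the constant term, using $\zeta(\cdot)|_{t=0} = \epsilon$).

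For uniqueness, suppose $\Psi : (H,\zeta)\to(\mQSym_R,\zetaq)$ is any morphism. Write $\Psi(h) = \sum_\alpha c_\alpha(h) M_\alpha$; I want to show $c_\alpha = \zeta_\alpha$. Applying the iterated coproduct, using that $\Psi$ commutes with $\Delta$ and $\epsilon$, the coefficient of $M_{(\alpha_1)}\otimes\cdots\otimes M_{(\alpha_k)}$ in $\Delta_{\mQSym}^{(k-1)}(\Psi(h))$ equals $c_\alpha(h)$ by the combinatorics of the $\QSym$ coproduct on the $M$-basis; on the other hand this equals the coefficient of $M_{(\alpha_1)}\otimes\cdots\otimes M_{(\alpha_k)}$ in $\Psi^{\otimes k}(\Delta^{(k-1)}(h))$, and applying $\zetaq^{\otimes k}$ (equivalently, reading off the appropriate monomial in each tensor slot) together with $\zetaq\circ\Psi = \zeta$ turns this into the coefficient of $t^{\alpha_1}\otimes\cdots\otimes t^{\alpha_k}$ in $\zeta^{\otimes k}(\Delta^{(k-1)}(h))$, which is exactly $\zeta_\alpha(h)$. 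Hence $\Psi = \Phi$.

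I expect the only genuine subtlety — as opposed to routine bookkeeping — to be making sure the linearly compact topology is handled correctly everywhere the classical argument manipulates possibly-infinite sums: one must check that $\zeta^{\otimes k}\circ\Delta^{(k-1)}$ really is continuous into $R\llbracket t\rrbracket^{\htimes k}$ (so that coefficient extraction is legitimate), that $\Phi$ lands in $\mQSym_R$ rather than some larger completion, and that the bialgebra identities, which a priori are identities in completed tensor products $\mQSym_R\htimes\mQSym_R$, can be verified pseudobasis-coefficient by pseudobasis-coefficient. None of these is hard, but they are the points where the generalization beyond \cite[Thm.~4.1]{ABS} actually happens; the algebraic heart of the proof is identical to the graded case. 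It may be cleanest to reduce to that case outright: restrict attention to the dense graded sub-bialgebra structures where possible, invoke the original ABS theorem, and then extend $\Phi$ uniquely by continuity — though one still has to say why the extension remains a bialgebra morphism, which is again a density/continuity argument.
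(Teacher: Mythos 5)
Your argument is correct, but it takes a genuinely different route from the paper. The paper's proof is a two-step reduction: extend scalars from the integral domain $R$ to its field of fractions, invoke the already-established LC version of the Aguiar--Bergeron--Sottile theorem over a field (\cite[Thm.~7.8]{Marberg2018}) to get existence, uniqueness, and the explicit formula there, and then observe that since each coefficient $\zeta_\alpha(h)$ already lies in $R$, the morphism restricts to one over $R$. You instead redo the ABS argument from scratch in the linearly compact setting: well-definedness and continuity of $h\mapsto\sum_\alpha\zeta_\alpha(h)M_\alpha$ via the pseudobasis/product topology, the (co)algebra morphism verifications via the quasi-shuffle product and deconcatenation coproduct on the $M_\alpha$, and uniqueness by showing $(\zetaq)_\alpha(M_\beta)=\delta_{\alpha\beta}$ so that any morphism's coefficients are forced to be $\zeta_\alpha(h)$. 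What your approach buys is self-containedness and the observation that no field hypothesis is ever really needed; what the paper's approach buys is brevity and the outsourcing of exactly the bookkeeping you defer (multiplicativity via quasi-shuffles is the one step that is tedious rather than formal). Your closing list of the places where the LC topology genuinely enters — continuity of $\zeta^{\otimes k}\circ\Delta^{(k-1)}$, convergence in $\mQSym_R$, and checking identities in $\mQSym_R\htimes\mQSym_R$ coefficientwise — is exactly right, and these checks all go through because each $\zeta_\alpha$ is a continuous functional and the LC topology on $\mQSym_R$ is the product topology in the $M_\alpha$ coordinates. I see no gap, only deferred routine verifications.
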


\begin{proof}
By extending the ring of scalars, any such morphism $\Phi$ extends to a morphism over the field of fractions of $R$, and this extension satisfies $\zetaq \circ \Phi = \zeta$.  By \cite[Thm.~7.8]{Marberg2018}, there is a unique such morphism $\Phi$ defined over a field, and this morphism has the given formula $\Phi(h) = \sum_\alpha \zeta_\alpha(h) M_\alpha$.  Since this formula is in fact defined over $R$, the result follows.
\end{proof}

\begin{example}
Suppose $R=\ZZ[\beta]$ and $H = \ZZ[\beta]\llbracket x \rrbracket$ with the LC-Hopf algebra structure in Example~\ref{lc-ex4}.
Each map $\zeta : H \to \ZZ[\beta]\llbracket t\rrbracket$ in $\XX(H)$ is uniquely determined by its value at $x$,
and there exists $\zeta \in \XX(H)$ with $\zeta(x) = f \in \ZZ\llbracket t\rrbracket$
if and only if $f \in t\ZZ[\beta]\llbracket t\rrbracket$ is a power series with no constant term.
If $\zeta  \in \XX(H)$ is the trivial isomorphism mapping $x \mapsto t$,
then the morphism $\Phi$ in Theorem~\ref{abs-thm}
sends $x \mapsto M_{(1)} + \beta M_{(1,1)} + \beta^2 M_{(1,1,1)} + \dots$.
\end{example}

\begin{notation}
For the rest of this article, we
fix the ring of scalars to be $R=\ZZ[\beta]$,
where $\beta$ is an indeterminate,
and define 
\be\label{qsym-beta-convention-eq} \QSym := \QSym_{\ZZ[\beta]}
\qquand \mQSym := \mQSym_{\ZZ[\beta]}.
\ee
All combinatorial Hopf algebras are assumed to be defined over $\ZZ[\beta]$.
\end{notation}

 \section{Set-valued $P$-partitions}
 \label{mlpset-sect}
 
 In this section, we introduce a combinatorial LC-Hopf algebra on labeled posets.
 Applying the canonical morphism from this object to $\mQSym$ produces a family of interesting quasisymmetric functions,
 recovering Lam and Pylyavskyy's generating functions for set-valued $P$-partitions \cite{LamPyl}.

  \subsection{Labeled posets}
 
 Let $P$ be a finite poset with an injective labeling map $\gamma : P \to \ZZ$.
 We refer to 
 the pair $(P,\gamma)$ as a \emph{labeled poset}.

We say that $t \in P$ \emph{covers} $s \in P$ and write $s\lessdot t$ if $\{x \in P : s \leq x < t\} = \{s\}$.
Two labelings $\gamma$ and $\delta$ of $P$ are \emph{equivalent}
if, whenever $s\lessdot t$ in $P$, we have $\gamma(s) > \gamma(t)$ if and only if $\delta(s) > \delta(t)$.
Labeled posets $(P,\gamma)$ and $(Q,\delta)$ are \emph{isomorphic}
if there is a poset isomorphism $\phi : P \xrightarrow{\sim} Q$ such that $\gamma$ and $ \delta\circ \phi$
are equivalent labelings of $P$.  Denote the isomorphism class of $(P,\gamma)$ by $[(P,\gamma)]$.


The isomorphism class $[(P, \gamma)]$ may be represented
uniquely by adding an orientation to the Hasse diagram of $P$, as follows:
for each covering relation $x\lessdot y$, there is an edge
$x\to y$ if $\gamma(x) > \gamma(y)$ and an edge $x \leftarrow y$ 
if $\gamma(x) < \gamma(y)$.

\begin{example}
\label{ori-ex}
Let $P$ be the labeled poset with four elements $s_1,s_2,s_3,s_4$ and four covering relations
 $s_1\lessdot s_2$ and $s_1\lessdot s_3$ and $s_2\lessdot s_4$ and $s_3\lessdot s_4$,
and let $\gamma : P \to \ZZ$ be defined by $\gamma(s_1) = 5$ and $\gamma(s_i) = i$ for $i\in\{2,3,4\}$.
Then $[(P, \gamma)]$ is represented by the oriented Hasse diagram
\[
\begin{tikzpicture}[baseline=(c2.base), xscale=0.4, yscale=0.3]
\tikzset{edge/.style = {->}}
  \node (c4) at (0,3) {$s_4$};
\node (c3) at (2,0) {$s_3$};
\node (c2) at (-2,0) {$s_2$};
  \node (c1) at (0,-3) {$s_1$};
  \draw[edge] (c1) -- (c2);
  \draw[edge] (c1) -- (c3);
  \draw[edge] (c4) -- (c2);
  \draw[edge] (c4) -- (c3);
\end{tikzpicture}
\] 
\end{example}

\begin{definition}
Define
$\mLPSet$ to be the linearly compact $\ZZ[\beta]$-module with a pseudobasis given by 
the isomorphism classes of all (finite) labeled posets.
\end{definition}

There is a natural Hopf structure on $\mLPSet$,
which we now describe.
We define a disjoint union operation $\sqcup$ on labeled posets $(P,\gamma)$ and $(Q,\delta)$
so that the oriented Hasse diagram of
the isomorphism class
 $[(P, \gamma) \sqcup (Q, \delta)]$
is the disjoint union of the oriented Hasse diagrams 
of $[(P,\gamma)]$ and $[(Q,\delta)]$: concretely,
$(P,\gamma)\sqcup (Q,\delta) := (P\sqcup Q, \gamma\sqcup \delta)$ is the labeled poset such that
$P \sqcup Q$ is the usual disjoint poset union
and
$\gamma \sqcup \delta : P \sqcup Q \to \ZZ$ is the labeling map
\be\label{sqcup-eq}
(\gamma \sqcup \delta)(s) := \begin{cases}
 \gamma(s) - \max_{x\in P} \gamma(x)&\text{if $s \in P$}
\\
 \delta(s) - \min_{x \in Q} \delta(x)+ 1 &\text{if $s \in Q$}.
 \end{cases}
 \ee 
 If $S \subseteq P$ is any subset of a labeled poset $(P, \gamma)$ then $(S,\gamma|_S)$
  is itself a labeled poset, where 
$S$ inherits the partial order of $P$.  
To unclutter our notation, we will henceforth write $(S,\gamma)$ in place of $(S,\gamma|_S)$.
A subposet $S \subseteq P$ is a \emph{lower set} (respectively, \emph{upper set})
if  
for all $x,y \in P$ with $x<y$, 
$y \in S$ $\Rightarrow$ $x\in S$ (respectively, $x \in S$ $\Rightarrow$ $y \in S$). 
A subset $S\subseteq P$ is an \emph{antichain}
if no elements $x,y \in S$ satisfy $x<y$ in $P$.
Given labeled posets $(P,\gamma)$ and $(Q,\delta)$, define
\[
\nabla([(P,\gamma)] \otimes [(Q,\delta)]) := [(P,\gamma)\sqcup (Q,\delta)]
\]
and
\[ \Delta([(P,\gamma)]) := \sum_{S\cup T = P} \beta^{|S\cap T|} \cdot[(S,\gamma)] \otimes [(T,\gamma)]\]
where the sum is over all ordered pairs $(S,T)$ of subsets of $P$ such that 
$S$ is a lower set, $T$ is an upper set,
$P = S\cup T$, and 
$S\cap T$ is an antichain.
Since there are only finitely many isomorphism classes of labeled posets of a given size,
these operations extend to continuous linear maps
$\nabla : \mLPSet \htimes \mLPSet \to \mLPSet$
and $\Delta : \mLPSet \to \mLPSet \htimes \mLPSet$.

\begin{example}\label{mlp-delta-ex}
The value of $\Delta([(P,\gamma)])$ for $P = \{ a < b\}$ with $\gamma(a) < \gamma(b)$
is 
\[
\ba
 \Delta\(\begin{tikzpicture}[baseline=(c.base), xscale=0.6, yscale=0.4]
\tikzset{edge/.style = {<-}}
\node (c) at (0, -1) {};
  \node (b) at (0,0) {$b$};
  \node (a) at (0,-2) {$a$};
  \draw[edge] (a) -- (b);
\end{tikzpicture}\)
&=
\(\begin{tikzpicture}[baseline=(c.base), xscale=0.6, yscale=0.4]
\tikzset{edge/.style = {<-}}
\node (c) at (0, -1) {};
  \node (a) at (0,-2) {\ };
  \node (b) at (0,0) {\ };
\end{tikzpicture}\)
\otimes 
\(\begin{tikzpicture}[baseline=(c.base), xscale=0.6, yscale=0.4]
\tikzset{edge/.style = {<-}}
\node (c) at (0, -1) {};
  \node (b) at (0,0) {$b$};
  \node (a) at (0,-2) {$a$};
  \draw[edge] (a) -- (b);
\end{tikzpicture}\)
+
\(\begin{tikzpicture}[baseline=(c.base), xscale=0.6, yscale=0.4]
\tikzset{edge/.style = {<-}}
\node (c) at (0, -1) {};
  \node (a) at (0,-2) {$a$};
  \node (b) at (0,0) {\ };
\end{tikzpicture}\)
\otimes 
\(\begin{tikzpicture}[baseline=(c.base), xscale=0.6, yscale=0.4]
\tikzset{edge/.style = {<-}}
\node (c) at (0, -1) {};
  \node (a) at (0,-2) {\ };
  \node (b) at (0,0) {$b$};
\end{tikzpicture}\)
+
\(\begin{tikzpicture}[baseline=(c.base), xscale=0.6, yscale=0.4]
\tikzset{edge/.style = {<-}}
\node (c) at (0, -1) {};
  \node (b) at (0,0) {$b$};
  \node (a) at (0,-2) {$a$};
  \draw[edge] (a) -- (b);
\end{tikzpicture}\)
\otimes 
\(\begin{tikzpicture}[baseline=(c.base), xscale=0.6, yscale=0.4]
\tikzset{edge/.style = {<-}}
\node (c) at (0, -1) {};
  \node (a) at (0,-2) {\ };
  \node (b) at (0,0) {\ };
\end{tikzpicture}\)
\\&\quad+
\beta \(\begin{tikzpicture}[baseline=(c.base), xscale=0.6, yscale=0.4]
\tikzset{edge/.style = {<-}}
\node (c) at (0, -1) {};
  \node (a) at (0,-2) {$a$};
  \node (b) at (0,0) {\ };
\end{tikzpicture}\)
\otimes 
\(\begin{tikzpicture}[baseline=(c.base), xscale=0.6, yscale=0.4]
\tikzset{edge/.style = {<-}}
\node (c) at (0, -1) {};
  \node (b) at (0,0) {$b$};
  \node (a) at (0,-2) {$a$};
  \draw[edge] (a) -- (b);
\end{tikzpicture}\)
+
\beta
\(\begin{tikzpicture}[baseline=(c.base), xscale=0.6, yscale=0.4]
\tikzset{edge/.style = {<-}}
\node (c) at (0, -1) {};
  \node (b) at (0,0) {$b$};
  \node (a) at (0,-2) {$a$};
  \draw[edge] (a) -- (b);
\end{tikzpicture}\)
\otimes
 \(\begin{tikzpicture}[baseline=(c.base), xscale=0.6, yscale=0.4]
\tikzset{edge/.style = {<-}}
\node (c) at (0, -1) {};
  \node (a) at (0,-2) {\ };
  \node (b) at (0,0) {$b$};
\end{tikzpicture}\)
.
\ea
\]
\end{example}

Write  $\iota : \ZZ[\beta]\to \mLPSet$ 
for the linear map that sends $1$ to the isomorphism class of the empty labeled poset.
Write $\epsilon : \mLPSet \to\ZZ[\beta]$
for the continuous linear map whose value at 
$[(P,\gamma)] $ is $1$ if $|P|=0$ and $0$ otherwise. 

\begin{theorem}\label{mlpset-thm}
With respect to the operations $\nabla$, $\Delta$, $\iota$, $\epsilon$ just given,
the $\ZZ[\beta]$-module $\mLPSet$ is a commutative LC-Hopf algebra.
\end{theorem}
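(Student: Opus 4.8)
The plan is to verify directly that the stated maps satisfy the axioms of a commutative bialgebra (in the linearly compact sense), after which the Hopf property is automatic from connectedness once we observe that $\mLPSet$ carries a grading by the number of poset elements that is respected by all structure maps. First I would check the algebra axioms: associativity of $\nabla$ reduces to the fact that iterating $\sqcup$ on three labeled posets produces, on the level of oriented Hasse diagrams, the disjoint union of all three diagrams regardless of how the labels are shifted, because the shifting prescription in \eqref{sqcup-eq} only depends on the equivalence class of the labeling, i.e.\ on the relative order of labels across each covering relation. Commutativity is the same observation: $[(P,\gamma)\sqcup(Q,\delta)]$ and $[(Q,\delta)\sqcup(P,\gamma)]$ have the same oriented Hasse diagram since disjoint union of diagrams is symmetric and there are no covering relations between the two parts. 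The unit axioms are immediate since $\sqcup$ with the empty labeled poset changes nothing. Continuity of $\nabla$ was already noted in the excerpt (finitely many isomorphism classes of each size), so these checks take place on the pseudobasis.

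Next I would check the coalgebra axioms for $\Delta$ and $\epsilon$. Counitality $(\epsilon\htimes\id)\circ\Delta = \id = (\id\htimes\epsilon)\circ\Delta$ holds because in the sum defining $\Delta([(P,\gamma)])$ the only term with $S=\emptyset$ is $(S,T)=(\emptyset,P)$ (forcing $S\cap T=\emptyset$, so $\beta^{0}=1$), and similarly for $T=\emptyset$; every other term is killed by $\epsilon$ on the relevant tensor factor. Coassociativity is the substantive computation: both $(\Delta\htimes\id)\circ\Delta$ and $(\id\htimes\Delta)\circ\Delta$ applied to $[(P,\gamma)]$ should equal
\[
\sum \beta^{|A\cap B| + |B\cap C| + |A\cap C|}\,[(A,\gamma)]\otimes[(B,\gamma)]\otimes[(C,\gamma)],
\]
where the sum is over ordered triples $(A,B,C)$ with $A\cup B\cup C = P$, $A$ a lower set, $C$ an upper set, $B$ "sandwiched" (i.e.\ $A\cup B$ a lower set and $B\cup C$ an upper set), and the pairwise intersections antichains with $A\cap B\cap C=\emptyset$. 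The key combinatorial point I would isolate as a lemma: if $S$ is a lower set, $T$ an upper set, $S\cup T = P$, $S\cap T$ an antichain, and then we decompose $S = A\cup B_0$ (lower/upper inside $S$) and $T = B_1\cup C$ (lower/upper inside $T$) with the appropriate antichain conditions, the resulting data matches the triple decompositions of $P$ bijectively, and crucially the exponents of $\beta$ agree. This last bookkeeping — that $|S\cap T| + |B_0\cap T| + |S\cap B_1|$ (from one association) equals $|A\cap B| + |B\cap C| + |A\cap C|$ with $B = B_0 \cup B_1$ (from the other) — is where one must be careful, using that $A\cap B\cap C=\emptyset$ so that elements of $P$ lie in at most two of $A,B,C$; I expect this to be the main obstacle, though it is ultimately a finite, checkable identity on the membership pattern of each element.

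After that, I would verify that $\Delta$ and $\epsilon$ are algebra morphisms (equivalently $\nabla$ and $\iota$ are coalgebra morphisms), which is what upgrades the (co)algebra structures to a bialgebra. For $\epsilon$ this is trivial. For $\Delta$, one checks $\Delta\circ\nabla = (\nabla\htimes\nabla)\circ(\id\htimes\tau\htimes\id)\circ(\Delta\htimes\Delta)$ on pseudobasis elements: a lower-set/upper-set decomposition of a disjoint union $P\sqcup Q$ restricts to such decompositions of $P$ and of $Q$ independently (there being no relations between the parts), the antichain condition likewise factors, and $\beta^{|S\cap T|}$ factors as the product of the two local contributions — and one checks the label-shifting conventions are compatible, which again only matters up to equivalence of labelings. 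Finally, $\mLPSet$ is graded by $|P|$, $\nabla$ is additive and $\Delta$ is additive on this grading, $\epsilon$ and $\iota$ identify $\ZZ[\beta]$ with the degree-$0$ part (the single empty poset), so the bialgebra is $\NN$-graded connected; by the footnote in the excerpt (and the discussion of completions of graded connected Hopf algebras there, noting finite graded rank), it is automatically an LC-Hopf algebra. Since $\nabla$ is commutative by the first step, $\mLPSet$ is a commutative LC-Hopf algebra, completing the proof.
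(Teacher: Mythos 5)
Your bialgebra verification is structurally sound, but there are two genuine problems. The less serious one is in your coassociativity lemma: the triples $(A,B,C)$ appearing in the iterated coproduct are \emph{not} constrained to have $A\cap B\cap C=\varnothing$, and the exponent of $\beta$ is not $|A\cap B|+|B\cap C|+|A\cap C|$. Take $P=\{p\}$ a single point: composing $\Delta$ with itself produces the term $[(\{p\},\gamma)]^{\otimes 3}$ with coefficient $\beta^2$, coming from $A=B=C=\{p\}$, which your condition excludes and your exponent formula would assign $\beta^3$. The correct common value of the exponent (for either association) is $|A|+|B|+|C|-|P|$, i.e.\ $|A\cap B|+|B\cap C|+|A\cap C|-|A\cap B\cap C|$; with that correction the bookkeeping does go through, so this gap is reparable.

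The serious gap is your last step. The coproduct $\Delta$ is \emph{not} additive on the grading by $|P|$: the terms with $S\cap T\neq\varnothing$ land in degree $|S|+|T|=|P|+|S\cap T|>|P|$, so $\mLPSet$ is not an $\NN$-graded connected bialgebra and the footnote you invoke does not apply. Worse, the usual local-nilpotence argument fails on $\mLPSet$ itself: for a one-element poset $p$ one computes $(\id-\iota\circ\epsilon)^{\star k}([(\{p\},\gamma)])=\beta^{k-1}\cdot[k\text{-element antichain}]\neq 0$ for every $k$, so Takeuchi's formula is an infinite (only LC-convergent) sum and cannot be justified by connectedness alone. The paper's proof avoids this by passing to the free $\ZZ[\beta]$-module $\LPSet$ dual to $\mLPSet$ and transporting all structure maps to their adjoints: there the roles are reversed, $\LPSet$ is graded connected as a \emph{coalgebra} (because $\nabla$ on $\mLPSet$ is graded) and merely \emph{filtered} as an algebra (because $\Delta$ on $\mLPSet$ raises degree), which is exactly the generality in which local $\star$-nilpotence and Takeuchi's formula apply; the antipode of $\mLPSet$ is then obtained as the adjoint. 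You need either to adopt this dualization, or to prove directly that Takeuchi's series converges in the linearly compact topology and defines a continuous two-sided convolution inverse of $\id$ — neither of which your argument currently supplies.
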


\begin{proof}
Every covering relation in $P \sqcup Q$ is a covering relation in either $P$ or $Q$, and the relative order of labels is not altered by the disjoint union.  
Hence $[(P, \gamma) \sqcup (Q, \delta)] = [(Q, \delta) \sqcup (P, \gamma)]$ and so the product $\Delta$ is commutative.

Let $\LPSet$ denote the free $\ZZ[\beta]$-module with a basis given by all isomorphism
classes of labeled posets $[(P,\gamma)]$.
For $n \in \NN$, let $\LPSet_n$ denote the $\ZZ[\beta]$-submodule of $\LPSet$ spanned by 
isomorphism classes $[(P,\gamma)]$ with $|P|=n$.

Consider the nondegenerate bilinear form $ \LPSet \times \mLPSet \to \ZZ[\beta]$
making obvious the (pseudo)bases of isomorphism classes dual to each other.
Let $\nabla^\vee : \LPSet \to \LPSet \otimes \LPSet$, $\Delta^\vee : \LPSet \otimes \LPSet \to \LPSet$, $\iota^\vee : \LPSet \to \ZZ[\beta]$, and $\epsilon^\vee : \ZZ[\beta] \to \LPSet$
denote the respective adjoints of $\nabla$, $\Delta$, $\iota$, and $\epsilon$
relative to this form.
One has $\iota^\vee = \epsilon|_{\LPSet}$ and $\epsilon^\vee = \iota|_{\LPSet}$,
while
\[
\nabla^\vee([(P,\gamma)]) = \sum_{\substack{S\sqcup T = P \\ \text{(as posets)}}} [(S,\gamma)]\otimes [(T,\gamma)].
\]
%
It is slightly more complicated, but still straightforward, to write 
down a similar formula for $\Delta^\vee$;
the important observation is that  $\nabla^\vee$ 
maps $\LPSet_n \to \bigoplus_{i+j =n} \LPSet_i \otimes \LPSet_j$.

To show that $\mLPSet$ is an LC-Hopf algebra, it suffices to
show that the maps $\Delta^\vee$, $\nabla^\vee$, $\epsilon^\vee$, and $\iota^\vee$
make $\LPSet$ into an ordinary Hopf algebra.
It is a routine calculation to show that $\LPSet$ is at least a bialgebra.
Moreover,  $\LPSet$ is evidently graded and connected as a coalgebra
 and filtered as an algebra.
It follows that $f: = \id - \epsilon^\vee \circ \iota^\vee$ is \emph{locally $\star$-nilpotent}
in the sense of \cite[Rem.~1.4.23]{GrinbergReiner}, i.e., that 
for each $x \in \LPSet$, 
$ (\Delta^\vee)^{(k-1)} \circ f^{\otimes k} \circ (\nabla^\vee)^{(k-1)}(x) = 0$
for some sufficiently large $k=k(x)$.
The bialgebra $\LPSet$ therefore has an antipode given by
Takeuchi's formula \cite[Prop.~1.4.22]{GrinbergReiner},
and we conclude that $\LPSet$ is a Hopf algebra, as needed.
\end{proof}

One can obtain an antipode formula for $\mLPSet$ by taking the adjoint of
Takeuchi's antipode formula for $\LPSet$. Both formulas involve substantial cancellation of terms.  
This raises the following problem, which seems to be open:

\begin{problem}
Find a cancellation-free formula for the antipode of $\mLPSet$.
\end{problem}

A solution to this problem would generalize the antipode formulas 
in \cite[\S15.3]{AguiarArdila}, which roughly correspond to the case when $\beta=0$.

Up to equivalence, there is a unique labeled poset $(P,\gamma)$ 
in which $P$ is an $n$-element antichain.
The continuous linear map sending $x^n$ to this poset is an injective morphism of LC-Hopf algebras
$\ZZ[\beta]\llbracket x \rrbracket \to \mLPSet$, where $\ZZ[\beta]\llbracket x \rrbracket$ has the LC-Hopf structure
described in Example~\ref{lc-ex4}.

Define $\zetaLP : \mLPSet \to \ZZ[\beta]\llbracket t\rrbracket$ to be the continuous linear map 
with 
\be\label{incr-zeta-eq} \zetaLP([(P,\gamma)]) = \begin{cases} t^{|P|} &\text{if 
$\gamma(s) < \gamma(t)$ for all $s<t$ in $P$
} \\ 0 &\text{otherwise}.\end{cases}\ee
This is a particularly natural algebra morphism,
which makes $(\mLPSet,\zetaLP)$ into a combinatorial LC-Hopf algebra.
Theorem~\ref{abs-thm} asserts that there is a
 unique morphism
$
(\mLPSet,\zetaLP) \to (\mQSym,\zetaq)
$,
and it is an interesting and \emph{a priori} nontrivial problem to evaluate this map
at the isomorphism class of a given labeled poset.
We turn to this problem in the next section.

\subsection{Set-valued $P$-partitions}
\label{sv-sect}

\emph{Set-valued $P$-partitions} are certain maps assigning sets of integers to
the vertices of a labeled poset $(P,\gamma)$.
It will turn out that these maps exactly parametrize the monomials appearing 
in the quasisymmetric generating 
function associated to $[(P,\gamma)]$ by the unique morphism 
$
(\mLPSet,\zetaLP) \to (\mQSym,\zetaq)
$.

Let $\PSet$ denote the set of finite, nonempty subsets of 
$\PP$.
Given $S,T \in \PSet$, write $S\prec T$ if $\max(S) < \min(T)$
and $S\preceq T$ if $\max(S) \leq \min(T)$.
(In particular, $S \preceq S$ if and only if $|S| = 1$.)
For $S \in \PSet$, define $x^S = \prod_{i \in S} x_i$.

\begin{definition}[{\cite[Def.~5.4]{LamPyl}}] 
\label{svp-def}
Let $(P,\gamma)$ be a labeled poset.
A \emph{set-valued $(P,\gamma)$-partition} is a map $\sigma : P \to \PSet$
such that for each covering relation $s\lessdot t$ in $P$ one has
$\sigma(s) \preceq \sigma(t)$, with $\sigma(s) \prec \sigma(t)$ if $\gamma(s) > \gamma(t)$.
\end{definition}

\begin{example}
Suppose $P = \{1<2<3\}$ is a 3-element chain and $\gamma(1) < \gamma(2) > \gamma(3)$.
If $\sigma$ is a set-valued $(P,\gamma)$-partition,
then $(\sigma(1), \sigma(2), \sigma(3))$ could be $(\{ 2\},\{2,3\}, \{4,5\})$ or $(\{1\},\{2,4\}, \{6\})$,
but not $(\{1,2\},\{3\}, \{3,4\})$.
\end{example}

Lam and Pylyavskyy \cite{LamPyl} introduced this definition while studying a ``$K$-theoretic'' analogue of
the Malvenuto--Reutenauer Hopf algebra of permutations.
The idea
generalizes the classical notion of a \emph{$P$-partition} from \cite{St1},
which is just a set-valued partition whose values are all singleton sets.

Let $\tA(P,\gamma)$ denote the set of all set-valued $(P,\gamma)$-partitions.
The \emph{length} of $\sigma \in \tA(P,\gamma)$ is the 
nonnegative integer $|\sigma| := \sum_{s \in P} |\sigma(s)|$,
while the \emph{weight} of $\sigma$ is the monomial
$x^\sigma := \prod_{s \in P} x^{\sigma(s)}$.
We define the \emph{set-valued weight enumerator} of $(P,\gamma)$
to be the quasisymmetric formal power series 
\be\label{tgam-eq}
\tGamma({P,\gamma}) := \sum_{\sigma \in \tA(P,\gamma)} \beta^{|\sigma| - |P|} x^\sigma
\in \mQSym.
\ee
The power series $\Gamma^{(1)}(P,\gamma)$ obtained from \eqref{tgam-eq} by setting $\beta=1$
is denoted $\tilde K_{P,\gamma}$ in \cite[\S5.3]{LamPyl}.
These generating functions (and by extension, the sets $\tA(P,\gamma)$)
are natural objects to consider on account of the following theorem.

 \begin{theorem}\label{<-thm}
The continuous linear map with $[(P,\gamma)] \mapsto \tGamma(P,\gamma)$
for each labeled poset $(P,\gamma)$
is the unique morphism 
of combinatorial LC-Hopf algebras 
$(\mLPSet,\zetaLP) \to (\mQSym,\zetaq).$
\end{theorem}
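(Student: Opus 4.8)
The statement asks us to identify the unique combinatorial Hopf algebra morphism $(\mLPSet,\zetaLP)\to(\mQSym,\zetaq)$ explicitly as the set-valued weight enumerator. By Theorem~\ref{abs-thm}, this unique morphism $\Phi$ is given by $\Phi([(P,\gamma)]) = \sum_\alpha (\zetaLP)_\alpha([(P,\gamma)])\, M_\alpha$, where $(\zetaLP)_\alpha([(P,\gamma)])$ reads off the coefficient of $t^{\alpha_1}\otimes\cdots\otimes t^{\alpha_k}$ in $\zetaLP^{\htimes k}\circ\Delta^{(k-1)}([(P,\gamma)])$. So the plan is to compute this iterated-coproduct-then-$\zetaLP$ quantity combinatorially and check it matches the coefficient of $M_\alpha$ in $\tGamma(P,\gamma)$. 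Equivalently — and this is the cleaner route — I would verify directly that the map $[(P,\gamma)]\mapsto\tGamma(P,\gamma)$ is (i) a morphism of LC-bialgebras and (ii) satisfies $\zetaq\circ\Phi = \zetaLP$; uniqueness is then automatic from Theorem~\ref{abs-thm}.

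**Key steps.**

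First, I would unwind the iterated coproduct $\Delta^{(k-1)}([(P,\gamma)])$: iterating the formula for $\Delta$ shows it is a sum over ordered decompositions $P = S_1\cup S_2\cup\cdots\cup S_k$ where each initial union $S_1\cup\cdots\cup S_j$ is a lower set, consecutive overlaps $S_j\cap S_{j+1}$ are antichains (more precisely, the combinatorics of multichains of lower sets), weighted by $\beta$ to the total overlap count. Applying $\zetaLP^{\htimes k}$ kills every tensor factor $[(S_j,\gamma)]$ unless $\gamma$ is strictly increasing along covers within $S_j$, in which case that factor contributes $t^{|S_j|}$. Matching $t^{\alpha_1}\otimes\cdots\otimes t^{\alpha_k}$ forces $|S_j|=\alpha_j$. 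The second step is to recognize that specifying such a decomposition with each block $\gamma$-increasing is exactly the data of a set-valued $(P,\gamma)$-partition $\sigma$ together with a choice of how to distribute its values into $k$ ordered ``columns'' of sizes $\alpha_1,\dots,\alpha_k$: given $\sigma$, let $S_j$ be the set of vertices $s$ such that $\sigma(s)$ meets the $j$-th block of variable indices. The $\preceq$/$\prec$ conditions in Definition~\ref{svp-def} translate precisely into the lower-set/antichain/increasing-label conditions coming from the coproduct and $\zetaLP$, and the $\beta^{|S\cap T|}$ weights accumulate to $\beta^{|\sigma|-|P|}$. The third step is bookkeeping: summing over all $\alpha$ and all compatible decompositions reorganizes into $\sum_{\sigma}\beta^{|\sigma|-|P|}x^\sigma$, i.e.\ $\tGamma(P,\gamma)$, because the monomial $M_\alpha$ packages exactly the strictly-increasing choices of variable indices. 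Finally, checking $\zetaq\circ\Phi=\zetaLP$ amounts to setting all but the first variable to zero: a set-valued $(P,\gamma)$-partition survives $x_2=x_3=\cdots=0$ only if every value is $\{1\}$, which happens iff $\gamma$ is increasing along all covers, giving $t^{|P|}$, matching \eqref{incr-zeta-eq}.

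**The main obstacle.**

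The routine parts are the bialgebra axioms and the support of $\zetaq\circ\Phi$. The genuine content — and the step I expect to be most delicate — is the bijective translation in the second step: showing that an ordered decomposition $P=S_1\cup\cdots\cup S_k$ of the type produced by $\Delta^{(k-1)}$ followed by $\zetaLP^{\htimes k}$, with the overlap-weight $\beta^{\sum|S_j\cap S_{j+1}|}$ (and the subtlety that higher-order overlaps must be handled so that the antichain condition at each stage is respected), corresponds bijectively and weight-preservingly to a set-valued $(P,\gamma)$-partition recorded with a monomial of shape $\alpha$. One has to be careful that the local antichain conditions from iterating $\Delta$ assemble correctly into the global $\preceq$-versus-$\prec$ dichotomy governed by whether $\gamma(s)>\gamma(t)$ on a cover, and that the exponent of $\beta$ is counted without over- or under-counting when a vertex's value $\sigma(s)$ straddles several blocks. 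Once this dictionary is established cleanly, the theorem follows by assembling the sum and invoking Theorem~\ref{abs-thm} for uniqueness; alternatively, one could cite the $\beta=1$ specialization in \cite{LamPyl} and argue that the $\beta$-graded refinement is forced, but giving the direct argument is more self-contained.
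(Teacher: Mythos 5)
Your proposal follows the same route as the paper's proof: apply Theorem~\ref{abs-thm}, describe combinatorially the tuples produced by the iterated coproduct followed by $\zetaLP^{\otimes k}$ (the paper's sets $\sP_k$, whose conditions are exactly your lower-set/increasing-label conditions), and biject them, paired with increasing index sequences, with set-valued $(P,\gamma)$-partitions so that the $\beta$-exponent matches $|\sigma|-|P|$. The dictionary you flag as the delicate step is precisely the bijection the paper writes out explicitly ($\sigma(s)=\{i_j : s\in P_j\}$ together with its inverse), so the proposal is correct and essentially identical in approach.
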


 If $(P,\gamma) \cong (Q,\delta)$ then clearly $\tGamma(P,\gamma) = \tGamma(Q,\delta)$,
 so the continuous linear map  
 $\mLPSet \to \mQSym$ described in the theorem is at least well-defined.

 \begin{proof}
 Fix a  labeled poset $(P,\gamma)$.
 For each $k \in \NN$, let 
 $\sP_k$ denote the set of $k$-tuples $(P_1,\dots, P_k)$
 of nonempty sets with $P_1  \cup \dots \cup P_k = P$
 such that
 \ben
 \item[(a)] if $s \in P_i$ and $t \in P_j$ where $i<j$ then $t \not < s$ in $P$, and
 \item[(b)] if $s,t \in P_i$ and $s \lessdot t$ in $P$ then $\gamma(s) <\gamma(t)$.

\een
%
Also let $\sI_k$ be the set of $k$-tuples of positive integers $(i_1,\dots,i_k)$ with $i_1<\dots<i_k$.
According to Theorem~\ref{abs-thm},
the unique morphism of 
combinatorial LC-Hopf algebras
$ (\mLPSet,\zetaLP) \to (\mQSym,\zetaq)$ is the continuous linear map 
with
\[
[(P,\gamma)] \mapsto \sum_{k \in \NN} \sum_{(P_1, \dots, P_k) \in \sP_k} \sum_{(i_1,\dots, i_k) \in \sI_k} 
\beta^{\sum_i |P_i|  - |P|} x_{i_1}^{|P_1|} \cdots x_{i_k}^{|P_k|}.
\]
We claim that the right hand expression is equal to $ \tGamma(P,\gamma)$.
Given tuples
$\pi = (P_1,\dots,P_k) \in \sP_k$ and $I = (i_1<\dots<i_k) \in \sI_k$,
define $\sigma : P \to \PSet$ to be the map with
$\sigma(s) = \{ i_j : 1 \leq j \leq k \text{ and }s \in P_j\}$. 
If $s\lessdot t$ in $P$ and $ i,j $ are indices such that $s \in P_i$ and $t \in P_j$, then property (a) implies that $i \leq j$,
so $\sigma(s) \preceq \sigma(t)$; moreover, if $\gamma(s) > \gamma(t)$,
then  property (b) implies that $s$ and $t$ do not belong to the same $P_i$ and so $\sigma(s) \prec \sigma(t)$.
Thus $\sigma \in \tA(P,\gamma)$,
and we have $|\sigma| = \sum_i |P_i|$ and $x^\sigma = x_{i_1}^{|P_1|}  \cdots x_{i_k}^{|P_k|}$.

It suffices to show that $(\pi,I) \mapsto \sigma$ is a bijection
$\bigsqcup_{k \in \NN} \sP_k \times \sI_k \to \tA(P,\gamma)$.
This is straightforward; the inverse map is $\sigma \mapsto (\pi,I)$ 
where $I$ is the sequence of elements in $\bigcup_{s \in P} \sigma(s)$ arranged in order,
and $\pi = (P_1,P_2,\dots,P_{|I|})$ is the tuple
in which $P_i$ is the set of $s \in P$ such that $\sigma(s)$ contains the $i$th element of $I$.
It is easy to deduce from Definition~\ref{svp-def} that  $\sigma \in \tA(P,\gamma)$ implies  $\pi \in \sP_k$.
 \end{proof}

 As one application of  Theorem~\ref{<-thm}, we obtain some new
 (co)product formulas for the quasisymmetric functions $\tGamma(P,\gamma)$.
 
\begin{corollary}\label{sv-products-cor}
Suppose $(P,\gamma)$ and $(Q,\delta)$ are labeled posets.
Then
\[\tGamma(P,\gamma)\cdot \tGamma(Q,\delta) = \tGamma((P,\gamma)\sqcup (Q,\delta))\]
and 
\[\Delta(\tGamma(P,\gamma)) = \sum_{S\cup T = P} \beta^{|S\cap T|} \cdot \tGamma(S,\gamma)\otimes
\tGamma( T,\gamma)\]
where the sum is over all ordered pairs $(S,T)$ of subsets of $P$ such that 
$S$ is a lower set, $T$ is an upper set,
$P = S\cup T$, and 
$S\cap T$ is an antichain.
\end{corollary}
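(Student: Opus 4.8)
The plan is to deduce Corollary~\ref{sv-products-cor} directly from Theorem~\ref{<-thm} together with Theorem~\ref{mlpset-thm}, with essentially no additional combinatorial work. The key observation is that the continuous linear map $\Phi : \mLPSet \to \mQSym$ defined by $[(P,\gamma)] \mapsto \tGamma(P,\gamma)$ is, by Theorem~\ref{<-thm}, a morphism of LC-bialgebras (indeed of combinatorial LC-Hopf algebras). Being an LC-bialgebra morphism means precisely that $\Phi$ intertwines the products and the coproducts on the two sides: $\nabla_{\mQSym} \circ (\Phi \htimes \Phi) = \Phi \circ \nabla_{\mLPSet}$ and $\Delta_{\mQSym} \circ \Phi = (\Phi \htimes \Phi) \circ \Delta_{\mLPSet}$.

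First I would establish the product formula. Applying the multiplicativity of $\Phi$ to the pseudobasis elements $[(P,\gamma)]$ and $[(Q,\delta)]$ gives
\[
\tGamma(P,\gamma)\cdot \tGamma(Q,\delta) = \Phi([(P,\gamma)])\cdot \Phi([(Q,\delta)]) = \Phi\bigl(\nabla([(P,\gamma)]\otimes[(Q,\delta)])\bigr) = \Phi\bigl([(P,\gamma)\sqcup(Q,\delta)]\bigr),
\]
which is $\tGamma((P,\gamma)\sqcup(Q,\delta))$ by definition of $\Phi$. Here I use the explicit description of $\nabla$ on $\mLPSet$ from just before Example~\ref{mlp-delta-ex}, namely $\nabla([(P,\gamma)]\otimes[(Q,\delta)]) = [(P,\gamma)\sqcup(Q,\delta)]$.

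Next I would establish the coproduct formula in the same way. The comultiplicativity of $\Phi$ gives
\[
\Delta(\tGamma(P,\gamma)) = \Delta(\Phi([(P,\gamma)])) = (\Phi\htimes\Phi)(\Delta([(P,\gamma)])) = (\Phi\htimes\Phi)\!\left(\sum_{S\cup T = P} \beta^{|S\cap T|}\, [(S,\gamma)]\otimes[(T,\gamma)]\right),
\]
where the sum ranges over pairs $(S,T)$ with $S$ a lower set, $T$ an upper set, $S\cup T = P$, and $S\cap T$ an antichain; here I invoke the explicit formula for $\Delta$ on $\mLPSet$. Since $\Phi\htimes\Phi$ is continuous and linear, it distributes over this (possibly infinite, but locally finite in each degree) sum, sending $[(S,\gamma)]\otimes[(T,\gamma)]$ to $\tGamma(S,\gamma)\otimes\tGamma(T,\gamma)$, yielding exactly the claimed identity. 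The only points requiring a word of care are that $\Phi$ and $\Phi\htimes\Phi$ are genuinely continuous (so that they commute with the infinite sums defining $\Delta$), which is built into the statement of Theorem~\ref{<-thm} and the construction of completed tensor products in Section~\ref{completions-sect}, and the harmless abuse of writing $(S,\gamma)$ for $(S,\gamma|_S)$. There is no real obstacle here: the corollary is a formal consequence of the fact that the weight enumerator map is a bialgebra morphism, and the ``main step'' — proving that morphism property — was already carried out in Theorem~\ref{<-thm}.
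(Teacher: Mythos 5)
Your proof is correct and matches the paper's (implicit) argument: the corollary is stated there as an immediate consequence of Theorem~\ref{<-thm}, exactly via the morphism property of $[(P,\gamma)]\mapsto\tGamma(P,\gamma)$ combined with the explicit formulas for $\nabla$ and $\Delta$ on $\mLPSet$. The only minor overcaution is your remark about infinite sums: for a fixed finite poset $P$ the coproduct sum has finitely many terms, so continuity is not even needed at that step.
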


\subsection{Multifundamental quasisymmetric functions}\label{multi-sect}

In this section we investigate the properties of 
the generating functions $\tGamma(P,\gamma)$
in the special case when $P$ is linearly ordered.
One can show that these quasisymmetric functions form a 
 pseudobasis of $\mQSym$;
following \cite{LamPyl},
we refer to them as \emph{multifundamental quasisymmetric functions}.

Fix an arbitrary labeled poset $(P,\gamma)$.
Lam and Pylyavskyy show in \cite[\S5.3]{LamPyl}
that both $\tA(P,\gamma)$ and $\tGamma(P,\gamma)$ are controlled by the following objects:

\begin{definition}
A finite sequence $w=(w_1,w_2,\dots,w_N)$
is a \emph{linear multiextension} of $P$ 
if it holds that 
$P = \{w_1,w_2,\dots,w_N\}$,  $w_i \neq w_{i+1}$ for each $1 \leq i < N$,
and $\{ i : w_i = a\} \prec \{ i : w_i = b\}$ whenever $a\lessdot b$ in $P$.
\end{definition}

(This differs superficially from Lam and Pylyavskyy's definition in \cite[\S5.3]{LamPyl};
what they call a linear multiextension is the map sending $s \in P$ 
to $\{ i : w_i = s\}$ rather than the sequence $(w_1,w_2,\dots,w_N)$.)

Let $\tL(P)$ denote the set of linear multiextensions of $P$.
This set has a unique element if $P$ is  a chain
and is infinite otherwise.
For each integer $N \in \NN$, 
let $[N]:= \{1<2<\dots<N\}$ be the usual $N$-element chain. 
Given a finite sequence $w=(w_1,w_2,\dots,w_N)$
with an injective map $\gamma : \{w_1,w_2,\dots,w_N\}\to\ZZ$,
let $\delta : [N] \to [N]$ denote the unique bijection with $\delta(i) > \delta(j)$
for $ i < j $
if and only if $\gamma(w_i) > \gamma(w_{j})$,
and define
\[\tA(w,\gamma) := \tA([N], \delta)
\qquand
\tGamma(w,\gamma):=\tGamma([N],\delta).\]
Let $\ell(w) :=N$ denote the length of the finite sequence $w$.
The following is a sort of
``Fundamental Lemma of Set-Valued $(P,\gamma)$-Partitions.''

\begin{theorem}[{\cite[Thm.~5.6]{LamPyl}}]\label{p-thm1}
For each labeled poset $(P,\gamma)$, there is a length- and weight-preserving bijection
$\tA(P,\gamma) \xrightarrow{\sim} \bigsqcup_{w \in \tL(P)} \tA(w,\gamma)$,
and so 
\[\tGamma({P,\gamma}) = \sum_{w \in \tL(P)} \beta^{\ell(w)-|P|} \tGamma({w,\gamma}).\]
\end{theorem}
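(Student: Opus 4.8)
The plan is to prove the bijection $\tA(P,\gamma) \xrightarrow{\sim} \bigsqcup_{w \in \tL(P)} \tA(w,\gamma)$ directly and then observe that the generating function identity follows immediately by summing over fibers, since the stated bijection is length- and weight-preserving and the exponent of $\beta$ in each summand is $\ell(w) - |P|$ while each term of $\tA(w,\gamma) = \tA([N],\delta)$ contributes a factor $\beta^{|\sigma'| - N}$, so that a $(P,\gamma)$-partition of length $m$ sitting over a multiextension of length $N$ contributes $\beta^{(m - N)} \cdot \beta^{(N - |P|)} = \beta^{m - |P|}$, matching the definition \eqref{tgam-eq}.

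For the bijection itself, I would first recall from the proof of Theorem~\ref{<-thm} the encoding of a set-valued $(P,\gamma)$-partition $\sigma$ as a pair consisting of an increasing index sequence $I = (i_1 < \dots < i_k)$ listing $\bigcup_{s}\sigma(s)$ and a tuple $\pi = (P_1,\dots,P_k)$ with $P_j = \{s : i_j \in \sigma(s)\}$, satisfying conditions (a) and (b) there. The key new idea is to refine $\pi$ into a linear multiextension: from $\pi \in \sP_k$ build the word $w = w(\pi)$ by listing, for each block $P_j$ in order $j = 1, \dots, k$, the elements of $P_j$ in increasing order of the label $\gamma$; delete immediate repetitions between consecutive blocks (i.e. if $\max_\gamma P_j = \min_\gamma P_{j+1}$ are the same element, do not repeat it) — more precisely one must merge the ``tail'' of $P_j$ and ``head'' of $P_{j+1}$ appropriately so that the result has no equal consecutive entries. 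One checks, using condition (a), that $w$ is indeed a linear multiextension of $P$, and then that $\sigma$ is recovered as a $(w,\gamma)$-partition, i.e. as an element of $\tA([N],\delta)$ where $\delta$ is the induced labeling of the chain $[N]$. Conversely, given $w \in \tL(P)$ and $\sigma' \in \tA(w,\gamma)$, the positions of $[N]$ get grouped by $\sigma'$ into blocks, and pushing forward along $w : [N] \to P$ (i.e. $\sigma(s) = \bigcup_{i : w_i = s} \sigma'(i)$, suitably re-indexed) produces an element of $\tA(P,\gamma)$; these two constructions are mutually inverse.

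The main obstacle — and the part I expect to require the most care — is the bookkeeping around repeated letters and the precise definition of $w(\pi)$ near block boundaries. The subtlety is exactly the one flagged in the parenthetical remark after the definition of linear multiextension: Lam and Pylyavskyy phrase a multiextension as a map $s \mapsto \{i : w_i = s\}$, and the condition $w_i \neq w_{i+1}$ interacts nontrivially with how one linearizes each $P_j$ and how the ``$\preceq$ versus $\prec$'' condition in Definition~\ref{svp-def} translates. One has to verify that when $\gamma(s) > \gamma(t)$ for a covering $s \lessdot t$, the two elements $s, t$ cannot land in positions forced to coincide, which is precisely condition (b); and conversely that the strict/weak inequality data on $P$ matches the strict/weak data on the chain $[N]$ under $\delta$. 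I would handle this by being explicit: within each block $P_j$, order by $\gamma$; between blocks $P_j, P_{j+1}$, the only element that could be repeated is one lying in both, and since (a) forbids $t < s$ with $t \in P_{j+1}$, $s \in P_j$, such a shared element is an antichain element relative to its neighbors, so deleting the repeat is harmless. The remainder — checking that $|\sigma|$ and $x^\sigma$ are preserved and that the fiber decomposition gives the $\beta$-power identity — is then a routine calculation that I would not carry out in detail here, merely noting it follows from $|\sigma| = \sum_j |P_j| = |\sigma'| $ under the correspondence and from the definition of $\tGamma$.

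Since Theorem~\ref{p-thm1} is quoted verbatim from \cite[Thm.~5.6]{LamPyl}, an alternative and shorter route is simply to cite that reference and remark that the translation between our conventions and theirs (the parenthetical above) is immediate; but I would still include the sketch above so that the paper is self-contained on this point, given how central these objects are to the enriched theory developed later.
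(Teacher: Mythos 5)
Your proposal is correct and matches how this result is actually handled: the paper itself gives no proof of Theorem~\ref{p-thm1}, simply citing \cite[Thm.~5.6]{LamPyl} with the remark that the $\beta=1$ case has the same proof, and your block-by-block construction (list each $P_j$ in increasing $\gamma$-order, concatenate, collapse adjacent repeats, recover $\sigma$ via $s\mapsto\bigcup_{i:w_i=s}\sigma'(i)$) is exactly the specialization of the argument the paper does write out in full for the enriched analogue, Theorem~\ref{ep-thm1}. You correctly flag the only delicate points (repeats can occur only at block boundaries, and condition (a) makes the merge harmless), so nothing essential is missing.
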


\begin{remark*}
To be precise, \cite[Thm.~5.6]{LamPyl} is the case of the preceding result with $\beta=1$;
however, both statements have the same proof.
\end{remark*}

Fix a sequence $w$ with $N=\ell(w)$ and an injective map $\gamma : \{w_1,w_2,\dots,w_N\} \to \ZZ$.
Define
$\Des(w,\gamma) := \{ i\in[N-1] : \gamma(w_i)  > \gamma(w_{i+1})\}$. 
We then have
\be\tA(w,\gamma) = \left\{ \sigma : [N] \to \PSet \hs:\hs  \ba \sigma(1)&\preceq \sigma(2)\preceq \dots \preceq \sigma(N)
\text{ and} \\
\sigma(i) &\prec \sigma(i+1)\text{ for }i \in \Des(w,\gamma)
\ea
\right\}.\ee
This suggests the following definition.
For a tuple of sets $S=(S_1,S_2,\dots, S_N)$, let $x^S := \prod_i x^{S_i}$
and $|S| := \sum_i |S_i|$.
For a composition $\alpha=(\alpha_1,\alpha_2,\dots,\alpha_k)$,
let \[I(\alpha) := \{\alpha_1, \alpha_1+\alpha_2,\dots, \alpha_1+\alpha_2 + \dots + \alpha_{k-1}\}.\]
Then $\alpha \mapsto I(\alpha)$ is a bijection from compositions of $N$ to subsets of $[N-1]$.
Define the \emph{multifundamental quasisymmetric function}
of 
$\alpha \vDash N$
to be
\be\label{lbeta-eq} L^{(\beta)}_\alpha   := \sum_{
\substack{
S=(S_1 \preceq S_2 \preceq \dots \preceq S_N) \\
S_i \prec S_{i+1}\text{ if }i \in I(\alpha)}
}
\beta^{|S| - N} x^S
\in \mQSym
\ee
where in the sum each $S_i$ belongs to $\PSet$.
The following holds by definition.

\begin{proposition}\label{by-def-prop}
Fix a choice of $(w,\gamma)$ as above, and
let $\alpha \vDash \ell(w)$ be the unique composition such that $I(\alpha)= \Des(w,\gamma)$.
Then
$L^{(\beta)}_\alpha = \tGamma(w,\gamma)$.
\end{proposition}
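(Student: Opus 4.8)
The plan is to observe that the identity $L^{(\beta)}_\alpha = \tGamma(w,\gamma)$ is really just an unwinding of definitions, so the argument reduces to matching the two index sets over which the respective sums run.

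First I would recall the definitions on the left-hand side. By construction $\tGamma(w,\gamma) = \tGamma([N],\delta)$, where $N = \ell(w)$ and $\delta$ is the chain labeling of $[N]$ characterized by $\delta(i) > \delta(i+1)$ exactly when $\gamma(w_i) > \gamma(w_{i+1})$, i.e.\ exactly when $i \in \Des(w,\gamma)$. Since the covering relations in $[N]$ are the pairs $i \lessdot i+1$, Definition~\ref{svp-def} identifies $\tA([N],\delta) = \tA(w,\gamma)$ with the set of maps $\sigma : [N] \to \PSet$ satisfying $\sigma(1) \preceq \sigma(2) \preceq \cdots \preceq \sigma(N)$ together with $\sigma(i) \prec \sigma(i+1)$ for $i \in \Des(w,\gamma)$ --- this is precisely the description of $\tA(w,\gamma)$ displayed above. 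Because $|[N]| = N$, formula \eqref{tgam-eq} then reads $\tGamma(w,\gamma) = \sum_{\sigma} \beta^{|\sigma| - N} x^\sigma$, the sum running over this set of maps.

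Next I would compare with \eqref{lbeta-eq}. Identifying a map $\sigma : [N] \to \PSet$ with the tuple $S = (\sigma(1),\sigma(2),\dots,\sigma(N))$ of members of $\PSet$ converts the chain conditions on $\sigma$ into exactly the conditions $S_1 \preceq S_2 \preceq \cdots \preceq S_N$ and $S_i \prec S_{i+1}$ for $i \in \Des(w,\gamma)$. Since $\alpha$ was chosen so that $I(\alpha) = \Des(w,\gamma)$ (which determines $\alpha$ uniquely, as $\alpha \mapsto I(\alpha)$ is a bijection from compositions of $N$ to subsets of $[N-1]$), these are precisely the constraints indexing the sum defining $L^{(\beta)}_\alpha$, and moreover $|\sigma| = |S|$ and $x^\sigma = x^S$; hence the two sums agree term by term. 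The only point requiring any care is the bookkeeping of the three descent sets involved --- the descents of $w$ relative to $\gamma$, the descents of the chain labeling $\delta$, and the set $I(\alpha)$ --- and checking that all three coincide; there is no real obstacle beyond that.
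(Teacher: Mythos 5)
Your proposal is correct and is essentially the paper's own argument: the paper introduces the displayed description of $\tA(w,\gamma)$ precisely so that \eqref{lbeta-eq} matches it term by term, and then states that the proposition "holds by definition." Your careful matching of the three descent sets ($\Des(w,\gamma)$, the descents of $\delta$, and $I(\alpha)$) is exactly the bookkeeping the paper leaves implicit.
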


Write $\leq$ for the usual \emph{refinement order} on compositions, so that
$\alpha \leq \alpha'$ if and only if
$|\alpha| = |\alpha'|$ and
$I(\alpha) \subseteq I(\alpha')$.
Setting $\beta=0$ in \eqref{lbeta-eq} gives the \emph{fundamental quasisymmetric function}
$
L_\alpha := L_\alpha^{(0)} 
= \sum_{\alpha \leq \alpha'} M_{\alpha'}  \in \QSym.$
If $\beta$ has degree zero and $x_i$ has degree one,
then $L_\alpha$ is the nonzero homogeneous component of $L^{(\beta)}_\alpha$
of lowest degree.
Since the power series $L_\alpha$ form a basis of $\QSym$,
it follows  that
the functions $\{L^{(\beta)}_\alpha\}$ are a pseudobasis of $\mQSym$.

\begin{example}\label{lbeta-ex}
If $\alpha =(2,1)$ then \eqref{lbeta-eq} is the sum over all $S_1 \preceq S_2 \prec S_3$
with $S_i \in \PSet$. This translates to the somewhat more explicit formula
\[ L^{(\beta)}_{(2,1)} = \sum_{n} \sum_{i_1  < i_2<\dots <i_n}  \beta^{n-3} \(\tbinom{n-1}{2}+ \beta \sum_{1 \leq j <k\leq n} x_{i_j} \)x_{i_1}x_{i_2} \cdots x_{i_n}.\]
Setting $\beta=0$ gives $L_{(2,1)} = 
M_{(1,1,1)} + M_{(2,1)}$ as expected.
\end{example}

The finite-variable truncations of $L^{(\beta)}_{\alpha}$
are the \emph{quasisymmetric glide polynomials} in \cite{PechenikSearles},
and each $L^{(\beta)}_{\alpha}$ is a certain ``stable limit'' of Pechenik and Searles's \emph{glide polynomials}.

 Lam and Pylyavskyy \cite[\S5.3]{LamPyl} refer to the specializations $\tilde L_\alpha := L_\alpha^{(1)}$ 
as \emph{multifundamental quasisymmetric functions}.
One recovers $L_\alpha^{(\beta)}$ from $\tilde L_\alpha$ by substituting
$x_i \mapsto \beta x_i$ and then dividing by $\beta^{|\alpha|}$; that is, we have
\be\label{tilde-l-eq} \beta^{|\alpha|}  L_\alpha^{(\beta)} = \tilde L_\alpha(\beta x_1, \beta x_2, \dots).\ee
This lets one rewrite any identities in \cite{LamPyl} involving  $\tilde L_\alpha$ in terms of $L^{(\beta)}_\alpha$.

\begin{remark}\label{product-rmk}
One can use Corollary~\ref{sv-products-cor} to obtain (co)product formulas
for the multifundamental quasisymmetric functions $L^{(\beta)}_\alpha$.
Lam and Pylyavskyy have already derived such formulas
in \cite[\S5.4]{LamPyl}; their results are stated in terms of the functions
$\tilde L_\alpha := L^{(1)}_\alpha$, but easily translate to $L^{(\beta)}_\alpha$
via \eqref{tilde-l-eq}.
The coproduct $\Delta\(L^{(\beta)}_\alpha\)$ is always a finite $\NN[\beta]$-linear combination
of tensors of the form $L^{(\beta)}_{\alpha'}\otimes L^{(\beta)}_{\alpha''}$.
By contrast, the product
$\nabla\(L^{(\beta)}_{\alpha'}\otimes L^{(\beta)}_{\alpha''}\)$ is an
infinite linear combination of $L^{(\beta)}_{\alpha}$'s if $\alpha'$ and $\alpha''$
are both nonempty.
\end{remark}

\subsection{Stable Grothendieck polynomials}\label{stable-sect}

A primary motivation for the definition of set-valued $P$-partitions
comes from the labeled posets associated with Young diagrams of partitions.
As Lam and Pylyavskyy note in \cite{LamPyl},
the weight enumerators of these labeled posets are essentially
the \emph{stable Grothendieck polynomials}
studied in \cite{FominKirillov94,LS1982} (see also \cite{Buch2002}).
We review these here.

Let $\lambda =(\lambda_1\geq \lambda_2 \geq \dots \geq 0)$
and $\mu = (\mu_1\geq \mu_2\geq \dots \geq 0)$ 
be (integer)  partitions
with $\mu \subseteq \lambda$, i.e., with $\mu_i \leq \lambda_i$
for all $i$. 
The \emph{skew diagram} of $\lambda/\mu$ is
\[\D_{\lambda/\mu} := \{ (i,j) \in \PP\times \PP : \mu_i < j \leq \lambda_i\}.\]
Let $\D_\lambda = \D_{\lambda/\emptyset}$.
We consider $\D_{\lambda/\mu}$ to be partially ordered with $(i,j) \leq (i',j')$
 if $i \leq i'$ and $j \leq j'$. 
 Let $n = |\lambda|-|\mu|$
 and fix a bijection $\theta : \D_{\lambda/\mu} \to [n]$
with
\be\label{canonical-eq}
\theta(i,j) < \theta(i,j+1)
\qquand \theta(i,j) > \theta(i+1,j)
\ee
for all relevant positions in $\D_{\lambda/\mu}$.
(The choice of $\theta$ is unimportant,
since all such labelings are equivalent.)
For example, if $\lambda =(5,4,2)$ and $\mu=(2,1)$ then 
the oriented Hasse diagram representing $(\D_{\lambda/\mu},\theta)$ is 
\be\label{dlam-eq}
\begin{tikzpicture}[baseline=(center.base), xscale=1, yscale=1]
\tikzset{edge/.style = {->}}
\node (center) at (0, -1) {};
  \node (a) at (0,0) {$(1,5)$};
  \node (b) at (1,-1) {$(1,4)$};
  \node (c) at (2,0) {$(2,4)$};
  \node (d) at (2,-2) {$(1,3)$};
  \node (e) at (3,-1) {$(2,3)$};
  \node (g) at (4,-2) {$(2,2)$};
  \node (h) at (5,-1) {$(3,2)$};
  \node (i) at (6,-2) {$(3,1)$};
  \draw[edge] (a) -- (b);
  \draw[edge] (b) -- (d);
  \draw[edge] (c) -- (e);
  \draw[edge] (e) -- (g);
  \draw[edge] (h) -- (i);
  \draw[edge] (b) -- (c);
  \draw[edge] (d) -- (e);
  \draw[edge] (g) -- (h);
\end{tikzpicture}
\ee
 
 The elements of $\tA(\D_{\lambda/\mu},\theta)$
 may be identified with \emph{semistandard set-valued tableaux} of shape $\lambda/\mu$ as defined in \cite[\S3]{Buch2002},
 i.e., fillings of $\D_{\lambda/\mu}$ by nonempty finite subsets of positive integers that are weakly increasing 
(in the sense of $\preceq$) along rows and strictly increasing (in the sense of $\prec$) along columns.
Let 
\[ \SetSSYT(\lambda/\mu) := \tA(\D_{\lambda/\mu},\theta).\]
The \emph{stable Grothendieck polynomial} of $\lambda/\mu$ is then
 \be\label{tk-eq} 
 G^{(\beta)}_{\lambda/\mu} := \tGamma({\D_{\lambda/\mu},\theta}) = 
 \sum_{T \in \SetSSYT(\lambda/\mu)} \beta^{|T|-|\lambda/\mu|} x^T.
 \ee
Both definitions are independent of the choice of $\theta$. 

 The power series $G^{(\beta)}_{\lambda/\mu}$ is symmetric in the $x_i$ variables, though of unbounded degree (see \cite[\S2 and Thm.~3.1]{Buch2002}).
The formula \eqref{tk-eq} sometimes appears in the literature with  $\beta$ set to $\pm 1$ \cite{Buch2002,BuchSamuel,LamPyl}.
There is no loss of generality in making such a specialization, but it is more convenient to work with a generic parameter.
Setting $\beta=0$ in \eqref{tk-eq} gives the skew Schur function $s_{\lambda/\mu}$.

Say that a set-valued tableau $T \in \SetSSYT(\lambda/\mu)$ is \emph{standard}
if its entries are disjoint sets, \emph{not containing any consecutive integers},
with union $\{1,2,\dots,N\}$ for some $N\geq n$.
We identify the set $\SetSYT(\lambda/\mu)$ of standard set-valued tableaux of shape $\lambda/\mu$
with the set $\tL(\D_{\lambda/\mu})$ of linear multiextensions of $\D_{\lambda/\mu}$:
each $(w_1,w_2,\dots,w_N) \in \tL(\D_{\lambda/\mu})$
 corresponds to the standard set-valued tableau with $i$ in box $w_i$.
 Then it follows from Theorem~\ref{p-thm1} that
 \be\label{tilde-G-eq}
G^{(\beta)}_{\lambda/\mu} = \sum_\alpha  f^\alpha_{\lambda/\mu} \cdot \beta^{|\alpha| - |\lambda/\mu|} \cdot L^{(\beta)}_\alpha,
 \ee
 where the sum is over compositions $\alpha$ and $ f^\alpha_{\lambda/\mu}$ is the number of 
 standard set-valued tableaux $T \in \SetSYT(\lambda/\mu)$ with $\Des(T,\theta)=I(\alpha)$
 and $|T|=|\alpha|$.

\section{Enriched $P$-partitions}\label{main-sect}

In \cite{Stembridge1997a}, Stembridge introduced an ``enriched'' analogue of ordinary (i.e., not set-valued)
$P$-partitions as a means of constructing skew Schur $Q$-functions.   Stembridge's theory admits a $K$-theoretic analogue, which we describe in this section.
This leads to interesting quasisymmetric 
generalizations of Ikeda and Naruse's \emph{shifted stable Grothendieck polynomials}  \cite{IkedaNaruse}, discussed in Section~\ref{shifted-stable-sect}.
 
\subsection{Labeled posets revisited}

We begin by embedding the LC-Hopf algebra $\mLPSet$ from Section~\ref{mlpset-sect} in a larger algebra.
Given a labeled poset $(P,\gamma)$,
a \emph{valley} $v \in P$ is an element with the property that 
$\gamma(x) > \gamma(v)$ for all covers $x\lessdot v$ in $P$
and $\gamma(v) < \gamma(y)$ for all covers $v \lessdot y$ in $P$.
In other words, a valley is a sink in the oriented Hasse diagram of $(P, \gamma)$.
Define $\ValSet(P,\gamma)$ to be the set of valleys of $(P, \gamma)$.

\begin{definition}
Let $\mLPSet^+$ be the linearly compact $\ZZ[\beta]$-module
with a pseudobasis 
consisting of the
 isomorphism classes $[(P,\gamma,V)]$
for labeled posets $(P,\gamma)$ and subsets $V\subseteq \ValSet(P,\gamma)$,
where we define $(P,\gamma,V) \cong (P',\gamma',V')$
if there exists an isomorphism of labeled posets $ (P,\gamma) \xrightarrow{\sim} (P',\gamma')$ taking $V$ to $V'$.
\end{definition}

The Hopf structure on $\mLPSet$ extends to $\mLPSet^+$ as follows.
Let $\nabla : \mLPSet^+ \htimes \mLPSet^+ \to \mLPSet^+$ 
denote the continuous linear map with
\[
\nabla([(P,\gamma,V)]\otimes [(Q,\delta,W)]) := [(P,\gamma,V)\sqcup (Q,\delta,W)]
\]
where
$(P,\gamma,V)\sqcup (Q,\delta,W) := (P\sqcup Q,\gamma\sqcup \delta,V\sqcup W)$,
with $\gamma\sqcup \delta$ as in \eqref{sqcup-eq}.
Let $\Delta :  \mLPSet^+ \to \mLPSet^+ \htimes \mLPSet^+$
be the continuous linear map with
\[ \Delta([(P,\gamma,V)]) := \sum_{S\cup T = P}\beta^{|S\cap T|}\cdot [(S,\gamma,V\cap S)] \otimes [(T,\gamma,V\cap T)]\]
where the sum is over all ordered pairs $(S,T)$ of subsets of $P$ such that 
$S$ is a lower set, $T$ is an upper set,
$P = S\cup T$, and
$S\cap T$ is an antichain.
Finally, write  $\iota : \ZZ[\beta]\to \mLPSet^+$ 
for the linear map with $1\mapsto [(\varnothing, \varnothing,\varnothing)]$
and $\epsilon : \mLPSet^+ \to\ZZ[\beta]$
for the continuous linear map whose value at 
 $[(P,\gamma,V)] $ is $1$ if $P$ is empty and $0$ otherwise.

\begin{theorem}
With respect to the operations $\nabla$, $\Delta$, $\iota$, $\epsilon$ just given,
the $\ZZ[\beta]$-module $\mLPSet^+$ is a commutative LC-Hopf algebra.
\end{theorem}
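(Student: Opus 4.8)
The plan is to reduce the claim about $\mLPSet^+$ to the already-established Theorem~\ref{mlpset-thm} for $\mLPSet$, exactly as that theorem was reduced to a statement about the graded dual $\LPSet$. First I would observe that commutativity of $\nabla$ is immediate: the disjoint union $(P,\gamma,V)\sqcup(Q,\delta,W)$ is symmetric in its two arguments up to isomorphism, since the oriented Hasse diagram of the union is the disjoint union of the two oriented Hasse diagrams (with the harmless relabeling in \eqref{sqcup-eq}) and the distinguished valley subsets simply concatenate; note also that a valley of $P\sqcup Q$ is precisely a valley of $P$ or of $Q$, so $V\sqcup W\subseteq \ValSet(P\sqcup Q,\gamma\sqcup\delta)$ and the data is well-formed. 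The same remark shows $\nabla$ is associative.

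Next I would pass to the dual side. Let $\LPSet^+$ denote the free $\ZZ[\beta]$-module on the isomorphism classes $[(P,\gamma,V)]$, graded by $|P|$, and let $\LPSet^+_n$ be the span of classes with $|P|=n$; with respect to the bilinear form making this (pseudo)basis orthonormal, form the adjoints $\nabla^\vee,\Delta^\vee,\iota^\vee,\epsilon^\vee$ of the four structure maps. As in the proof of Theorem~\ref{mlpset-thm}, one gets $\iota^\vee=\epsilon|_{\LPSet^+}$, $\epsilon^\vee=\iota|_{\LPSet^+}$, and
\[
\nabla^\vee([(P,\gamma,V)]) = \sum_{\substack{S\sqcup T=P\\\text{(as posets)}}} [(S,\gamma,V\cap S)]\otimes[(T,\gamma,V\cap T)],
\]
which visibly maps $\LPSet^+_n\to\bigoplus_{i+j=n}\LPSet^+_i\otimes\LPSet^+_j$; a similar, slightly bulkier formula holds for $\Delta^\vee$. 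To see that $\mLPSet^+$ is an LC-Hopf algebra it suffices to check that these adjoint maps make $\LPSet^+$ an ordinary Hopf algebra. That $\LPSet^+$ is a bialgebra is a routine verification of the compatibility diagrams — essentially identical to the $\mLPSet$ case, with the bookkeeping of the extra subset $V$ carried along, using only that $V\cap S$ and $V\cap T$ depend functorially on $V$ and on the subposet. Since $\LPSet^+$ is graded and connected as a coalgebra and filtered as an algebra (the grading by $|P|$ works exactly as before, and there is a unique element of degree $0$), the map $f:=\id-\epsilon^\vee\circ\iota^\vee$ is locally $\star$-nilpotent, so Takeuchi's formula \cite[Prop.~1.4.22]{GrinbergReiner} supplies an antipode, and $\LPSet^+$ — hence $\mLPSet^+$ — is a Hopf algebra.

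The one point that needs a little care, and is the main place where $\mLPSet^+$ differs from $\mLPSet$, is the interaction between the valley data and the restriction to subposets: when we split $P=S\cup T$ with $S$ a lower set and $T$ an upper set, I should check that $\ValSet(S,\gamma)\supseteq\ValSet(P,\gamma)\cap S$ and likewise for $T$, so that $[(S,\gamma,V\cap S)]$ and $[(T,\gamma,V\cap T)]$ are genuinely elements of the pseudobasis — i.e., that $V\cap S$ really is a set of valleys of $(S,\gamma)$. This holds because passing to a lower set only removes elements above a given vertex and can only delete outgoing/incoming covers in a way that preserves the ``sink'' property, and the covering relations within $S$ are covering relations in $P$; the analogous statement for upper sets is symmetric. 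Granting this, the coassociativity and bialgebra checks go through verbatim from Theorem~\ref{mlpset-thm}, with every term decorated by the appropriate intersection of $V$. I expect this verification that the valley data restricts correctly under the coproduct to be the only genuine obstacle; everything else is a direct transcription of the earlier argument.
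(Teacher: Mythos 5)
Your proof is correct and takes the same route as the paper, whose entire argument for this theorem is literally ``the proof is the same as for Theorem~\ref{mlpset-thm}, \emph{mutatis mutandis}.'' You have simply spelled out the mutanda, and correctly isolated the one genuinely new verification --- that for a lower (resp.\ upper) set $S$ the covering relations among elements of $S$ agree with those in $P$, so $V\cap S$ really lands in $\ValSet(S,\gamma)$ --- which goes through exactly as you describe.
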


\begin{proof}
The proof is the same as for Theorem~\ref{mlpset-thm},
\emph{mutatis mutandis}.
\end{proof}

Identifying  $[(P,\gamma)] = [(P,\gamma,\varnothing)]$
lets us view $\mLPSet \subset \mLPSet^+$ as LC-Hopf algebras.

We now define a map $\uzetaLP$ that turns $\mLPSet^+$ into a combinatorial LC-Hopf algebra. 
It is slightly different from the structure on $\mLPSet$ considered in the previous section.

The map $\zetaLP$ defined in \eqref{incr-zeta-eq} extends to
a continuous algebra morphism $\mLPSet^+ \to \ZZ[\beta]\llbracket t\rrbracket$
with $\zetaLP([(P,\gamma,V)]) := \zetaLP([(P,\gamma)])$.
Define $\dzetaLP$ to be the continuous algebra morphism $\mLPSet^+ \to \ZZ[\beta]\llbracket t\rrbracket$
with 
\[ \dzetaLP([(P,\gamma,V)]) := \begin{cases} 
t^{|P|} &\text{if $V=\varnothing$ and $\gamma(x) > \gamma(y)$ for all $x\lessdot y$ in $P$}, \\
0&\text{otherwise}.
\end{cases}
\]
Finally, write $\uzetaLP : \mLPSet^+ \to \ZZ[\beta]\llbracket t\rrbracket$ for the convolution product 
\be\label{tilde-unimodal-zeta-eq}
\uzetaLP := \dzetaLP * \zetaLP =  \nabla_{\ZZ[\beta]\llbracket t\rrbracket} \circ (\dzetaLP \htimes \zetaLP)\circ \Delta.
\ee
Next, we derive a more explicit formula for this algebra morphism.

For a labeled poset $(P,\gamma)$, let $\PeakSet(P,\gamma)$
denote the set of elements $y \in P$ for which there exist elements $x,z \in P$
with $x\lessdot y \lessdot z$ and $\gamma(x) < \gamma(y) > \gamma(z)$.

\begin{proposition}\label{><-prop}
Let $(P,\gamma)$ be a labeled poset and $V \subseteq \ValSet(P,\gamma)$.
Then 
\[ \uzetaLP([(P,\gamma,V)]) = 
\begin{cases}
 t^{|P|} (2+\beta t)^{|\ValSet(P,\gamma)| - |V|}&\text{if }\PeakSet(P,\gamma)=\varnothing, \\
0&\text{otherwise}.
\end{cases}
\]
\end{proposition}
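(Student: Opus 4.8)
The plan is to unwind the convolution definition \eqref{tilde-unimodal-zeta-eq} using the explicit coproduct on $\mLPSet^+$. Writing $\Delta([(P,\gamma,V)]) = \sum_{S\cup T = P}\beta^{|S\cap T|}\cdot [(S,\gamma,V\cap S)] \otimes [(T,\gamma,V\cap T)]$, we get
\[
\uzetaLP([(P,\gamma,V)]) = \sum_{S\cup T = P}\beta^{|S\cap T|}\cdot \dzetaLP([(S,\gamma,V\cap S)])\cdot \zetaLP([(T,\gamma,V\cap T)]),
\]
the sum over pairs $(S,T)$ with $S$ a lower set, $T$ an upper set, and $S\cap T$ an antichain. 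Now $\dzetaLP$ is nonzero only when $V\cap S = \varnothing$ and $S$ is ``decreasing'' (every cover in $S$ goes down in $\gamma$), contributing $t^{|S|}$; and $\zetaLP$ is nonzero only when $T$ is ``increasing'' (every cover in $T$ goes up in $\gamma$), contributing $t^{|T|}$. Since $|S|+|T| = |P| + |S\cap T|$, each surviving term contributes $\beta^{|S\cap T|} t^{|S|+|T|} = t^{|P|}(\beta t)^{|S\cap T|}$. So the whole computation reduces to: classify the pairs $(S,T)$ such that $S$ is a decreasing lower set, $T$ is an increasing upper set, $S\cup T = P$, $S\cap T$ is an antichain, and $V\cap S = \varnothing$; and for each such pair, record $|S\cap T|$.

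The key combinatorial step is to see that such pairs exist if and only if $\PeakSet(P,\gamma) = \varnothing$, and in that case to enumerate them. First I would argue the "only if" direction: if $y$ is a peak, say $x\lessdot y\lessdot z$ with $\gamma(x)<\gamma(y)>\gamma(z)$, then $y$ cannot lie in the decreasing lower set $S$ (it has a cover $x$ below it with $\gamma(x)<\gamma(y)$, so the edge $x\lessdot y$ is increasing, not allowed in $S$ unless... wait, we need to be careful: $S$ being decreasing means covers \emph{within} $S$ go down, so if $x\notin S$ this is vacuous) — the cleaner statement is that $x\lessdot y$ forces $x\in S$ (since $S$ is a lower set and... no: $x< y$ and $y\in S$ would force $x\in S$, but we don't yet know $y\in S$). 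The right argument: since $S\cup T = P$ and covers behave well, consider the position of $y$. If $y\in T$ then $z\in T$ (upper set), but $\gamma(y)>\gamma(z)$ contradicts $T$ increasing — unless $y\in S\cap T$, but then $z\in S\cap T$ too (as $y<z$ and... no, $S$ is a lower set). One must track this carefully, splitting on whether $y\in S$, $y\in T$, or $y\in S\cap T$, and using that $S\cap T$ is an antichain to rule out $x,z$ being on the wrong side. For the "if" direction when $\PeakSet(P,\gamma)=\varnothing$: the orientation of the Hasse diagram has no peaks, so every element is either a "source region" or "sink region" vertex; I would show the valid pairs $(S,T)$ are parametrized by choosing, for each valley $v\in \ValSet(P,\gamma)\setminus V$, whether to put $v$ in $S\cap T$ or not (valleys in $V$ are forced out of $S$, hence into $T$ only), while all non-valley elements are forced — sources into $S$, and so on. This gives $|S\cap T|$ ranging over subsets of $\ValSet(P,\gamma)\setminus V$, so
\[
\uzetaLP([(P,\gamma,V)]) = t^{|P|}\sum_{W \subseteq \ValSet(P,\gamma)\setminus V}(\beta t)^{|W|} = t^{|P|}(1+\beta t)^{|\ValSet(P,\gamma)|-|V|}.
\]
Hmm — this gives $(1+\beta t)$, not $(2+\beta t)$; the discrepancy means each valley not in $S\cap T$ actually has \emph{two} placements (in $S$ alone or in $T$ alone), contributing a factor $2$, while being in $S\cap T$ contributes $\beta t$. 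So the correct count per free valley is $2 + \beta t$, giving the claimed formula.

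The main obstacle will be the careful case analysis establishing that non-valley, non-peak elements are rigidly forced into exactly one of $S$ or $T$ (no choice), whereas each valley outside $V$ genuinely has the three options (in $S$ only, in $T$ only, in both). This requires checking compatibility of the lower-set/upper-set conditions with the decreasing/increasing edge conditions at every covering relation, using connectedness arguments along maximal chains of the Hasse diagram and the antichain condition on $S\cap T$ to prevent conflicts; and separately verifying that the constraint $V\cap S=\varnothing$ exactly removes the "$S$ only" and "$S\cap T$" options for valleys in $V$, leaving them forced into $T$. Everything else — the convolution unwinding, the exponent bookkeeping $|S|+|T|=|P|+|S\cap T|$, and summing the geometric-type series — is routine.
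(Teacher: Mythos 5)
Your plan follows the paper's proof essentially verbatim: unwind the convolution over pairs $(S,T)$, observe that a peak $y$ kills every term (since $y\in S$ forces $x\in S$ with $\gamma(x)<\gamma(y)$, while $y\in T$ forces $z\in T$ with $\gamma(y)>\gamma(z)$, and $S\cup T=P$), and when $\PeakSet(P,\gamma)=\varnothing$ parametrize the surviving pairs by the three independent placements of each valley outside $V$ (in $S$ only, $T$ only, or both), giving the factor $2+\beta t$ per free valley. The $(1+\beta t)$ slip is self-corrected within your writeup, and the remaining case checks you flag are exactly the routine verifications the paper carries out.
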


\begin{proof}
Each pair $(S, T)$ of subsets of $P$ such that $S$ is a lower set, $T$ is an upper set,
$P = S\cup T$, and
$S\cap T$ is an antichain contributes
\be\label{term-eq}
\beta^{|S\cap T|} \cdot \dzetaLP([(S,\gamma,V\cap S)]) \cdot \zetaLP([(T,\gamma)])
\ee
to the value of $\uzetaLP([(P,\gamma,V)])$.  

If there is some element $y \in \PeakSet(P,\gamma)$, then for every such pair $(S, T)$, either $y \in S$ and there is $x\lessdot y$ in $S$ with $\gamma(x) <\gamma(y)$
or $y \in T$ and there is $z \gtrdot y$ in $T$ with $\gamma(y) > \gamma(z)$.  Thus one of
$\dzetaLP([(S,\gamma,V\cap S)])$ or $ \zetaLP([(T,\gamma)])$ is always zero,
and consequently $\uzetaLP([(P,\gamma,V)])=0$, as claimed.

Otherwise, we may assume $\PeakSet(P,\gamma) = \varnothing$. 
We construct a collection of pairs $(S, T)$ of subsets of $P$, as follows:
\begin{enumerate}[(i)]
\item if there is $z \gtrdot y $ in $P$ and $\gamma(y) > \gamma(z)$ then $y \in S$;
\item if there is $x \lessdot y$ in $P$ and $\gamma(x) < \gamma(y)$ then $y \in T$; 
\item if $y \in V$ then $y \in T$; and
\item if $y \in \ValSet(P,\gamma)\setminus V$ then we may assign $y$ to $S$, to $T$, or to both.
\end{enumerate}
Since $\PeakSet(P,\gamma) = \varnothing$, these rules are disjoint.
Suppose $(S, T)$ is one of the pairs so constructed.  
If $y \in S$ and $x\lessdot y$, then rules (ii) and (iv) imply that 
$\gamma(x) > \gamma(y)$, and rule (i) implies that $x \in S$.
Thus $S$ is a lower set with $\dzetaLP([(S,\gamma,\varnothing)]) = t^{|S|}$.
It follows similarly that $T$ is an upper set
with $ \zetaLP([(T,\gamma)])=t^{|T|}$.
Finally, it is clear that $S\cap T$ is an antichain
contained in $\ValSet(P,\gamma)\setminus V$.  Thus, the total contribution \eqref{term-eq} of the pair $(S, T)$ is $t^{|P|} (\beta t)^{|S\cap T|}$.  
Moreover, any pair $(S, T)$ with nonzero contribution must be constructed in this way.  
Since constructing such a pair $(S,T)$ is equivalent to choosing,
independently
for each $v \in \ValSet(P,\gamma)\setminus V$,
whether $v \in S \setminus T$ or $v \in T \setminus S$ or $v \in S\cap T$,
the proposition follows.
\end{proof}

\subsection{Enriched $P$-partitions}

In Section~\ref{sv-sect}, we encountered set-valued $P$-partitions 
by considering Theorem~\ref{abs-thm} applied to the combinatorial 
LC-Hopf algebra
$(\mLPSet,\zetaLP)$.
To obtain ``enriched'' analogues of those definitions,
we proceed in a similar way but  
now consider
$(\mLPSet^+,\uzetaLP)$
in place of $(\mLPSet,\zetaLP)$.

Let $\MM :=\{1'<1<2'<2<\dots\}$ denote
the totally ordered \emph{marked alphabet}. 
Let $\MSet$ denote the set of finite, nonempty subsets of $\MM$.
For $i \in\PP$, let $|i'|: = |i| = i$, and define $x^S := \prod_{i \in S} x_{|i|}$ for $S \in \MSet$.
For $S,T \in \MSet$, write $S\prec T$ if $\max(S) < \min(T)$ and $S \preceq T$ if $\max(S) \leq \min(T)$ (just as before).

\begin{definition}
\label{enriched-def}
Let $(P,\gamma)$ be a labeled poset.
 An \emph{enriched set-valued $(P,\gamma)$-partition} is a map $\sigma : P \to \MSet$
such that for each covering relation $s\lessdot t$ in $P$, 
one has $\sigma(s) \preceq \sigma(t)$ and the following properties hold:
\ben
\item[(a)] if $\gamma(s) < \gamma(t)$ then 
$\sigma(s)\cap \sigma(t) \subset \{1,2,3,\dots\}$, and

\item[(b)] if $\gamma(s) > \gamma(t)$ then  
$\sigma(s)\cap \sigma(t) \subset \{1',2',3',\dots\}$.

\een
\end{definition}

This is a generalization of Stembridge's definition of an \emph{enriched $P$-partition} \cite[\S2]{Stembridge1997a},
which corresponds to the case when $|\sigma(s)| =1$ for all $s \in P$. 
(For other generalizations of this notion, see \cite{Petersen}.)
A set-valued partition
in the sense of Definition~\ref{svp-def} is an
enriched set-valued $(P,\gamma)$-partition with values in $\PSet$.

\begin{example}
Suppose $P = \{1<2<3\}$ and $\gamma(1) < \gamma(2) > \gamma(3)$.
If $\sigma $ is an enriched set-valued $(P,\gamma)$-partition,
then possible values for the sequence  $(\sigma(1), \sigma(2), \sigma(3))$
include $(\{ 2',2\},\{2,3'\}, \{3',3\})$, $(\{2'\},\{2,3'\}, \{3,4',4\})$, 
and $(\{1',2'\}, \{3'\},\{3'\})$,
but not
$(\{1,2'\},\{2',3'\}, \{3,4'\})$
or
$(\{1,2'\},\{2,3\}, \{3,4'\})$.
\end{example}

For a given labeled poset $(P,\gamma)$,
let $\tE(P,\gamma)$ denote the set of all enriched set-valued $(P,\gamma)$-partitions.
For each subset $V \subseteq \ValSet(P,\gamma)$, define 
\be
\tE(P,\gamma, V) := \{ \sigma \in \tE(P,\gamma) : \sigma(v) \subset \PP\text{ for all }v \in V\},
\ee
so that $\tE(P,\gamma) = \tE(P,\gamma,\varnothing)$.
Define the \emph{length} and \emph{weight} of $\sigma \in \tE(P,\gamma)$ 
to be $|\sigma| := \sum_{s \in P} |\sigma(s)|$
and $x^\sigma := \prod_{s \in P} x^{\sigma(s)}$.
The \emph{enriched set-valued weight enumerator} of the triple $(P,\gamma,V)$ is
the quasisymmetric formal power series 
\be
\tOmega(P,\gamma,V) := \sum_{\sigma \in \tE(P,\gamma,V)} \beta^{|\sigma|-|P|} x^\sigma \in \mQSym.\ee
These definitions are natural in view of the following analogue of Theorem~\ref{<-thm}.

 \begin{theorem}\label{><-thm}
The continuous linear map with $[(P,\gamma,V)] \mapsto \tOmega(P,\gamma,V)$
for each labeled poset $(P,\gamma)$ and subset $V\subseteq\ValSet(P,\gamma)$
is the unique morphism of combinatorial LC-Hopf algebras
$(\mLPSet^+,\uzetaLP) \to (\mQSym,\zetaq).$
\end{theorem}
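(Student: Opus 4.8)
The plan is to imitate the proof of Theorem~\ref{<-thm} by invoking the universal property from Theorem~\ref{abs-thm}. Since $(\mLPSet^+,\uzetaLP)$ is a combinatorial LC-Hopf algebra, there is a unique morphism $\Phi : (\mLPSet^+,\uzetaLP) \to (\mQSym,\zetaq)$, and Theorem~\ref{abs-thm} gives the explicit formula $\Phi([(P,\gamma,V)]) = \sum_\alpha \uzetaLP_\alpha([(P,\gamma,V)])\, M_\alpha$, where $\uzetaLP_\alpha$ is the coefficient-extraction map built from iterated coproducts. So the whole content is to show that this formula equals $\tOmega(P,\gamma,V)$. First I would unwind $\uzetaLP_\alpha([(P,\gamma,V)])$ using the $(k-1)$-fold coproduct $\Delta^{(k-1)}$ on $\mLPSet^+$: this expands $[(P,\gamma,V)]$ as a sum over chains of subsets $P = P_1 \cup \dots \cup P_k$ (with appropriate lower/upper-set and antichain conditions on consecutive overlaps, weighted by $\beta^{\#\text{overlaps}}$), and then applies $\uzetaLP$ to each factor. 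The first factor gets $\dzetaLP$-like treatment and the rest get $\zetaLP$-like treatment — except that, because $\uzetaLP = \dzetaLP * \zetaLP$ is itself a convolution, each individual factor already contributes a $(2+\beta t)$-type binomial over its free valleys, per Proposition~\ref{><-prop}.

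\textbf{Key steps.} (1) Combinatorially encode an enriched set-valued $(P,\gamma)$-partition $\sigma \in \tE(P,\gamma,V)$ by its ``level data'': list the distinct absolute values appearing in $\bigcup_s \sigma(s)$ in increasing order as $i_1 < i_2 < \dots < i_k$, and for each level $i_j$ record the set $P_j = \{ s : |i_j| \in \sigma(s) \text{ as primed or unprimed}\}$ together with, for each $s \in P_j$, whether $i_j$ or $i_j'$ (or, if both $i_j, i_j' \in \sigma(s)$, that too) lies in $\sigma(s)$. (2) Show the poset/labeling conditions (a),(b) of Definition~\ref{enriched-def} translate exactly into: each $P_j$ satisfies the peak-free local condition together with a two-way valley ambiguity, matching the data $(S_j, T_j)$ of the convolution in Proposition~\ref{><-prop}; and the condition $\sigma(v) \subset \PP$ for $v\in V$ kills the primed option at valleys in $V$. (3) Track the $\beta$-exponent: $\beta^{|\sigma|-|P|}$ on the $\tOmega$ side must match $\beta^{\sum_j|P_j| - |P| + (\text{double-occupancies})}$ on the $\Phi$ side; the "both $i_j$ and $i_j'$" occupancies contribute the extra $\beta t$ term in the $(2+\beta t)$ factors, while the overlapping-subsets in the iterated coproduct contribute the $\beta^{|P_j \cap P_{j+1}|}$-style terms. (4) Assemble these into a weight- and length-preserving bijection between $\tE(P,\gamma,V)$ and the indexing set of the sum defining $\Phi([(P,\gamma,V)])$, exactly as the bijection $(\pi,I)\mapsto\sigma$ in the proof of Theorem~\ref{<-thm}, and conclude $\Phi([(P,\gamma,V)]) = \tOmega(P,\gamma,V)$.

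\textbf{Main obstacle.} The delicate point is the bookkeeping at valleys, where the marked alphabet allows a box $s$ to contain $i'$, $i$, or both $i'$ and $i$ for the same absolute value $i$. This is precisely what produces the $2 + \beta t$ (rather than just $2$) factor in Proposition~\ref{><-prop}, and getting the bijection to respect it — so that the "double occupancy" at a free valley corresponds bijectively to the $\beta t$ summand and not to any coproduct overlap — requires care. A secondary subtlety is making sure that when a valley lies strictly between two consecutive levels used by other boxes (so it is "skipped"), the iterated-coproduct expansion still accounts for it correctly; this is where one must check that Proposition~\ref{><-prop}'s local analysis (peak-free $\Rightarrow$ every non-$V$ valley is freely assignable to $S$, $T$, or both) glues consistently across all $k$ factors. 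Once those two points are handled, the rest is the same routine verification as in Theorem~\ref{<-thm}. I would also remark that, just as before, $(P,\gamma) \cong (Q,\delta)$ with matching valley subsets gives $\tOmega(P,\gamma,V) = \tOmega(Q,\delta,W)$, so the asserted continuous linear map is well-defined at the outset.
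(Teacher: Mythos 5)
Your proposal is correct and follows essentially the same route as the paper: invoke Theorem~\ref{abs-thm}, unwind $\uzetaLP_\alpha$ via the iterated coproduct together with the convolution structure of $\uzetaLP=\dzetaLP*\zetaLP$ (so each level $i_j$ carries a pair of sets, the primed/lower-decreasing part and the unprimed/upper-increasing part), and match this data bijectively, weight- and length-preservingly, with enriched set-valued $(P,\gamma,V)$-partitions via exactly the ``level data'' encoding you describe. The only slip is the passing claim that ``the first factor gets $\dzetaLP$-like treatment and the rest get $\zetaLP$-like treatment,'' which is not how the convolution distributes over the iterated coproduct, but your immediately following clause corrects this, and the rest of the bookkeeping (including the $\beta$-exponents and the role of $V$ in suppressing primed entries at designated valleys) agrees with the paper's argument.
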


 \begin{proof}
The result follows by a calculation similar to the proof of Theorem~\ref{<-thm}.
Fix a  labeled poset $(P,\gamma)$ and a subset $V \subseteq\ValSet(P,\gamma)$.
 For each $k \in \NN$, let 
 $\sQ_k$ denote the set of $2k$-tuples $\pi=(Q_1',Q_1,\dots, Q_k',Q_k)$
 of sets with $Q_i' \cup Q_i\neq \varnothing$ for all $i\in[k]$ and $Q_1'\cup Q_1  \cup \dots \cup Q_k'\cup Q_k = P$
such that
 \ben[(a)]
\item if $s \in Q_i'\cup Q_i$ and $t \in Q_j'\cup Q_j$ where $i<j$
 then $t \not < s$ in $P$, 
 \item if $s \in Q_i'$ and $t \in Q_i$,
 then $t \not < s$ in $P$,
 \item if $s,t \in Q_i'$ and $s \lessdot t$ in $P$ then $\gamma(s) >\gamma(t)$, 
  \item if $s,t \in Q_i$ and $s \lessdot t$ in $P$ then $\gamma(s) <\gamma(t)$, and
 \item $V \subseteq P \setminus (Q_1' \cup Q_2' \cup \dots \cup Q_k')$.
\een
Define $\sI_k$
to
 be the set of $k$-tuples of positive integers $I=(i_1,i_2,\dots,i_k)$ with $i_1<i_2<\dots<i_k$.
 Given $\pi  \in \sQ_k$ and $I \in \sI_k$,
 let  $|\pi| := |Q_1'| + |Q_1| + \dots + | Q_k'| + |Q_k|$ and 
$x^{(\pi,I)} := x_{i_1}^{|Q_1'| + |Q_1|} \cdots x_{i_k}^{|Q_k'| + |Q_k|}$.
According to Theorem~\ref{abs-thm},
the unique morphism 
$\Phi : (\mLPSet^+,\zetaLP) \to (\mQSym,\zetaq)$ is the continuous linear map 
with
\[
[(P,\gamma,V)] \mapsto \sum_{k \in \NN} \sum_{(\pi,I) \in \sQ_k \times \sI_k} 
\beta^{|\pi|  - |P|} x^{(\pi,I)} .
\]
We claim that the right side is $ \tOmega(P,\gamma,V)$.
For
$\pi = (Q_1',Q_1,\dots,Q_k',Q_k) \in \sQ_k$ and $I = (i_1<\dots<i_k) \in \sI_k$,
define $\sigma : P \to \MSet$ to be the map with
\[\sigma(s) = \{i_j' : j \in [k]\text{ and } s \in Q_j'\} \cup \{ i_j : j \in [k]\text{ and }s \in Q_j\}
\quad\text{for $s \in P$.}\]
Properties (a) and (b) imply that if $s,t\in P$ are such that $s\lessdot t$
then $\sigma(s) \preceq \sigma(t)$.
Given this fact, it is easy to see that properties (c)-(e) imply that
$\sigma \in \tE(P,\gamma,V)$.
Clearly $|\sigma| = |\pi|$ and $x^\sigma = x^{(\pi,I)}$.

It suffices to show that $(\pi,I) \mapsto \sigma$ is a bijection
$\bigsqcup_{k \in \NN} \sQ_k \times \sI_k \to \tE(P,\gamma,V)$.
This is straightforward: the inverse map is $\sigma \mapsto (\pi,I)$ 
where $I$ is the sequence of elements in $\{i_1<i_2<\dots<i_k\} := \bigcup_{s \in P} \{ |i| : i \in \sigma(s)\}$
arranged in order,
and $\pi = (Q_1',Q_1,\dots,Q_k',Q_k)$ is the tuple
in which $Q_j'$ and $Q_j$ are the sets consisting of the elements $s \in P$
with  $i_j' \in \sigma(s)$ and $i_j \in \sigma(s)$, respectively.
\end{proof}

The theorem implies an analogue of Corollary~\ref{sv-products-cor}.
 
\begin{corollary}\label{esv-products-cor}
If $(P,\gamma,V)$ and $(Q,\delta,W)$ are labeled posets
then
\[\tOmega(P,\gamma,V)\cdot \tOmega(Q,\delta,W) = \tOmega((P,\gamma,V)\sqcup (Q,\delta,W))\]
and
\[\Delta(\tOmega(P,\gamma,V)) = \sum_{S\cup T = P} \beta^{|S\cap T|}\cdot\tOmega(S,\gamma,V\cap S)\otimes
\tOmega(T,\gamma,V\cap T)\]
where the sum is over 
all ordered pairs $(S,T)$ of subsets of $P$ such that 
$S$ is a lower set, $T$ is an upper set,
$P = S\cup T$, and
$S\cap T$ is an antichain.
\end{corollary}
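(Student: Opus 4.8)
The plan is to deduce both identities directly from Theorem~\ref{><-thm}, in exactly the way Corollary~\ref{sv-products-cor} is deduced from Theorem~\ref{<-thm}. Theorem~\ref{><-thm} asserts that the continuous linear map $\mLPSet^+ \to \mQSym$ with $[(P,\gamma,V)] \mapsto \tOmega(P,\gamma,V)$ is a morphism of combinatorial LC-Hopf algebras; in particular it is a morphism of LC-bialgebras, and hence intertwines the product and coproduct maps on source and target. (That $\tOmega(P,\gamma,V)$ depends only on the isomorphism class $[(P,\gamma,V)]$ is built into the statement of that theorem, so there is nothing further to check on that point, beyond noting the mild abuse of terminology in the corollary's hypothesis, where ``labeled posets'' should be read as ``triples $(P,\gamma,V)$ with $V \subseteq \ValSet(P,\gamma)$''.)

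For the product formula I would apply the identity $\tOmega \circ \nabla = \nabla_{\mQSym} \circ (\tOmega \htimes \tOmega)$ to the pseudobasis element $[(P,\gamma,V)] \otimes [(Q,\delta,W)]$ of $\mLPSet^+ \htimes \mLPSet^+$. By the definition of $\nabla$ on $\mLPSet^+$, the left side equals $\tOmega\bigl([(P,\gamma,V)\sqcup(Q,\delta,W)]\bigr) = \tOmega\bigl((P,\gamma,V)\sqcup(Q,\delta,W)\bigr)$, while the right side is $\tOmega(P,\gamma,V)\cdot\tOmega(Q,\delta,W)$; this is the first claim. For the coproduct formula I would apply $\Delta_{\mQSym}\circ\tOmega = (\tOmega\htimes\tOmega)\circ\Delta$ to $[(P,\gamma,V)]$. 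The explicit formula for $\Delta$ on $\mLPSet^+$ expresses $\Delta([(P,\gamma,V)])$ as a (finite) sum of $\beta^{|S\cap T|}\,[(S,\gamma,V\cap S)] \otimes [(T,\gamma,V\cap T)]$ over pairs $(S,T)$ with $S$ a lower set, $T$ an upper set, $S\cup T = P$, and $S\cap T$ an antichain; applying the continuous map $\tOmega\htimes\tOmega$ termwise replaces each tensor factor $[(S,\gamma,V\cap S)]$ by $\tOmega(S,\gamma,V\cap S)$, which gives the second claim.

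There is essentially no obstacle here: the substance lies entirely in Theorem~\ref{><-thm}, and what remains is a routine unwinding of the definitions of $\nabla$ and $\Delta$ on $\mLPSet^+$ together with the already-established continuity of $\tOmega\htimes\tOmega$. The only point worth a sentence of care is that the coproduct on $\mLPSet^+$ is a genuinely finite sum on each pseudobasis element, so no convergence issues arise in $\mQSym\htimes\mQSym$; all the identities may be checked on pseudobasis elements and then extended by continuity.
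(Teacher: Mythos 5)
Your proposal is correct and is exactly the argument the paper intends: Corollary~\ref{esv-products-cor} is stated as an immediate consequence of Theorem~\ref{><-thm}, obtained by applying the morphism property of $[(P,\gamma,V)]\mapsto\tOmega(P,\gamma,V)$ to the explicit formulas for $\nabla$ and $\Delta$ on $\mLPSet^+$. Nothing is missing.
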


Two special cases of $\tOmega(P,\gamma,V)$ deserve their own notation. Namely, let
\be
\tOmega(P,\gamma) := \tOmega(P,\gamma,\varnothing)
\quand
\oOmega(P,\gamma) := \tOmega(P,\gamma,\ValSet(P,\gamma)).
\ee
In this notation, when $V=\varnothing$, Corollary~\ref{esv-products-cor} gives
\be\Delta(\tOmega(P,\gamma)) = \sum_{S\cup T = P} \beta^{|S\cap T|}\cdot \tOmega(S,\gamma)\otimes
\tOmega( T,\gamma )\ee
with the summation as above.
The same formula does not hold if we replace $\tOmega$ by $\oOmega$,
since one may have
$\ValSet(P,\gamma) \cap S \subsetneq \ValSet(S,\gamma)$.
However, it does hold that
\be\oOmega(P,\gamma)\cdot \oOmega(Q,\delta) = \oOmega((P,\gamma)\sqcup (Q,\delta));\ee
this follows by setting  $V =\ValSet(P, \gamma)$ and $W = \ValSet(Q, \delta)$ in Corollary~\ref{esv-products-cor}. 

\subsection{Multipeak quasisymmetric functions}\label{multipeak-sect}

For an arbitrary labeled poset $(P, \gamma)$, the structures of $\tE(P,\gamma)$ and $\tOmega(P,\gamma)$ are again determined by 
the set  $\tL(P)$ of linear multiextensions of $P$.

Given an arbitrary finite sequence $w=(w_1,w_2,\dots,w_N)$ with an injective
map $\gamma : \{w_1,w_2,\dots,w_N\} \to \ZZ$, 
let $\delta : [N] \to [N]$ denote the 
unique bijection such that  if $i<j$ then
\be
\label{delta-eq}
\delta(i)>\delta(j) 
\qquad\text{if and only if} 
\qquad \gamma(w_i) > \gamma(w_{j}),
\ee
and define
\[\tE(w,\gamma) := \tE([N],\delta)
\qquand
\tOmega(w,\gamma) := \tOmega([N],\delta).
\]
The set $\tE(w,\gamma)$ consists of all maps $\sigma : [N] \to \MSet$
with $\sigma(1) \preceq \dots \preceq \sigma(N)$ such that, for each $i \in [N-1]$, 
the following properties hold:
\ben
\item[(a)] if $i\notin \Des(w,\gamma)$
then $\sigma(w_i)\cap \sigma(w_{i+1})\subset  \{1,2,3,\dots\}$, and
\item[(b)] if $i\in \Des(w,\gamma)$ then $\sigma(w_i) \cap \sigma(w_{i+1}) \subset \{1',2',3',\dots\}$.
\een

\begin{theorem}\label{ep-thm1}
For any labeled poset $(P,\gamma)$, there is a length- and weight-preserving bijection
$\tE(P,\gamma) \xrightarrow{\sim} \bigsqcup_{w \in \tL(P)} \tE(w,\gamma)$
and consequently 
\[
\tOmega({P,\gamma}) = \sum_{w \in \tL(P)} \beta^{\ell(w) -|P|} \tOmega({w,\gamma}).
\]
\end{theorem}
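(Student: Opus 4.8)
The plan is to reduce this to the analogous non-enriched statement in Theorem~\ref{p-thm1} by establishing a combinatorial ``decomposition-into-linear-multiextensions'' bijection directly, exactly parallel to the argument behind Theorem~\ref{p-thm1}. First I would observe that it suffices to construct the length- and weight-preserving bijection $\tE(P,\gamma) \xrightarrow{\sim} \bigsqcup_{w \in \tL(P)} \tE(w,\gamma)$, since summing $\beta^{|\sigma|-|P|}x^\sigma$ over each side yields the displayed identity: the left side is $\tOmega(P,\gamma)$ by definition, and for each $w \in \tL(P)$ the contribution of $\tE(w,\gamma)$ is $\tOmega(w,\gamma)$ up to the extra factor $\beta^{\ell(w)-|P|}$, because elements of $\tE(w,\gamma) = \tE([N],\delta)$ have length at least $N = \ell(w)$ while we are weighting relative to $|P|$, not relative to $\ell(w)$.

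Next I would define the forward map. Given $\sigma \in \tE(P,\gamma)$, read off the ``global order'' on the multiset of marked letters appearing: let $i_1 < i_2 < \dots < i_m$ be the distinct absolute values occurring in $\bigcup_{s\in P}\sigma(s)$, and for each $i_r$ consider first the primed letter $i_r'$ and then the unprimed letter $i_r$ (matching the total order on $\MM$). For each such marked letter $\ell$ in this order, let $B_\ell = \{ s \in P : \ell \in \sigma(s)\}$. Discarding the empty blocks gives a sequence of nonempty subsets $B_1, B_2, \dots, B_L$ of $P$ whose union is $P$; I would then produce from this the sequence $w = (w_1, \dots, w_N)$ by listing the elements of each $B_j$ in increasing order of the labeling $\gamma$ (this is the natural choice forced by properties (c)--(d) in the proof of Theorem~\ref{><-thm}), concatenating, and then deleting a letter whenever it equals its predecessor — i.e., collapsing each maximal run of a repeated element to a single occurrence. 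One checks that consecutive equal elements only arise when a single element $s$ lies in both $B_j$ and $B_{j+1}$ for adjacent blocks coming from $i_r'$ then $i_r$; this is exactly the ``$\preceq$ but not $\prec$'' slack, and collapsing it is well-defined. The resulting $w$ is a linear multiextension of $P$: the no-consecutive-repeat condition is built in, and the condition $\{i : w_i = a\} \prec \{i : w_i = b\}$ for $a \lessdot b$ follows from $\sigma(a) \preceq \sigma(b)$ together with the sign constraints (a)--(b) of Definition~\ref{enriched-def}, which prevent $a$ and $b$ from sharing a letter in the ``wrong'' sign relative to $\gamma(a)$ versus $\gamma(b)$. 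Given $w$, the original $\sigma$ is recovered as an element of $\tE(w,\gamma) = \tE([N],\delta)$ by transporting: for $i \in [N]$, set $\sigma'(i) := \sigma(w_i)$, and one verifies $\sigma'(1) \preceq \dots \preceq \sigma'(N)$ with the correct primed/unprimed intersection condition at each step $i$ according to whether $i \in \Des(w,\gamma)$, which is precisely the characterization of $\tE(w,\gamma)$ recorded just before the theorem statement.

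For the inverse, given $w \in \tL(P)$ and $\tau \in \tE(w,\gamma)$, define $\sigma : P \to \MSet$ by $\sigma(s) := \bigcup_{i : w_i = s} \tau(i)$; the conditions defining $\tE(w,\gamma)$ force these unions to be ``compatible'' across covering relations, so $\sigma \in \tE(P,\gamma)$, and one checks the two assignments are mutually inverse and manifestly preserve $|\cdot|$ and $x^{\bullet}$. The main obstacle — and the only place requiring real care — is verifying that the run-collapsing step in the forward direction is consistent with the sign rules: one must check that when $s \in B_\ell \cap B_{\ell'}$ for consecutive marked letters $\ell < \ell'$ (necessarily $\ell = i_r', \ell' = i_r$ for some $r$), collapsing does not lose information and does not conflict with a covering relation $s \lessdot t$ or $u \lessdot s$; this is handled by noting that such a coincidence means $\sigma(s)$ contains both $i_r'$ and $i_r$, so the relevant intersection with a neighbor is governed by at most one of the two letters, matching exactly one of conditions (a)/(b). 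I expect this bookkeeping to be the crux; everything else is a routine transcription of the argument for Theorem~\ref{p-thm1} with the marked alphabet $\MM$ in place of $\PP$. Alternatively, and perhaps more cleanly, one can cite \cite[Thm.~5.6]{LamPyl} or \cite[\S2]{Stembridge1997a} as the structural template and present the above as the enriched set-valued refinement, remarking as in the non-enriched case that the $\beta=1$ specialization would follow from existing literature while the general statement has the same proof.
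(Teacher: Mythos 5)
Your overall approach is the same as the paper's --- decompose $\sigma$ into blocks indexed by marked letters, sort each block according to $\gamma$, concatenate and collapse adjacent repeats to get $w \in \tL(P)$, recover $\tau$ by transport, and invert by $\sigma(s) = \bigcup_{j : w_j = s}\tau(j)$ --- but the sorting rule you state is wrong for primed letters, and this is not a cosmetic slip. You propose listing the elements of each block $B_\ell$ in \emph{increasing} order of $\gamma$, uniformly for all marked letters $\ell$, and you cite properties (c)--(d) from the proof of Theorem~\ref{><-thm} as justification. But those properties dictate the opposite for primed letters: (c) says that if $s,t$ both carry $i'$ and $s \lessdot t$ then $\gamma(s) > \gamma(t)$, so the primed block $B_{i'}$ must be listed in \emph{decreasing} order of $\gamma$, and only the unprimed block $B_i$ is listed increasing (property (d)). The paper's proof does exactly this, writing the primed sequence $a^{(i)}$ with $\gamma(a^{(i)}_1) > \cdots > \gamma(a^{(i)}_{M_i})$ and the unprimed sequence $b^{(i)}$ with $\gamma(b^{(i)}_1) < \cdots < \gamma(b^{(i)}_{N_i})$.

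With the uniform increasing rule the construction does not land in $\tL(P)$. Take $P = \{s < t\}$ a two-element chain with $\gamma(s) > \gamma(t)$, and $\sigma(s) = \sigma(t) = \{1'\}$, which is a valid enriched set-valued $(P,\gamma)$-partition by Definition~\ref{enriched-def}(b). Then $B_{1'} = \{s,t\}$, and increasing $\gamma$-order lists $t$ before $s$, giving the word $(t,s)$; this is not a linear multiextension of $P$, since $\{i : w_i = s\} = \{2\} \not\prec \{1\} = \{i : w_i = t\}$ even though $s \lessdot t$. The correct decreasing order gives $(s,t) \in \tL(P)$. Once you reverse the sorting direction for primed blocks, the remainder of your sketch --- repeats occur only at an $i'$-to-$i$ boundary, the transport $\sigma'(i) := \sigma(w_i)$ lands in $\tE(w,\gamma)$, and the stated inverse is a weight- and length-preserving two-sided inverse --- tracks the paper's proof and the argument goes through.
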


Setting $\beta=0$ here recovers 
\cite[Lem.~2.1]{Stembridge1997a}.

\begin{proof}
To make sense of the constructions that follow, 
it may be helpful to consult Example~\ref{enriched-ex}.
Fix $\sigma \in \tE(P,\gamma)$. For each $i \in \PP$,
let 
\[a^{(i)} = (a^{(i)}_1,a^{(i)}_2,\dots,a^{(i)}_{M_i})
\quand
b^{(i)} = (b^{(i)}_1,b^{(i)}_2,\dots,b^{(i)}_{N_i})
\]
 be the sequences of elements $s \in P$ with
$i' \in \sigma(s)$ and $i \in \sigma(s)$, respectively,
arranged so that
$
 \gamma(a^{(i)}_1)>  \dots > \gamma(a^{(i)}_{M_i})
 $
 and
 $
 \gamma(b^{(i)}_1)<   \dots < \gamma(b^{(i)}_{N_i}).
$
The concatenation $a^{(1)} b^{(1)} a^{(2)} b^{(2)}\cdots$ may have adjacent repeated entries, 
but omitting these repetitions produces a linear multiextension $w=(w_1,\dots,w_N) \in \tL(P)$, and
there is a unique non-decreasing surjective map 
$ \fk t : [\sum_i M_i + \sum_i N_i ] \to [N]$
such that $a^{(1)} b^{(1)} a^{(2)} b^{(2)}\cdots = (w_{\fk t(1)}, w_{\fk t(2)}, w_{\fk t(3)},  \cdots)$.
For each $j \in [N]$,
define $\tau(j) \in \MSet$ to be the set containing
$i'$ for $i \in \PP$ if and only if 
\[j \in \fk t\( M_1 + N_1 + \dots + M_{i-1} + N_{i-1} + [M_i]\)\]
and containing $i \in \PP$ if and only if 
\[j \in \fk t\( M_1 + N_1 + \dots + M_{i-1} + N_{i-1} + M_i + [N_i]\).\]
In other words, $\tau(j)$ contains $i'$ (respectively, $i$)
precisely when $a^{(i)}$ (respectively, $b^{(i)}$) contributes the $j$th entry of $w$.
This defines a map $\tau : [N] \to \MSet$.

By construction, $\tau(j) \preceq \tau(j+1)$ for all $j \in [N-1]$.
Write $\delta : [N] \to [N]$ for the unique bijection satisfying \eqref{delta-eq} for $i < j$.
If $\max(\tau(j)) = \min(\tau(j+1)) = i \in \PP$, then $b^{(i)}$ contains $(w_j,w_{j+1})$
as a consecutive subsequence, so $\gamma(w_j) < \gamma(w_{j+1})$
and $\delta(j) < \delta(j+1)$.
If $\max(\tau(j)) = \min(\tau(j+1)) = i'$ for some $i \in \PP$, then $u^{(i)}$ contains $(w_j,w_{j+1})$
as a consecutive subsequence,
so $\gamma(w_j) > \gamma(w_{j+1})$
and $\delta(j) > \delta(j+1)$.
We conclude that  $\tau \in \tE(w,\gamma)$. 
 
Given $w \in \tL(P)$ and $\tau \in \tE(w,\gamma)$, we reconstruct $\sigma$
as the map $s \mapsto \bigcup_{j: w_j = s} \tau(j)$.
The correspondence $\sigma \mapsto (w,\tau)$ is then a bijection from $\cA(P,\gamma)$ to the set of pairs $(w,\tau)$ with
$w \in \tL(P)$ and $\tau \in \tE(w,\gamma)$.  Since it holds by construction that $|\sigma| = |\tau|$ and $x^\sigma = x^\tau$, the theorem follows.
\end{proof}


\begin{example}\label{enriched-ex}
Let $(P, \gamma)$ be the labeled poset of Example~\ref{ori-ex}, 
so that $P=\{s_1,s_2,s_3,s_4\}$ with covering relations
$s_1\lessdot s_2$ and $s_1\lessdot s_3$ and $s_2\lessdot s_4$ and $s_3\lessdot s_4$,
and $\gamma(s_1) = 5$ and $\gamma(s_i) = i$ for $i\in\{2,3,4\}$. 
The map $\sigma : P \to \MSet$ with 
\[
\sigma(s_1) = \{ 1',1,2'\},
\quad
\sigma(s_2) = \{ 2', 2, 3'\},
\quad
\sigma(s_3) = \{2', 3\},
\quad
\sigma(s_4) = \{3\}
\]
is an element of $\tE(P,\gamma)$, illustrated below:
\[
\begin{tikzpicture}[baseline=(c2.base), xscale=0.4, yscale=0.3]
\tikzset{edge/.style = {->}}
  \node (c4) at (0,3) {$\{3\}$};
\node (c3) at (2,0) {$\{2', 3\}$};
\node (c2) at (-2,0) {$\{2', 2, 3'\}$};
  \node (c1) at (0,-3) {$\{1', 1, 2'\}$};
  \draw[edge] (c1) -- (c2);
  \draw[edge] (c1) -- (c3);
  \draw[edge] (c4) -- (c2);
  \draw[edge] (c4) -- (c3);
\end{tikzpicture}
\]
In the notation of the proof of Theorem~\ref{ep-thm1}, 
we have $a^{(1)} =b^{(1)}= (s_1)$, $a^{(2)} = (s_1,s_3,s_2)$, 
$b^{(2)} =  a^{(3)} = (s_2)$, $b^{(3)} = (s_3,s_4)$, and $a^{(i)} = b^{(i)} = \emptyset$ for $i>3$.
Thus
\[ \ba
a^{(1)} b^{(1)} a^{(2)} b^{(2)}\cdots
&=
(s_1;s_1;s_1,s_3,s_2; s_2;s_2;s_3,s_4),
\\
w &= (s_1,s_3,s_2, s_3,s_4), 
\\
(\delta(1),\delta(2),\delta(3),\delta(4),\delta(5)) &= (5, 2,1,3,4).
\ea
\]
The  non-decreasing surjective map $\fk t :\{1,2,\dots,9\} \to \{1,2,3,4,5\}$
has \[
(\fk t(1), \fk t(2),\dots, \fk t(9)) = (1, 1, 1, 2, 3, 3, 3, 4, 5)
\]
and $\tau : \{1,2,3,4,5\} \to \MSet$ is the map with
\[
\tau(1) = \{1',1,2'\},
\quad
\tau(2) = \{2'\},
\quad
\tau(3) = \{2',2,3'\},
\quad
\tau(4) = \tau(5)=\{3\}.
\]
As expected, we have $\tau \in \tE(w,\gamma)$ and $|\sigma|=|\tau| = 9$ with $x^\sigma = x^\tau = x_1^2 x_2^4 x_3^3$.
\end{example}


Fix a sequence $w$ of length $N$ and an injective map $\gamma : \{w_1,w_2,\dots,w_N\} \to \ZZ$.
It turns out that $\tOmega(w,\gamma)$ depends only on the \emph{peak set} of $w$,
given by
\[ \ba
\PeakSet(w,\gamma) &:= \{ i : 1 < i < N,\ \gamma(w_{i-1}) \leq \gamma(w_i) > \gamma(w_{i+1})\}
\\&
= \{ i \in\Des(w,\gamma): i > 1, i-1\notin \Des(w,\gamma)\}.
\ea
\]
(Since we allow $w$ to have repeated entries, the inequality ``$\leq$'' on the first line is meaningful and may sometimes be an equality.)
To express this precisely, we introduce an ``enriched'' analogue of $L^{(\beta)}_\alpha$.
 
The set $\PeakSet(w,\gamma)$ is a finite subset of $\PP$ that does not contain $1$ or any two consecutive integers;
we refer to such sets as \emph{peak sets}.
A \emph{peak composition} is a composition $\alpha$ for which $I(\alpha)$ is a peak set. Equivalently,
$\alpha$ is a peak composition if and only if $\alpha_i \geq 2$ for $1\leq i < \ell(\alpha)$.
We define the \emph{multipeak quasisymmetric function} of a peak composition $\alpha \vDash N$ to be
\be\label{kbeta-eq} 
K^{(\beta)}_\alpha   := \sum_{
S
}
\beta^{|S|-N} x^S
 \in \mQSym
\ee
where the sum is over $N$-tuples $S=(S_1 , S_2 , \dots, S_N)$ of sets $S_i \in \MSet$
with
\begin{itemize}
\item $S_1 \preceq S_2 \preceq \dots\preceq S_N$,
\item $S_i \cap S_{i+1} \subset \{1',2',3',\dots\}$ if $i \in I(\alpha)$, and
\item $S_i \cap S_{i+1} \subset \{1,2,3,\dots\}$  if $i \notin I(\alpha)$.
\end{itemize}
\begin{example}\label{kbeta-ex}
If $\alpha=(2,1)$ then 
\eqref{kbeta-eq} is the sum over all triples $S_1 \preceq S_2 \preceq S_3$
in $\MSet$ with $S_1 \cap S_2 \subset \{1,2,\dots\}$ and $S_2 \cap S_3\subset \{1',2',\dots\}$. 
One can show that in this case \eqref{kbeta-eq} is equivalent
 to the formula
\[ K^{(\beta)}_{(2,1)}
=\sum_{n} \sum_{i_1  <i_2<\dots <i_n}  \beta^{n-3} \(\tbinom{n-1}{2} +\beta \sum_{1\leq j < k \leq n} x_{i_j} \oplus x_{i_k} \) \prod_{j=1}^n x_{i_j}\oplus x_{i_j} \]
where
$x\oplus y := x + y + \beta xy$. There is an amusing formal similarity between this expression and the one
for $L^{(\beta)}_{(2,1)}$ in Example~\ref{lbeta-ex}.
\end{example}

Here we have an analogue of 
Proposition~\ref{by-def-prop}:

\begin{proposition}\label{k1-prop}
Fix a choice of $(w,\gamma)$ as above, and
let $\alpha \vDash \ell(w)$ be the unique composition such that $I(\alpha)= \PeakSet(w,\gamma)$.
Then $K^{(\beta)}_\alpha = \tOmega(w,\gamma)$.
\end{proposition}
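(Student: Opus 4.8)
The plan is to derive this from the universal property of $(\mQSym,\zetaq)$ together with the explicit evaluation of $\uzetaLP$ in Proposition~\ref{><-prop}, rather than by a direct bijection. Write $N:=\ell(w)$ and let $\delta:[N]\to[N]$ be the bijection attached to $(w,\gamma)$ by \eqref{delta-eq}, so that $\tOmega(w,\gamma)=\tOmega([N],\delta)=\tOmega([N],\delta,\varnothing)$, while $\Des([N],\delta)=\Des(w,\gamma)$ and $\PeakSet([N],\delta)=\PeakSet(w,\gamma)=I(\alpha)$ (the last equality is the hypothesis, and the identification of $\PeakSet$ for the chain $[N]$ with the peak set of the sequence follows at once from \eqref{delta-eq}). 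On the other side, unwinding \eqref{kbeta-eq} shows that $K^{(\beta)}_\alpha=\tOmega([N],\delta_0,\varnothing)$ for any bijection $\delta_0:[N]\to[N]$ with $\Des([N],\delta_0)=I(\alpha)$: the tuples $(S_1\preceq\dots\preceq S_N)$ summed in $K^{(\beta)}_\alpha$, with their conditions on $S_i\cap S_{i+1}$ governed by $I(\alpha)$, are exactly the elements of $\tE([N],\delta_0)$, and the weights match. Such a $\delta_0$ exists, and for it $\PeakSet([N],\delta_0)=\{\,i\in I(\alpha):i>1,\ i-1\notin I(\alpha)\,\}=I(\alpha)$ since $I(\alpha)$ is a peak set. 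Hence $\delta$ and $\delta_0$ have the same peak set, and the proposition follows once we show that $\tOmega([N],\cdot)$ depends only on the peak set of the labeling.

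To see this, invoke Theorem~\ref{><-thm} and Theorem~\ref{abs-thm}: the morphism $(\mLPSet^+,\uzetaLP)\to(\mQSym,\zetaq)$ sends $[([N],\delta,\varnothing)]\mapsto\sum_\alpha (\uzetaLP)_\alpha([([N],\delta,\varnothing)])\,M_\alpha$, where $(\uzetaLP)_\alpha(h)$ is the coefficient of $t^{\alpha_1}\otimes\cdots\otimes t^{\alpha_k}$ in $\uzetaLP^{\otimes k}\circ\Delta^{(k-1)}(h)$. So it suffices to check that each $(\uzetaLP)_\alpha([([N],\delta,\varnothing)])$ depends only on $N$, $\alpha$, and $\PeakSet([N],\delta)$. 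Iterating the coproduct of $\mLPSet^+$ on the chain $[N]$ — using that lower and upper sets of a chain are initial and final segments and that an antichain in a chain has at most one element — expresses $\Delta^{(k-1)}([([N],\delta,\varnothing)])$ as a sum over all ways of writing $[N]=I_1\cup\dots\cup I_k$ as a union of consecutive integer intervals with $I_j\cap I_{j+1}$ empty or a single point, with coefficient $\beta^{\#\{j:\,I_j\cap I_{j+1}\neq\varnothing\}}$, each piece carrying the restricted labeling. Empty pieces contribute a factor $1$ under $\uzetaLP$ (Proposition~\ref{><-prop}) and hence contribute nothing to $(\uzetaLP)_\alpha$ for nonempty $\alpha$, so we may take all pieces nonempty. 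By Proposition~\ref{><-prop}, $\uzetaLP([(I_j,\delta,\varnothing)])=t^{|I_j|}(2+\beta t)^{|\ValSet(I_j,\delta)|}$ if $\PeakSet(I_j,\delta)=\varnothing$ and $0$ otherwise; and a nonempty chain whose labeling is unimodal (decreasing, then increasing) has exactly one valley, while this is exactly the case $\PeakSet(I_j,\delta)=\varnothing$, so the value is $t^{|I_j|}(2+\beta t)$ when $\PeakSet(I_j,\delta)=\varnothing$ and $0$ otherwise. Finally $\PeakSet(I_j,\delta)=\PeakSet([N],\delta)\cap\{\min I_j+1,\dots,\max I_j-1\}$, so which decompositions contribute depends only on $\PeakSet([N],\delta)$; extracting the coefficient of $t^{\alpha_1}\otimes\cdots\otimes t^{\alpha_k}$ (which picks out, for each $j$, the coefficient of $t^{\alpha_j}$ in $t^{|I_j|}(2+\beta t)$, namely $2$ if $|I_j|=\alpha_j$, $\beta$ if $|I_j|=\alpha_j-1$, else $0$) gives a quantity depending only on $N$, $\alpha$, and $\PeakSet([N],\delta)$. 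Combining with the first paragraph, $K^{(\beta)}_\alpha=\tOmega([N],\delta_0)=\tOmega([N],\delta)=\tOmega(w,\gamma)$.

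The main point requiring care is the chain computation of $\Delta^{(k-1)}$: verifying that the coproduct of $\mLPSet^+$ produces precisely the decompositions of $[N]$ into consecutive intervals meeting in at most a single point, correctly tracking the empty pieces and the $\beta$-weights, and pinning down $|\ValSet|$ for unimodal chains. Everything else is formal manipulation within the framework already established. (Setting $\beta=0$ recovers Stembridge's observation that enriched $P$-partition enumerators depend only on the peak set; one could alternatively give a direct length- and weight-preserving bijection generalizing Stembridge's toggle argument, but the route above reuses Proposition~\ref{><-prop} and the universality theorem already in hand.)
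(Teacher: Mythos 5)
Your proof is correct, but it takes a genuinely different route from the paper's. The paper also begins with the observation that $K^{(\beta)}_\alpha = \tOmega(w',\gamma')$ whenever $\Des(w',\gamma') = I(\alpha)$, but then reduces the general case to this one by an explicit local ``toggle'': for each $i$ with $i-1,i \in \Des(w,\gamma)$ and $i+1 \notin \Des(w,\gamma)$ it constructs a length- and weight-preserving bijection $\tE(w,\gamma) \xrightarrow{\sim} \tE(w',\gamma')$ that removes the descent $i$ while preserving the peak set, and iterates. You instead prove the key fact --- that $\tOmega([N],\delta)$ depends only on $N$ and $\PeakSet([N],\delta)$ --- structurally, by feeding the chain through Theorems~\ref{abs-thm} and \ref{><-thm} and Proposition~\ref{><-prop}: the iterated coproduct of $\mLPSet^+$ on a chain decomposes $[N]$ into consecutive intervals overlapping in at most one point, each peakless nonempty interval has exactly one valley and so contributes $t^{|I_j|}(2+\beta t)$, and the surviving decompositions are governed only by the peak set. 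I checked the delicate points (the boundary conventions for valleys of a chain, the vanishing of empty pieces against $t^{\alpha_j}$ with $\alpha_j\geq 1$, the restriction $\PeakSet(I_j,\delta)=\PeakSet([N],\delta)\cap\{\min I_j+1,\dots,\max I_j-1\}$, and non-circularity of invoking Theorem~\ref{><-thm}) and they all hold. The trade-off: the paper's bijection is elementary, self-contained, and reusable (the analogous Proposition~\ref{k2-prop} is proved by modifying it), whereas your argument leans on the Hopf-algebra machinery already built and yields as a byproduct an explicit $M$-expansion of $K^{(\beta)}_\alpha$ indexed by interval decompositions of $[N]$ avoiding the peak set in their interiors. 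One cosmetic remark: you overload $\alpha$ as both the fixed peak composition and the summation index in $\sum_\alpha \zeta_\alpha(h)M_\alpha$; rename one of them.
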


\begin{proof}
Let $w$ be a sequence of length $N$, let $\gamma : \{w_1,w_2,\dots,w_N\} \to \ZZ$ be an injective map,
and let $\alpha\vDash N$ be the composition with $I(\alpha) = \PeakSet(w,\gamma)$.
If $\PeakSet(w,\gamma) = \Des(w,\gamma)$
then
 $\tOmega(w,\gamma) = K^{(\beta)}_\alpha$
holds by definition.

Suppose $1<i<N$ is such that $i-1,i \in \Des(w,\gamma)$ and $i+1 \notin\Des(w,\gamma)$.
Let $w'$ be a word of length $N$ with an injective map
$\gamma'  : \{w_1',w_2',\dots,w_N'\}\to\ZZ $ such that $\Des(w',\gamma') = \Des(w,\gamma) \setminus \{i\}$.
Then $\PeakSet(w',\gamma') = \PeakSet(w,\gamma) \neq \Des(w,\gamma)$
and $\sigma(i) \cap \sigma(i+1) \subseteq\{1',2',\dots\}$
for all $\sigma \in \tE(w,\gamma)$.
It suffices to show that $\tOmega(w,\gamma) = \tOmega(w',\gamma')$, 
since iterating this identity 
lets us assume without loss of generality that $\PeakSet(w,\gamma) = \Des(w,\gamma)$.

To this end, it is enough to construct 
a length- and weight-preserving bijection
$\tE(w,\gamma) \xrightarrow{\sim} \tE(w',\gamma')$.
For $\sigma \in \tE(w,\gamma)$, define $\sigma' : [N] \to \MSet$ as follows.
Set $\sigma'(j) = \sigma(j)$ for $j \notin \{i,i+1\}$,
and if $\max(\sigma(i)) < \min(\sigma(i+1)) $ then define 
\[\sigma'(i) = \sigma(i)
\quand \sigma'(i+1) = \sigma(i+1).\]
Suppose $\max(\sigma(i)) = \min(\sigma(i+1)) = a'$ for $a \in \PP$.
If $a \in \sigma(i+1)$ then set 
\[\sigma'(i) = \sigma(i) \sqcup \{a\}\quand \sigma'(i+1) = \sigma(i+1) \setminus\{a'\}\]
 and if $a \notin \sigma(i+1)$ then set 
\[\sigma'(i) = \sigma(i)\setminus\{a'\} \sqcup \{a\} \quand \sigma'(i+1) = \sigma(i+1)\setminus\{a'\} \sqcup \{a\}.\]
The resulting map $\sigma'$ is an element of $\tE(w',\gamma')$
with $|\sigma| = |\sigma'|$ and $x^\sigma = x^{\sigma'}$
and it is easy to see that $\sigma \mapsto \sigma'$ 
is a bijection $\tE(w,\gamma) \xrightarrow{\sim} \tE(w',\gamma')$, as needed.
\end{proof}

Setting $\beta=0$ in \eqref{kbeta-eq} recovers Stembridge's definition \cite[\S2.2]{Stembridge1997a} of
the \emph{peak quasisymmetric function}
$
K_\alpha := K^{(0)}_\alpha =\sum_{\alpha'} 2^{\ell(\alpha')} M_{\alpha'} \in \QSym$,
where the sum is over all compositions $\alpha' \vDash |\alpha|$ with
$I(\alpha) \subseteq I(\alpha') \cup (I(\alpha')+1)$.
If $\beta$ has degree zero and $x_i$ has degree one,
then $K_\alpha$ is the nonzero homogeneous component of $K^{(\beta)}_\alpha$
of lowest degree.

The peak quasisymmetric functions are
a basis for a subalgebra of $\QSym$ \cite[Thm.~3.1]{Stembridge1997a}.
The power series $\{ K^{(\beta)}_\alpha\}$ are therefore a pseudobasis for a linearly compact $\ZZ[\beta]$-submodule
of $\mQSym$.

There is a slight variant of the preceding constructions which is also of interest.
Let $w=(w_1,w_2,\dots,w_N)$ be an arbitrary finite sequence along with 
an injective map $\gamma : \{w_1,w_2,\dots,w_N\} \to \ZZ$.
Extending our earlier notation, let
\[  \oOmega(w,\gamma) := \oOmega([N],\delta)\]
where  
$\delta : [N] \to [N]$ is again the map 
satisfying \eqref{delta-eq} for all $1\leq i < j \leq N$.
For a
 peak composition $\alpha \vDash N$, define
\be\label{okbeta-eq} 
\oK^{(\beta)}_\alpha   := \sum_S
\beta^{|S|-N} x^S
 \in \mQSym,
\ee
where the sum is over $N$-tuples $S=(S_1 , S_2 , \dots, S_N)$ of sets $S_i \in \MSet$
with
\begin{itemize}
\item $S_1 \preceq S_2 \preceq \dots\preceq S_N$,
\item $S_i \prec S_{i+1}$ if $i \in I(\alpha)$, and  $S_{i+1} \subseteq \PP$ if $i \in \{0\} \cup I(\alpha)$, and
\item $S_i \cap S_{i+1} \subseteq \PP$  if $i \notin I(\alpha)$.
\end{itemize}
These are the same as the tuples indexing the sum in \eqref{kbeta-eq} except 
we require the set $S_{i+1}$ to contain only unprimed numbers if $i \in \{0\} \cup I(\alpha)$.

\begin{example}
If $\alpha=(2,1)$ then 
\eqref{okbeta-eq} is the sum over all triples $S_1 \preceq S_2 \prec S_3$
with $S_1, S_3\in \PSet$ and $S_2 \in \MSet$.
In this case \eqref{okbeta-eq} is equivalent
 to
\[ \oK^{(\beta)}_{(2,1)}
=\sum_{n} \sum_{i_1  <i_2<\dots <i_n} \sum_{1\leq j < k \leq n} \beta^{n-3} \cdot \Pi(i;j,k)\cdot x_{i_1}x_{i_2}\cdots x_{i_n} \]
where we define
\[
\Pi(i;j,k) := \begin{cases}
 (1+\beta x_{i_{j}}) \cdot  \prod_{j<t<k} (2 + \beta x_{i_{t}} ) \cdot  (1+\beta x_{i_k}) &\text{if $j+1<k$}\\
 \beta\cdot  (x_{i_j} + x_{i_{j+1}} + \beta x_{i_j} x_{i_{j+1}}) & \text{if }j+1=k.
 \end{cases}
\]
\end{example}

We have a second analogue of 
Proposition~\ref{by-def-prop}:

\begin{proposition}\label{k2-prop}
Fix a choice of $(w,\gamma)$ as above, and
let $\alpha \vDash \ell(w)$ be the unique composition such that $I(\alpha)= \PeakSet(w,\gamma)$.
Then $\oK^{(\beta)}_\alpha = \oOmega(w,\gamma)$.
\end{proposition}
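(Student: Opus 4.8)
The plan is to mirror exactly the proof of Proposition~\ref{k1-prop}, using $\oOmega$ and $\oK^{(\beta)}$ in place of $\tOmega$ and $K^{(\beta)}$. First I would observe that if $\PeakSet(w,\gamma) = \Des(w,\gamma)$, then the sets $S = (S_1,\dots,S_N)$ indexing the sum in \eqref{okbeta-eq} are, by inspection, precisely the maps $\sigma \in \tE([N],\delta)$ satisfying $\sigma(v) \subseteq \PP$ for all $v \in \ValSet([N],\delta)$: the valleys of $([N],\delta)$ are exactly the indices $i$ with $i-1 \in \Des$ (or $i=1$) and $i \notin \Des$, and the condition ``$S_{i+1} \subseteq \PP$ when $i \in \{0\} \cup I(\alpha)$'' together with ``$S_i \prec S_{i+1}$ when $i \in I(\alpha)$'' encodes exactly that together with the descent/ascent conditions on intersections. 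Hence in this case $\oK^{(\beta)}_\alpha = \oOmega(w,\gamma)$ holds essentially by definition, after unwinding $\oOmega(w,\gamma) := \oOmega([N],\delta) = \tOmega([N],\delta,\ValSet([N],\delta))$.

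For the general case, the key step is to reduce to the situation $\PeakSet(w,\gamma) = \Des(w,\gamma)$ by repeatedly removing a ``redundant'' descent, as in Proposition~\ref{k1-prop}. Concretely, suppose $1 < i < N$ with $i-1, i \in \Des(w,\gamma)$ and $i+1 \notin \Des(w,\gamma)$, and let $(w',\gamma')$ be any sequence of length $N$ with $\Des(w',\gamma') = \Des(w,\gamma) \setminus \{i\}$; then $\PeakSet(w',\gamma') = \PeakSet(w,\gamma)$. It suffices to produce a length- and weight-preserving bijection $\tE(w,\gamma,V) \xrightarrow{\sim} \tE(w',\gamma',V')$, where $V = \ValSet([N],\delta)$ and $V'= \ValSet([N],\delta')$. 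The only subtlety versus Proposition~\ref{k1-prop} is bookkeeping the valley-constraint sets $V$ and $V'$: removing the descent at position $i$ changes which indices are valleys, so one must check that the bijection $\sigma \mapsto \sigma'$ (the same local rearrangement at positions $i$ and $i+1$ used in the proof of Proposition~\ref{k1-prop}) carries the constraint ``$\sigma(v) \subseteq \PP$ for $v \in V$'' to ``$\sigma'(v') \subseteq \PP$ for $v' \in V'$.'' Since position $i+1$ is already an ascent bottom (hence possibly a valley) on both sides and the local move only shuffles unprimed/primed copies of a single value $a$ between boxes $i$ and $i+1$, one checks case by case that this compatibility holds; the indices outside $\{i, i+1\}$ are untouched, and whether $i$ or $i+1$ lies in $V$ versus $V'$ is dictated by whether it is a valley, which is determined by the descent set.

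The main obstacle I anticipate is precisely this valley-bookkeeping: one must verify that the valley set of $([N],\delta')$ differs from that of $([N],\delta)$ in a controlled way (removing the descent at $i$ can only affect whether $i-1$, $i$, $i+1$ are valleys, and in fact since $i-1 \in \Des$ still and $i+1 \notin \Des$ still, the only change is at $i$ itself), and then that the local bijection respects the resulting constraint. This is a finite case analysis and should go through cleanly, but it requires care because unlike in Proposition~\ref{k1-prop} the side conditions on $S_{i+1} \subseteq \PP$ interact with the move. Once this is checked, iterating the bijection reduces to the base case $\PeakSet(w,\gamma) = \Des(w,\gamma)$ handled above, completing the proof. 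I would also remark, as the paper does after Proposition~\ref{k1-prop}, that setting $\beta = 0$ in \eqref{okbeta-eq} should recover the corresponding specialization in Stembridge's framework, which serves as a useful consistency check.
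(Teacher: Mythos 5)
Your overall strategy --- verify the case $\PeakSet(w,\gamma)=\Des(w,\gamma)$ by unwinding definitions, then reduce to it by stripping redundant descents one at a time via a length- and weight-preserving local bijection --- is exactly the paper's. But the crucial step fails as you describe it: you assert that ``the same local rearrangement at positions $i$ and $i+1$ used in the proof of Proposition~\ref{k1-prop}'' works after some valley bookkeeping, and it does not. In the setting of this proposition, $i+1 \in \ValSet(w,\gamma)$, so every $\sigma \in \oE(w,\gamma)$ has $\sigma(i+1)\subseteq\PP$; combined with $\sigma(i)\cap\sigma(i+1)\subseteq\{1',2',\dots\}$ this forces $\sigma(i)\prec\sigma(i+1)$, so the bijection from Proposition~\ref{k1-prop} is simply the identity on all of $\oE(w,\gamma)$. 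The identity map does not land in $\oE(w',\gamma')$: after removing the descent at $i$, the index $i$ becomes a valley (and $i+1$ ceases to be one --- note your claim that ``the only change is at $i$ itself'' is also wrong, since whether $i+1$ is a valley depends on whether $i\in\Des$), so one needs $\sigma'(i)\subseteq\PP$, which fails whenever $\sigma(i)$ contains a primed letter. Concretely, for $N=3$ with $\Des(w,\gamma)=\{1,2\}$ and $\sigma=(\{1'\},\{1',2'\},\{2\})$, your map returns $\sigma$ itself, but $\sigma(2)=\{1',2'\}\not\subseteq\PP$ while $2\in\ValSet(w',\gamma')$.

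The paper explicitly constructs a \emph{different} bijection here: letting $a'$ be the smallest primed entry of $\sigma(i)$, one splits $\sigma(i)$ at $a'$, keeps $\{x\in\sigma(i):x<a'\}\sqcup\{a\}$ in box $i$, and pushes $\{x\in\sigma(i):a'<x\}$ into box $i+1$, with a further case depending on whether $(\min\sigma(i+1))'$ already lies in $\sigma(i)$ (in which case one must also prime the minimum of $\sigma(i+1)$ to preserve length). This redistribution of mass between boxes is the genuinely new content of the proof relative to Proposition~\ref{k1-prop}, and your proposal is missing it; without it the reduction step does not go through.
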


\begin{proof}
Our argument is similar to the proof of Proposition~\ref{k1-prop}
but relies on a different bijection.
Let $w=(w_1,w_2,\dots,w_N)$ be a finite sequence
of length $N$, let $\gamma : \{w_1,w_2,\dots,w_N\} \to \ZZ$ be an injective map,
and let
$\alpha \vDash N$ be the composition with $I(\alpha) = \PeakSet(w,\gamma)$.
We have $\oOmega(w,\gamma) = \oK^{(\beta)}_\alpha$ by definition if 
$\PeakSet(w,\gamma) = \Des(w,\gamma)$.    
Let
$\oE(w, \gamma)
= \cE([N],\delta,\ValSet([N],\delta))$ where $\delta : [N]\to[N]$ satisfies \eqref{delta-eq}, so that
$\oOmega(w, \gamma) = \sum_{\sigma \in \oE(w,\gamma)} \beta^{|\sigma|-N} x^\sigma$.

Suppose $1<i<N$ is such that $i-1,i \in \Des(w,\gamma)$ and $i+1 \notin\Des(w,\gamma)$.
Define $w'$ and $\gamma'$ as in the proof of Proposition~\ref{k1-prop}
so that $\Des(w',\gamma') = \Des(w,\gamma) \setminus \{i\}$
and $\PeakSet(w',\gamma') = \PeakSet(w,\gamma) \neq \Des(w,\gamma)$.
It suffices to show that $\oOmega(w,\gamma) = \oOmega(w',\gamma')$, 
and we do so 
by constructing
a length- and weight-preserving bijection
$\oE(w,\gamma) \xrightarrow{\sim} \oE(w',\gamma')$. 

We have
 $i+1 \in \ValSet(w,\gamma) := 
\{ j \in [N] : \gamma(w_{j-1}) > \gamma(w_j) \leq \gamma(w_{j+1})\}$,
where we define $\gamma(w_0) = \gamma(w_{N+1}) := \infty$.
More specifically, it holds that
\[ \ValSet(w',\gamma') = \ValSet(w,\gamma)\setminus\{i+1\} \sqcup \{i\}.\]
Let  $\sigma \in \tE(w,\gamma)$
and observe that necessarily
\[
\sigma(i) \cap \sigma(i+1) \subset \{1',2',\dots\}
\quand
\sigma(i+1) \subset \PP.\] We must therefore have $\sigma(i) \prec \sigma(i+1)$.
We define a map $\sigma' : [N] \to \MSet$ as follows.
Set $\sigma'(j) = \sigma(j)$ for $j \notin \{i,i+1\}$.
If $\sigma(i) \subseteq \PP$ then define 
\[\sigma'(i) = \sigma(i)
\quand \sigma'(i+1) = \sigma(i+1).\]
Otherwise, there exists a smallest $a\in\PP$ with  $a' \in \sigma(i) \cap \{1',2',\dots\}$.
In this case, if $b := \min(\sigma(i+1)) \in \PP$ and $b' \in \sigma(i)$ then we set
\[\ba
\sigma'(i) &= \{ x \in \sigma(i) : x < a'\} \sqcup \{a\},\\
\sigma'(i+1) &=\{ x \in \sigma(i) : a' < x\}  \sqcup  \sigma(i+1),
\ea\]
while if $b' \notin \sigma(i)$ then we set
\[\ba
\sigma'(i) &= \{ x \in \sigma(i) : x < a'\} \sqcup \{a\},\\
\sigma'(i+1) &=\{ x \in \sigma(i) : a' < x\}  \sqcup  (\sigma(i+1)\setminus\{b\}) \sqcup \{b'\}.
\ea\]
The resulting map $\sigma'$ is an element of $\oE(w',\gamma')$
with $|\sigma| = |\sigma'|$ and $x^\sigma = x^{\sigma'}$
and one can check that $\sigma \mapsto \sigma'$ 
is a bijection $\oE(w,\gamma) \xrightarrow{\sim} \oE(w',\gamma')$, as needed.
\end{proof}
 
Setting $\beta=0$ in \eqref{okbeta-eq} recovers 
the power series denoted
$\oK_\alpha$
in \cite{Stembridge1997a},
which is the nonzero homogeneous component of $\oK^{(\beta)}_\alpha$
of lowest degree.
Since $\oK_\alpha = 2^{-\ell(\alpha)} K_\alpha$,
the power series $\{ \oK^{(\beta)}_\alpha\}$ are a pseudobasis for a (different) linearly compact $\ZZ[\beta]$-submodule
of $\mQSym$. 

Unlike $\oK_\alpha = \oK^{(0)}_\alpha$ and $K_\alpha=K^{(0)}_\alpha$,
the power series  $\oK^{(\beta)}_\alpha$ and $K^{(\beta)}_\alpha$
are not scalar multiples of each other. We investigate their precise relationship next.

\subsection{Poset operators}

Fix a labeled poset $(P,\gamma)$ and let $V\subseteq \ValSet(P,\gamma)$.
It is not hard to show that we always have
$\Omega^{(0)}(P,\gamma,V) = 2^{-|V|} \Omega^{(0)}(P,\gamma)$. 
The relationship between $\tOmega(P,\gamma,V)$ and $\tOmega(P,\gamma)$ is more complicated, 
involving the \emph{vertex doubling operators} $\fkD_v$ that we now define.

Given  $v \in P$,
define $\fkD_v(P,\gamma) := (Q,\delta)$ to be the labeled poset with the following properties.
\begin{itemize}
\item As a set, $Q = P\sqcup\{ v'\}$ is the disjoint of union of $P$ and a new element $v'$.

\item The order on $Q$ is the one extending the order on  $P$ such that
$v\lessdot v'$ and for every $x \in P \setminus \{v\}$, $v'$ is related to $x$ in the same way that $v$ is.

\item The labeling map $\delta$ satisfies  $\delta(v) =\gamma(v)$, $\delta(v') = \gamma(v)+1$, $\delta(x) = \gamma(x)+1$ for $x \in P$ with $\gamma(v) < \gamma(x)$, and $\delta(x) = \gamma(x)$ for all other $x \in P$.
\end{itemize}

\begin{example}\label{poset-ex1}
If we represent labeled posets 
as oriented Hasse diagrams as in 
Example~\ref{ori-ex}, 
then the doubling operator $\fkD_v$ acts as follows:
\[
\fkD_{v_2}\(\begin{tikzpicture}[baseline=(b.base), xscale=0.6, yscale=0.5]
\tikzset{edge/.style = {<-}}
\node (c1) at (-2,3) {$v_3$};
\node (c2) at (2,3) {$v_4$};
  \node (b) at  (0,0)  [circle, dashed, draw] {$v_2$};
  \node (a) at (0,-3) {$v_1$};
  \node (d) at (-2, 0) {$v_5$};
  \draw[edge] (b) -- (a);
  \draw[edge] (b) -- (c1);
  \draw[edge] (c2) to (b);
  \draw[edge] (a) to (d);
  \draw[edge] (d) to (c1);
\end{tikzpicture}\)\ =\ 
\begin{tikzpicture}[baseline=(b.base), xscale=0.6, yscale=0.5]
\tikzset{edge/.style = {<-}}
\node (c1) at (-2,3) {$v_3$};
\node (c2) at (2,3) {$v_4$};
\node (b) at (0,0) {};
  \node (b2) at (0,1) {$v_2'$};
  \node (b1) at (0,-1) {$v_2$};
  \node (a) at (0,-3) {$v_1$};
  \node (d) at (-2, 0) {$v_5$};
  \draw[edge] (a) to (d);
  \draw[edge] (d) to (c1);
  \draw[edge] (b1) -- (a);
  \draw[edge] (b2) -- (c1);
  \draw[edge] (b1) -- (b2);
  \draw[edge] (c2) -- (b2);
  \draw[dashed] (0, .1) ellipse [x radius = 1, y radius = 1.5];
\end{tikzpicture}
\]
\end{example}

If $P$ is a chain (i.e., linearly ordered) then $\fkD_v(P,\gamma)$ is also a chain.

\begin{lemma}\label{fkd-lem}
Let $(P,\gamma)$ be a labeled poset.
Suppose $v \in V \subseteq \ValSet(P,\gamma)$ and $(Q,\delta) = \fkD_v(P,\gamma)$. 
There is then a length- and weight-preserving bijection
\[  \tE(P,\gamma,V\setminus\{v\}) \xrightarrow{\sim} \tE(P,\gamma,V) \sqcup \tE(P,\gamma,V) \sqcup \tE(Q,\delta,V).
\]
Consequently, $\tOmega(P,\gamma,V\setminus\{v\}) = 2 \cdot \tOmega(P,\gamma,V) + \beta \cdot \tOmega(Q,\delta,V)$.
\end{lemma}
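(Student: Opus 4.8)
The strategy is to collapse the statement to a local assertion at the single vertex $v$ and then verify that assertion by an explicit ``merging'' bijection on finite sets of letters (whose existence can, if one prefers, instead be read off from a one-line generating-function identity). First I would record that since $v$ is a valley, no other element of $V$ is a Hasse-neighbor of $v$: if $u\in V$ with $u\lessdot v$ or $v\lessdot u$, applying the valley condition to $u$ and to $v$ forces incompatible inequalities between $\gamma(u)$ and $\gamma(v)$; hence $v$ is still a valley of $(Q,\delta)$, $V\subseteq\ValSet(Q,\delta)$, and $\tE(Q,\delta,V)$ is defined. Next, $Q$ restricted to $Q\setminus\{v,v'\}=P\setminus\{v\}$ agrees with $P$ restricted to $P\setminus\{v\}$, with an equivalent labeling, and the only covering relations of $P$ (resp.\ $Q$) that meet $v$ (resp.\ $v$ or $v'$) are those joining $v$ or $v'$ to its Hasse-neighbors. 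Thus each of $\tE(P,\gamma,V\setminus\{v\})$, $\tE(P,\gamma,V)$, $\tE(Q,\delta,V)$ is a disjoint union, over the common set of ``partial partitions'' $\rho\colon P\setminus\{v\}\to\MSet$ obeying the defining conditions that do not involve $v$, of the valid ways to extend $\rho$ at $v$ (respectively at $v$ with $\sigma(v)\subseteq\PP$; respectively at $v$ and $v'$), so it suffices to fix $\rho$ and produce a length- and weight-preserving bijection between these three families.

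I would then make the three local families explicit. Because $v$ is a valley, a valid extension at $v$ is exactly a nonempty $S\in\MSet$ with $\sigma(x)\preceq S$ and $\sigma(x)\cap S\subseteq\{1',2',3',\dots\}$ for all $x\lessdot v$, and $S\preceq\sigma(y)$ and $S\cap\sigma(y)\subseteq\PP$ for all $v\lessdot y$. A short inspection of what the ``primed-intersection'' conditions force at the smallest and largest letters of $S$ shows that these $S$ are precisely the nonempty subsets of an interval $I_\rho=\{a',a,(a+1)',a+1,\dots,b',b\}$ of $\MM$, where $a\in\PP$ (with $a=1$ if $v$ is minimal in $P$) and $b\in\PP\cup\{\infty\}$ (with $b=\infty$ if $v$ is maximal) are determined by $\max_{x\lessdot v}\max\sigma(x)$ and $\min_{v\lessdot y}\min\sigma(y)$, and $a\le b$ since a valid $\sigma$ exists. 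Imposing $S\subseteq\PP$ cuts this down to $\cC:=\{S\subseteq\{a,a+1,\dots,b\}:S\ne\varnothing\}$. A parallel analysis on $(Q,\delta)$ — where the covers at $v$ are the $x\lessdot v$ together with $v\lessdot v'$, and the covers at $v'$ are $v\lessdot v'$ together with the $v\lessdot y$ — shows that a valid extension at $v$ and $v'$ is a pair $(S_1,S_2)$ with $S_1\subseteq\PP$ nonempty, $\min S_1\ge a$, $S_2\in\MSet$ nonempty, $\max S_2\le b$, and $\max S_1\le\min S_2$; call this set $\cP$. So the lemma reduces to a length- and weight-preserving bijection $\{S\in\MSet:S\subseteq I_\rho\}\xrightarrow{\sim}\cC\sqcup\cC\sqcup\cP$.

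To build this bijection I would first peel off the two copies of $\cC$: the $S\subseteq I_\rho$ consisting entirely of unprimed letters already lie in $\cC$ and account for the first copy (via the identity), while the $S\subseteq I_\rho$ consisting entirely of primed letters are sent, by replacing each letter $k'$ with $k$, bijectively onto the second copy. What remains is to match the ``mixed'' subsets $S\subseteq I_\rho$ (those with at least one primed and at least one unprimed letter) with $\cP$: the idea is to merge the unprimed part $U:=S\cap\PP$ and the set $W:=\{k\in\PP:k'\in S\}$ of primed-letter values into an ordered pair $(S_1,S_2)$, splitting $S$ at a suitable primed letter, pushing the part above it onto $v'$, and using primes in $S_2$ to record collisions, with a small case analysis governing the primed/unprimed choices at the split point. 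Pinning down this case analysis so that the resulting merging map is genuinely well defined on, and surjective onto, all of $\cP$ is the step I expect to be the main obstacle: it is finite and elementary but fiddly, in the spirit of the ``uninsertion'' bijections in the proofs of Propositions~\ref{k1-prop} and~\ref{k2-prop}. Once it is in place the three stages assemble into the required local bijection, and since every map preserves length and weight and $|Q|=|P|+1$, summing $\beta^{|\sigma|-|P|}x^\sigma$ over $\tE(P,\gamma,V\setminus\{v\})$ yields $\tOmega(P,\gamma,V\setminus\{v\})=2\,\tOmega(P,\gamma,V)+\beta\,\tOmega(Q,\delta,V)$.

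If one only wishes to know that such a bijection exists (and to prove the displayed identity directly), it is enough to check that the three local families have matching counts in each length and weight, i.e.\ to verify the polynomial identity $(1+t)^2-1=t+t+t^2$ with $t=\prod_{k=a}^{b}(1+\beta x_k)-1$: after clearing a common factor of $\beta^{-1}$, $\{S\subseteq I_\rho\}$ contributes $(1+t)^2-1$, the all-unprimed and all-primed subsets each contribute $t$, and $\cP$ contributes $t^2$ — the last being a one-line telescoping computation — so the counts agree and a length- and weight-preserving bijection exists.
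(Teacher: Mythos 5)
Your proposal reaches the right conclusion, but by a genuinely different route from the paper, and one branch of it is incomplete. The paper does not localize at all: it splits $\tE(P,\gamma,V\setminus\{v\})$ directly into three pieces according to whether $\sigma(v)$ contains no primed letters, contains a unique primed letter which equals $\max\sigma(v)$, or neither. The first two pieces map onto the two copies of $\tE(P,\gamma,V)$ (by the identity, resp.\ by stripping the prime from $\max\sigma(v)$), and the third maps onto $\tE(Q,\delta,V)$ by splitting $\sigma(v)$ at its least primed letter $a'$, sending $\{x\in\sigma(v):x<a'\}\sqcup\{a\}$ to $v$ and $\{x\in\sigma(v):x>a'\}$ to $v'$. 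This is exactly the ``merging/uninsertion'' map you were searching for, in global form; the reason it comes out so simply is the choice of the second case. Your choice (``all primed'' rather than ``unique prime at the max'') leaves a much more awkward residual family of mixed subsets, and you acknowledge that you have not constructed the bijection from it onto $\cP$ --- so that branch of your argument is genuinely unfinished.

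What saves the proposal is your fallback counting argument, which is valid and self-contained. Your localization is sound: the conditions at $v$ really do carve out exactly the nonempty subsets of an interval $I_\rho=\{a',a,\dots,b',b\}$ of $\MM$, your description of the local family $\cP$ for $(Q,\delta)$ is correct, and the degenerate case $a>b$ is harmless because then all three local families (including $\cP$) are empty. The one computation you assert without writing, $\sum_{(S_1,S_2)\in\cP}\beta^{|S_1|+|S_2|}x^{S_1}x^{S_2}=t^2$, does check out: conditioning on $c=\max S_1$ and writing $P_{<c}=\prod_{a\le k<c}(1+\beta x_k)$, each summand $\beta x_c P_{<c}\,(1+\beta x_c)\prod_{c<k\le b}(1+\beta x_k)^2$ equals $(1+t)^2\bigl(P_{<c}^{-1}-P_{\le c}^{-1}\bigr)$, so the sum telescopes to $(1+t)^2-(1+t)-t=t^2$. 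Since every weight class is finite, matching generating functions do yield a length- and weight-preserving bijection, and the displayed identity for $\tOmega$ follows. So: correct via the counting route, but if you want an explicit bijection, adopt the paper's case split rather than the all-primed one.
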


\begin{proof}
Let $\cE'(P,\gamma,V)$ be the set of enriched set-valued $(P,\gamma)$-partitions
$\tau \in \cE(P,\gamma,V\setminus\{v\})$
such that $\max(\tau(v))$ is the unique primed entry of $\tau(v)$.
Removing the prime from $\max(\tau(v))$
defines a length- and weight-preserving bijection $\cE'(P,\gamma,V) \to \cE(P,\gamma,V)$.
We describe a bijection
$\tE(P,\gamma,V\setminus\{v\}) \to \tE(P,\gamma,V) \sqcup \tE'(P,\gamma,V)
 \sqcup \tE(Q,\delta,V)$.

Fix $\sigma \in \tE(P,\gamma,V\setminus\{v\})$ and define $\sigma' $ as follows.
If $\sigma \in \tE(P,\gamma,V) \sqcup \tE'(P,\gamma,V)$,
then we set $\sigma' = \sigma$.
Otherwise, $\sigma(v)$ contains a least primed integer $a' \neq \max(\sigma(v))$, and we define
$\sigma' \in \tE(Q,\delta,V)$ to be the map with
\[
\sigma'(v) = \{ x \in \sigma(v) : x < a' \} \sqcup \{a\}
\quand
\sigma'(v') = \{ x \in \sigma(v) : a' < x\}
\]
and $\sigma'(s) = \sigma(s)$ for all $s \in P \setminus \{v\} = Q\setminus\{v,v'\}$.
It is easy to see that $\sigma \mapsto \sigma'$ 
is a length- and weight-preserving bijection of the desired type. 
\end{proof}

%

Fix a labeled poset $(P, \gamma)$, a subset $V \subseteq \ValSet(P, \gamma)$,  and a vertex $v \in  P$.
Let $(Q,\delta) := \fkD_v (P,\gamma)$.
 We use the notational shorthand $\fkD_v \cdot \tOmega(P,\gamma, V)$ to mean
\[\fkD_v \cdot \tOmega(P,\gamma, V) := \tOmega(Q,\delta, V),\] so that 
$
\fkD_v \cdot \oOmega(P, \gamma) = \oOmega(\fkD_v(P, \gamma))
$
and
$
\fkD_v \cdot \tOmega(P, \gamma) = \tOmega(\fkD_v(P, \gamma)).
$
In this shorthand, one has by simple telescoping that 
\[  (2 + \beta \fkD_v) \cdot \frac{1}{2} \sum_{n=0}^\infty (-\beta/2)^n \fkD_v^n  \cdot \tOmega(P,\gamma, V) = \tOmega(P,\gamma, V)\]
in $\mQSym_{\QQ[\beta]}$, and we define $(2 + \beta \fkD_v)^{-1}  :=\frac{1}{2} \sum_{n=0}^\infty (-\beta/2)^n \fkD_v^n$.


\begin{theorem}\label{op-thm}
Let $(P,\gamma)$ be a labeled poset, $V$ a subset of $\ValSet(P, \gamma)$, and $U = \ValSet(P, \gamma) \setminus V$.  Then
\[
\ba
\tOmega(P,\gamma,V) &= \prod_{u \in U} (2 + \beta \fkD_u)\cdot  \oOmega(P,\gamma)
\\&= \prod_{v \in V} (2 + \beta \fkD_v)^{-1}\cdot  \tOmega(P,\gamma).
\ea\]
In
particular, we have
$\tOmega(P,\gamma) = \prod_{v \in \ValSet(P,\gamma)} (2 + \beta \fkD_v)\cdot  \oOmega(P,\gamma).$
\end{theorem}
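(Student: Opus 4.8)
The plan is to derive the theorem by iterating Lemma~\ref{fkd-lem}. In the shorthand notation of the paragraph preceding the statement, that lemma asserts precisely that
\[
\tOmega(P,\gamma,V\setminus\{v\}) = (2 + \beta\fkD_v)\cdot \tOmega(P,\gamma,V)
\qquad\text{whenever } v \in V \subseteq \ValSet(P,\gamma).
\]
First I would record the elementary observation that no two valleys of a labeled poset lie in a covering relation: if $x \lessdot y$ with $x,y \in \ValSet(P,\gamma)$, then $y$ being a valley forces $\gamma(x) > \gamma(y)$ while $x$ being a valley forces $\gamma(x) < \gamma(y)$. Consequently the vertex-doubling operation at a valley is \emph{local}: for a valley $v$, every other valley of $(P,\gamma)$ is still a valley of $\fkD_v(P,\gamma)$, the vertex $v$ itself remains a valley of $\fkD_v(P,\gamma)$ (hence of $\fkD_v^n(P,\gamma)$ for all $n$), and $\fkD_u\fkD_v(P,\gamma) \cong \fkD_v\fkD_u(P,\gamma)$ for distinct valleys $u,v$. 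It follows that a finite product $\prod_{u \in U}(2+\beta\fkD_u)$ acts as a well-defined, order-independent operator on the weight enumerators of labeled posets obtained from $(P,\gamma)$ by doubling valleys, and likewise the series $(2+\beta\fkD_v)^{-1} = \tfrac12\sum_{n\ge 0}(-\beta/2)^n\fkD_v^n$ converge in the linearly compact topology (since $\fkD_v$ raises by one the lowest degree occurring in a weight enumerator) and commute with all the $\fkD_w$.

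Next I would prove the first equality by induction on $|U|$, where $U = \ValSet(P,\gamma)\setminus V$, peeling the elements of $U$ off one at a time. When $U = \varnothing$ the assertion is just the definition $\oOmega(P,\gamma) = \tOmega(P,\gamma,\ValSet(P,\gamma))$. For the inductive step, choose $u \in U$; applying the displayed form of Lemma~\ref{fkd-lem} with $V\cup\{u\}$ in the role of $V$ and $v = u$ gives $\tOmega(P,\gamma,V) = (2+\beta\fkD_u)\cdot\tOmega(P,\gamma,V\cup\{u\})$, and since the complement of $V\cup\{u\}$ in $\ValSet(P,\gamma)$ is $U\setminus\{u\}$, the inductive hypothesis rewrites $\tOmega(P,\gamma,V\cup\{u\})$ as $\prod_{u'\in U\setminus\{u\}}(2+\beta\fkD_{u'})\cdot\oOmega(P,\gamma)$; composing the operators and using order-independence of the product yields $\tOmega(P,\gamma,V) = \prod_{u\in U}(2+\beta\fkD_u)\cdot\oOmega(P,\gamma)$. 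Specializing to $V = \varnothing$ gives the ``in particular'' statement $\tOmega(P,\gamma) = \prod_{v\in\ValSet(P,\gamma)}(2+\beta\fkD_v)\cdot\oOmega(P,\gamma)$.

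Finally, for the second equality I would apply $\prod_{v\in V}(2+\beta\fkD_v)^{-1}$ to the ``in particular'' identity. Because $\ValSet(P,\gamma) = V\sqcup U$ and all the operators involved commute, the telescoping identity $(2+\beta\fkD_v)^{-1}(2+\beta\fkD_v) = \id$ cancels the $V$-factors and leaves $\prod_{v\in V}(2+\beta\fkD_v)^{-1}\cdot\tOmega(P,\gamma) = \prod_{u\in U}(2+\beta\fkD_u)\cdot\oOmega(P,\gamma)$, which equals $\tOmega(P,\gamma,V)$ by the first equality; this identity lives in $\mQSym_{\QQ[\beta]}$. The substantive combinatorial input is entirely contained in Lemma~\ref{fkd-lem}; the only real obstacle here is the bookkeeping around the doubling operators, namely checking the locality claims carefully enough to justify both the repeated applications of the lemma and the order-independence of the products — and these in turn all reduce to the single observation that valleys are pairwise non-adjacent in the oriented Hasse diagram.
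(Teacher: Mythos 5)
Your proposal is correct and follows essentially the same route as the paper: the paper's proof is exactly the observation that Lemma~\ref{fkd-lem} gives $(2+\beta\fkD_u)\cdot\tOmega(P,\gamma,V\sqcup\{u\})=\tOmega(P,\gamma,V)$ for $u\in U$, iterated over $U$, with the second equality following from the telescoping inverse defined just before the theorem. You simply make explicit the locality and commutativity checks (valleys are pairwise non-adjacent, doubling at one valley preserves the others) that the paper leaves implicit in the phrase ``repeating this observation.''
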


\begin{proof}
Lemma~\ref{fkd-lem} implies
that 
$(2 + \beta \fkD_u) \cdot \tOmega(P,\gamma, V\sqcup\{u\}) =  \tOmega(P,\gamma,V)$
if $u \in U$.  The result follows by repeating this observation.
\end{proof}

As a corollary, we deduce 
that each $K^{(\beta)}_\alpha$ is a finite $\ZZ[\beta]$-linear combination of $\oK^{(\beta)}_{\alpha}$'s
while each $\oK^{(\beta)}_{\alpha}$ is an infinite $\QQ[\beta]$-linear combination of $K^{(\beta)}_{\alpha}$'s.

\begin{corollary}\label{equiexp-cor}
If $\alpha = (\alpha_1,\alpha_2,\dots,\alpha_n)$ is a peak composition then 
\[ K^{(\beta)}_\alpha  = \sum_{\delta \in \{0,1\}^n} 2^{n-|\delta|} \beta^{|\delta|} \oK^{(\beta)}_{\alpha+\delta}
\quand \oK^{(\beta)}_\alpha  = \tfrac{1}{2^n}\sum_{\delta \in (\NN)^n} (-\beta/2)^{|\delta|} K^{(\beta)}_{\alpha+\delta}.\]
\end{corollary}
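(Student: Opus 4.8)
The plan is to express both $K^{(\beta)}_\alpha$ and $\oK^{(\beta)}_\alpha$ as weight enumerators of a single labeled \emph{chain} and then expand the operator identities in Theorem~\ref{op-thm}. Set $N = |\alpha|$ and $n = \ell(\alpha)$, and fix an injective labeling $\gamma$ of the chain $[N]$ with descent set $\Des([N],\gamma) = I(\alpha)$; such a labeling exists since every subset of $[N-1]$ is the descent set of some permutation. Because $\alpha$ is a peak composition, $I(\alpha)$ contains neither $1$ nor two consecutive integers, so from the definition of $\PeakSet$ one gets $\PeakSet([N],\gamma) = \{i \in \Des([N],\gamma) : i > 1,\ i-1 \notin \Des([N],\gamma)\} = I(\alpha)$. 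Hence Propositions~\ref{k1-prop} and~\ref{k2-prop} give $\tOmega([N],\gamma) = K^{(\beta)}_\alpha$ and $\oOmega([N],\gamma) = \oK^{(\beta)}_\alpha$.

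The next step is to understand the valleys of $([N],\gamma)$ and the effect of the doubling operators. Writing $0 = s_0 < s_1 < \dots < s_{n-1} < s_n = N$ for the partial sums of $\alpha$, one checks directly from the definitions that $v$ is a valley of $([N],\gamma)$ if and only if $v \in \{1, s_1+1, \dots, s_{n-1}+1\}$; in particular $\ValSet([N],\gamma) = \{v_1, \dots, v_n\}$ has exactly $n$ elements, with $v_k = s_{k-1}+1$. The key claim is: writing $\mathbf{e}_k \in \NN^n$ for the tuple with $1$ in position $k$ and $0$ elsewhere, the poset $\fkD_{v_k}([N],\gamma)$ is again a labeled chain, on $[N+1]$, with $\Des = I(\alpha + \mathbf{e}_k)$, and its $j$-th valley is $v_j$ for each $j \leq k$. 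This follows by unwinding the definition of $\fkD_{v_k}$: inserting a vertex just above $v_k$ with label immediately above $\gamma(v_k)$ turns the ascent at position $v_k$ into two ascents, shifts the indices past $v_k$ up by one, and changes no other cover orientation; since $\alpha + \mathbf{e}_k$ is again a peak composition, the resulting descent set coincides with the resulting peak set, so Propositions~\ref{k1-prop} and~\ref{k2-prop} again apply. Iterating the claim gives $\fkD_{v_k}^{m} \cdot K^{(\beta)}_\alpha = K^{(\beta)}_{\alpha + m\mathbf{e}_k}$ and $\fkD_{v_k}^{m} \cdot \oK^{(\beta)}_\alpha = \oK^{(\beta)}_{\alpha + m\mathbf{e}_k}$, and applying $\fkD_{v_n}, \fkD_{v_{n-1}}, \dots, \fkD_{v_1}$ in that order (so that when $\fkD_{v_k}$ acts, the chain still has $v_1, \dots, v_k$ as its first $k$ valleys) shows $\prod_k \fkD_{v_k}^{m_k} \cdot K^{(\beta)}_\alpha = K^{(\beta)}_{\alpha + (m_1,\dots,m_n)}$ for all $(m_1,\dots,m_n) \in \NN^n$, and likewise for $\oK^{(\beta)}$.

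Finally, I would expand the two identities of Theorem~\ref{op-thm} applied to $([N],\gamma)$. Its ``in particular'' conclusion gives $K^{(\beta)}_\alpha = \tOmega([N],\gamma) = \prod_{k=1}^{n}(2 + \beta\fkD_{v_k}) \cdot \oOmega([N],\gamma)$; writing $\prod_{k}(2+\beta\fkD_{v_k}) = \sum_{\delta \in \{0,1\}^n} 2^{n-|\delta|}\beta^{|\delta|} \prod_{k : \delta_k = 1}\fkD_{v_k}$ and applying this to $\oK^{(\beta)}_\alpha$ yields the first formula. Taking Theorem~\ref{op-thm} with $V = \ValSet([N],\gamma)$ gives $\oK^{(\beta)}_\alpha = \oOmega([N],\gamma) = \prod_{k=1}^{n}(2 + \beta\fkD_{v_k})^{-1} \cdot K^{(\beta)}_\alpha$; since $(2 + \beta\fkD_{v_k})^{-1} = \tfrac12 \sum_{m \geq 0}(-\beta/2)^m \fkD_{v_k}^m$ by definition, the product expands as $\tfrac{1}{2^n}\sum_{\delta \in \NN^n}(-\beta/2)^{|\delta|}\prod_k \fkD_{v_k}^{\delta_k}$, and applying this to $K^{(\beta)}_\alpha$ yields the second formula.

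The step I expect to be the main obstacle is the combinatorial claim about $\fkD_{v_k}$: one must check carefully that doubling the $k$-th valley of a labeled chain with peak composition $\alpha$ yields a labeled chain with peak composition $\alpha + \mathbf{e}_k$, and that this behaves correctly under iteration and under doubling distinct valleys, so that the operator products of Theorem~\ref{op-thm} can be expanded termwise. The remaining manipulations are routine.
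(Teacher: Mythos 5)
Your proposal is correct and follows exactly the route the paper intends: the paper's proof is the one-line remark that the identities follow from Theorem~\ref{op-thm} together with Propositions~\ref{k1-prop} and~\ref{k2-prop}, and your argument is precisely that deduction carried out in detail (realizing $K^{(\beta)}_\alpha$ and $\oK^{(\beta)}_\alpha$ as $\tOmega$ and $\oOmega$ of a labeled chain, checking that doubling the $k$th valley adds a box to the $k$th part, and expanding the operator products). The combinatorial claim you flag as the main obstacle does check out, including the point that doubling a valley neither creates nor destroys valleys, which is what makes the iterated identification with $\oK^{(\beta)}_{\alpha+\delta}$ legitimate.
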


\begin{proof}
Given Propositions~\ref{k1-prop} and \ref{k2-prop},
it is straightforward to deduce these identities from
Theorem~\ref{op-thm}.
\end{proof}

\subsection{Quasisymmetric subalgebras}

As noted in Section~\ref{multipeak-sect}, both
$\{K^{(\beta)}_\alpha\}$ and
$\{ \oK^{(\beta)}_\alpha\}$ (with $\alpha$ ranging over all peak compositions) are 
linearly independent families 
in $\mQSym$. 
Moreover, all $x$-monomials appearing 
in $K^{(\beta)}_\alpha$ and $\oK^{(\beta)}_\alpha$ have degree at least $|\alpha|$.
We may therefore make the following definitions.

\begin{definition}
Let $\mcoPeak $ and $ \omcoPeak$
denote the linearly compact $\ZZ[\beta]$-modules 
with the multipeak quasisymmetric functions $\{ K^{(\beta)}_\alpha\}$
and $\{ \oK^{(\beta)}_\alpha\}$
($\alpha$ ranging over all peak compositions) as 
respective pseudobases.
Define
$\mcoPeak_{\QQ[\beta]}$ to
be the linearly compact $\QQ[\beta]$-module with $\{ K^{(\beta)}_\alpha\}$ as a pseudobasis.\footnote{For simplicity, we define $\mcoPeak_{\QQ[\beta]}$  over the polynomial ring $\QQ[\beta]$ with rational coefficients, but in fact, it would be sufficient to work with coefficients in $\ZZ[2^{-1}]$ rather than $\QQ$. As in Corollary~\ref{equiexp-cor}, we will only ever need to divide  by powers of the prime $2$.}
\end{definition}

\begin{theorem}\label{inter-thm}
Both $\mcoPeak$ and $\omcoPeak$ are LC-Hopf subalgebras of $\mQSym$,
and
$\omcoPeak = \mQSym \cap \mcoPeak_{\QQ[\beta]} \supseteq \mcoPeak$.
\end{theorem}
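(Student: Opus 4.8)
The plan is to prove the theorem in three stages: first that $\mcoPeak$ is an LC-Hopf subalgebra, then that $\omcoPeak$ is an LC-Hopf subalgebra, and finally the identity $\omcoPeak = \mQSym \cap \mcoPeak_{\QQ[\beta]}$. For the Hopf-subalgebra claims, recall that a pseudobasis-closed subset is an LC-subalgebra once we check it is closed under $\nabla$, $\Delta$, contains $1$, and is sent to $\ZZ[\beta]$ by $\epsilon$. For $\mcoPeak$: a product $K^{(\beta)}_\alpha \cdot K^{(\beta)}_\gamma$ equals $\tOmega(u,\delta)\cdot\tOmega(u',\delta')$ for suitable sequences (via Proposition~\ref{k1-prop}), which by Corollary~\ref{esv-products-cor} is $\tOmega$ of a disjoint union of two \emph{chains}; expanding that $\tOmega$ over linear multiextensions (Theorem~\ref{ep-thm1}) and applying Proposition~\ref{k1-prop} again shows the product is an $\NN[\beta]$-combination of $K^{(\beta)}_\alpha$'s. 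For the coproduct, $\Delta(\tOmega(P,\gamma)) = \sum_{S\cup T = P}\beta^{|S\cap T|}\tOmega(S,\gamma)\otimes\tOmega(T,\gamma)$ by Corollary~\ref{esv-products-cor}; when $P$ is a chain, every lower set $S$ and upper set $T$ with $S\cup T=P$ and $S\cap T$ an antichain is itself a chain (or differs from one by a single repeated overlap element), so each $\tOmega(S,\gamma)$ and $\tOmega(T,\gamma)$ is again a $K^{(\beta)}$. Counit and unit are immediate. The same argument applies \emph{mutatis mutandis} to $\omcoPeak$, using the $\oOmega$ versions: $\oOmega(P,\gamma)\cdot\oOmega(Q,\delta) = \oOmega((P,\gamma)\sqcup(Q,\delta))$ as noted after Corollary~\ref{esv-products-cor}, and the coproduct of $\oOmega$ on a chain, while not literally $\sum \oOmega\otimes\oOmega$ (because $\ValSet$ doesn't restrict well), can be rewritten in the $\oK^{(\beta)}$ basis by first passing to $\tOmega$ via Theorem~\ref{op-thm} and then back; alternatively one checks directly from the defining sum \eqref{okbeta-eq}.

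For the identity $\omcoPeak = \mQSym \cap \mcoPeak_{\QQ[\beta]}$, one inclusion is easy: Corollary~\ref{equiexp-cor} writes each $\oK^{(\beta)}_\alpha$ as a $\QQ[\beta]$-combination of $K^{(\beta)}_\alpha$'s (so $\omcoPeak \subseteq \mcoPeak_{\QQ[\beta]}$) and each $\oK^{(\beta)}_\alpha$ lies in $\mQSym$ by definition, giving $\omcoPeak \subseteq \mQSym\cap\mcoPeak_{\QQ[\beta]}$; also the first identity of Corollary~\ref{equiexp-cor} expresses each $K^{(\beta)}_\alpha$ as an \emph{integral} combination of $\oK^{(\beta)}_{\alpha+\delta}$'s, giving $\mcoPeak \subseteq \omcoPeak$. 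For the reverse inclusion $\mQSym\cap\mcoPeak_{\QQ[\beta]} \subseteq \omcoPeak$, I would take $f = \sum_\alpha c_\alpha K^{(\beta)}_\alpha$ with $c_\alpha \in \QQ[\beta]$ and argue that $f \in \mQSym$ (i.e.\ $f$ has $\ZZ[\beta]$ coefficients) forces $f$ to be an $\ZZ[\beta]$-combination of the $\oK^{(\beta)}_\alpha$'s. The mechanism: expand $f$ in the $\oK^{(\beta)}$ pseudobasis using the second identity of Corollary~\ref{equiexp-cor}, obtaining $f = \sum_\beta d_\beta \oK^{(\beta)}_\beta$ with $d_\beta \in \QQ[\beta]$ a priori; then show $d_\beta \in \ZZ[\beta]$ by a triangularity/induction argument on $|\beta|$ and refinement, using that the lowest-degree component of $\oK^{(\beta)}_\alpha$ is Stembridge's $\oK_\alpha = 2^{-\ell(\alpha)}K_\alpha$, and that the $K_\alpha$'s (equivalently the $\oK_\alpha$'s up to scalar) are a basis of the peak algebra over $\ZZ$ — so an integral quasisymmetric function that is a $\QQ$-combination of peak functions is automatically an integral combination of the $\oK_\alpha$'s at each graded piece. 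Lifting this degree-by-degree through the "stable" expansion $\oK^{(\beta)}_\alpha = \oK_\alpha + (\text{higher degree})$ gives integrality of all $d_\beta$.

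The main obstacle I expect is precisely this last integrality lifting: controlling denominators in the passage from the finite-degree fact (that $\ZZ$-coefficient peak quasisymmetric functions are $\ZZ$-combinations of $\oK_\alpha$'s, which itself needs the known integral structure of the peak algebra, e.g.\ via Stembridge's result \cite[Thm.~3.1]{Stembridge1997a} that the $K_\alpha$ form a $\ZZ$-basis of their span) up to the infinite-degree statement about $\oK^{(\beta)}_\alpha$. The clean way to handle it is to set up a filtration of $\mQSym$ by degree and run a downward induction: given $f \in \mQSym\cap\mcoPeak_{\QQ[\beta]}$, its lowest-degree homogeneous component is an integral peak quasisymmetric function, hence an $\ZZ$-combination of the $\oK_\alpha$ (here the footnote's remark that only powers of $2$ appear in denominators, combined with the integral peak basis, is what rules out a genuine obstruction); subtract the corresponding $\ZZ[\beta]$-combination of $\oK^{(\beta)}_\alpha$'s — which lies in $\omcoPeak \subseteq \mcoPeak_{\QQ[\beta]}\cap\mQSym$ by the already-proved inclusions — and induct on the remainder. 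Everything else is a routine unwinding of the definitions and the corollaries already in hand.
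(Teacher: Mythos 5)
Your high-level plan matches the paper — prove the two LC-Hopf subalgebra claims and then establish $\omcoPeak = \mQSym\cap\mcoPeak_{\QQ[\beta]}$ — and your direct closure verification for $\mcoPeak$ is fine (a slightly more explicit version of the paper's one-line observation that $\mcoPeak$ is the image of $\mLPSet$ under the Hopf morphism of Theorem~\ref{><-thm}). But the ``\emph{mutatis mutandis}'' step for $\omcoPeak$ has a genuine gap. Closure of $\{\oK^{(\beta)}_\alpha\}$ under the product cannot be deduced by expanding $\oOmega(\mathrm{chain}\sqcup\mathrm{chain})$ over linear multiextensions: there is \emph{no} analogue of Theorem~\ref{ep-thm1} for $\oOmega$, because the valley condition does not transport along the bijection $\sigma\mapsto(w,\tau)$. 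Concretely, for a two-element antichain $P=\{a,b\}$ with $\gamma(a)<\gamma(b)$ and the multiextension $w=(a,b)$, both $a$ and $b$ are valleys of $(P,\gamma)$ but the chain $([2],\delta)$ has only position $1$ as a valley, so requiring $\sigma(a),\sigma(b)\subset\PP$ is strictly stronger than requiring $\tau(1)\subset\PP$. Your proposed repair (pass to $\tOmega$ and back via Theorem~\ref{op-thm}) yields only $\QQ[\beta]$-coefficients, and recovering $\ZZ[\beta]$-integrality requires precisely the identity $\omcoPeak=\mQSym\cap\mcoPeak_{\QQ[\beta]}$, which in your ordering you have not yet proved. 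The correct ordering is: first show $\mcoPeak$ is an LC-Hopf subalgebra; second prove the intersection identity (which only uses the definitions, Corollary~\ref{equiexp-cor}, and a triangularity argument); third deduce that $\omcoPeak$ is an LC-Hopf subalgebra, for instance as the image of $\mLPSet^+$ together with the fact that Theorem~\ref{op-thm} places every $\tOmega(P,\gamma,V)$ in $\mQSym\cap\mcoPeak_{\QQ[\beta]}=\omcoPeak$.

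On the intersection identity itself, your plan (invoke Stembridge's integral peak basis at $\beta=0$, then lift through a degree filtration) is more roundabout than necessary. The key observation in the paper — which you should use directly and which does all the work — is the unitriangularity of $\{\oK^{(\beta)}_\alpha\}$ against the monomial pseudobasis: for the total order $\prec$ on compositions with $\alpha\prec\alpha'$ iff $|\alpha|<|\alpha'|$, or $|\alpha|=|\alpha'|$ and $\alpha$ exceeds $\alpha'$ lexicographically, one reads off from \eqref{okbeta-eq} that $\oK^{(\beta)}_\alpha \in M_\alpha + \sum_{\alpha\prec\alpha'}\ZZ[\beta]M_{\alpha'}$. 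An induction along the well-order $\prec$ then immediately forces any $\QQ[\beta]$-combination of $\oK^{(\beta)}_\alpha$'s lying in $\mQSym$ to have $\ZZ[\beta]$-coefficients, with no need to appeal to the $\beta=0$ result or to track denominators degree by degree.
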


On setting $\beta=0$, this reduces to \cite[Cor.~3.4]{Stembridge1997a},
whose proof is similar.

\begin{proof}
The modules $\mcoPeak$ and $\omcoPeak$ are LC-Hopf subalgebras of $\mQSym$
since (in view of 
Theorem~\ref{ep-thm1} and the results in the previous section) they
are the images of $\mLPSet $ and $\mLPSet^+$ under the LC-Hopf algebra morphism
described in Theorem~\ref{><-thm}.

Corollary~\ref{equiexp-cor} implies that $\omcoPeak \subseteq \mQSym \cap \mcoPeak_{\QQ[\beta]}$.
Write $\prec$ for the linear order on compositions with $\alpha \prec \alpha'$ if
$|\alpha| < |\alpha'|$ or 
if $|\alpha| =|\alpha'|$ and $\alpha$ exceeds $\alpha'$ in lexicographic order.
By \eqref{okbeta-eq}, if $\alpha$ is a peak composition then
$\oK^{(\beta)}_\alpha \in M_\alpha + \sum_{\alpha \prec \alpha'} \ZZ[\beta] M_{\alpha'}$,
so any $\QQ[\beta]$-linear combination of $\oK^{(\beta)}_\alpha$'s 
in  $\mQSym$
must have coefficients in $\ZZ[\beta]$.
Thus, in view of Corollary~\ref{equiexp-cor},
the reverse inclusion $\omcoPeak \supseteq \mQSym \cap \mcoPeak_{\QQ[\beta]}$ also holds.
\end{proof}

\begin{corollary}\label{omco-cor}
If $V \subseteq \ValSet(P,\gamma)$
then $\tOmega(P,\gamma,V) \in \omcoPeak$.
\end{corollary}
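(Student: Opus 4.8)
The plan is to combine the second identity of Theorem~\ref{op-thm} with the intersection formula $\omcoPeak = \mQSym \cap \mcoPeak_{\QQ[\beta]}$ supplied by Theorem~\ref{inter-thm}. The first thing to record is that $\tOmega(Q,\delta) \in \mcoPeak$ for \emph{every} labeled poset $(Q,\delta)$: by Theorem~\ref{ep-thm1} one has $\tOmega(Q,\delta) = \sum_{w \in \tL(Q)} \beta^{\ell(w)-|Q|}\tOmega(w,\delta)$, an LC-convergent sum, and by Proposition~\ref{k1-prop} each term $\tOmega(w,\delta)$ equals a multipeak quasisymmetric function $K^{(\beta)}_\alpha$; since the $K^{(\beta)}_\alpha$ form a pseudobasis of $\mcoPeak$, this shows $\tOmega(Q,\delta)\in\mcoPeak$.

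Next I would apply Theorem~\ref{op-thm} in the form
\[
\tOmega(P,\gamma,V) = \prod_{v \in V}(2+\beta\fkD_v)^{-1}\cdot \tOmega(P,\gamma),
\]
and expand the finite product over $v \in V = \{v_1,\dots,v_k\}$ using $(2+\beta\fkD_v)^{-1} = \tfrac12\sum_{n\ge 0}(-\beta/2)^n\fkD_v^n$. Distinct valleys of a labeled poset are never related by a covering relation, so the doubling operators $\fkD_{v_i}$ may be composed freely, and $\fkD_{v_1}^{n_1}\cdots\fkD_{v_k}^{n_k}(P,\gamma)$ is a well-defined labeled poset with $|P| + n_1 + \dots + n_k$ vertices. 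Therefore $\tOmega(P,\gamma,V)$ is exhibited as an infinite $\ZZ[2^{-1}][\beta]$-linear combination of the power series $\tOmega\!\left(\fkD_{v_1}^{n_1}\cdots\fkD_{v_k}^{n_k}(P,\gamma)\right)$; as the monomials appearing in the $(n_1,\dots,n_k)$-summand all have degree at least $|P| + \sum_i n_i$, this combination converges in the LC-topology of $\mQSym_{\QQ[\beta]}$. By the previous paragraph every summand lies in $\mcoPeak$, and $\mcoPeak_{\QQ[\beta]}$ is closed, so the sum lies in $\mcoPeak_{\QQ[\beta]}$; hence $\tOmega(P,\gamma,V) \in \mcoPeak_{\QQ[\beta]}$.

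Finally, $\tOmega(P,\gamma,V)$ is by construction an honest element of $\mQSym$ (its coefficients lie in $\NN[\beta]$), so $\tOmega(P,\gamma,V) \in \mQSym \cap \mcoPeak_{\QQ[\beta]} = \omcoPeak$ by Theorem~\ref{inter-thm}, which is the claim. The only place demanding care is the middle paragraph --- justifying that the operator $\prod_{v\in V}(2+\beta\fkD_v)^{-1}$ may be expanded termwise, that the resulting infinite sum converges, and that it remains inside $\mcoPeak_{\QQ[\beta]}$ rather than merely inside $\mQSym_{\QQ[\beta]}$ --- but each of these follows readily once one knows that $\tOmega$ of an arbitrary labeled poset already lies in $\mcoPeak$. (One could instead expand the first identity $\tOmega(P,\gamma,V)=\prod_{u\in U}(2+\beta\fkD_u)\cdot\oOmega(P,\gamma)$ of Theorem~\ref{op-thm}, but this reduces to the special case $V=\ValSet(Q,\delta)$ of the corollary and offers no real simplification.)
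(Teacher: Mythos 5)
Your proof is correct and follows the same route as the paper: both establish that $\tOmega(P,\gamma,V)$ lies in $\mQSym$ (by construction) and in $\mcoPeak_{\QQ[\beta]}$ (via the second identity of Theorem~\ref{op-thm}), then invoke the intersection formula $\omcoPeak = \mQSym\cap\mcoPeak_{\QQ[\beta]}$ from Theorem~\ref{inter-thm}. The paper's proof is a one-liner citing those two theorems, whereas you unpack the Theorem~\ref{op-thm} step in full --- justifying termwise expansion of $\prod_{v\in V}(2+\beta\fkD_v)^{-1}$, noting that distinct valleys are never joined by a covering relation so the operators $\fkD_{v_i}$ commute and iterate freely, and verifying LC-convergence from the degree bound $\geq |P|+\sum n_i$; those details are correct and worth having on record, but they amount to expanding the paper's citation rather than taking a different path.
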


 \begin{proof}
 This follows from Theorem~\ref{inter-thm} since
$\tOmega(P,\gamma,V)$ belongs to $\mQSym$ by definition and to $\mcoPeak_{\QQ[\beta]}$
 by Theorem~\ref{op-thm}.
 \end{proof}
 
As in Example~\ref{kbeta-ex}, let
$x \oplus y := x + y + \beta xy$.
Also define $\ominus x := \frac{-x}{1+\beta x}$, so that $x\oplus (\ominus x) = 0$.
Elements of $\mcoPeak_{\QQ[\beta]}$ satisfy the following cancellation law,
which would be equivalent to Ikeda and Naruse's \emph{$K$-theoretic $Q$-cancellation property} \cite[Def. 1.1]{IkedaNaruse} if we also required symmetry in the $x_i$ variables.
Setting $\beta=0$ in this statement recovers \cite[Lem.~3.7]{Stembridge1997a}.

\begin{lemma}\label{q-cancel-lem}
If $f \in \mcoPeak_{\QQ[\beta]}$ then 
$f(t,\ominus t,x_3,x_4,\dots) = f(x_3,x_4,\dots)$ where $t$ is an indeterminate
commuting with each $x_i$.
\end{lemma}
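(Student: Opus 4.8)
The plan is to reduce the cancellation law to a statement about a single multipeak quasisymmetric function $K^{(\beta)}_\alpha$, since $\mcoPeak_{\QQ[\beta]}$ has these as a pseudobasis and the substitution $x_1 \mapsto t$, $x_2 \mapsto \ominus t$ is a continuous $\QQ[\beta]$-linear operation (it respects infinite linear combinations because each output monomial of bounded degree receives contributions from only finitely many $K^{(\beta)}_\alpha$). So it suffices to show $K^{(\beta)}_\alpha(t, \ominus t, x_3, x_4, \dots) = K^{(\beta)}_\alpha(x_3, x_4, \dots)$ for every peak composition $\alpha$. By Proposition~\ref{k1-prop}, $K^{(\beta)}_\alpha = \tOmega(w,\gamma)$ for a suitable word $w$ whose peak set is $I(\alpha)$; equivalently, via Theorem~\ref{><-thm}, $K^{(\beta)}_\alpha = \tOmega([N],\delta)$ is the enriched set-valued weight enumerator of a chain. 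I would then work directly with the combinatorial description: $K^{(\beta)}_\alpha = \sum_S \beta^{|S|-N} x^S$ summed over chains of sets $S_1 \preceq \cdots \preceq S_N$ in $\MSet$ obeying the primed/unprimed intersection constraints dictated by $I(\alpha)$.

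The heart of the argument is a sign-reversing involution on the terms of $K^{(\beta)}_\alpha$ evaluated at $(x_1, x_2, x_3, \dots) = (t, \ominus t, x_3, x_4, \dots)$. First I would expand $\ominus t = -t + \beta t^2 - \beta^2 t^3 + \cdots$ (a power series in $t$), so that every occurrence of a letter $1$ or $1'$ in a tuple $S$ gets replaced by a weight $t$, while every occurrence of a letter $2$ or $2'$ gets replaced by the series $\ominus t$; letters $\geq 3$ keep their usual weights $x_{|i|}$. The claim is that all contributions involving the letters $\{1', 1, 2', 2\}$ cancel except those with no such letters at all, which reproduce $K^{(\beta)}_\alpha(x_3, x_4, \dots)$ after reindexing. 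To build the involution, I would look at the ``prefix'' of a tuple $S$ consisting of the coordinates whose sets use only letters in $\{1',1,2',2\}$ — a down-closed initial segment because of the $\preceq$ condition — and set up a matching between such prefixes weighted by $t$-powers with signs coming from the expansion of $\ominus t$. The mechanism is exactly the $\beta$-deformation of Stembridge's involution in \cite[Lem.~3.7]{Stembridge1997a}: one identifies a distinguished coordinate and toggles whether the letter sitting there is ``$1$-ish'' or ``$2$-ish,'' or splits/merges a coordinate, tracking how $\beta$ and the sign interact. Because $t \oplus (\ominus t) = 0$, the local weights telescope: a pair $(1, 2)$ appearing consecutively with the primed/unprimed constraints contributes $t \oplus (\ominus t) = 0$ after the right regrouping.

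The main obstacle, and where I would spend the most care, is handling the set-valued (multi) aspect correctly: a single coordinate $S_i$ may contain several of the letters $1', 1, 2', 2$ at once, and the intersection constraints $S_i \cap S_{i+1} \subset \{1',2',\dots\}$ or $\subset \{1,2,\dots\}$ interact subtly with which primed/unprimed letters are forced to be shared across coordinates. In the $\beta=0$ case each $S_i$ is a singleton and Stembridge's involution just swaps a symbol between $1$ and $2$ (or $1'$ and $2'$); here one must instead argue at the level of the ``prefix subword'' in the alphabet $\{1' < 1 < 2' < 2\}$, showing that the generating function of all legal prefixes, weighted with the $t$/$\ominus t$ substitution and the $\beta^{\#-1}$ factors, collapses to $1$ (the empty prefix). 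Concretely, I would prove a lemma: for the chain poset, $\sum_{\text{prefixes } \sigma} \beta^{|\sigma| - \ell} (\text{weight of } \sigma \text{ under } 1\mapsto t,\ 2 \mapsto \ominus t) = \delta_{\ell, 0}$, where $\ell$ is the number of coordinates consumed by the prefix, by exhibiting an explicit involution on nonempty prefixes; then the full result follows by factoring each term of $K^{(\beta)}_\alpha(t,\ominus t, x_3,\dots)$ as (prefix part) $\times$ (tail in letters $\geq 3$) and observing the tail part is exactly $K^{(\beta)}_{\alpha''}(x_3,x_4,\dots)$ for the appropriate truncated peak composition $\alpha''$, with the prefix sum killing everything but $\alpha'' = \alpha$.
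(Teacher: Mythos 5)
Your opening reduction (to the single statement $K^{(\beta)}_\alpha(t,\ominus t,x_3,\dots)=K^{(\beta)}_\alpha(x_3,\dots)$ for each peak composition $\alpha$) is fine, and the guiding idea that $t\oplus(\ominus t)=0$ drives the cancellation is the right one. But the core of your argument has a gap that I do not think can be patched in the form you describe. Your factorization of a term of $K^{(\beta)}_\alpha(t,\ominus t,x_3,\dots)$ into ``(prefix in letters $\{1',1,2',2\}$) $\times$ (tail in letters $\geq 3$)'' is not valid: a single coordinate $S_i$ may contain, say, $\{2,3'\}$, so the first coordinate lying outside your prefix need not be ``in letters $\geq 3$,'' and the constraint $S_\ell\cap S_{\ell+1}\subset\{1',2',\dots\}$ or $\subset\{1,2,\dots\}$ at the boundary genuinely couples the two pieces. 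The correct bookkeeping for exactly this phenomenon is the coproduct-type decomposition with the antichain correction $\beta^{|S\cap T|}$ (Corollary~\ref{esv-products-cor}): the paper's proof begins by writing $\tOmega(P,\gamma)=\sum_{S\cup T=P}\beta^{|S\cap T|}\,\tOmega(S,\gamma)(x_1,x_2)\cdot\tOmega(T,\gamma)(x_3,x_4,\dots)$, which cleanly separates the first two variables from the rest and reduces the lemma to the \emph{two-variable} vanishing statement $K^{(\beta)}_\alpha(t,\ominus t)=0$ for nonempty peak compositions $\alpha$. Without some version of this decomposition your ``prefix sum equals $\delta_{\ell,0}$'' lemma does not assemble into the claimed identity.

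The second gap is that your cancellation mechanism is only sketched: the sign-reversing involution on set-valued prefixes in the alphabet $\{1'<1<2'<2\}$ with the $\ominus t$ expansion is precisely the hard combinatorial content, and you acknowledge but do not resolve it. The paper avoids constructing any involution. After the reduction above, $K^{(\beta)}_\alpha(x_1,x_2)=0$ trivially when $\ell(\alpha)>2$; the case $\ell(\alpha)=1$ is \cite[Prop.~3.1]{IkedaNaruse}; and for $\alpha=(j,N-j)$ one computes the closed form $K^{(\beta)}_\alpha(x_1,x_2)=(x_1\oplus x_1)(x_1\oplus x_2)(x_2\oplus x_2)\,x_1^{j-2}x_2^{N-j-1}$, which exhibits the factor $x_1\oplus x_2$ and hence vanishes at $(t,\ominus t)$. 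I would recommend either adopting that two-step structure, or, if you want to keep a purely bijective proof, first establishing the $\beta^{|S\cap T|}$-corrected factorization and then carrying out the involution only on the two-variable piece, where the case analysis is finite.
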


\begin{proof}
If $f \in \ZZ[\beta]\llbracket x_1,x_2,\dots \rrbracket$ then let $f(x_1,x_2,\dots,x_n)\in \ZZ[\beta][x_1,x_2,\dots,x_n]$
denote the polynomial obtained by setting $x_i=0$ for all $i>n$.
Suppose $(P,\gamma)$ is a labeled poset. 
It follows from the definition of $\tOmega(P,\gamma)$ that
\[ 
\tOmega(P,\gamma) = \sum_{S\cup T = P}
\beta^{|S\cap T|}\cdot 
\tOmega(S,\gamma)(x_1,x_2)\cdot  \tOmega(T,\gamma)(x_3,x_4,\dots) 
\]
where the sum is over all ordered pairs $(S,T)$ of subsets of $P$ such that 
$S$ is a lower set, $T$ is an upper set,
$P = S\cup T$, and
$S\cap T$ is an antichain.
To prove the lemma, it is therefore enough to check that 
$\tOmega(P,\gamma)(t,\ominus t)=0$ whenever $P$ is nonempty.
Let $\alpha$ be a nonempty peak composition.
By Theorem~\ref{ep-thm1} and Proposition~\ref{k1-prop}, it suffices to show that $K^{(\beta)}_\alpha(t,\ominus t) =0$.

It is clear from \eqref{kbeta-eq} that $K^{(\beta)}_\alpha(x_1,x_2) = 0$ if $\ell(\alpha)>2$.
If $\alpha$ has a single part, then in the notation of \cite{IkedaNaruse} one has $K^{(\beta)}_{\alpha} = \bGQ_\alpha$ 
(see \cite[Thm.~9.1]{IkedaNaruse} and Section~\ref{shifted-stable-sect} below) and the desired identity
is \cite[Prop.~3.1]{IkedaNaruse}.
Assume $\alpha = (j,N-j)\vDash N$ has two parts with $j\geq 2$. 
From \eqref{kbeta-eq},
it is easy to see that $K^{(\beta)}_{\alpha}(x_1,x_2)= \sum_\sigma \beta^{|\sigma|-N} x^\sigma$ 
where the sum is over all maps 
$\sigma:[N]\to \{\text{nonempty subsets of }\{1',1,2',2\}\}$ such that
\begin{itemize}
\item $\sigma(1)$ is $\{1'\}$ or $\{1\}$ or $\{1', 1\}$, and $\sigma(i) =\{1\}$ for $1<i<j$,
\item $\sigma(j)$ is $\{1\}$ or $\{2'\}$ or $\{1,2'\}$, and
\item $\sigma(j+1)$ is $\{2'\}$ or $\{2\}$ or $\{2',2\}$, and $\sigma(i) = \{2\}$ for $j+1<i \leq N$. 
\end{itemize}
Thus $K^{(\beta)}_{\alpha}(x_1,x_2) = (2x_1 + \beta x_1^2) x_1^{j-2}  (x_1 + x_2 + \beta x_1 x_2)  (2x_2 + \beta x_2^2)  x_2^{N-j-1}$,
which we can rewrite as 
$K^{(\beta)}_{\alpha}(x_1,x_2) = (x_1\oplus x_1)(x_1\oplus x_2)(x_2\oplus x_2) x_1^{j-2} x_2^{N-j-1}$.
Since $t \oplus (\ominus t) = 0$, we have $K^{(\beta)}_\alpha(t,\ominus t) = 0$ as needed.
\end{proof}

The LC-Hopf subalgebra $\mcoPeak$ is also a quotient of $\mQSym$. Define
\[\ttheta : \mQSym \to \mcoPeak\]
to be the continuous $\ZZ[\beta]$-linear map with 
$L^{(\beta)}_\alpha \mapsto K^{(\beta)}_{\Lambda(\alpha)}$
for each composition $\alpha$,
where $\Lambda(\alpha)$ is the peak composition of $|\alpha|$ characterized by
\be
\label{Lambda-eq}
I(\Lambda(\alpha)) = \{  i \in I(\alpha) : i > 1, i-1 \notin I(\alpha)\}.\ee
If $\Des(w,\gamma) = I(\alpha)$
then $\PeakSet(w,\gamma)  = I(\Lambda(\alpha))$,
and if $\alpha$ is already a peak composition then $\Lambda(\alpha) = \alpha$.

\begin{corollary}\label{ttheta-cor}
The map $\ttheta$ sends $\tGamma(P,\gamma)\mapsto \tOmega(P,\gamma)$
for all labeled posets $(P,\gamma)$
and is a surjective morphism of LC-Hopf algebras $ \mQSym \to \mcoPeak$.
\end{corollary}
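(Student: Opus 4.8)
The plan is to prove the identity $\ttheta(\tGamma(P,\gamma)) = \tOmega(P,\gamma)$ by directly chaining together the structural results on $(P,\gamma)$-partitions, and then to deduce the Hopf-morphism and surjectivity claims by soft continuity/density arguments, since the real combinatorial work has already been done in Propositions~\ref{by-def-prop}, \ref{k1-prop} and Theorems~\ref{p-thm1}, \ref{ep-thm1}. One should first observe that $\ttheta$ is well defined: because $\Lambda$ preserves the size of a composition, both $L^{(\beta)}_\alpha$ and $K^{(\beta)}_{\Lambda(\alpha)}$ have no monomials of degree below $|\alpha|$, and there are only finitely many compositions of each size, so the assignment $L^{(\beta)}_\alpha \mapsto K^{(\beta)}_{\Lambda(\alpha)}$ indeed extends uniquely to a continuous $\ZZ[\beta]$-linear map $\mQSym \to \mcoPeak$.

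For the main identity, I would fix a labeled poset $(P,\gamma)$ and use Theorem~\ref{p-thm1} together with Proposition~\ref{by-def-prop} to write $\tGamma(P,\gamma) = \sum_{w \in \tL(P)} \beta^{\ell(w)-|P|} L^{(\beta)}_{\alpha_w}$, where $\alpha_w$ is the composition with $I(\alpha_w) = \Des(w,\gamma)$ (the series converges because only finitely many $w \in \tL(P)$ have a given length). Applying the continuous map $\ttheta$ term by term turns this into $\sum_w \beta^{\ell(w)-|P|} K^{(\beta)}_{\Lambda(\alpha_w)}$; by the observation recorded just after \eqref{Lambda-eq} one has $I(\Lambda(\alpha_w)) = \PeakSet(w,\gamma)$, so Proposition~\ref{k1-prop} identifies each summand with $\tOmega(w,\gamma)$, and Theorem~\ref{ep-thm1} sums these to $\tOmega(P,\gamma)$. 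Thus $\ttheta(\tGamma(P,\gamma)) = \tOmega(P,\gamma)$ for every $(P,\gamma)$; equivalently, $\ttheta \circ \tGamma = \tOmega|_{\mLPSet}$, where on the right we restrict to $\mLPSet \subseteq \mLPSet^+$ the morphism of Theorem~\ref{><-thm}.

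To see that $\ttheta$ is a morphism of LC-Hopf algebras, I would note that $\tGamma : \mLPSet \to \mQSym$ (Theorem~\ref{<-thm}) and $\tOmega|_{\mLPSet} : \mLPSet \to \mcoPeak$ are both such morphisms — the latter because $\mLPSet$ is an LC-Hopf subalgebra of $\mLPSet^+$ and, by the proof of Theorem~\ref{inter-thm}, the image of $\mLPSet$ under the morphism of Theorem~\ref{><-thm} is exactly the LC-Hopf subalgebra $\mcoPeak \subseteq \mQSym$. Since $\im \tGamma$ is a submodule containing every $L^{(\beta)}_\alpha = \tGamma(C_\alpha)$ (for $C_\alpha$ a labeled chain with $\Des = I(\alpha)$), and the $L^{(\beta)}_\alpha$ form a pseudobasis of $\mQSym$, the image of $\tGamma$ is dense, hence so is $\im(\tGamma \htimes \tGamma)$ in $\mQSym \htimes \mQSym$. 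The identities $\ttheta \circ \nabla = \nabla \circ (\ttheta \htimes \ttheta)$, $\Delta \circ \ttheta = (\ttheta \htimes \ttheta) \circ \Delta$, $\ttheta(1) = 1$, and $\epsilon \circ \ttheta = \epsilon$ are all equalities of continuous maps, and each holds on the relevant dense subspace: for instance $\ttheta(\nabla(\tGamma(x) \otimes \tGamma(y))) = \ttheta(\tGamma(\nabla(x \otimes y))) = \tOmega(\nabla(x \otimes y)) = \nabla(\tOmega(x) \otimes \tOmega(y)) = \nabla(\ttheta\tGamma(x) \otimes \ttheta\tGamma(y))$, using that $\tGamma$ and $\ttheta \circ \tGamma = \tOmega|_{\mLPSet}$ are algebra morphisms. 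By continuity these identities then hold everywhere on $\mQSym$, and since a bialgebra morphism between LC-Hopf algebras automatically commutes with antipodes, $\ttheta$ is an LC-Hopf morphism. Surjectivity is immediate: for a peak composition $\alpha$ one has $\Lambda(\alpha) = \alpha$, so applying $\ttheta$ to the convergent series $\sum_{\alpha \text{ peak}} c_\alpha L^{(\beta)}_\alpha$ produces $\sum_{\alpha \text{ peak}} c_\alpha K^{(\beta)}_\alpha$, which exhausts $\mcoPeak$.

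I do not expect a genuine obstacle here; the substantive content is carried entirely by the earlier results. The one place that demands care is the topological bookkeeping in the last two paragraphs: tracking which elements lie in $\mLPSet$ versus $\mLPSet^+$ and in $\mcoPeak$ versus $\mQSym$, checking that the maps to which the density argument is applied are genuinely continuous, and confirming convergence of the infinite sums involved. If one prefers to avoid the abstract density argument, Step 3 can instead be verified directly on pseudobasis elements using the explicit (co)product formulas of Corollaries~\ref{sv-products-cor} and \ref{esv-products-cor} and the fact that $\ttheta(L^{(\beta)}_\alpha) = \tOmega(C_\alpha)$ for labeled chains $C_\alpha$.
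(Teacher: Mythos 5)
Your proposal is correct and follows essentially the same route as the paper: the identity $\ttheta(\tGamma(P,\gamma))=\tOmega(P,\gamma)$ via Theorems~\ref{p-thm1} and \ref{ep-thm1} together with Propositions~\ref{by-def-prop} and \ref{k1-prop}, the bialgebra-morphism property from the matching (co)product formulas of Corollaries~\ref{sv-products-cor} and \ref{esv-products-cor}, and surjectivity from $\Lambda(\alpha)=\alpha$ on peak compositions. Your density formulation of the middle step is just a more explicit packaging of the paper's one-line comparison of those two corollaries, and you correctly flag that direct verification on the images $\tGamma(P,\gamma)$ is the cleaner alternative.
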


\begin{proof}
Theorems~\ref{p-thm1} and \ref{ep-thm1} show that 
$\ttheta(\tGamma(P,\gamma)) = \tOmega(P,\gamma)$ for all labeled posets $(P,\gamma)$.
Comparing  Corollaries~\ref{sv-products-cor} and \ref{esv-products-cor}
shows that this is a bialgebra morphism, and hence an LC-Hopf algebra morphism.
Since the $K^{(\beta)}_{\alpha}$ form a pseudobasis for $\mcoPeak$, it is also surjective.
\end{proof}

\begin{remark*}
As discussed in Remark~\ref{product-rmk},
results of Lam and Pylyavskyy \cite[\S5.4]{LamPyl}
lead to product and coproduct formulas for the multifundamental quasisymmetric functions
$L^{(\beta)}_\alpha$. Applying $\ttheta$ to such identities gives analogous
(co)product formulas for the multipeak quasisymmetric functions $K^{(\beta)}_\alpha$.
\end{remark*}

\subsection{Shifted stable Grothendieck polynomials}\label{shifted-stable-sect}

A primary motivation for our definition of enriched set-valued $P$-partitions 
comes from the families of \emph{shifted stable Grothendieck
polynomials} $\GQ_\lambda$ and $\GP_\lambda$ introduced by
Ikeda and Naruse in \cite{IkedaNaruse}.
Mirroring the situation in Section~\ref{stable-sect},
we can recover these power series as the enriched set-valued weight enumerators
of labeled posets associated to shifted Young diagrams.
In this section, 
we will also describe skew analogues of $\GQ_\lambda$ and $\GP_\lambda$, which have not been
considered previously.

Suppose $\lambda =(\lambda_1> \lambda_2 > \dots > 0)$
and $\mu = (\mu_1> \mu_2> \dots > 0)$ 
are strict partitions with $\mu \subseteq \lambda$.
The \emph{shifted skew diagram} of $\lambda/\mu$ is
\be\label{sd-eq}\SD_{\lambda/\mu} := \{ (i,i+j-1) \in \PP\times \PP : \mu_i < j \leq \lambda_i\}.\ee
Let $\SD_\lambda = \SD_{\lambda/\emptyset}$.
As usual, we consider $\SD_{\lambda/\mu}$ to be partially ordered with $(i,j) \leq (i',j')$
if $i\leq i'$ and $j\leq j'$.
Let $n = |\lambda| - |\mu|$ and fix a bijection $\theta : \SD_{\lambda/\mu} \to [n]$
satisfying the conditions in \eqref{canonical-eq}.
For example, if $\lambda =(5,4,2)$ and $\mu=(2,1)$ then 
the oriented Hasse diagram representing $(\SD_{\lambda/\mu},\theta)$ is 
\[ 
\begin{tikzpicture}[baseline=(center.base), xscale=1, yscale=1]
\tikzset{edge/.style = {->}}
\node (center) at (0, -1) {};
  \node (a) at (3,-3) {$(1,3)$};
  \node (b) at (1,-1) {$(1,5)$};
  \node (c) at (2,0) {$(2,5)$};
  \node (d) at (2,-2) {$(1,4)$};
  \node (e) at (3,-1) {$(2,4)$};
  \node (g) at (4,-2) {$(2,3)$};
  \node (h) at (4,0) {$(3,4)$};
  \node (i) at (5,-1) {$(3,3)$};
  \draw[edge] (d) -- (a);
  \draw[edge] (a) -- (g);
  \draw[edge] (b) -- (d);
  \draw[edge] (c) -- (e);
  \draw[edge] (e) -- (g);
  \draw[edge] (g) -- (i);
  \draw[edge] (h) -- (i);
  \draw[edge] (b) -- (c);
  \draw[edge] (d) -- (e);
  \draw[edge] (e) -- (h);
\end{tikzpicture}
\]
It may be helpful to contrast this picture with \eqref{dlam-eq}.

 The elements of $\tE(\SD_{\lambda/\mu},\theta)$
may be identified with \emph{semistandard shifted set-valued (marked) tableaux} of shape $\lambda/\mu$
 as defined in \cite[\S9.1]{IkedaNaruse},
 i.e., fillings of $\SD_{\lambda/\mu}$ by finite nonempty subsets of $\MM$ that are weakly increasing along rows and columns in the sense of 
the relation $\preceq$, such that no unprimed number appears twice in the same column and no primed number appears twice in the same row.
We let \[ \SetSSMT(\lambda/\mu) := \tE(\SD_{\lambda/\mu},\theta)\]
and define the \emph{$K$-theoretic Schur $Q$-function} of $\lambda/\mu$ to be
 \be\label{gq-eq} 
 \bGQ_{\lambda/\mu} := \tOmega({\SD_{\lambda/\mu},\theta}) = 
 \sum_{T \in \SetSSMT(\lambda/\mu)} \beta^{|T|-|\lambda/\mu|}x^T.
 \ee
Both definitions are independent of $\theta$.

The special case $ \bGQ_{\lambda} :=  \bGQ_{\lambda/\emptyset}$
coincides with Ikeda and Naruse's definition of a $K$-theoretic Schur $Q$-function
\cite[Thm.~9.1]{IkedaNaruse}.
We will show in Section~\ref{sym-sect} that the quasisymmetric function $\bGQ_{\lambda/\mu}$ 
is always symmetric in the $x_i$ variables;
when $\mu=\emptyset$, this follows from \cite[Thm.~9.1]{IkedaNaruse}.
Setting $\beta=0$ reduces \eqref{gq-eq} to the definition of the \emph{skew Schur $Q$-function} $Q_{\lambda/\mu}$
described, for example, in \cite[App.~A.1]{Stembridge1997a}.

Recall that a semistandard set-valued tableau is \emph{standard}
if its entries are disjoint sets, not containing any consecutive integers,
with union $\{1,2,\dots,N\}$ for some $N\geq n$.
 The set $\tL(\SD_{\lambda/\mu})$ of linear multiextensions
 of $\SD_{\lambda/\mu}$ is naturally identified with the set of standard set-valued (unmarked) tableaux of shifted shape
 $\lambda/\mu$;
 a sequence $(w_1,w_2,\dots,w_N) \in \tL(\SD_{\lambda/\mu})$
 corresponds to the standard set-valued tableau containing $i$ in box $w_i$.
Let 
\[
\SetSYT_{\text{shifted}}(\lambda/\mu) := \tL(\SD_{\lambda/\mu})
\]
and define  $\PeakSet(T) := \PeakSet(T,\theta)$ for $T \in \SetSYT_{\text{shifted}}(\lambda/\mu)$; 
then $i \in \PeakSet(T)$ if and only if $i-1$, $i$, and $i+1$ all appear in $T$ with $i$ in a strictly greater column
than $i-1$ and a strictly lesser row than $i+1$.
Theorem~\ref{ep-thm1} implies that
 \be\label{tilde-GQ-eq}
 \bGQ_{\lambda/\mu} = \sum_\alpha  g^\alpha_{\lambda/\mu} \cdot \beta^{|\alpha| - |\lambda/\mu|} \cdot K^{(\beta)}_\alpha,
 \ee
 where the sum is over peak compositions $\alpha$ and 
 $g^\alpha_{\lambda/\mu}$ is the number of  tableaux
 $T \in \SetSYT_{\text{shifted}}(\lambda/\mu)$ with $|T|=|\alpha|$ and $\PeakSet(T) = I(\alpha)$.

There is a second family of shifted stable Grothendieck polynomials discussed in \cite{IkedaNaruse},
which arise in a similar way as enriched set-valued weight enumerators.
Continue to let $\lambda$ and $\mu$ be strict partitions with $\mu \subseteq \lambda$.
Let $n=|\lambda|-|\mu|$ and  fix a bijection $\theta : \SD_{\lambda/\mu} \to [n]$ 
satisfying \eqref{canonical-eq} as above.
Define 
\[ 
V_{\lambda/\mu} := \{ (i,j) \in \SD_{\lambda/\mu} : i=j\} \subseteq \ValSet(\SD_{\lambda/\mu},\theta),
\]
where the containment is equality if $\mu = \emptyset$.
The elements of $\tE(\SD_{\lambda/\mu},\theta,V_{\lambda/\mu})$ 
are precisely the semistandard shifted set-valued tableaux in $\SetSSMT(\lambda/\mu)$ whose entries on the main diagonal contain only unprimed numbers.
We define the \emph{$K$-theoretic Schur $P$-function} of $\lambda/\mu$ to be
\be\label{gp-eq}
\bGP_{\lambda/\mu} := \tOmega(\SD_{\lambda/\mu},\theta,V_{\lambda/\mu}) = 
\sum_{\substack{
T\in \SetSSMT(\lambda/\mu) \\ 
T(i,i) \subset \PP\text{ if }(i,i) \in \SD_{\lambda/\mu}
}} \beta^{|T|-|\lambda/\mu|} x^T.
\ee
This formula is again independent of the choice of $\theta$.

The case $ \bGP_{\lambda} :=  \bGP_{\lambda/\emptyset} = \oOmega(\SD_{\lambda/\mu},\theta)$
is Ikeda and Naruse's definition of a $K$-theoretic Schur $P$-function
\cite[Thm.~9.1]{IkedaNaruse}.
As with $\bGQ_\lambda$, 
Ikeda and Naruse prove that $\bGP_\lambda$ is symmetric in the $x_i$ variables;
we extend this result to skew shapes below.
Setting $\beta=0$ in \eqref{gp-eq} recovers the \emph{skew Schur $P$-function} 
$P_{\lambda/\mu} = 2^{\ell(\mu) - \ell(\lambda)} Q_{\lambda/\mu}$.

The essential reference for the properties of $\bGP_\lambda$ and $\bGQ_\lambda$ is
 \cite{IkedaNaruse}.
For more background and various extensions, see \cite{NN2017,NN2018,Naruse2018}.

We obtain a third interesting family of shifted stable Grothendieck polynomials
as a special case of the preceding constructions.
Let $\mu$ and $\lambda$ be arbitrary partitions (i.e., not necessarily strict)
with $\mu\subseteq \lambda$.
Suppose $\lambda$ has $k$ parts and let $\delta_k := (k,k-1,\dots,2,1)$.
The partitions 
\[ \ba
\lambda+\delta_k &:= (\lambda_1 +k, \lambda_2 + k-1, \dots, \lambda_k + 1) \\
\mu+\delta_k &:= (\mu_1 +k, \mu_2 + k-1, \dots, \mu_k + 1)
\ea
\]
are then both strict, so we can set 
\[\bGS_{\lambda/\mu} 
:= \bGP_{(\lambda+\delta_k)/(\mu+\delta_k)}
=\bGQ_{(\lambda+\delta_k)/(\mu+\delta_k)}
\quand
\bGS_\lambda := \bGS_{\lambda/\emptyset}.\]
This definition appears to be new.
We refer to $\bGS_{\lambda/\mu}$ as the 
 \emph{$K$-theoretic Schur $S$-function} of $\lambda/\mu$.
The name makes sense as setting $\beta=0$ recovers the \emph{Schur $S$-function} $S_{\lambda/\mu}$
discussed in \cite[\S A.4]{Stembridge1997a} and \cite[\S III.8]{Macdonald},
which is also the homogeneous
component of $\bGS_{\lambda/\mu}$ of lowest degree. 

Suppose $n = |\lambda|-|\mu|$.
Since 
$\SD_{(\lambda+\delta_k)/(\mu+\delta_k)} \cong \D_{\lambda/\mu}$ 
as posets, we have
by Corollary~\ref{ttheta-cor} that
\[
\bGS_{\lambda/\mu} = \tOmega(\SD_{(\lambda+\delta_k)/(\mu+\delta_k)}, \theta) = \tOmega(\D_{\lambda/\mu}, \theta) = \ttheta(G^{(\beta)}_{\lambda/\mu}).
\]
This shows that $\ttheta$ is a $K$-theoretic analogue of the \emph{superfication map} discussed in 
\cite[\S3.2]{TKLamThesis}.

We expect that the functions $\bGS_\lambda$
are related to the ``$K$-theoretic Stanley symmetric functions'' of classical types B, C, and D introduced
in \cite{KirillovNaruse}, as well as to
$K$-theoretic generalizations of the main result in \cite{MP2019}.
There is one other connection to the recent literature that we can explain more precisely.
DeWitt has shown that
$S_{\mu} = Q_{\nu}$
if $\mu$ and $\nu$ are the partitions
\be\label{mu-nu-eq}
\mu = (m^k) \qquand \nu = (m+k-1,m+k-3,\dots,|m-k|+1)
\ee
for some $m,k\in \PP$;
moreover,
this is the only  possible identity of the form $S_{\lambda/\mu} = c Q_\nu$ with $c\in \ZZ$ 
apart from the equality  $S_{(2,1)/(1)} = Q_{(1)}Q_{(1)} = 2Q_{(2)}$
 \cite[Thm.~IV.3]{Dewitt}.
DeWitt's  result appears to generalize.

\begin{conjecture}
If $\mu$ and $\nu$ are as in \eqref{mu-nu-eq} then 
$\bGS_{\mu} = \bGQ_{\nu}$.
\end{conjecture}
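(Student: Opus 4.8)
The plan is to reduce the conjecture, via the superfication map $\ttheta$, to a purely combinatorial statement that refines DeWitt's theorem. Recall from Corollary~\ref{ttheta-cor} and the discussion preceding the conjecture that $\bGS_\mu = \ttheta(G^{(\beta)}_\mu) = \tOmega(\D_\mu,\theta)$, while by definition \eqref{gq-eq} one has $\bGQ_\nu = \tOmega(\SD_\nu,\theta)$. Hence, for $\mu=(m^k)$ and $\nu$ as in \eqref{mu-nu-eq}, it suffices to show $\tOmega(\D_\mu,\theta) = \tOmega(\SD_\nu,\theta)$. By Theorem~\ref{ep-thm1}, both sides expand as $\sum_{w}\beta^{\ell(w)-mk}\tOmega(w,\theta)$ summed over the linear multiextensions $w$ of the respective posets, and by Proposition~\ref{k1-prop} each $\tOmega(w,\theta)$ equals $K^{(\beta)}_\alpha$ where $I(\alpha) = \PeakSet(w,\theta)$. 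Since the $K^{(\beta)}_\alpha$ are linearly independent, the desired identity is equivalent to the following claim: for every $N$ and every peak set $\Sigma$, the number of standard set-valued tableaux of rectangular shape $(m^k)$ with $N$ entries and peak set $\Sigma$ equals the number of standard set-valued shifted tableaux of shape $\nu$ with $N$ entries and peak set $\Sigma$. In other words, I want a bijection $\tL(\D_\mu)\xrightarrow{\sim}\tL(\SD_\nu)$ preserving both the length $\ell(w)$ and the peak set $\PeakSet(w,\theta)$.

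The next step is to construct this bijection. When $N=mk$ it specializes to a peak-preserving bijection between standard Young tableaux of the rectangle $(m^k)$ and standard shifted tableaux of $\nu$; such a map is precisely what underlies the refinement of DeWitt's equality $S_\mu = Q_\nu$ \cite[Thm.~IV.3]{Dewitt} to the expansion in peak quasisymmetric functions (recall $S_\mu$ is the $\beta=0$ superfication of $s_\mu$). The work is therefore to upgrade this to set-valued fillings. I would either (a) unpack DeWitt's construction, which I expect proceeds through a chain of shifted jeu-de-taquin–type slides, and verify that these moves commute with the passage from tableaux to set-valued tableaux while tracking the peak statistic; or (b) build the correspondence directly from a shifted set-valued (Hecke) insertion or $K$-jeu de taquin in the style of Buch--Samuel and Clifford--Thomas--Yong, and check that it carries the column/row-position data controlling peaks to the corresponding data on the other side.

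The main obstacle is exactly this bijection, and within it the preservation of peak sets in the set-valued setting. DeWitt's theorem is stated only at the level of symmetric functions, and even its refinement to the fundamental/peak expansion does not obviously localize in a manner compatible with set-valued fillings: the peak set of a standard set-valued tableau depends on the relative column and row positions of $i-1$, $i$, and $i+1$, any of which may share a box with another entry, so controlling it under the relevant moves is delicate. An alternative route worth mentioning avoids the explicit bijection: by Section~\ref{sym-sect}, $\bGS_\mu = \bGP_{(\lambda+\delta_k)/(\mu+\delta_k)}$ is symmetric, and by Corollary~\ref{omco-cor} it lies in $\mcoPeak_{\QQ[\beta]}$, hence (Lemma~\ref{q-cancel-lem}) satisfies Ikeda and Naruse's $K$-theoretic $Q$-cancellation property \cite{IkedaNaruse}; if one combines this with the structural facts from \cite{IkedaNaruse} that the $\bGQ_\lambda$ form a pseudobasis for such symmetric functions with a degree filtration pinned down by leading terms, then comparing lowest-degree components — which are $S_\mu$ and $Q_\nu$, equal by DeWitt — forces $\bGS_\mu = \bGQ_\nu$. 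This trades the bijection for a rigidity argument, at the cost of needing more from \cite{IkedaNaruse} than is quoted here.
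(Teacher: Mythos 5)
The statement you are addressing is stated in the paper as a \emph{conjecture}; the authors give no proof, so there is nothing to compare your argument against, and the real question is whether your proposal actually closes the problem. It does not. Your reduction is sound: $\bGS_\mu = \tOmega(\D_\mu,\theta)$ and $\bGQ_\nu = \tOmega(\SD_\nu,\theta)$, both expand in the pseudobasis $\{K^{(\beta)}_\alpha\}$ via Theorem~\ref{ep-thm1} and Proposition~\ref{k1-prop} with coefficients $g^\alpha$ counting standard set-valued tableaux by length and peak set, so the conjecture is equivalent to a length- and peak-set-preserving bijection $\tL(\D_\mu)\xrightarrow{\sim}\tL(\SD_\nu)$. (The $\beta=0$ case of such a bijection does exist, since DeWitt's identity $S_\mu=Q_\nu$ together with linear independence of the $K_\alpha$ forces equality of the peak-set counts for ordinary standard tableaux.) But you then only sketch two strategies for the set-valued upgrade without carrying either out, and you correctly identify this as "the main obstacle." That obstacle is the entire content of the conjecture: tracking peak sets of set-valued fillings under $K$-jeu-de-taquin or Hecke-insertion moves is precisely what is not known, which is why the paper leaves the statement open.

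Your fallback "rigidity" argument also has a genuine gap. Corollary~\ref{vab-cor}(a) does give $\bGS_\mu \in \mGSym$, i.e., $\bGS_\mu = \sum_\lambda c_\lambda(\beta)\,\bGQ_\lambda$ with $c_\lambda \in \ZZ[\beta]$. Comparing lowest-degree components (with $\deg\beta=0$, $\deg x_i = 1$) and invoking DeWitt shows only that $c_\nu(0)=1$ and $c_\lambda(0)=0$ for the other $\lambda$ with $|\lambda|=|\mu|$; it cannot exclude higher corrections such as $\bGS_\mu = \bGQ_\nu + \beta\,\bGQ_{\lambda'}+\cdots$ with $|\lambda'|>|\mu|$, since these contribute nothing in the bottom degree. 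Indeed the paper's own example $\bGS_{(2,1)/(1)} = \bGQ_{(1)}\bGQ_{(1)} \neq 2\bGQ_{(2)}$ shows that lowest-degree agreement ($S_{(2,1)/(1)} = 2Q_{(2)}$) does not propagate to the $K$-theoretic level. To make the rigidity route work you would need an actual identity forcing all higher coefficients to vanish, which neither \cite{IkedaNaruse} nor anything quoted in the paper supplies. As it stands, your proposal is a correct reformulation of the conjecture, not a proof of it.
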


Since $\bGS_{(2,1)/(1)} = \bGQ_{(1)} \bGQ_{(1)} \neq 2\bGQ_{(2)}$
and since the $K$-theoretic Schur $Q$- and $S$-functions are homogeneous
if  $\beta$ has degree $-1$,
this formula would describe all possible identities of the form
$\bGS_{\lambda/\mu} = c \bGQ_\nu$ with $c\in \ZZ[\beta]$.

\section{Symmetric functions}\label{sym-sect}

 The power series $\bGP_{\lambda/\mu}$ and $\bGQ_{\lambda/\mu}$ defined 
 by \eqref{tilde-GQ-eq} and \eqref{gp-eq} are quasisymmetric by construction.  When $\mu = \emptyset$, it follows from \cite[Thm.~9.1]{IkedaNaruse} that these power series are actually symmetric.  In this section, 
 we
 prove that  $\bGP_{\lambda/\mu}$ and $\bGQ_{\lambda/\mu}$ are symmetric for any  $\mu$.
 As motivation, we start by discussing the 
connection between these power series and the $K$-theory of the Grassmannian.

\subsection{$K$-theory of Grassmannians}

Suppose $Z$ is a complex algebraic variety. The
 \emph{$K$-theory ring} $K(Z)$
is the Grothendieck group of locally-free sheaves on $Z$.
The ring multiplication is the operation induced by the tensor product.

Fix integers $0\leq k \leq n$ and let $\Gr(k,\CC^n)$ denote the Grassmannian of
$k$-dimensional subspaces of $\CC^n$.
For each partition $\lambda$ whose diagram fits in the rectangle $[k]\times[n-k]$,
there is an associated \emph{Schubert variety} $X_\lambda \subseteq \Gr(k,\CC^n)$; see \cite[\S3.2]{Manivel}. If $\cO_{X_\lambda}$ denotes the structure sheaf of $X_\lambda$, then
the ring $K(\Gr(k,\CC^n))$ is spanned by the corresponding classes $[\cO_{X_\lambda}]$.

Let $\Gamma$ be the additive group generated by the stable Grothendieck polynomials
$G_\lambda := G^{(-1)}_\lambda$ as $\lambda$ ranges over all integer partitions,
and let
$I_{k,n-k}$ denote the subgroup of $\Gamma$ spanned by the functions $G_\lambda$
indexed by partitions $\lambda\not\subseteq [k]\times [n-k]$.
Results of Buch \cite{Buch2002} show that $\Gamma$ is a ring in which
the subgroup $I_{k,n-k}$ is an ideal.

\begin{theorem}[{\cite[Thm.~8.1]{Buch2002}}]
If we set $[\cO_{X_\lambda}]=0$ when $\lambda\not\subseteq [k]\times [n-k]$,
then  $G_\lambda \mapsto[\cO_{X_\lambda}]$ 
induces a ring isomorphism $\Gamma / I_{k,n-k} \xrightarrow{\sim} K(\Gr(k,\CC^n))$.
\end{theorem}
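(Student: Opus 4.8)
Since this is \cite[Thm.~8.1]{Buch2002}, I will only sketch the strategy. The plan is to produce a surjective ring homomorphism $\psi\colon\Gamma\to K(\Gr(k,\CC^n))$ with $\psi(G_\lambda)=[\cO_{X_\lambda}]$ for $\lambda\subseteq[k]\times[n-k]$ and $\psi(G_\lambda)=0$ otherwise, and then to identify its kernel with $I_{k,n-k}$. The input I would take for granted is the standard computation in $K$-theoretic Schubert calculus that $K(\Gr(k,\CC^n))$ is a free $\ZZ$-module with basis $\{[\cO_{X_\lambda}] : \lambda\subseteq[k]\times[n-k]\}$, of rank $\binom{n}{k}$.

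The $\ZZ$-linear map $\psi$ defined on the basis $\{G_\nu\}$ of $\Gamma$ by the recipe above is automatically well defined, and because $I_{k,n-k}$ is an ideal of $\Gamma$ (as noted above) the inclusion $I_{k,n-k}\subseteq\ker\psi$ holds trivially. The real work is to show $\psi$ is multiplicative, equivalently that the structure constants of $\Gamma$ in the $G$-basis agree with the Schubert structure constants of $K(\Gr(k,\CC^n))$ whenever all three partitions fit inside $[k]\times[n-k]$. I would prove this by passing through the flag variety: the projection $\Fl(\CC^n)\to\Gr(k,\CC^n)$ is a locally trivial fibration by flag manifolds, so the pullback on $K$-theory is an injective ring homomorphism carrying $[\cO_{X_\lambda}]$ to the structure-sheaf class of a Grassmannian permutation $w_\lambda$; in the Borel presentation of $K(\Fl(\CC^n))$ this class is the Lascoux--Schützenberger Grothendieck polynomial $\mathfrak{G}_{w_\lambda}$, which is symmetric in the $K$-theoretic Chern roots of the tautological rank-$k$ subbundle $S$ and there restricts from $G_\lambda$ (using that $G_\lambda(x_1,\dots,x_k)=0$ when $\ell(\lambda)>k$, together with the quotient relation $[\wedge^\bullet S^\vee]\cdot[\wedge^\bullet Q^\vee]=[\wedge^\bullet(\CC^n)^\vee]$ which kills the remaining $G_\lambda$). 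Since multiplication in that presentation is ordinary polynomial multiplication and products of Schubert structure sheaves in $\Fl(\CC^n)$ expand into Schubert structure sheaves, multiplicativity of $\psi$ follows.

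Once $\psi$ is known to be a surjective ring homomorphism with $I_{k,n-k}\subseteq\ker\psi$, it factors through a surjection $\overline\psi\colon\Gamma/I_{k,n-k}\to K(\Gr(k,\CC^n))$; since $I_{k,n-k}$ is spanned by the $G_\mu$ with $\mu\not\subseteq[k]\times[n-k]$, the cosets $\{G_\lambda+I_{k,n-k}:\lambda\subseteq[k]\times[n-k]\}$ form a $\ZZ$-basis of the source, and $\overline\psi$ sends this basis bijectively onto the $\ZZ$-basis $\{[\cO_{X_\lambda}]\}$ of the target, so $\overline\psi$ is an isomorphism. The main obstacle is concentrated entirely in the middle step --- matching the two sets of structure constants --- and in the stability bookkeeping needed to reduce the stable symmetric functions $G_\lambda$ to finitely many variables; Buch's own argument avoids the flag variety by instead writing down an explicit presentation of $K(\Gr(k,\CC^n))$ and verifying directly that the special Grothendieck classes satisfy its defining relations, which carries the same content.
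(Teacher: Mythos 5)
The paper does not prove this statement; it is quoted verbatim from Buch \cite[Thm.~8.1]{Buch2002} as background, with no argument supplied. So there is no ``paper's proof'' to compare against, and the right standard to hold your sketch to is Buch's original argument, which you already flag at the end. Your route (pull back along $\Fl(\CC^n)\to\Gr(k,\CC^n)$, identify Grassmannian-permutation Grothendieck polynomials $\mathfrak G_{w_\lambda}$ in the Borel presentation with finite-variable truncations of $G_\lambda$, and deduce multiplicativity from the flag variety) is a genuinely different one from Buch's, who works entirely on the Grassmannian side by writing down an explicit presentation of $K(\Gr(k,\CC^n))$ in terms of the exterior powers of the tautological bundle and then checking that the single-row classes $G_{(p)}$ satisfy those relations. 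Your version trades Buch's explicit relation-checking for the (also nontrivial) stability bookkeeping: you must show that the stable power series $G_\lambda$ specializes to the Lascoux--Sch\"utzenberger polynomial $\mathfrak G_{w_\lambda}$ after setting $x_{k+1}=x_{k+2}=\cdots=0$, and you must be careful that the vanishing of $G_\lambda$ for $\lambda\not\subseteq[k]\times[n-k]$ has two distinct sources (too many rows kills it already in $k$ variables; too many columns only dies modulo the quotient relation in $K(\Gr)$). You mention both, so the sketch is sound in outline, but as written the key ``stability'' step is asserted rather than proved; Buch's own approach avoids that step entirely, which is presumably why he took it.
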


Thus, the stable Grothendieck polynomials are ``universal'' $K$-theory representatives
for Schubert varieties in type A Grassmannians, and their structure constants 
determine the structure constants of the ring $K(\Gr(k,\CC^n))$.
The shifted stable Grothendieck polynomials have a similar interpretation 
in the context of Lagrangian and maximal orthogonal Grassmannians.

Fix a nondegenerate symmetric bilinear form $\langle\cdot,\cdot\rangle$ on $\CC^n$ and define $\OG(k,n)$ to be the 
\emph{orthogonal Grassmannian} of $k$-dimensional subspaces $V\subseteq \CC^n$ 
that are isotropic, in the sense that $\langle V,V\rangle = 0$.
Let $\cG_n$ be either $\OG(n,2n+1)$ or $\OG(n+1,2n+2)$.
For each strict partition $\lambda$ whose shifted diagram fits in the square $[n]\times[n]$,
or which equivalently has $\lambda \subseteq (n,n-1,\dots,2,1)$,
there is an associated \emph{Schubert variety} $\Omega_\lambda \subseteq \cG_n$ \cite[\S8.1]{IkedaNaruse}.
The classes $[\cO_{\Omega_\lambda}]$ of the corresponding structure sheaves are a basis for $K(\cG_n)$.

Let $\Gamma_P$ be the additive group generated by
$\GP_\lambda := \GP_\lambda^{(-1)} $ as $\lambda$ ranges over all strict partitions.
Let 
$I_{P,n}$ denote the subgroup of $\Gamma_P$ spanned by the functions $\GP_\lambda$
indexed by strict partitions $\lambda\not\subseteq (n,n-1,n\dots,2,1)$.
Results in \cite{CTY,IkedaNaruse} 
(see also \cite{HKPWZZ}) show that $\Gamma_P$ is a ring in which $I_{P,n}$ is an ideal.

\begin{theorem}[{see \cite[\S8.3]{IkedaNaruse}}]
If we set $[\cO_{\Omega_\lambda}]=0$ when 
$ \lambda\not\subseteq(n,n-1,\dots,2,1)$,
then $\GP_\lambda \mapsto[\cO_{\Omega_\lambda}]$ 
induces a ring isomorphism $\Gamma_P / I_{P,n} \xrightarrow{\sim} K(\cG_n)$.
\end{theorem}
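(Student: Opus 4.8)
This result is due to Ikeda and Naruse \cite[\S8.3]{IkedaNaruse}; what follows is only a sketch of the shape of the argument, which parallels Buch's proof of the type A statement (Theorem~8.1 of \cite{Buch2002} above). Write $\rho_n := (n,n-1,\dots,2,1)$. Since the classes $\GP_\lambda = \GP^{(-1)}_\lambda$ form a $\ZZ$-basis of $\Gamma_P$ as $\lambda$ ranges over all strict partitions, and $I_{P,n}$ is by definition the span of those $\GP_\lambda$ with $\lambda\not\subseteq\rho_n$, the images of $\{\GP_\lambda : \lambda\subseteq\rho_n\}$ form a $\ZZ$-basis of $\Gamma_P/I_{P,n}$; on the geometric side the structure sheaf classes $\{[\cO_{\Omega_\lambda}] : \lambda\subseteq\rho_n\}$ form a $\ZZ$-basis of $K(\cG_n)$. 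Hence the assignment $\GP_\lambda\mapsto[\cO_{\Omega_\lambda}]$ (with $[\cO_{\Omega_\lambda}]:=0$ when $\lambda\not\subseteq\rho_n$) induces a well-defined $\ZZ$-module isomorphism $\phi:\Gamma_P/I_{P,n}\xrightarrow{\sim}K(\cG_n)$, and the content of the theorem is precisely that $\phi$ is multiplicative.

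Writing $\GP_\lambda\GP_\mu = \sum_\nu c^\nu_{\lambda\mu}\GP_\nu$ and $[\cO_{\Omega_\lambda}][\cO_{\Omega_\mu}] = \sum_{\nu\subseteq\rho_n} d^{\nu,n}_{\lambda\mu}[\cO_{\Omega_\nu}]$, the map $\phi$ is a ring homomorphism if and only if $c^\nu_{\lambda\mu} = d^{\nu,n}_{\lambda\mu}$ whenever $\lambda,\mu,\nu\subseteq\rho_n$. I would establish this in two steps. The first is a \emph{stabilization} step: the natural embedding $\cG_n\hookrightarrow\cG_{n+1}$ induces a surjective pullback $K(\cG_{n+1})\twoheadrightarrow K(\cG_n)$ sending $[\cO_{\Omega_\lambda}]$ to $[\cO_{\Omega_\lambda}]$ for $\lambda\subseteq\rho_n$ and to $0$ otherwise, so $d^{\nu,n}_{\lambda\mu}$ is independent of $n$ once $n$ is large relative to $\lambda,\mu,\nu$, with a well-defined stable value $d^\nu_{\lambda\mu}$; consequently $\varprojlim_n K(\cG_n)$ is a ring with a topological basis indexed by all strict partitions and with structure constants $d^\nu_{\lambda\mu}$. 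The second is an \emph{identification} step: one must show that this inverse limit is a completion of $\Gamma_P$ under which $\GP_\lambda$ is matched to the stable Schubert basis, i.e.\ that $c^\nu_{\lambda\mu} = d^\nu_{\lambda\mu}$. This is done by recognizing $\GP_\lambda$ as the stable limit of the $K$-theoretic Schubert classes $[\cO_{\Omega_\lambda}]$ — equivalently of the associated $K$-theoretic double Schubert polynomials, whose limits Ikeda and Naruse compute by torus localization — and then checking that both $\Gamma_P$ and $\varprojlim_n K(\cG_n)$ are cut out by the same presentation: a polynomial ring on the ``special'' (single-row) classes modulo the $K$-theoretic $Q$-cancellation relations (in the spirit of Lemma~\ref{q-cancel-lem}), with multiplication by a special class governed by a Pieri rule that matches the geometric Chevalley--Pieri formula on $\cG_n$. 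Matching presentations and matching Pieri rules force $c^\nu_{\lambda\mu} = d^\nu_{\lambda\mu}$, so $\phi$ is a ring isomorphism.

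The main obstacle is the identification step, and concretely the input that the stable geometric structure constants $d^\nu_{\lambda\mu}$ coincide with the combinatorial constants $c^\nu_{\lambda\mu}$ governing products of the $\GP_\lambda$. This does not follow formally from anything recalled earlier in the present paper; it rests on Ikeda and Naruse's localization computation of the stable limits of Schubert classes on $\cG_n$ — via torus-fixed points and excited Young diagrams — together with a $K$-theoretic Pieri rule for these classes (see also \cite{CTY,HKPWZZ}). For this reason the honest route here is simply to cite \cite[\S8.3]{IkedaNaruse} for the theorem; the value of the sketch is only to make clear why the generating functions $\GP_\lambda$ studied in this paper are the correct combinatorial avatars of $K(\cG_n)$.
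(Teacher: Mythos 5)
The paper offers no proof of this statement: it is quoted as an external result, attributed to \cite[\S8.3]{IkedaNaruse}, and serves only as background motivation for studying the $K$-theoretic Schur $P$-functions. You correctly recognize this and say that the honest route is to cite Ikeda and Naruse, which is precisely what the paper does. So there is no proof in the paper against which to compare your sketch.

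As for the sketch itself: the module-isomorphism part is immediate from basis matching as you say, and the stabilization-plus-Pieri strategy you outline is a sensible template modeled on Buch's type A proof. But be aware that Ikeda and Naruse's actual route is somewhat different in emphasis: they work in torus-equivariant $K$-theory, define $\bGP_\lambda$ (and its skew-free variant) as the stable limit of localized equivariant Schubert classes computed via excited Young diagrams, and deduce the ring isomorphism largely as a consequence of that construction rather than by separately establishing stabilization and then matching Pieri rules. The identification step you flag as the main obstacle is genuinely the substantive input, and your sketch does not supply it; it would need the localization formulas and the Chevalley--Pieri rule from the cited sources. None of this is a defect in the context of the present paper, since the paper treats the theorem as a black box, but if you ever needed to make the argument self-contained you would have to import those results rather than derive them from anything in this paper.
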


A similar result holds for the $K$-theoretic Schur $Q$-functions, with one technical caveat.
Let $\LG(n)$ denote the \emph{Lagrangian Grassmannian}
of $n$-dimensional subspaces in $\CC^{2n}$ that are isotropic with respect to a fixed
nondegenerate skew-symmetric bilinear form.
For each strict partition $\lambda\subseteq (n,n-1,\dots,2,1)$,
there is again an associated \emph{Schubert variety} $\Omega'_\lambda \subseteq \LG(n)$, and
the classes $[\cO_{\Omega'_\lambda}]$ are a basis for $K(\LG(n))$ \cite[\S8.1]{IkedaNaruse}.

Let $\Gamma_Q$ be the additive group generated by
$\GQ_\lambda := \GQ_\lambda^{(-1)} $ as $\lambda$ ranges over all strict partitions,
and let $I_{Q,n}$
be the subgroup spanned by the functions $\GQ_\lambda$
with $\lambda\not\subseteq (n,n-1,\dots,2,1)$.
Let $\hat \Gamma_Q := \prod_{\lambda} \ZZ \GQ_\lambda\supsetneq \bigoplus_{\lambda} \ZZ \GQ_\lambda = \Gamma_Q$ and define $\hat I_{Q,n}$ as the completion of $I_{Q,n}$ relative to its basis of $\GQ_\lambda$'s.
It then follows from \cite[Prop.~3.5]{IkedaNaruse} that
$\hat\Gamma_Q$ is a ring in which $\hat I_{Q,n}$ is an ideal.

\begin{theorem}[{see \cite[\S8.3]{IkedaNaruse}}]
If we set $[\cO_{\Omega'_\lambda}]=0$ when 
$ \lambda\not\subseteq(n,n-1,\dots,2,1)$,
then $\GQ_\lambda \mapsto[\cO_{\Omega'_\lambda}]$ 
induces a ring isomorphism $\hat \Gamma_Q / \hat I_{Q,n} \xrightarrow{\sim} K(\LG(n))$.
\end{theorem}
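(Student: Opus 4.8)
The plan mirrors the two preceding theorems: first build the map, then check it is a group isomorphism, and finally isolate multiplicativity as the one genuinely geometric input. Write $\rho_n := (n,n-1,\dots,2,1)$, and recall that $\GQ_\lambda = \GQ^{(-1)}_\lambda$ is the $\beta=-1$ specialization of $\bGQ_\lambda$, which by \cite[Thm.~9.1]{IkedaNaruse} coincides with Ikeda and Naruse's $K$-theoretic Schur $Q$-function.

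First I would note that, since $\{\GQ_\lambda\}$ (over all strict partitions) is a pseudobasis of $\hat\Gamma_Q = \prod_\lambda \ZZ\GQ_\lambda$ and only finitely many strict $\lambda$ satisfy $\lambda\subseteq\rho_n$, the formula $\phi\bigl(\sum_\lambda c_\lambda\GQ_\lambda\bigr) := \sum_{\lambda\subseteq\rho_n} c_\lambda[\cO_{\Omega'_\lambda}]$ defines a continuous $\ZZ$-linear map $\phi:\hat\Gamma_Q\to K(\LG(n))$. Its image is all of $K(\LG(n))$ because the classes $[\cO_{\Omega'_\lambda}]$ with $\lambda\subseteq\rho_n$ form a $\ZZ$-basis of $K(\LG(n))$ \cite[\S8.1]{IkedaNaruse}. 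By construction $\hat I_{Q,n} = \prod_{\lambda\not\subseteq\rho_n}\ZZ\GQ_\lambda \subseteq \ker\phi$, so $\phi$ descends to a continuous map $\bar\phi$ on $\hat\Gamma_Q/\hat I_{Q,n} \cong \bigoplus_{\lambda\subseteq\rho_n}\ZZ\GQ_\lambda$; since $\bar\phi$ carries the finite $\ZZ$-basis $\{\GQ_\lambda:\lambda\subseteq\rho_n\}$ bijectively onto the Schubert basis of $K(\LG(n))$, it is an isomorphism of abelian groups, and in particular $\ker\phi = \hat I_{Q,n}$.

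The main obstacle is showing that $\bar\phi$ is a ring homomorphism: for strict $\lambda,\mu\subseteq\rho_n$ one must check that if $\GQ_\lambda\cdot\GQ_\mu = \sum_\nu d^\nu_{\lambda\mu}\GQ_\nu$ in the completed ring $\hat\Gamma_Q$ — whose ring structure, and the fact that $\hat I_{Q,n}$ is an ideal in it, is \cite[Prop.~3.5]{IkedaNaruse} — then deleting the terms with $\nu\not\subseteq\rho_n$ recovers $[\cO_{\Omega'_\lambda}]\cdot[\cO_{\Omega'_\mu}]$ in $K(\LG(n))$. This is a geometric statement, and the route I would take is to invoke \cite[\S8.3]{IkedaNaruse}: the functions $\GQ_\lambda$ are the stable limits of the Schubert classes $[\cO_{\Omega'_\lambda}]$ under the restriction maps $K(\LG(m))\to K(\LG(n))$ for $m\gg 0$, and the $\GQ_\nu$ with $\nu\not\subseteq\rho_n$ are exactly the ones annihilated by restriction to $K(\LG(n))$; its algebraic counterpart is the $K$-theoretic $Q$-cancellation property (our Lemma~\ref{q-cancel-lem} together with symmetry), which pins down the relevant subring of $\mQSym$. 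The passage to the completions $\hat\Gamma_Q$ and $\hat I_{Q,n}$ — which distinguishes the present Lagrangian statement from the orthogonal one, where no completion was needed — is forced by the inhomogeneity of the $\beta=-1$ specialization: the products $\GQ_\lambda\cdot\GQ_\mu$ have infinitely many lower-degree terms, and only the completed ideal can absorb all of them. Once multiplicativity is established, combining it with the group isomorphism from the previous paragraph yields the stated ring isomorphism.
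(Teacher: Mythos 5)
This theorem is presented in the paper as background and is not proved there: it is attributed to \cite[\S8.3]{IkedaNaruse}, and the only original content in the surrounding discussion is the observation that one must work with the completions $\hat\Gamma_Q$ and $\hat I_{Q,n}$ because it is open (\cite[Conj.~3.2]{IkedaNaruse}) whether $\Gamma_Q$ itself is closed under multiplication. So there is no proof in the paper to compare yours against. Judged on its own terms, your decomposition is sound: the additive part is elementary (only finitely many strict $\lambda$ fit inside $(n,n-1,\dots,2,1)$, so the map is well defined and continuous on the product, its kernel is visibly $\hat I_{Q,n}$, and the induced map carries the finite basis $\{\GQ_\lambda : \lambda \subseteq (n,\dots,1)\}$ bijectively onto the Schubert basis), and you correctly isolate multiplicativity as the sole geometric input. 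Be aware, though, that your treatment of that input is a restatement rather than a proof: the claim that the structure constants of $\hat\Gamma_Q$ furnished by \cite[Prop.~3.5]{IkedaNaruse} agree, after truncating the $\nu\not\subseteq(n,\dots,1)$ terms, with those of $K(\LG(n))$ is precisely the content of \cite[\S8.3]{IkedaNaruse}, so your argument imports the same black box the paper cites. One small slip in the motivating remark: the completion is needed not because $\GQ_\lambda\cdot\GQ_\mu$ has ``infinitely many lower-degree terms,'' but because its expansion $\sum_\nu d^\nu_{\lambda\mu}\GQ_\nu$ may involve infinitely many $\nu$, all with $|\nu|\geq|\lambda|+|\mu|$ (compare lowest-degree components of the homogeneous $\bGQ_\lambda$'s); the tail runs through ever \emph{larger} partitions, which is exactly why $\bigoplus_\lambda\ZZ\,\GQ_\lambda$ may fail to be a ring while $\prod_\lambda\ZZ\,\GQ_\lambda$ is one.
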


We have to state this result in terms of the completions $\hat \Gamma_Q$ and $\hat I_{Q,n}$ 
because
it is still an open problem to show that $\Gamma_Q$ is a ring; see \cite[Conj.~3.2]{IkedaNaruse}.
If this holds then  $\GQ_\lambda \mapsto [\cO_{\Omega'_\lambda}]$
would also induce an isomorphism $\Gamma_Q /I_{Q,n} \xrightarrow{\sim} K(\LG(n))$.
To prove Ikeda and Naruse's conjecture, it is enough to show that
$\bGQ_\lambda \bGQ_\mu$ is always a finite linear combination of $\bGQ_\nu$'s.
Results of Buch and Ravikumar \cite{BuchRavikumar} imply that this holds at least when $\lambda$ or $\mu$ has a single part.

\subsection{Fomin--Kirillov operators}

An element $f \in \ZZ[\beta]\llbracket x_1,x_2,\dots \rrbracket$ is \emph{symmetric} if
the coefficients of $x_1^{a_1}x_2^{a_2}\cdots x_{k}^{a_k}$ and $x_{i_1}^{a_1}x_{i_2}^{a_2}\cdots x_{i_k}^{a_k}$
in $f$ are equal for every choice of $a_1,a_2,\dots,a_k \in \PP$ and every choice of $k$ distinct positive integers 
$i_1, i_2, \dots, i_k$. 
Equivalently, $f$ should be invariant under the change of variables swapping 
$x_i$ and $x_{i+1}$ for all $i$.

\begin{definition}
Let $\mSym$ denote the $\ZZ[\beta]$-module
of all symmetric power series in $\ZZ[\beta]\llbracket x_1,x_2,\dots \rrbracket$.
Let $\Sym$ denote the submodule of power series in $\mSym$ of bounded degree.
\end{definition}

The  \emph{monomial symmetric function} of a partition $\lambda$ is 
the power series given by the sum
$m_\lambda := \sum_{\sort(\alpha)=\lambda}M_\alpha$ over all compositions
$\alpha$ that sort to $\lambda$.
It is well-known that $\Sym$ is a graded Hopf subalgebra of $\QSym$,
which is free as a $\ZZ[\beta]$-module
with
a homogeneous basis given by the power series $\{m_\lambda\}$.
We identify $\mSym$ with the completion of $\Sym$ relative to this basis.

The main result of this section is the following theorem, 
which reduces to \cite[Thm.~9.1]{IkedaNaruse} in the case when $\mu=\emptyset$.

\begin{theorem}
\label{skew-sym-thm}
Let $\mu$ and $\lambda$ be strict partitions
with $\mu\subseteq \lambda$.
The power series $\bGP_{\lambda/\mu}$ and $\bGQ_{\lambda/\mu}$
are elements of $\mSym$.
\end{theorem}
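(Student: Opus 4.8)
The plan is to prove symmetry by showing that each $\bGQ_{\lambda/\mu}$ (and $\bGP_{\lambda/\mu}$) is invariant under the elementary transposition $x_i \leftrightarrow x_{i+1}$ of adjacent variables, since a quasisymmetric power series invariant under all such swaps is symmetric. The natural strategy is to exhibit, for each $i$, a weight-preserving, length-preserving involution on the set of semistandard shifted set-valued (marked) tableaux $\SetSSMT(\lambda/\mu)$ that interchanges the multiplicities of $i$ and $i+1$ — that is, a $K$-theoretic analogue of the Bender--Knuth / Fomin--Kirillov switching involution. Given the title of the subsection ("Fomin--Kirillov operators"), I expect the proof to be organized around operators $B_i$ on $\MSet$-valued fillings that act locally in the two-row / two-column strip occupied by the entries $\{i', i, (i+1)', i+1\}$, adjusting which cells hold $i$-labels versus $(i+1)$-labels while preserving the $\preceq$-chain conditions in rows and columns and the marking constraints (no unprimed number repeated in a column, no primed number repeated in a row). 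One then checks $B_i^2 = \mathrm{id}$, that $B_i$ sends $\SetSSMT(\lambda/\mu)$ to itself, that it preserves $|T|$ (so the power of $\beta$ is unchanged), and that it swaps the exponents of $x_i$ and $x_{i+1}$ in $x^T$; applying it for all $i$ gives invariance under every adjacent transposition, hence symmetry.

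For the $\bGP$ case one must further check that $B_i$ preserves the extra condition that the main-diagonal entries $T(j,j)$ contain only unprimed numbers — equivalently, that $B_i$ restricts to the subset $\tE(\SD_{\lambda/\mu},\theta,V_{\lambda/\mu})$; this is where the primed/unprimed bookkeeping near the diagonal is most delicate, and I would handle it by noting that the diagonal cells of a shifted shape lie in the first available column of their row, so the local picture there is constrained enough that the switching rule can be set up to fix the diagonal. Alternatively — and this may be cleaner — one can deduce the $\bGP$ symmetry from the $\bGQ$ symmetry using Theorem~\ref{op-thm}: since $\bGP_{\lambda/\mu} = \tOmega(\SD_{\lambda/\mu},\theta,V_{\lambda/\mu}) = \prod_{v \in V_{\lambda/\mu}}(2+\beta\fkD_v)^{-1}\cdot \bGQ_{\lambda/\mu}$ in $\mQSym_{\QQ[\beta]}$, and each vertex-doubling operator $\fkD_v$ is realized geometrically by adding a box to the shifted shape (doubling a diagonal cell), symmetry of all $\bGQ$'s would propagate. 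But the doubled poset $\fkD_v(\SD_{\lambda/\mu},\theta)$ need not itself be (the poset of) a shifted skew shape, so one would need symmetry of $\tOmega$ for a slightly larger class of labeled posets — which again points back to proving the involution directly.

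The main obstacle, as I see it, is making the switching involution genuinely well-defined in the set-valued setting: in ordinary tableaux the Bender--Knuth move on a row segment of $i$'s and $(i+1)$'s is a clean "flip lengths" operation, but with set-valued entries a single cell can contain both $i$ and $i+1$ (or both $i'$ and $i+1$, etc.), the $\prec$ versus $\preceq$ distinctions track which covering relations are "ascents" in $(P,\gamma)$, and the marked-alphabet order $1' < 1 < 2' < 2 < \cdots$ means the entries being permuted are really four symbols $\{i', i, (i+1)', (i+1)\}$ rather than two. I would therefore isolate the combinatorics entirely at the level of linear multiextensions: use Theorem~\ref{ep-thm1} to reduce $\tOmega(P,\gamma)$ to a sum of $\tOmega(w,\gamma) = K^{(\beta)}_\alpha$ over $w \in \tL(P)$, and then it suffices to prove each $K^{(\beta)}_\alpha$ is \emph{not} what we want (it isn't symmetric), so instead one must work with the \emph{full} tableau model and build the involution directly on $\SetSSMT(\lambda/\mu)$, exploiting that the shape is fixed. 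Concretely, fix $i$; in each row, the cells that can hold an element of $\{i', i, (i+1)', (i+1)\}$ form a contiguous interval, and within the two-row band the column-strictness for unprimed values and row-weak-increase force a rigid structure ("free" cells vs. cells pinned by neighbors); the switching rule then rebalances the counts of unprimed $i$ vs. unprimed $i+1$ in each column-compatible run, handling the at most one mixed cell per relevant row separately, exactly as in Ikeda--Naruse's argument for $\mu = \emptyset$ — the new content being only that the boundary of the region is the skew inner shape $\mu$ rather than empty, which changes nothing essential because the local moves never reference cells outside $\SD_{\lambda/\mu}$. Verifying $\beta$-degree preservation ($|T|$ unchanged) and involutivity is then routine. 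I would expect the write-up to quote the $\mu=\emptyset$ case of \cite[Thm.~9.1]{IkedaNaruse} as the template and emphasize that the involution is "inner-shape-blind."
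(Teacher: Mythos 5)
Your overall strategy (a Bender--Knuth-type switching involution on $\SetSSMT(\lambda/\mu)$ exchanging the multiplicities of $i$ and $i+1$) is a legitimate route in principle, but as written there is a genuine gap: the involution $B_i$ is never actually constructed, and its construction is the entire content of the theorem. You repeatedly defer to ``exactly as in Ikeda--Naruse's argument for $\mu=\emptyset$,'' but \cite[Thm.~9.1]{IkedaNaruse} is not proved by a switching involution on marked set-valued tableaux (it goes through excited Young diagrams and localization), so there is no template of the kind you invoke. In the shifted marked setting the difficulty is real: a single cell may contain any subset of $\{i',i,(i+1)',i+1\}$, the constraints ``no unprimed letter twice in a column, no primed letter twice in a row'' couple the two rows and two columns of the relevant band in a way that has no clean analogue of ``flip the run lengths,'' and for $\bGP$ one must additionally fix the diagonal condition. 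Your own text signals the problem: the passage reducing to $K^{(\beta)}_\alpha$ via Theorem~\ref{ep-thm1} is abandoned mid-sentence (correctly --- the individual $K^{(\beta)}_\alpha$ are not symmetric, so no conclusion can be drawn summand by summand), and the proposed deduction of the $\bGP$ case from the $\bGQ$ case via Theorem~\ref{op-thm} is acknowledged to require symmetry for doubled posets that are no longer shifted skew shapes. What remains is a plan, not a proof.

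For contrast, the paper avoids the involution entirely. It encodes a tableau as an alternating sequence of vertical and horizontal strips added to $\SD_\mu$, realizes the generating functions (up to an inclusion--exclusion over removable boxes of $\mu$, Corollary~\ref{in-ex-cor}) as matrix coefficients $\langle\lambda,\cdots\cQ_n(x_2)\cQ_n(x_1)\mu\rangle$ of products of box-adding operators $A_i(x)=1+xa_i$, and then proves Yang--Baxter-type relations (Lemma~\ref{yang-baxter-lem}) implying $\cQ_n(x)\cQ_n(y)=\cQ_n(y)\cQ_n(x)$ and $\cP_n(x)\cP_n(y)=\cP_n(y)\cP_n(x)$, following Fomin--Kirillov. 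Commutativity of these operators is exactly invariance under $x_i\leftrightarrow x_{i+1}$, and the finitely many local cases in the Yang--Baxter check replace the global tableau combinatorics you would otherwise have to control. If you want to salvage your approach, you would need to write down $B_i$ explicitly on the two-row band and verify involutivity, weight exchange, and preservation of all marking and diagonal constraints --- a nontrivial piece of new combinatorics rather than a routine adaptation.
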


We delay the proof of this result until Section~\ref{yb-sect}.
First, we need to introduce two new families of power series closely related to 
$\bGP_{\lambda/\mu}$ and $\bGQ_{\lambda/\mu}$.

Let $\SPart$ be 
the free $\ZZ[\beta]\llbracket x_1, x_2,\ldots\rrbracket$-module with a pseudobasis given by the set of all strict partitions, and write
 $\mSPart$ for the corresponding completion.
 Let
 $ \langle\cdot,\cdot\rangle : \SPart \times \mSPart \to \ZZ[\beta]\llbracket x_1, x_2,\ldots\rrbracket $
 denote the associated form making the natural (pseudo)bases of strict partitions in $\SPart$ and $\mSPart$ dual to each other. In other words, $\langle\cdot,\cdot\rangle$ is the 
 nondegenerate $ \ZZ[\beta]\llbracket x_1, x_2,\ldots\rrbracket$-bilinear form, continuous in the second coordinate,
 such that $\langle \mu,\nu\rangle = \delta_{\mu\nu}$ for all strict partitions $\mu$ and $\nu$.

 Let $\mu$ be a strict partition of $n \in \NN$
  with  shifted diagram $\SD_\mu$ as in \eqref{sd-eq}. 
The \emph{$r$th diagonal} of $\mu$ is the set of positions $(i,j) \in \SD_\mu$ with $j - i =r$.
The \emph{removable boxes} of $\mu$
are the positions $(i,j) \in \SD_\mu$ such that $\SD_\mu \setminus\{(i,j)\}$ is the shifted diagram of 
a strict partition of $n-1$.
The \emph{addable boxes} of  $\mu$
are the positions $(i,j) \notin \SD_\mu$ such that $\SD_\mu\sqcup \{(i,j)\}$ is the shifted diagram of a strict partition of $n+1$.

 For $n \in \NN$, define
 $a_n : \SPart \to \SPart$
 to be  the continuous linear map such that if $\mu$ is a strict partition then
 \[
a_n \mu = \begin{cases}
\beta \cdot  \mu & \textrm{if $\mu$ has removable box on its $n$th diagonal}, \\
 \nu & \textrm{if $\SD_{\nu} = \SD_\mu \sqcup\{(i,j)\}$ where $j-i=n$, and }\\
0 & \textrm{otherwise}.
\end{cases}
\]
When $\beta=0$ the maps $a_i$ specialize to the \emph{diagonal box-adding operators} considered 
in \cite[Ex.~2.4]{FominGreene} or \cite[\S1.4]{Serrano}.
For $x \in \ZZ[\beta]\llbracket x_1,x_2 \ldots\rrbracket$, let 
\be\label{apq-eq}
\ba 
A_n(x) &= 1 + x a_n,\\
\cP_n(x) &= A_n(x)  \cdots A_2(x) A_1(x) A_0(x) A_{1}(x)A_{2}(x) \cdots    A_{n}(x),
\\
\cQ_n(x) &=  A_n(x)  \cdots A_2(x) A_1(x) A_0(x)A_0(x) A_{1}(x) A_{2}(x) \cdots    A_{n}(x).
\ea
\ee
These operators are similar to the ones which Fomin and Kirillov define in \cite{FominKirillov}. 
Fix strict partitions $\mu \subseteq \lambda$, let $n = \lambda_1 \geq \ell(\lambda)$,
and define 
\[
\bGP_{\lambda\ss \mu}
:=
\left\langle \lambda,  \cdots \cP_n(x_2)\cP_n(x_1)  \mu \right\rangle
\quand
\bGQ_{\lambda\ss \mu}
:=
\left\langle \lambda,  \cdots \cQ_n(x_2)\cQ_n(x_1)  \mu \right\rangle.
\]
The first formula is well-defined because  the coefficient of each fixed $x$-monomial 
 in $\langle \lambda, \cP_n(x_N)\cdots \cP_n(x_2)\cP_n(x_1)  \mu\rangle$
eventually stabilizes as $N\to\infty$.
The second formula makes sense for similar reasons. 
(One can replace $n$ in these formulas by any integer greater than $\lambda_1$
without changing the meaning.)

From our definition, it is only clear that $\bGP_{\lambda\ss \mu}$
and $\bGQ_{\lambda\ss \mu}$ are formal power series in $\ZZ[\beta]\llbracket x_1,x_2,\dots \rrbracket$.
These power series are actually quasisymmetric and  related to 
$\bGP_{\lambda/ \mu}$
and $\bGQ_{\lambda/ \mu}$  in the following way.
Let $\IC(\mu)$ be the set of removable boxes of the strict partition $\mu$.

\begin{proposition}
Suppose $\mu\subseteq \lambda$ are strict partitions.  Then  
\[
\bGP_{\lambda\ss \mu} = \sum_{\nu \subseteq \mu} \beta^{|\mu|-|\nu|} \bGP_{\lambda/\nu}
\qquand
\bGQ_{\lambda\ss \mu} = \sum_{\nu \subseteq \mu} \beta^{|\mu|-|\nu|}\bGQ_{\lambda/\nu}
\] 
where both sums are over all strict partitions $\nu \subseteq \mu$ with $\SD_{\mu/\nu}\subseteq \IC(\mu)$.
\end{proposition}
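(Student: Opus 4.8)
The plan is to expand the Fomin--Kirillov operator product combinatorially and match its terms with marked set-valued tableaux; I describe the argument for $\bGP_{\lambda\ss\mu}$, the case of $\bGQ_{\lambda\ss\mu}$ being identical once one accounts for the extra central factor $A_0(x)$ in $\cQ_n(x)$. Fix a monomial $x^S$ in the variables $x_1,\dots,x_N$. Since $\cdots\cP_n(x_2)\cP_n(x_1)\mu$ stabilizes coefficientwise, the coefficient of $x^S$ in $\bGP_{\lambda\ss\mu}$ equals its coefficient in $\langle\lambda,\cP_n(x_N)\cdots\cP_n(x_1)\mu\rangle$, and expanding each $A_i(x_k)=1+x_ka_i$ rewrites this as a sum over \emph{growth histories}: for every pass $k\in\{1,\dots,N\}$ a choice of a subset of the ordered list of operators $a_n,a_{n-1},\dots,a_1,a_0,a_1,\dots,a_n$ (the two occurrences of $a_i$ for each $1\le i\le n$ being treated as distinct) to \emph{activate}, such that applying the activated operators in order carries $\mu$ to $\lambda$ and the number of activations in pass $k$ is the exponent of $x_k$ in $S$. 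By the definition of $a_i$, each activation either adjoins a cell on its diagonal (contributing no $\beta$) or, when the current shape already has a removable cell on that diagonal, leaves the shape fixed and contributes one factor of $\beta$; an activation of neither form kills the term.

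The heart of the proof is a weight-preserving bijection between growth histories from $\mu$ to $\lambda$ and the disjoint union, over strict partitions $\nu$ with $\SD_{\mu/\nu}\subseteq\IC(\mu)$ --- equivalently, over subsets $R\subseteq\IC(\mu)$, with $\nu=\mu\setminus R$ --- of the sets of $T\in\SetSSMT(\lambda/\nu)$ whose diagonal entries are unprimed. In pass $k$, the activations on the descending run $a_n,\dots,a_1$ record the cells whose entry set contains $k'$, and the activations on the ascending run $a_0,a_1,\dots,a_n$ record the cells containing $k$; an activation that adjoins a new cell supplies that cell's first entry, while an activation hitting an already-present removable cell records one further (set-valued) entry there. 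The only activations that can hit a removable cell of $\mu$ itself occur before any cell has been adjoined, that is, at the very start of pass $1$; since the removable corners of $\mu$ lie on pairwise distinct diagonals and may be removed independently, the set $R\subseteq\IC(\mu)$ of corners so hit is arbitrary, $\mu\setminus R$ is a strict partition, and these hits contribute exactly $\beta^{|R|}=\beta^{|\mu|-|\nu|}$; the remaining cells of $T$ are $\SD_{\lambda/\mu}$, so $T$ has shape $\lambda/\nu$. One then checks that the intra-pass order --- descending diagonals for the primed entries, then ascending for the unprimed --- enforces precisely the defining conditions on $T$ (weak increase along rows and columns for $\preceq$, no repeated unprimed entry in a column, no repeated primed entry in a row), and that the single central $A_0$ keeps primed entries off the main diagonal, while for $\bGQ$ the doubled $A_0A_0$ removes this last restriction. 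Under this bijection a history carries total $\beta$-exponent $(|\mu|-|\nu|)+(|T|-|\lambda/\nu|)=|T|-|\lambda/\mu|$, so summing over all histories gives
\[
\bGP_{\lambda\ss\mu}=\sum_{R\subseteq\IC(\mu)}\beta^{|R|}\,\bGP_{\lambda/(\mu\setminus R)}=\sum_{\nu}\beta^{|\mu|-|\nu|}\,\bGP_{\lambda/\nu},
\]
which is the first identity; the second follows verbatim with $\cQ_n$ in place of $\cP_n$. In particular $\bGP_{\lambda\ss\mu}$ and $\bGQ_{\lambda\ss\mu}$ are quasisymmetric, each summand being so by construction.

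The main obstacle is the bookkeeping inside this bijection: one must verify that the prescribed reading order converts an element of $\SetSSMT(\lambda/\nu)$ into a legal growth history and back while correctly tracking which physical cell each operator acts on, handle the degenerate diagonals $0$ and $n$ (where a cell addition would leave $\lambda$ and hence cannot occur in a surviving history), and --- the subtlest point --- show that only the first pass can push the inner boundary below $\mu$, so that $\beta$-hits on corners of $\mu$ in later passes are counted as set-valued entries rather than as further shrinking of $\nu$. A possible alternative is to induct on $|\mu|$, using $A_i(x_1)\mu=\mu+\beta x_1\mu$ together with the relation $a_i^2=\beta a_i$ to peel off one removable corner of $\mu$ at a time, with base case $\mu=\emptyset$ supplied by Ikeda and Naruse's operator descriptions of $\bGP_\lambda$ and $\bGQ_\lambda$ in \cite{IkedaNaruse}; but the interleaving of cell additions and $\beta$-hits within the first pass makes this recursion delicate, and it ultimately relies on the same tableau-theoretic input.
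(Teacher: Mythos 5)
Your overall strategy --- expand $\cdots\cP_n(x_2)\cP_n(x_1)\mu$ into a sum over growth histories and then biject these with pairs $(\nu,T)$ where $\SD_{\mu/\nu}\subseteq\IC(\mu)$ and $T\in\SetSSMT(\lambda/\nu)$ --- is exactly the one the paper uses, merely phrased in terms of individual operator activations rather than the pair of strips $(V_i,H_i)$ recorded by each pass of $\cP_n(x_i)$ or $\cQ_n(x_i)$.

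There is, however, a genuine error in your justification of the bijection, precisely at the point you flag as ``the subtlest.'' You claim that the only activations that can hit a removable corner of $\mu$ are those at the very start of pass $1$, and from this you conclude that the set $R\subseteq\IC(\mu)$ of hit corners is chosen once-and-for-all before anything is adjoined. That claim is false. A removable corner $(a,b)\in\IC(\mu)$ remains a removable corner of the current shape (and so can still be hit, contributing a factor of $\beta$) for as long as neither $(a,b+1)$ nor $(a+1,b)$ has been adjoined, and this can persist through many passes; the first hit on $(a,b)$ can therefore occur in an arbitrary pass $k$. A minimal example is $\mu=\lambda=(2,1)$, where $\IC(\mu)=\{(2,2)\}$. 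The growth history with no activations in pass $1$ and a single $a_0$-activation in pass $2$ is legal, carries $\mu$ to $\lambda$, and contributes $\beta x_2$; it should correspond to $\nu=(2)$ and the tableau $T$ of shape $(2,1)/(2)$ with $T(2,2)=\{2\}$. Under your stated bijection every box of $\SD_{\mu/\nu}$ would be forced to contain $1'$ or $1$ as its minimum entry, so your map is not surjective onto $\bigsqcup_\nu\SetSSMT(\lambda/\nu)$ and the claimed identity would not follow.

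The repair --- and it is what the paper's proof actually does --- is to allow each pass to hit removable corners of the \emph{current} shape, not just of $\mu$: one tracks a chain $\mu=\lambda^0\subseteq\mu^1\subseteq\lambda^1\subseteq\cdots\subseteq\lambda^N=\lambda$ in which the strip added in pass $i$ is permitted to overlap $\IC(\lambda^{i-1})$. Under the map $T\mapsto(V_1,H_1,\ldots)$ sending $V_i,H_i$ to the boxes of $T$ containing $i',i$, a box $(a,b)\in\SD_{\mu/\nu}$ whose first entry occurs in pass $k$ is automatically a removable corner of $\lambda^{k-1}$, because the semistandard conditions force every entry of $(a,b+1)$ and $(a+1,b)$ to be at least $\max T(a,b)$, hence those boxes cannot have been filled before pass $k$; the box then contributes $\beta$ on that first hit and on every subsequent one, which together with the $|\lambda/\mu|$ genuine cell-additions gives $\beta$-exponent $|T|-|\lambda/\mu|$. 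Your arithmetic $(|\mu|-|\nu|)+(|T|-|\lambda/\nu|)=|T|-|\lambda/\mu|$ is correct as stated; it is the bijection, not the bookkeeping, that needs fixing.
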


In particular, neither  $\bGP_{\lambda\ss \lambda}$ nor $\bGQ_{\lambda \ss \lambda}$
is equal to $\bGP_{\lambda/\lambda} = \bGQ_{\lambda/\lambda}  =1$.

\begin{proof}
A \emph{vertical strip} (respectively, \emph{horizontal strip}) is a skew shape with no two boxes in the same row (respectively, same column).

We first prove the formula for $\bGQ_{\lambda \ss\mu}$.
Let $n =\lambda_1 \geq \ell(\lambda)$.
From the definition of $a_i$ and $A_i(x)$, one sees that the strict partition $\lambda$ appears with nonzero coefficient in $A_0(x)A_1(x) \cdots A_n(x) \mu$ if and only if we can produce the shifted diagram of $\lambda$ by adding a vertical strip to the shifted diagram of $\mu$.  Moreover, if we define $\cV_{\lambda/\mu}$ to be the collection of vertical strips $V$ 
such that $V \cup \SD_{\mu} = \SD_\lambda$ and $V \cap \SD_{\mu}\subseteq\IC(\mu)$,
then 
\be\label{coeff1-eq}
\left\langle \lambda, A_0(x) A_1(x)\cdots A_n(x) \mu \right\rangle = \sum_{V \in \mathcal{V}_{\lambda/\mu}} \beta^{- |\lambda/\mu|} (\beta x)^{|V|}.
\ee
Similarly,  $\lambda$ appears with nonzero coefficient in $A_n(x) \cdots A_1(x)A_0(x) \mu$ if and only if we can can produce the shifted diagram of $\lambda$ by adding a horizontal strip to the shifted diagram of $\mu$.  Moreover, if we define $\cH_{\lambda/\mu}$ to be the collection of horizontal strips $H$ such that $H \cup \SD_{\mu} = \SD_\lambda$ and $H \cap \SD_{\mu}\subseteq \IC(\mu)$, then
\[
\left\langle \lambda, A_n(x) \cdots A_1(x)A_0(x) \mu \right\rangle = \sum_{H \in \mathcal{H}_{\lambda/\mu}} \beta^{- |\lambda/\mu|} (\beta x)^{|H|}.
\]
Combining these observations, we deduce that 
\be\label{bgq-ss-eq}
\bGQ_{\lambda\ss \mu} =
\sum_{(V_1, H_1, V_2, H_2, \ldots, V_N,H_N)} \beta^{- |\lambda/\mu|} \prod_i (\beta x_i)^{|V_i| + |H_i|}
\ee
where the sum is over all tuples $(V_1,H_1,V_2,H_2,\ldots, V_N, H_N)$
such that for some sequence of strict partitions $\mu = \lambda^0\subseteq \mu^1 \subseteq \lambda^1 \subseteq \mu^2 \subseteq \lambda^2 \subseteq \dots \subseteq \mu^N \subseteq \lambda^{N} = \lambda$
it holds that $V_i \in \cV_{\mu^i/\lambda^{i-1}}$ and $H_i \in \cH_{\lambda^i/\mu^i}$.

Fix strict partitions $\mu \subseteq \lambda$.  Consider the set of semistandard shifted set-valued tableaux $T$ of shape $\lambda/\nu$
where $\nu \subseteq \mu$ is a strict partition with $\SD_{\mu/\nu}\subseteq \IC(\mu)$.
This set is in bijection with the sequences indexing the summands of \eqref{bgq-ss-eq} via the 
map $T \mapsto (V_1,H_1,V_2,H_2,\ldots)$
where
$V_i$ and  $H_i$ are the sets of 
boxes in $T$ containing $i'$ and $i$, respectively.
Moreover, under this bijection, we have
$|T| = \sum_i (|V_i| + |H_i|)$ and $x^T = \prod_i x_i^{|V_i| + |H_i|}$.
Thus,
the desired formula for $\bGQ_{\lambda \ss \mu}$ follows by combining the definition
\eqref{gq-eq} with \eqref{bgq-ss-eq}.

The argument needed to deduce our formula for $\bGP_{\lambda \ss \mu}$ is similar:
one just needs to modify the steps above by
replacing $A_0(x) A_1(x)\cdots A_n(x)$ by $ A_1(x)\cdots A_n(x)$ in \eqref{coeff1-eq}
and requiring all vertical strips $V \in \cV_{\lambda/\mu}$ to contain no positions on the main diagonal.
We omit the details.
\end{proof}

Applying  inclusion-exclusion to the preceding result gives the following.

\begin{corollary}\label{in-ex-cor}
Suppose $\mu\subseteq \lambda$ are strict partitions. Then 
\[
\bGP_{\lambda/\mu} = \sum_{\nu  \subseteq \mu} (-\beta)^{|\mu| - |\nu|} \bGP_{\lambda\ss \nu}
\qquand
\bGQ_{\lambda/\mu} = \sum_{\nu\subseteq \mu} (-\beta)^{|\mu| - |\nu|} \bGQ_{\lambda\ss \nu}
\]
where both sums are over all strict partitions $\nu \subseteq \mu$.
\end{corollary}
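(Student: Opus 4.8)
The plan is to read both formulas as the inversion of a single unitriangular transition matrix. Fix a strict partition $\lambda$ and index everything by strict partitions contained in $\lambda$. Let $U=(U_{\mu\nu})$ be the matrix with $U_{\mu\nu}=\beta^{|\mu|-|\nu|}$ when $\nu\subseteq\mu$ and $\SD_{\mu/\nu}\subseteq\IC(\mu)$, and $U_{\mu\nu}=0$ otherwise, and let $V=(V_{\mu\nu})$ have $V_{\mu\nu}=(-\beta)^{|\mu|-|\nu|}$ when $\nu\subseteq\mu$ and $V_{\mu\nu}=0$ otherwise. The preceding proposition says precisely that the vector $(\bGP_{\lambda\ss\mu})$ is the image under $U$ of the vector $(\bGP_{\lambda/\nu})$, and likewise with $\bGQ$ replacing $\bGP$; crucially $U$ is the same matrix in both cases. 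Since $U$ and $V$ are unitriangular with respect to any linear refinement of containment ordered by size, it suffices to prove the single matrix identity $VU=I$; both displayed formulas then follow at once.

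For strict partitions $\kappa\subseteq\mu$ a short computation gives
\[
(VU)_{\mu\kappa}\;=\;\beta^{\,|\mu|-|\kappa|}\sum_{\nu\in T_\kappa}(-1)^{\,|\mu|-|\nu|},\qquad T_\kappa:=\bigl\{\nu:\kappa\subseteq\nu\subseteq\mu,\ \SD_{\nu/\kappa}\subseteq\IC(\nu)\bigr\}.
\]
When $\kappa=\mu$ we have $T_\mu=\{\mu\}$ and the sum equals $1$, so the claim reduces to showing $\sum_{\nu\in T_\kappa}(-1)^{|\nu|}=0$ whenever $\kappa\subsetneq\mu$. I would prove this with an explicit fixed-point-free sign-reversing involution on $T_\kappa$: let $i_0$ be the least row index with $\kappa_{i_0}<\mu_{i_0}$ (which exists because $\kappa\subsetneq\mu$), and send $\nu\in T_\kappa$ to the partition obtained from $\nu$ by changing the length of row $i_0$ from $\kappa_{i_0}$ to $\kappa_{i_0}+1$, or back, while fixing all other rows. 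This is visibly an involution with no fixed points, and it changes $|\nu|$ by $\pm1$, so once we know it maps $T_\kappa$ into itself the alternating sum vanishes.

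The only real content is that last point, and it rests on the minimality of $i_0$. Since rows $1,\dots,i_0-1$ coincide in $\kappa$ and $\mu$ and $\mu$ is strict with $\mu_{i_0}\ge1$, one has $\kappa_{i_0-1}=\mu_{i_0-1}\ge\mu_{i_0}+1\ge\kappa_{i_0}+2$ when $i_0\ge2$ (and there is no row above when $i_0=1$). This gap of at least two forces that inserting or deleting the last box of row $i_0$ in any $\nu\in T_\kappa$ again produces a strict partition squeezed between $\kappa$ and $\mu$, with that box (when present) removable in the new partition; and a routine check — using that every box of $\SD_{\nu/\kappa}$ is by hypothesis removable in $\nu$ — shows that the removability of the remaining boxes of $\SD_{\nu/\kappa}$ and the strictness of the new partition across the $(i_0,i_0+1)$ boundary are unaffected. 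I expect this verification of the local case analysis near row $i_0$ to be the only slightly delicate step; the rest is bookkeeping, and no separate work is needed for $\bGQ$ since the matrix $U$ there is identical.
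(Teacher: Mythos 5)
Your proof is correct and is exactly the inclusion--exclusion argument that the paper invokes in a single line, with the details (the unitriangular transition-matrix inversion and the sign-reversing involution on $T_\kappa$) written out explicitly. The only step worth making explicit is that the condition $\SD_{\nu/\kappa}\subseteq\IC(\nu)$ forces $\nu_{i_0}\in\{\kappa_{i_0},\kappa_{i_0}+1\}$ for every $\nu\in T_\kappa$ --- a removable box is necessarily the last box of its row, so $\SD_{\nu/\kappa}$ contains at most one box per row --- which is what makes your toggle well-defined on all of $T_\kappa$ rather than just on the $\nu$ with $\nu_{i_0}\le\kappa_{i_0}+1$.
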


\subsection{Yang-Baxter relations}\label{yb-sect}

In view of Corollary~\ref{in-ex-cor},
to prove Theorem~\ref{skew-sym-thm} it suffices to show that $\bGP_{\lambda \ss  \mu}$ and $\bGQ_{\lambda \ss  \mu}$ are symmetric.
For this it is enough to prove that the operators $\cP_n(x)$ and $\cP_n(y)$ (respectively, $\cQ_n(x)$ and $\cQ_n(y)$) commute.  To show this latter fact, we follow the approach of \cite{FominKirillov} (see also \cite{FominGreene,FominKirillov1996,Serrano,Yeliussiozv2019}), proving some Yang--Baxter-type equations satisfied by the factors $A_i(x)$ and $A_j(x)$.  

Extending our earlier notation, for any expressions $x$ and $y$, we write 
\be x \oplus y := x + y + \beta xy,
\quad \ominus x := \tfrac{-x}{1+\beta x},
\quand x\ominus y := x \oplus(\ominus y) = \tfrac{x-y}{1+\beta y}.
\ee
The operators $A_i(x)$ from \eqref{apq-eq} satisfy the following commutation relations.

\begin{lemma}
\label{yang-baxter-lem}
Let $i,j \in \NN$ and $x,y \in \ZZ[\beta]\llbracket x_1,x_2,\dots \rrbracket$.  Then
\begin{enumerate}[(a)]
\item $A_i(x) A_j(y) = A_j(y) A_i(x)$ if $|i - j| > 1$,
\item $A_i(x) A_i(y) = A_i(x \oplus y)$,
\item $A_{i + 1}(x)A_{i}(x \oplus y) A_{i + 1}(y) = A_{i}(y)A_{i + 1}(x \oplus y) A_{i}(x)$ if $i>0$,  and
\item $A_0(x)A_1(x \oplus y)A_0(y)A_1(y \ominus x) = A_1(y \ominus x)A_0(y)A_1(x \oplus y)A_0(x)$.

\end{enumerate}
\end{lemma}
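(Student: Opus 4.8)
The plan is to reduce all four parametrized identities to a short list of unparametrized relations among the operators $a_n$, and then to verify those by a local analysis of how $a_n$ modifies a shifted diagram. Since the variables $x,y$ are central (they commute with every $a_n$) and $A_i(t)=1+ta_i$, expanding both sides of (a)--(d) as polynomials in $x,y$---or, for (d), as rational functions in $x,y$---and equating coefficients of each monomial in $a_0,a_1,\dots$ shows that it suffices to prove:
\begin{enumerate}[(i)]
\item $a_n^2=\beta a_n$ for all $n\in\NN$;
\item $a_m a_n=a_n a_m$ whenever $|m-n|\geq 2$;
\item $a_{n+1}a_n a_{n+1}=a_n a_{n+1}a_n$ for $n\geq 1$; and
\item $a_0 a_1 a_0 a_1=a_1 a_0 a_1 a_0$.
\end{enumerate}
These are $\beta$-deformations of the defining relations of the $0$-Hecke monoid of type $C_\infty$. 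The coefficient bookkeeping in the reduction is governed entirely by the arithmetic of $\oplus$ and $\ominus$: writing $X=1+\beta x$ and $Y=1+\beta y$, one has $1+\beta(x\oplus y)=XY$ and $1+\beta(x\ominus y)=X/Y$, so that $\oplus$ is just multiplication of the $X$-variables; in particular $x\oplus(y\ominus x)=y$, $(x\oplus y)\ominus y=x$, and $(x\oplus y)\oplus(y\ominus x)=y\oplus y$. These are exactly the identities that make the nonconstant coefficients on the two sides of (c) and (d) agree once (i)--(iv) are known. For instance, substituting $A_i(t)=1+ta_i$ into (c) and using (i) to rewrite $a_i^2$ and $a_{i+1}^2$, the two triple products become identical except for a single term $xy(x\oplus y)\,a_{i+1}a_i a_{i+1}$ on one side versus $xy(x\oplus y)\,a_i a_{i+1}a_i$ on the other, which match by (iii); and (a), (b) are immediate coefficient forms of (ii) and (i).

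Each of (i)--(iv) is an identity of continuous $\ZZ[\beta]\llbracket x_1,x_2,\dots\rrbracket$-linear operators on $\mSPart$, hence may be checked by applying both sides to a single strict partition $\mu$. Two structural observations make this finite. First, on each diagonal $r\geq 0$ of $\SD_\mu$ there is at most one addable box and at most one removable box, and $\mu$ never has both on the same diagonal; consequently $a_r$ sends $\mu$ either to $\beta\mu$, to one strict partition, or to $0$. Second, adding or deleting a box on diagonal $r$ changes which boxes are addable or removable only on the diagonals $r-1$, $r$, and $r+1$. Relation (i) then follows because, when $\mu$ has an addable box on diagonal $n$, filling it in produces a strict partition whose unique box on diagonal $n$ is removable, so $a_n$ then acts by $\beta$; the cases where $\mu$ has a removable box, or neither, on diagonal $n$ are trivial. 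Relation (ii) is immediate from the second observation, since the diagonals $\{m-1,m,m+1\}$ and $\{n-1,n,n+1\}$ are disjoint when $|m-n|\geq 2$.

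Relations (iii) and (iv) are the heart of the proof. Each is verified by recording, for a strict partition $\mu$, the status (addable box, removable box, or neither) of the relevant block of consecutive diagonals---$n-1$ through $n+2$ for (iii), and $0$ through $2$ for (iv)---together with the rows that the corresponding boxes occupy, and then tracing how the three-fold (respectively four-fold) application of $a_n,a_{n+1}$ (respectively $a_0,a_1$) on each side transforms this local data step by step. For (iii), since $n\geq 1$ the main diagonal of $\SD_\mu$ is irrelevant, and one finds the two sides agree; in fact a short case check shows both words $a_{n+1}a_n a_{n+1}$ and $a_n a_{n+1}a_n$ annihilate every strict partition, which collapses the comparison. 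For (iv) one argues the same way, but now the asymmetric manner in which a box on the main diagonal of a \emph{shifted} diagram interacts with diagonal $1$ must be accounted for---this asymmetry is precisely why there is no cubic relation between $a_0$ and $a_1$ and the correct relation is the length-$4$ relation (iv); concretely, one checks that $a_1 a_0 a_1$ is the zero operator, whence both quartic words vanish. Carrying out this local case analysis at the special diagonal is the main obstacle; everything else is routine.

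Finally, with (i)--(iv) in hand, (a) and (b) follow by the coefficient comparisons already noted; (c) follows by expanding the two triple products $A_{i+1}(x)A_i(x\oplus y)A_{i+1}(y)$ and $A_i(y)A_{i+1}(x\oplus y)A_i(x)$, applying (i), and matching the single residual discrepancy via (iii); and (d) follows by the analogous, somewhat longer expansion of the two quadruple products, applying (i) and the $\oplus/\ominus$ identities above to transport the parameters through and reducing the residual discrepancy to (iv). This last computation is the type-$C$ (equivalently type-$B$) Yang--Baxter identity that appears in \cite{FominKirillov,FominKirillov1996,Yeliussiozv2019}, here carried out with the extra parameter $\beta$.
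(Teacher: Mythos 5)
Your proposal is correct, and it organizes the argument differently from the paper. The paper verifies (c) and (d) head-on: it sets $f=A_{i+1}(x)A_i(z)A_{i+1}(y)$, $f'=A_i(y)A_{i+1}(z)A_i(x)$ (and the analogous $g,g'$ with four factors), computes the action of both sides on a strict partition $\mu$ in each of the finitely many local configurations of addable/removable boxes on the two adjacent diagonals, and reads off the constraints $z=x\oplus y$ (resp.\ the scalar identity $wy=wx+xz+yz+\beta wxz+\beta xyz$) under which the two sides agree. You instead expand formally, use $a_i^2=\beta a_i$ to normalize, and reduce everything to the unparametrized relations (i)--(iv) plus $\oplus/\ominus$ arithmetic; I checked that your expansion of (c) leaves exactly the discrepancy $xyz(a_{i+1}a_ia_{i+1}-a_ia_{i+1}a_i)$ and that of (d) leaves the degree-two discrepancy resolved by the paper's identity \eqref{yb-eq} (which does follow from $x\oplus(y\ominus x)=y$, though you should display it) together with the degree-four discrepancy $a_0a_1a_0a_1$ versus $a_1a_0a_1a_0$. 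Your key structural observation — that for $n\geq 1$ both braid words $a_{n+1}a_na_{n+1}$ and $a_na_{n+1}a_n$ vanish on every strict partition, and that $a_1a_0a_1=0$ so both quartic words vanish — is true and makes (iii) and (iv) degenerate; this is visible in the paper's tables (no cubic term survives in the $f,f'$ computations for $i>0$, and no quartic term in $g,g'$), but the paper never isolates it. What your route buys is a cleaner separation of the combinatorics (local box analysis, done once for unparametrized words) from the algebra (central scalars and formal group arithmetic); what the paper's route buys is that it identifies the exact parameter constraints rather than assuming them. Two small corrections: in (ii) the triples $\{m-1,m,m+1\}$ and $\{n-1,n,n+1\}$ are not disjoint when $|m-n|=2$ — the relevant point is that $m\notin\{n-1,n,n+1\}$ and $n\notin\{m-1,m,m+1\}$; and in (i) the filled-in diagram generally has many boxes on diagonal $n$, so you mean its unique \emph{removable} box on that diagonal is the one just added.
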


\begin{proof}
The first two statements are clear from the definitions of $a_i$ and $A_i(x)$ in \eqref{apq-eq}.
For part (c), assume $i>0$ and consider $f := A_{i + 1}(x)A_{i}(z) A_{i + 1}(y)$ and $f' := A_{i}(y)A_{i + 1}(z) A_{i}(x)$; later, we will specialize $z$ to $x \oplus y$.  
Write $\diag_j := \{ (a,b) \in \PP\times \PP : b -a = j\}$ for the $j$th diagonal in $\PP\times \PP$.
The behavior of $f$ and $f'$ on a strict partition $\mu$ depends only on local properties of its shifted diagram.
There are finitely many cases to consider:
\begin{itemize}

\item 
It is not possible for
the adjacent diagonals $\diag_i$ and $\diag_{i+1}$ to both contain removable boxes of $\mu$ or to both contain addable boxes of $\mu$.

\item If neither $\diag_i$ nor $\diag_{i + 1}$ contains an addable or removable box of $\mu$, then we have $a_i \mu = a_{i + 1} \mu = 0$, so $f \mu = f' \mu = \mu$.

\item Suppose $\diag_{i + 1}$ contains an addable box of $\mu$ and $\diag_i$ does not contain an addable or removable box.
Let $\mu'$ be the strict partition whose shifted diagram is obtained from $\SD_\mu$ by adding a box in diagonal $i+1$.
Then $\mu'$ has an addable box in diagonal $i$;
write $\mu''$ for the result of adding this box.
Then $\mu''$ does not have an addable or removable box in diagonal $i+1$, so we compute  
$f \mu = \mu + (x \oplus y) \mu' + yz \mu'' $ and $ 
f' \mu = \mu + z \mu' + yz \mu''.$
Thus we have $(f-f') \mu = (x\oplus y - z)\mu'$.
  
  \item If $\diag_i$ contains an addable box of $\mu$ and $\diag_{i + 1}$ does not contain an addable or removable box then by similar reasoning 
  $(f-f')\mu = (z - x\oplus y)\mu'$ where $\mu'$ is the result of adding a box in diagonal $i$ to $\mu$.
  
  \item If $\diag_{i + 1}$ contains a removable box and $\diag_i$ does not contain an addable or removable box,
   then $f\mu = (1+\beta x)(1+\beta y)\mu = (1 + \beta \cdot x \oplus y)\mu$ and $f'\mu = (1 + \beta z)\mu$.
   
\item If $\diag_{i }$ has a removable box and $\diag_{i+1}$ does not contain an addable or removable box then $f\mu = (1 + \beta z)\mu$ and $f'\mu = (1 + \beta x)(1+  \beta y) \mu = (1 + \beta \cdot x \oplus y)\mu$.

\item Suppose $\diag_i$ contains an addable box of $\mu$ and $\diag_{i + 1}$ contains a removable box.
Let $\mu'$ be the strict partition whose shifted diagram is obtained from $\SD_\mu$ by adding a box in diagonal $i$. Then $\mu'$ does not have an addable or removable box in diagonal $i + 1$,
so 
 $f\mu = (1 + \beta x)(1 + \beta y)\mu + (1 + \beta y)z \mu'$ and $f'\mu = (1 + \beta z)\mu + (x \oplus y + \beta yz)\mu'$. Thus $(f-f') \mu = (x \oplus y - z)( \beta \mu - \mu')$.
 
\item If $\diag_i$ contains a removable box of $\mu$ and $\diag_{i + 1}$ contains an addable box then
by similar reasoning $(f-f') \mu = (z - x \oplus y)( \beta \mu - \mu')$,
 where $\mu'$ is the result of adding a box to $\mu$ in diagonal $i + 1$.
 
\end{itemize}
Comparing coefficients in each case, we see that if $z = x \oplus y$ then $f = f'$.

Our proof of part (d) is similar. Fix $w,x,y,z \in \ZZ[\beta]\llbracket x_1,x_2,\dots \rrbracket$ and consider $g := A_0(x)A_1(w)A_0(y)A_1(z)$ and $g' := A_1(z)A_0(y)A_1(w)A_0(x)$ acting on a diagram $S$.  We have four cases: diagonal $0$ is always an addable or removable box, and diagonal $1$ can be addable, removable, or neither, but never the same as $0$.  We illustrate the cases with shapes $( 2 )$, $( 2, 1 )$, $( 3 )$, $( 3, 1 )$, $( 3, 2 )$, and $( 3, 2, 1 )$.
\begin{itemize}

\item If $\diag_0$ contains a removable box of $\mu$ and $\diag_1$ does not contain
an addable or removable box, then 
the last two parts of $\mu$ must be $(2,1)$ and we  have
 $g \mu =g'\mu =  (1 + \beta x)(1 + \beta y) \mu $.

\item Suppose $\diag_0$ contains an addable box of $\mu$ and $\diag_1$ does not contain
an addable or removable box.
Then the smallest part of $\mu$ must be $\geq 3$, and we can assume without loss of generality that $\mu = (3)$.
Let $\mu' = (3,1)$, $\mu''=(3,2)$, and $\mu''' = (3,2,1)$. Then one can check that
 \[
 \ba
 g \mu &=\mu + (x \oplus y)\mu' + wy\mu'' + wxy\mu''', \text{ and } \\
 g' \mu &= \mu +(x \oplus y) \mu' +(wx + xz + yz + \beta wxz + \beta xyz) \mu'' + wxy \mu''',
 \ea
 \]
 so $(g-g')\mu = -(wx + xz + yz + \beta wxz + \beta xyz - wy)\mu''$.

\item Suppose $\diag_0$ contains an addable box of $\mu$ and $\diag_1$ contains a removable box,
so that the last part of $\mu$ must be $2$. Let $\mu'$ be the strict partition formed by adding a box to $\mu$ in $\diag_0$, creating a new part of size $1$.
Then we have
\[
\ba
g \mu &= (1 + \beta w)(1 + \beta z) \mu + ( x  + y +  \beta wx  + \beta xy)(1+\beta z)\mu',
\text{ and }
\\
g' \mu &= (1 + \beta w)(1 + \beta z)\mu + (x +  y + \beta wy + \beta xy)\mu'
\ea
\]
so $(g-g')\mu =  \beta (wx + xz + yz + \beta wxz + \beta x y z - wy) \mu'$.

%

\item Suppose $\diag_0$ contains a removable box of $\mu$ and $\diag_1$ contains an addable box.
Then the last part of $\mu$ must be $1$ and the second-to-last part of $\mu$ must be $\geq 3$,
and we can assume without loss of generality that $\mu = (3,1)$.
Let $\mu'=(3,2)$ and $\mu'' = (3,2,1)$. Then it follows by similar calculations that
$
(g-g')\mu =  (wx + xz + yz + \beta wxz + \beta x y z - wy) (\beta \mu' -\mu'')
$.

\end{itemize}
It follows in each case that we have $g = g'$ provided that 
\begin{equation}
\label{yb-eq}
wy =  wx + xz + yz + \beta wxz + \beta xyz,
\end{equation} 
and in particular this is satisfied if $w = x \oplus y$ and $z = x\ominus y$.
\end{proof}

We can now show that $\bGP_{\lambda/\mu}$ and $\bGQ_{\lambda/\mu}$ are symmetric functions.

\begin{proof}[Proof of Theorem~\ref{skew-sym-thm}]
The relations in Lemma~\ref{yang-baxter-lem} are the same as \cite[(2.1)--(2.4)]{FominKirillov} with ``$h_i$'' replaced by ``$A_i$'' and ``$+$/$-$'' replaced by ``$\oplus$/$\ominus$''.  
Making the same substitutions transforms \cite[Prop.~4.2]{FominKirillov} to the 
assertion that the operators $\cP_{n}(x)$ and $\cP_{n}(y)$ commute for all $x,y$, which is what we need to show 
to deduce that $\bGP_{\lambda\ss\mu}$ and $\bGP_{\lambda/\mu}$ are symmetric.

 Fomin and Kirillov's proof of \cite[Prop.~4.2]{FominKirillov} only depends on the fact that $+$ is a commutative formal group law; hence every formal consequence of the Yang--Baxter equations for their $h_i$'s also holds for  our $A_i$'s, with ``$+$'' and ``$-$'' respectively replaced by ``$\oplus$'' and ``$\ominus$''.  In particular, the proof of \cite[Prop.~4.2]{FominKirillov} carries over to our context, \emph{mutatis mutandis}, and we conclude that 
 $\cP_{n}(x)\cP_{n}(y) = \cP_{n}(y)\cP_{n}(x)$ as desired.

We now explain how to see that the operators $\cQ_n(x)$ and $\cQ_n(y)$ likewise commute.
Fomin and Kirillov's proof of \cite[Prop.~4.2]{FominKirillov} 
is implicitly an inductive argument, which translates when $n=2$, for example,
to the following sequence of transformations. Here, the Yang--Baxter relations from Lemma~\ref{yang-baxter-lem}
are indicated with braces overhead:
{\scriptsize\begin{align*}
\cP_2(x) \cP_2(y) & = A_2(x)A_1(x)A_0(x)A_1(x)\overbrace{A_2(x)A_2(y)}A_1(y) A_0(y)A_1(y)A_2(y) \\
& = A_2(x)A_1(x)A_0(x)\overbrace{A_1(x)A_2(x\oplus y)A_1(y)} A_0(y)A_1(y)A_2(y) \\
& = \overbrace{A_2(x)}A_1(x)\overbrace{A_0(x)A_2(y)}A_1(x\oplus y)\overbrace{A_2(x) A_0(y)}A_1(y)\overbrace{A_2(y)} \\
& =A_2(y) \overbrace{A_2(x\ominus y) A_1(x) A_2(y)} A_0(x) A_1(x \oplus y) A_0(y) \overbrace{A_2(x) A_1(y) A_2(y \ominus x)} A_2(x) \\
& = A_2(y) A_1(y) A_2(x) A_1(x\ominus y) \overbrace{A_0(x) A_1(x \oplus y) A_0(y) A_1(y \ominus x)} A_2(y) A_1(x) A_2(x) \\
& =  A_2(y) A_1(y) A_2(x) \overbrace{A_1(x\ominus y) A_1(y \ominus x)} A_0(y) A_1(x \oplus y) \overbrace{A_0(x)  A_2(y)} A_1(x) A_2(x) \\
& = A_2(y) A_1(y)\overbrace{ A_2(x)A_0(y)} A_1(x \oplus y)  A_2(y)A_0(x)  A_1(x) A_2(x) \\
& = A_2(y) A_1(y) A_0(y) \overbrace{A_2(x) A_1(x \oplus y)  A_2(y)}A_0(x)  A_1(x) A_2(x) \\
& = A_2(y) A_1(y) A_0(y) A_1(y) \overbrace{A_2(x \oplus y)}  A_1(x)A_0(x)  A_1(x) A_2(x) \\
& =  A_2(y) A_1(y) A_0(y) A_1(y) A_2(y) A_1(x) A_1(x)A_0(x)  A_1(x) A_2(x) \\
& = \cP_2(y) \cP_2(x).
\end{align*}}%
Only minor adjustments to this inductive argument are needed to check that 
$\cQ_n(x)\cQ_n(y)= \cQ_n(y)\cQ_n(x)$. 
For example, when $n=2$ one has
{\scriptsize\begin{align*}
\cQ_2(x) \cQ_2(y) & = A_2(x)A_1(x)\overbrace{A_0(x)A_0(x)}A_1(x)\overbrace{A_2(x)A_2(y)}A_1(y) \overbrace{A_0(y)A_0(y)}A_1(y)A_2(y) \\
& = A_2(x)A_1(x)A_0(x \oplus x)\overbrace{A_1(x)A_2(x\oplus y)A_1(y)} A_0(y\oplus y)A_1(y)A_2(y) \\
& = \overbrace{A_2(x)}A_1(x)\overbrace{A_0(x\oplus x)A_2(y)}A_1(x\oplus y)\overbrace{A_2(x) A_0(y \oplus y)}A_1(y)\overbrace{A_2(y)} \\
& =A_2(y) \overbrace{A_2(x\ominus y) A_1(x) A_2(y)} A_0(x\oplus x) A_1(x \oplus y) A_0(y \oplus y) \overbrace{A_2(x) A_1(y) A_2(y \ominus x)} A_2(x) \\
& = A_2(y) A_1(y) A_2(x) A_1(x\ominus y) \overbrace{A_0(x \oplus x) A_1(x \oplus y) A_0(y \oplus y) A_1(y \ominus x)} A_2(y) A_1(x) A_2(x) \\
& =  A_2(y) A_1(y) A_2(x) \overbrace{A_1(x\ominus y) A_1(y \ominus x)} A_0(y\oplus y) A_1(x \oplus y) \overbrace{A_0(x \oplus x)  A_2(y)} A_1(x) A_2(x) \\
& = A_2(y) A_1(y)\overbrace{ A_2(x)A_0(y \oplus y)} A_1(x \oplus y)  A_2(y)A_0(x \oplus x)  A_1(x) A_2(x) \\
& = A_2(y) A_1(y) A_0(y \oplus y) \overbrace{A_2(x) A_1(x \oplus y)  A_2(y)}A_0(x \oplus x)  A_1(x) A_2(x) \\
& = A_2(y) A_1(y) \overbrace{A_0(y \oplus y)} A_1(y) \overbrace{A_2(x \oplus y)}  A_1(x) \overbrace{A_0(x \oplus x)}  A_1(x) A_2(x) \\
& =  A_2(y) A_1(y) A_0(y)A_0(y) A_1(y) A_2(y) A_1(x) A_1(x)A_0(x) A_0(x) A_1(x) A_2(x) \\
& = \cQ_2(y) \cQ_2(x)
\end{align*}}%
where the transformation in the fifth line uses the identity 
\be
A_0(x\ominus y) A_1(x) A_0(x \oplus y) A_1(y) = A_1(y) A_0(x \oplus y) A_1(x)A_0(x\ominus y)
\ee
 which follows from \eqref{yb-eq}.
The general case is similar and we omit the details.
We conclude that the power series
$\bGQ_{\lambda\ss\mu}$ and  $\bGQ_{\lambda/\mu}$ are symmetric. 
 \end{proof}

\subsection{Symmetric subalgebras}

We have seen that $\GP_\lambda$ and $\GQ_\lambda$ are linearly independent
in $\mSym$ and involve only $x$-monomials of degree at least $|\lambda|$,
so
 the following is well-defined.

\begin{definition}
Let $\mGSym$ and $\omGSym$ denote the linearly compact $\ZZ[\beta]$-modules
with the $K$-theoretic Schur $Q$- and $P$-functions $\{\bGQ_\lambda\}$ and $\{\bGP_\lambda\}$
as respective pseudobases ($\lambda$ ranging over all strict partitions).
\end{definition}

The images of $\omGSym$ and $\mGSym$
under the truncation map setting $x_{n+1}=x_{n+2}=\dots=0$
are the rings $G\Gamma_{n}$ and $G\Gamma_{n,+}$ in \cite{IkedaNaruse}.
Ikeda and Naruse's results in \cite{IkedaNaruse} show
that $\omGSym$ is characterized by the following cancellation law.

\begin{theorem}[{\cite[Thm.~3.1 and Prop.~3.4]{IkedaNaruse}}]
\label{q-cancel-thm}
It holds that
\[\omGSym = \{ f \in \mSym : f(t,\ominus t,x_3,x_4,\dots) = f(x_3,x_4,\dots)\}\]
where $t$ is an indeterminate
commuting with each $x_i$.
\end{theorem}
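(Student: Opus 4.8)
The goal is to establish Theorem~\ref{q-cancel-thm}: that $\omGSym$ equals the subspace of $\mSym$ consisting of power series $f$ satisfying the $K$-theoretic $Q$-cancellation property $f(t,\ominus t,x_3,x_4,\dots) = f(x_3,x_4,\dots)$. The plan is to cite the relevant results of Ikeda and Naruse directly, reducing the statement to facts already in the literature, while also observing how the identity fits into the framework built up in this paper.

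The argument proceeds in two directions. First I would show that $\omGSym$ is contained in the cancellation-property subspace. By definition $\omGSym$ has pseudobasis $\{\bGP_\lambda\}$, and each $\bGP_\lambda = \oOmega(\SD_\lambda, \theta) = \tOmega(\SD_\lambda,\theta,\ValSet(\SD_\lambda,\theta))$ belongs to $\omcoPeak$ by Corollary~\ref{omco-cor} (or directly from the definition), hence to $\mcoPeak_{\QQ[\beta]}$ by Theorem~\ref{inter-thm}. Lemma~\ref{q-cancel-lem} then gives $\bGP_\lambda(t,\ominus t, x_3,x_4,\dots) = \bGP_\lambda(x_3,x_4,\dots)$. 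Since the cancellation property is a $\ZZ[\beta]$-linear condition that is moreover continuous (it is preserved under the infinite linear combinations allowed in a linearly compact module, because each $x$-monomial's coefficient depends on only finitely many terms), every element of $\omGSym$ inherits it; combined with Theorem~\ref{skew-sym-thm}, which guarantees $\bGP_\lambda \in \mSym$, this gives the inclusion $\omGSym \subseteq \{f \in \mSym : f(t,\ominus t, x_3,x_4,\dots) = f(x_3,x_4,\dots)\}$.

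The reverse inclusion is where I would lean on \cite[Thm.~3.1 and Prop.~3.4]{IkedaNaruse}. Ikeda and Naruse prove that the finite-variable versions, namely the ring $G\Gamma_n$, are precisely the symmetric polynomials in $x_1,\dots,x_n$ satisfying the cancellation law, with the $\bGP_\lambda$ (for $\lambda \subseteq (n-1,n-2,\dots,1,0)$ truncated appropriately) forming a basis. One passes to the stable limit: given $f \in \mSym$ with the cancellation property, its truncations $f(x_1,\dots,x_n,0,0,\dots)$ lie in $G\Gamma_n$ for every $n$, hence expand in the $\bGP_\lambda$; a standard stability/compatibility argument across increasing $n$ (using that $\bGP_\lambda$ is the lowest-degree-stable lift and that the coefficient of $m_\lambda$ in $\bGP_\lambda$ is $1$ by the triangularity implicit in \eqref{okbeta-eq}) shows $f$ is a (possibly infinite, but linearly-compact-convergent) $\ZZ[\beta]$-combination of the $\bGP_\lambda$, i.e., $f \in \omGSym$. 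The triangularity point — that $\{\bGP_\lambda\}$ is unitriangular with respect to $\{m_\lambda\}$ under a suitable order on strict partitions, so that the expansion coefficients are uniquely determined and lie in $\ZZ[\beta]$ rather than $\QQ[\beta]$ — is the same mechanism used in the proof of Theorem~\ref{inter-thm}, and I would cite that proof's argument.

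The main obstacle is the bookkeeping in the stable limit: one must verify that the expansion coefficients obtained at finite level $n$ stabilize as $n \to \infty$ and that the resulting infinite sum genuinely converges in the linearly compact topology on $\mSym$ to $f$ (not merely to something agreeing with $f$ in each truncation). This is routine given that $\bGP_\lambda$ involves only monomials of degree $\geq |\lambda|$ and is unitriangular against the $m_\lambda$, so only finitely many $\lambda$ of each fixed size contribute to a given monomial; I would spell this out briefly but expect no genuine difficulty. Everything substantive is already contained in Lemma~\ref{q-cancel-lem}, Theorem~\ref{skew-sym-thm}, Theorem~\ref{inter-thm}, and the cited Ikeda--Naruse theorems.
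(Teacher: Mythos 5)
Your proposal is correct and follows the same route as the paper, which states this result purely as a citation of \cite[Thm.~3.1 and Prop.~3.4]{IkedaNaruse} (remarking only that the truncations of $\omGSym$ recover the rings $G\Gamma_n$) and offers no independent proof. Your extra details --- deriving the inclusion $\omGSym\subseteq\{f\in\mSym: f(t,\ominus t,x_3,\dots)=f(x_3,\dots)\}$ from Lemma~\ref{q-cancel-lem} together with Corollary~\ref{omco-cor} and Theorem~\ref{inter-thm}, and passing from Ikeda--Naruse's finite-variable characterization to the stable limit via the unitriangularity of $\{\bGP_\lambda\}$ against $\{m_\lambda\}$ --- are sound and consistent with how these results are used elsewhere in the paper (e.g.\ in the proof of Theorem~\ref{cap-thm}).
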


The following is a $K$-theoretic extension of \cite[Thm.~3.8]{Stembridge1997a}.

\begin{theorem}\label{cap-thm}
It holds that 
\[\mGSym = \mSym \cap \mcoPeak
\qquand \omGSym = \mSym \cap \omcoPeak.\]
In particular, we have 
$\mGSym 
\subseteq \omGSym$.
\end{theorem}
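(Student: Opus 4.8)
The plan is to imitate the $\beta=0$ argument from \cite[Thm.~3.8]{Stembridge1997a}, using the Hopf-algebraic tools already in place. First I would establish the two inclusions ``$\subseteq$''. We know $\mGSym$ is the pseudospan of $\{\bGQ_\lambda\}$ and $\omGSym$ the pseudospan of $\{\bGP_\lambda\}$. By \eqref{gq-eq} and \eqref{gp-eq}, $\bGQ_\lambda = \tOmega(\SD_\lambda,\theta)$ and $\bGP_\lambda = \oOmega(\SD_\lambda,\theta) = \tOmega(\SD_\lambda,\theta,V_\lambda)$, so Corollary~\ref{omco-cor} gives $\bGQ_\lambda, \bGP_\lambda \in \omcoPeak$; moreover $\bGQ_\lambda \in \mcoPeak$ since $\tOmega(P,\gamma) = \tOmega(P,\gamma,\varnothing)$ lies in the image of $\mLPSet$ under the morphism of Theorem~\ref{><-thm}, which is exactly $\mcoPeak$ by the proof of Theorem~\ref{inter-thm}. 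Combined with Theorem~\ref{skew-sym-thm} (the $\mu=\varnothing$ case suffices here), which tells us $\bGQ_\lambda,\bGP_\lambda \in \mSym$, this yields $\mGSym \subseteq \mSym \cap \mcoPeak$ and $\omGSym \subseteq \mSym \cap \omcoPeak$. The final containment $\mGSym \subseteq \omGSym$ then follows from $\mcoPeak \subseteq \omcoPeak$ (Theorem~\ref{inter-thm}).

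The substance is the reverse inclusions. For $\omGSym \supseteq \mSym \cap \omcoPeak$: by Theorem~\ref{inter-thm}, $\omcoPeak = \mQSym \cap \mcoPeak_{\QQ[\beta]}$, so an element $f \in \mSym \cap \omcoPeak$ is a symmetric function lying in $\mcoPeak_{\QQ[\beta]}$. By Lemma~\ref{q-cancel-lem}, $f$ satisfies $f(t,\ominus t, x_3, x_4,\dots) = f(x_3,x_4,\dots)$, and since $f \in \mSym$, Theorem~\ref{q-cancel-thm} (Ikeda--Naruse) identifies $f$ as an element of $\omGSym$. This gives $\mSym \cap \omcoPeak \subseteq \omGSym$, completing the $P$-case. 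For $\mGSym \supseteq \mSym \cap \mcoPeak$: take $f \in \mSym \cap \mcoPeak$. Then $f \in \mSym \cap \omcoPeak = \omGSym$ by what we just proved, so $f = \sum_\lambda c_\lambda \bGP_\lambda$ for some coefficients $c_\lambda \in \ZZ[\beta]$ (pseudobasis expansion). I would then argue that membership in $\mcoPeak$ forces this expansion to collapse onto the $\bGQ$'s. The cleanest route is a triangularity/degree argument: express $\bGP_\lambda$ in the $\{K^{(\beta)}_\alpha\}$ pseudobasis via \eqref{tilde-GQ-eq}-style coefficients, use that the lowest-degree part of $\bGP_\lambda$ is $P_\lambda = 2^{\ell(\mu)-\ell(\lambda)}Q_\lambda$ and of $\bGQ_\lambda$ is $Q_\lambda$, and invoke the classical fact (Stembridge) that $\Sym \cap (\text{span of }K_\alpha) = \Gamma Q := \ZZ\text{-span}\{Q_\lambda\}$; then a standard lowest-degree-component induction upgrades this to the $K$-theoretic statement. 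Concretely: if $f \in \mcoPeak$, its lowest-degree homogeneous piece lies in $\Sym \cap \coPeak = \text{span}\{Q_\lambda : |\lambda| = d\}$, so equals a $\ZZ[\beta]$-combination of the $Q_\lambda$ and hence (matching leading terms) of the $\bGQ_\lambda$; subtract and induct on degree, noting $\mcoPeak$ is closed under this since the $\bGQ_\lambda$ are in $\mcoPeak$.

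The main obstacle I anticipate is the reverse inclusion $\mGSym \supseteq \mSym \cap \mcoPeak$ — specifically, making the "subtract the leading term and induct on degree" argument fully rigorous in the linearly compact setting, where power series have unbounded degree and the pseudobasis expansions are infinite. One must check that the induction is well-founded: each homogeneous component of $f$ is a finite $\ZZ[\beta]$-combination of homogeneous components of the $\bGQ_\lambda$, and the correction terms introduced at degree $d$ only affect degrees $\geq d$, so the coefficients $c_\lambda$ stabilize and the resulting expansion $f = \sum_\lambda c_\lambda \bGQ_\lambda$ converges in the LC-topology. I would also double-check the coefficient-ring bookkeeping: we need the $c_\lambda$ to lie in $\ZZ[\beta]$ rather than $\QQ[\beta]$, which should follow from the same unitriangularity with respect to the monomial basis $\{M_\alpha\}$ used in the proof of Theorem~\ref{inter-thm} (the argument that any $\QQ[\beta]$-combination of $\oK^{(\beta)}_\alpha$ lying in $\mQSym$ has $\ZZ[\beta]$ coefficients), adapted to the $\bGQ_\lambda$ via their expansion \eqref{tilde-GQ-eq} into $K^{(\beta)}_\alpha$'s.
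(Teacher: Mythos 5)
Your proposal is correct, and the first three quarters of it (the forward inclusions, the final containment $\mGSym\subseteq\omGSym$, and the reverse inclusion $\omGSym\supseteq\mSym\cap\omcoPeak$ via Theorem~\ref{inter-thm}, Lemma~\ref{q-cancel-lem}, and Theorem~\ref{q-cancel-thm}) coincide with the paper's proof. Where you genuinely diverge is the hard reverse inclusion $\mGSym\supseteq\mSym\cap\mcoPeak$. You run a leading-term induction: the lowest-degree homogeneous component of $f\in\mSym\cap\mcoPeak$ is $\sum_{|\alpha|=d_0}c_\alpha K_\alpha$ (since $K_\alpha$ is the bottom component of $K^{(\beta)}_\alpha$ and all monomials of $K^{(\beta)}_\alpha$ have degree $\geq|\alpha|$), which lies in $\Sym\cap\coPeak=\ZZ\text{-span}\{Q_\lambda\}$ by Stembridge's classical theorem; you then subtract the matching $\ZZ[\beta]$-combination of $\bGQ_\lambda$'s and iterate, with the coefficients stabilizing degree by degree so the sum converges in the LC topology. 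The paper instead proves the stronger intermediate claim $\mGSym\supseteq\omGSym\cap\mcoPeak$ in two steps: it shows $\omGSym\subseteq\mGSym_{\QQ[\beta]}$ by inverting the triangular relation $\bGQ_\lambda\in 2^{\ell(\lambda)}\bGP_\lambda+\sum_{|\mu|>|\lambda|}\ZZ[\beta]\bGP_\mu$, and then shows that any $\QQ[\beta]$-combination of $\bGQ_\lambda$'s lying in $\mcoPeak$ has $\ZZ[\beta]$ coefficients, using the unitriangularity $\bGQ_\lambda\in K^{(\beta)}_\lambda+\sum_{\lambda\prec\alpha}\ZZ[\beta]K^{(\beta)}_\alpha$ drawn from the proof (not just the statement) of Stembridge's Theorem~3.8. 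Your route is more direct and avoids the $\QQ[\beta]$ intermediary entirely, at the cost of having to verify the well-foundedness and convergence of the degree induction, which you correctly flag and which does go through (only finitely many $\alpha$ with $|\alpha|\leq d$ contribute to degree $d$, and extending Stembridge's intersection theorem from $\ZZ$ to $\ZZ[\beta]$ is harmless by flatness). The only blemish is the typo $P_\lambda=2^{\ell(\mu)-\ell(\lambda)}Q_\lambda$, which should read $P_\lambda=2^{-\ell(\lambda)}Q_\lambda$.
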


\begin{proof}
First, we consider the case of $\omGSym$.
Fix a strict partition $\lambda$.  
By \eqref{gp-eq} and Corollary~\ref{omco-cor},
we have $\bGP_\lambda \in \omcoPeak$.
Since $\bGP_\lambda$ is symmetric,
$ \omGSym \subseteq \mSym \cap \omcoPeak$.
On the other hand, by Lemma~\ref{q-cancel-lem} and Theorem~\ref{q-cancel-thm}
we have
$\omGSym \supseteq  \mSym \cap \mcoPeak_{\QQ[\beta]} = \mSym \cap (\mQSym \cap \mcoPeak_{\QQ[\beta]})$,
so  Theorem~\ref{inter-thm} implies that $\omGSym \supseteq  \mSym \cap \omcoPeak$.

Second, we consider the case of $\mGSym$.  By \eqref{tilde-GQ-eq}, we have $\bGQ_\lambda \in \mcoPeak$
and so $\mGSym \subseteq \mSym \cap \mcoPeak$.  
To finish, we will check that $\mGSym \supseteq \omGSym \cap \mcoPeak$, which implies $\mGSym \supseteq \mSym \cap \mcoPeak$
by Theorem~\ref{inter-thm}.


Let $\mGSym_{\QQ[\beta]}$ be the linearly compact $\QQ[\beta]$-module
with the $K$-theoretic Schur $Q$-functions $\bGQ_\lambda$ as a pseudobasis.
We claim that $\omGSym \subseteq \mGSym_{\QQ[\beta]}$.
Since $\bGQ_\lambda \in \mSym \cap \mcoPeak \subseteq \omGSym$,
we have $\bGQ_\lambda \in \sum_\mu \ZZ[\beta] \bGP_\mu$
where the sum is over all strict partitions.
If $\beta$ has degree $0$ and each $x_i$ has degree $1$ then
the nonzero homogeneous components of $\bGP_\lambda$ and $\bGQ_\lambda$
of lowest degree
are $P_\lambda$ and $Q_\lambda = 2^{\ell(\lambda)} P_\lambda$,
so 
\be
\bGQ_\lambda \in 2^{\ell(\lambda)} \bGP_\lambda + \sum_{|\lambda| < |\mu|} \ZZ[\beta]\bGP_\mu.
\ee
Hence $\bGP_\lambda \in 2^{-\ell(\lambda)} \bGQ_\lambda + \sum_{|\lambda| < |\mu|} \QQ[\beta] \bGP_\mu$ and it follows
that $\bGP_\lambda \in \mGSym_{\QQ[\beta]}$, so $\omGSym \subseteq \mGSym_{\QQ[\beta]}$ as claimed.

The peak quasisymmetric function $K_\lambda$ is the nonzero homogeneous component
of $K^{(\beta)}_\lambda$ of lowest degree, and it is shown in
the proof of \cite[Thm.~3.8]{Stembridge1997a}
that $Q_\lambda \in K_\lambda + \sum_{\lambda \prec \alpha} \ZZ K_\alpha$
where $\prec$ is the partial order on compositions 
described in the proof of Theorem~\ref{inter-thm}.
Since $\bGQ_\lambda \in \mcoPeak$, we must have
\be
\bGQ_\lambda \in K^{(\beta)}_\lambda + \sum_{\lambda \prec \alpha} \ZZ[\beta] K^{(\beta)}_\alpha.
\ee
Thus,
as in the proof of Theorem~\ref{inter-thm},
any $\QQ[\beta]$-linear combination of $\bGQ_\lambda$'s
that belongs to $\mcoPeak$ must have coefficients in $\ZZ[\beta]$.
By the previous paragraph,
this means that $\mGSym \supseteq \mGSym_{\QQ[\beta]} \cap \mcoPeak \supseteq \omGSym \cap \mcoPeak$.

Finally,  $\mGSym 
\subseteq \omGSym$ holds
as $\mcoPeak \subset \omcoPeak$ by Theorem~\ref{inter-thm}.
\end{proof}

\begin{corollary}\label{vab-cor}
Suppose $(P,\gamma)$ is a labeled poset and $V\subseteq \ValSet(P,\gamma)$.
\ben
\item[(a)] 
If $\tOmega(P,\gamma) \in \mSym$ then $\tOmega(P,\gamma) \in \mGSym$.

\item[(b)] If 
$\tOmega(P,\gamma,V) \in \mSym$
then $\tOmega(P,\gamma,V) \in \omGSym$.
\een
\end{corollary}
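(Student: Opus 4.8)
The plan is to reduce this corollary directly to Theorem~\ref{cap-thm}. The key observation is that by Corollary~\ref{ttheta-cor}, every weight enumerator $\tOmega(P,\gamma)$ lies in $\mcoPeak$ (it is the image of $\tGamma(P,\gamma)$ under $\ttheta$, or equivalently it is in the image of $\mLPSet$ under the morphism of Theorem~\ref{><-thm}, whose image is $\mcoPeak$ by the proof of Theorem~\ref{inter-thm}). Similarly, by Corollary~\ref{omco-cor}, every $\tOmega(P,\gamma,V)$ with $V \subseteq \ValSet(P,\gamma)$ lies in $\omcoPeak$.

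For part (a): assume $\tOmega(P,\gamma) \in \mSym$. Then $\tOmega(P,\gamma) \in \mSym \cap \mcoPeak$, which equals $\mGSym$ by Theorem~\ref{cap-thm}. Hence $\tOmega(P,\gamma) \in \mGSym$, as desired.

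For part (b): assume $\tOmega(P,\gamma,V) \in \mSym$. Then $\tOmega(P,\gamma,V) \in \mSym \cap \omcoPeak$, which equals $\omGSym$ by Theorem~\ref{cap-thm}. Hence $\tOmega(P,\gamma,V) \in \omGSym$.

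There is essentially no obstacle here: the corollary is a formal consequence of the containments already established. The only thing to be careful about is citing the correct memberships --- that $\tOmega(P,\gamma) \in \mcoPeak$ comes from Corollary~\ref{ttheta-cor} (since $\ttheta$ maps into $\mcoPeak$ and $\tGamma(P,\gamma) \in \mQSym$), and that $\tOmega(P,\gamma,V) \in \omcoPeak$ comes from Corollary~\ref{omco-cor}. Once those two facts are in hand, both parts are immediate applications of Theorem~\ref{cap-thm}.

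\begin{proof}
By Corollary~\ref{ttheta-cor}, $\tOmega(P,\gamma) = \ttheta(\tGamma(P,\gamma))$ lies in $\mcoPeak$.
If moreover $\tOmega(P,\gamma) \in \mSym$, then $\tOmega(P,\gamma) \in \mSym \cap \mcoPeak = \mGSym$
by Theorem~\ref{cap-thm}. This proves (a).

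For (b), Corollary~\ref{omco-cor} gives $\tOmega(P,\gamma,V) \in \omcoPeak$.
If $\tOmega(P,\gamma,V) \in \mSym$, then $\tOmega(P,\gamma,V) \in \mSym \cap \omcoPeak = \omGSym$
by Theorem~\ref{cap-thm}.
\end{proof}
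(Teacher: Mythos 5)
Your proof is correct and follows essentially the same route as the paper: establish $\tOmega(P,\gamma)\in\mcoPeak$ and $\tOmega(P,\gamma,V)\in\omcoPeak$, then intersect with $\mSym$ via Theorem~\ref{cap-thm}. The only (immaterial) difference is that the paper justifies $\tOmega(P,\gamma)\in\mcoPeak$ by Theorem~\ref{ep-thm1} (the expansion into $K^{(\beta)}_\alpha$'s) whereas you invoke Corollary~\ref{ttheta-cor}; both are valid.
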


\begin{proof}
We have $\tOmega(P,\gamma) \in \mcoPeak$ by Theorem~\ref{ep-thm1}
and $\tOmega(P,\gamma,V) \in \omcoPeak$ by Corollary~\ref{omco-cor},
so this follows by Theorem~\ref{cap-thm}.
\end{proof}

We single out one especially important case.
Fix strict partitions $\mu\subseteq \lambda$.

\begin{corollary}\label{single-cor}
It holds that
$\bGP_{\lambda/\mu} \in   \omGSym$
and $\bGQ_{\lambda/\mu}\in \mGSym$.
\end{corollary}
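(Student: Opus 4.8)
The plan is to deduce this statement almost immediately from Theorem~\ref{skew-sym-thm} together with Corollary~\ref{vab-cor}, so the role of this proof is just to assemble pieces that are already in place. First I would recall, from \eqref{gq-eq} and \eqref{gp-eq}, the identifications
\[
\bGQ_{\lambda/\mu} = \tOmega(\SD_{\lambda/\mu},\theta)
\qquand
\bGP_{\lambda/\mu} = \tOmega(\SD_{\lambda/\mu},\theta,V_{\lambda/\mu}),
\]
which exhibit both power series as enriched set-valued weight enumerators of the labeled poset $(\SD_{\lambda/\mu},\theta)$ — the latter equipped with the distinguished subset $V_{\lambda/\mu} \subseteq \ValSet(\SD_{\lambda/\mu},\theta)$ (the inclusion being noted right after the definition of $V_{\lambda/\mu}$ in Section~\ref{shifted-stable-sect}).

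Next I would invoke Theorem~\ref{skew-sym-thm}, which tells us that $\bGQ_{\lambda/\mu}$ and $\bGP_{\lambda/\mu}$ both lie in $\mSym$. Applying Corollary~\ref{vab-cor}(a) to $\bGQ_{\lambda/\mu} = \tOmega(\SD_{\lambda/\mu},\theta)$ then yields $\bGQ_{\lambda/\mu} \in \mGSym$, and applying Corollary~\ref{vab-cor}(b) to $\bGP_{\lambda/\mu} = \tOmega(\SD_{\lambda/\mu},\theta,V_{\lambda/\mu})$ yields $\bGP_{\lambda/\mu} \in \omGSym$. If one prefers a more self-contained rendering that does not route through Corollary~\ref{vab-cor}, the same conclusion follows by observing that $\bGQ_{\lambda/\mu} \in \mcoPeak$ by \eqref{tilde-GQ-eq} (it is a pseudobasis expansion in the $K^{(\beta)}_\alpha$) and $\bGP_{\lambda/\mu} \in \omcoPeak$ by \eqref{gp-eq} and Corollary~\ref{omco-cor}, and then combining Theorem~\ref{skew-sym-thm} with the characterizations $\mGSym = \mSym \cap \mcoPeak$ and $\omGSym = \mSym \cap \omcoPeak$ from Theorem~\ref{cap-thm}.

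I do not expect any genuine obstacle in this proof: all of the substantive work is already done elsewhere — the symmetry of the skew power series in the Yang–Baxter argument of Section~\ref{yb-sect} (Theorem~\ref{skew-sym-thm}), the fact that every $\tOmega(P,\gamma,V)$ lands in $\omcoPeak$ (Corollary~\ref{omco-cor}), and the intersection descriptions of $\mGSym$ and $\omGSym$ (Theorem~\ref{cap-thm}). The one point worth stating explicitly is simply the verification that the hypotheses of Corollary~\ref{vab-cor} are met, i.e.\ that $\bGQ_{\lambda/\mu}$ arises as $\tOmega$ of a labeled poset with empty distinguished valley set and $\bGP_{\lambda/\mu}$ arises as $\tOmega$ of the same poset with $V = V_{\lambda/\mu} \subseteq \ValSet(\SD_{\lambda/\mu},\theta)$; both are immediate from the definitions \eqref{gq-eq} and \eqref{gp-eq}.
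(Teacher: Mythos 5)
Your proof is correct and follows essentially the same route as the paper, which also deduces the corollary from Corollary~\ref{vab-cor} together with the identifications \eqref{tilde-GQ-eq}, \eqref{gp-eq} and the symmetry result Theorem~\ref{skew-sym-thm}. The alternative phrasing you offer via Theorem~\ref{cap-thm} is just an unwinding of the proof of Corollary~\ref{vab-cor}, so it is not a genuinely different argument.
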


\begin{proof}
This holds by Corollary~\ref{vab-cor} given
 \eqref{tilde-GQ-eq}, \eqref{gp-eq}, and Theorem~\ref{skew-sym-thm}.
\end{proof}

Concretely, Corollary~\ref{single-cor} means 
that we have $\bGP_{\lambda/\mu} \in  \sum_\nu \ZZ[\beta] \bGP_\nu$
and $\bGQ_{\lambda/\mu}\in\sum_\nu \ZZ[\beta] \bGQ_\nu$ where the sums are over all strict partitions $\nu$.
We expect that these sums are actually finite with positive coefficients:

\begin{conjecture}
 $\bGP_{\lambda/\mu} \in \bigoplus_\nu \NN[\beta]\bGP_\nu$
and $\bGQ_{\lambda/\mu}\in \bigoplus_\nu \NN[\beta]\bGQ_\nu$.
\end{conjecture}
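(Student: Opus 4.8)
Corollary~\ref{single-cor} already shows that $\bGP_{\lambda/\mu}=\sum_\nu c^\lambda_{\mu\nu}(\beta)\,\bGP_\nu$ and $\bGQ_{\lambda/\mu}=\sum_\nu d^\lambda_{\mu\nu}(\beta)\,\bGQ_\nu$ with coefficients in $\ZZ[\beta]$, where $\nu$ ranges over strict partitions. The conjecture asks that these sums are finite and that the coefficients lie in $\NN[\beta]$. The plan is to split the problem into the finiteness statement and the positivity statement, treating the two through a Hopf-dual reformulation and a ``$K$-theoretic shifted jeu de taquin'' argument built on the enriched $P$-partition machinery of Sections~\ref{main-sect}--\ref{sym-sect}.

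First I would record the Hopf-theoretic picture. Assigning $\beta$ degree $-1$ and each $x_i$ degree $1$, Theorem~\ref{cap-thm} makes $\omGSym$ and $\mGSym$ graded Hopf subalgebras of $\mSym$ in which $\bGP_\lambda$ and $\bGQ_\lambda$ are homogeneous of degree $|\lambda|$. Using Corollary~\ref{esv-products-cor} to compute $\Delta(\bGP_\lambda)$ and $\Delta(\bGQ_\lambda)$ and Corollary~\ref{in-ex-cor} to absorb the overlap terms carrying powers of $\beta$, one identifies $c^\lambda_{\mu\nu}(\beta)$ and $d^\lambda_{\mu\nu}(\beta)$ with comultiplication structure constants of $\omGSym$ and $\mGSym$, hence with the multiplication structure constants of the graded-dual Hopf algebras (the ``$K$-theoretic peak algebra'' alluded to in the introduction). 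For finiteness in the $\bGP$ case, I would invoke the fact (recorded before Theorem~\ref{skew-sym-thm} in the discussion of $K$-theory of Grassmannians) that $\Gamma_P$ is a genuine ring: with the grading above it is then a graded connected Hopf algebra of finite graded rank, so its coproduct restricted to each graded component lands in a finite direct sum, forcing each $\bGP_{\lambda/\mu}$ to be a \emph{finite} $\ZZ[\beta]$-combination of $\bGP_\nu$'s. For $\bGQ$ this step is precisely the open conjecture \cite[Conj.~3.2]{IkedaNaruse} that $\Gamma_Q$ is a ring, so here finiteness cannot be had for free; one would instead need to prove a finite combinatorial rule directly, the support bound coming from showing that $K$-rectification of a shifted set-valued skew tableau of shape $\lambda/\mu$ lands only in straight shapes $\nu$ confined to an explicit bounded region depending on $\lambda/\mu$.

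For positivity, the combinatorial route is to use the standard-tableau expansion \eqref{tilde-GQ-eq} (and its $\bGP$-analogue), reducing the claim to producing a shape-free, weight-preserving, and $\PeakSet$-compatible (up to a common power of $\beta$) ``$K$-infusion''/rectification map that partitions $\SetSYT_{\text{shifted}}(\lambda/\mu)$ into blocks, each in bijection with $\SetSYT_{\text{shifted}}(\nu)$ for a single straight strict shape $\nu$, with block count independent of the chosen straight-shape target; this would extend Buch--Samuel's type-A $K$-jeu de taquin \cite{BuchSamuel} and the shifted $K$-theoretic jeu de taquin of Clifford--Thomas--Yong and Buch--Ravikumar \cite{CTY,BuchRavikumar}, and would exploit that $\tOmega(P,\gamma,V)$ depends only on the peak data of linear multiextensions (Theorems~\ref{ep-thm1} and~\ref{op-thm}, Propositions~\ref{k1-prop} and~\ref{k2-prop}). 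An alternative route to positivity is geometric: via the isomorphisms with $K(\OG_n)$ and $K(\LG_n)$, the relevant structure constants alternate in sign with codimension by Brion-type positivity for $K$-theory of flag varieties, and homogenizing with $\beta$ (degree $-1$) and passing to the stable limit converts these alternating signs into $\NN[\beta]$-coefficients. The main obstacle is the $\bGQ$-finiteness: it is equivalent to the long-standing Ikeda--Naruse conjecture, and in its absence one obtains only the weaker statement that the $\bGQ$-coefficients lie in $\NN[\beta]$ with possibly infinite support. Establishing the confluence and the fiber structure of $K$-theoretic shifted jeu de taquin (so that both finiteness and positivity drop out simultaneously) is the key technical hurdle on the combinatorial side.
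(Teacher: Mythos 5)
The statement you are trying to prove is stated in the paper as an open \emph{conjecture}: immediately after Corollary~\ref{single-cor} the authors prove only that $\bGP_{\lambda/\mu}$ and $\bGQ_{\lambda/\mu}$ lie in the (possibly infinite) $\ZZ[\beta]$-spans of the $\bGP_\nu$ and $\bGQ_\nu$, and they explicitly remark that finiteness and positivity ``should be a direct consequence of the geometric interpretation'' and that ``it would be interesting to find a combinatorial proof.'' So there is no proof in the paper to compare against, and your proposal does not close the gap either: it is a research program whose decisive steps (confluence and fiber structure of a shifted $K$-theoretic jeu de taquin; the Ikeda--Naruse conjecture that $\Gamma_Q$ is a ring; a precise geometric positivity statement for the relevant \emph{coproduct} structure constants) are all deferred, and you acknowledge as much.

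Beyond the overall incompleteness, one concrete step in your sketch is wrong. You claim that with $\deg(\beta)=-1$ and $\deg(x_i)=1$ the algebra $\omGSym$ (or $\Gamma_P$) becomes ``a graded connected Hopf algebra of finite graded rank,'' forcing finiteness of the $\bGP$-expansion. The graded components are not of finite rank: for fixed $n$, every $\beta^{|\nu|-n}\bGP_\nu$ with $|\nu|\geq n$ is homogeneous of degree $n$, so the degree-$n$ piece contains infinitely many independent pseudobasis elements, and the grading argument gives no finiteness. (Indeed, Corollary~\ref{in-ex-cor} and the operator expansion of $\bGP_{\lambda\ss\mu}$ only produce \emph{a priori} infinite sums; this is exactly why the paper works in completions throughout.) Likewise, the ring structure on $\Gamma_P$ controls products, not coproducts, so it does not by itself bound the support of the skew expansion. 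If you want to pursue this, the honest status is: positivity and finiteness are genuinely open here, the $\bGQ$ finiteness is tied to \cite[Conj.~3.2]{IkedaNaruse}, and the combinatorial route you outline (a shifted set-valued rectification compatible with peak sets) is the missing ingredient, not a consequence of the results already in the paper.
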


This should be a direct consequence of the geometric interpretation of 
$\bGP_\lambda$ and $\bGQ_\lambda$ in \cite{IkedaNaruse}. 
It would be interesting to find a combinatorial proof.

Since $\mGSym 
\subseteq \omGSym$,
we must have $\bGQ_\mu \in \sum_\lambda \ZZ[\beta] \bGP_\mu$.
Computer calculations suggest 
that this expansion is actually always finite, with the following fairly simple description.
As before, a \emph{vertical strip} is a subset of $\PP\times \PP$ that contains at most one position in each row.

\begin{conjecture}
If $\mu$ is a strict partition then 
\be\label{q-to-p-eq}
\bGQ_\mu = 2^{\ell(\mu)} \cdot \sum_{\lambda}  (-1)^{c(\lambda/\mu)} \cdot (-\beta/2)^{|\lambda/\mu|}\cdot  \bGP_\lambda\ee
where the sum is over the finite set of strict partitions $\lambda\supseteq \mu$ 
with $\ell(\mu) =\ell(\lambda)$ such that $\SD_{\lambda/\mu}$ is a vertical strip,
and we define $c(\lambda/\mu)$ to be the number of distinct columns occupied by positions in $\SD_{\lambda/\mu}$.
\end{conjecture}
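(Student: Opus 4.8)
The plan is to prove the conjectured expansion $\bGQ_\mu = 2^{\ell(\mu)} \sum_\lambda (-1)^{c(\lambda/\mu)}(-\beta/2)^{|\lambda/\mu|}\bGP_\lambda$ by combining the operator formalism of Section~\ref{yb-sect} with the change-of-variables relation between $\bGP$ and $\bGQ$. The starting observation is that $\cQ_n(x)$ and $\cP_n(x)$ differ only by the replacement of the single factor $A_0(x)$ in $\cP_n(x)$ by $A_0(x)A_0(x)$, and that $A_0(x)A_0(x) = A_0(x\oplus x) = A_0(2x + \beta x^2)$ by Lemma~\ref{yang-baxter-lem}(b). Thus at the level of the generating operators, $\cQ_n(x)$ is obtained from $\cP_n(x)$ by a ``doubling'' of the diagonal step, and one should be able to track the difference between $\prod_i \cP_n(x_i)$ and $\prod_i \cQ_n(x_i)$ applied to $\mu$ (expanding against $\lambda$) entirely in terms of how boxes are added on the main diagonal.

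First I would set up the bookkeeping: using $\bGP_\lambda = \langle \lambda, \cdots \cP_n(x_2)\cP_n(x_1)\mu\rangle$ with $\mu = \emptyset$ (and similarly for $\bGQ$), I would compare the two products. Since $A_0(x)$ adds at most one box on the diagonal $\diag_0$ and multiplies by $(1+\beta x)$ when that diagonal already has a removable box, the operator $A_0(x)A_0(x)$ contributes, for each variable $x_i$, either nothing extra, or one box added to $\diag_0$ with coefficient $2x_i + \beta x_i^2 = x_i \oplus x_i$, compared with $A_0(x_i)$ which contributes $x_i$ for the same box. Iterating over all variables and all rows, the upshot should be a formula expressing $\bGQ_\mu$ as a sum over ways of decorating $\bGP_\lambda$-tableaux with an extra diagonal entry in each row of the shape (or, dually, a sum over strict partitions $\lambda \supseteq \mu$ obtained by adding a vertical strip that meets each row at most once). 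The factor $2^{\ell(\mu)}$ and the powers $(-\beta/2)$ and the sign $(-1)^{c(\lambda/\mu)}$ should emerge from inverting a relation of the shape $\bGQ_\mu = \sum_{\lambda} (\text{something}) \bGP_\lambda$ where the ``something'' is a product over columns of a simple local weight; the inclusion–exclusion/Möbius inversion here is the same kind of manipulation already used in Corollary~\ref{in-ex-cor} and Corollary~\ref{equiexp-cor}. Concretely, I would aim to first prove the ``forward'' identity $\bGP_\lambda = \sum_{\mu} w(\lambda/\mu)\,\bGQ_\mu$ (sum over $\mu \subseteq \lambda$ with $\lambda/\mu$ a vertical strip, $\ell(\mu)=\ell(\lambda)$, for a suitable monomial weight $w$ in $\beta$), which follows more directly by comparing the local action of $A_0(x)$ versus $A_0(x\oplus x)$ on each diagonal box, and then invert it.

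The main obstacle I anticipate is controlling the signs and the exponent $c(\lambda/\mu)$ precisely: the naive expansion of $\prod_i \cP_n(x_i)$ versus $\prod_i \cQ_n(x_i)$ will produce a sum whose weights are products over individual \emph{boxes} of the vertical strip, of the form $\prod_{b} (\text{linear in } x_i)$, whereas the conjectured answer has weights $(-\beta/2)^{|\lambda/\mu|}$ together with a sign depending only on the number of \emph{columns} $c(\lambda/\mu)$ — so some resummation over boxes lying in the same column must collapse a more complicated alternating sum into the stated closed form. I expect this is where the restriction $\ell(\mu) = \ell(\lambda)$ and the vertical-strip condition are genuinely used: within a fixed column the added boxes of $\SD_{\lambda/\mu}$ form a contiguous run, and summing the box-weights along that run geometrically should give the factor $(-1)$ once per occupied column plus the uniform $(-\beta/2)$ per box. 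I would isolate this column-wise resummation as a self-contained lemma, prove it by a direct telescoping/geometric-series computation in $\QQ[\beta]$ (note the division by $2$ is harmless, cf.\ the footnote on $\mcoPeak_{\QQ[\beta]}$), and then assemble the pieces. A sanity check against small cases ($\mu = (1)$, where $\bGQ_{(1)} = 2\bGP_{(1)} + $ corrections, and $\mu = (2,1)$) should pin down the normalization before committing to the general argument.
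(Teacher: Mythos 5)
First, a point of context: the statement you are proving is stated in the paper as a \emph{conjecture}, not a theorem. The authors only establish (via Theorem~\ref{cap-thm}) that $\bGQ_\mu$ lies in the $\ZZ[\beta]$-span of the $\bGP_\lambda$'s as a possibly infinite combination; even the finiteness of the expansion is left open, supported only by computer calculations. So there is no proof in the paper to compare yours against, and your proposal must stand entirely on its own.

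On its own terms, the proposal is a strategy sketch with the decisive step missing. Your starting observation is correct: $\cQ_n(x)$ is obtained from $\cP_n(x)$ by replacing the middle factor $A_0(x)$ with $A_0(x)A_0(x)=A_0(x\oplus x)$, and combinatorially this exactly accounts for allowing primed entries on the main diagonal. But this comparison produces tableaux of the \emph{same} shape $\mu$ with extra diagonal decorations; it does not, by itself, produce the larger shapes $\lambda\supseteq\mu$ in the conjectured expansion. Note that the boxes of $\SD_{\lambda/\mu}$ appearing in \eqref{q-to-p-eq} sit at the right ends of the rows of $\mu$ (a vertical strip with $\ell(\lambda)=\ell(\mu)$), which is geometrically the opposite end of each row from the main diagonal. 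So the extra $A_0$ factors cannot be ``tracked'' box-by-box into the boxes of $\lambda/\mu$ without first constructing a signed bijection (or sign-reversing involution) between shape-$\mu$ tableaux with primed diagonal entries and pairs consisting of a vertical strip $\lambda/\mu$ and a shape-$\lambda$ tableau; your proposed ``column-wise resummation'' lemma is precisely this missing bijection, and nothing in the sketch indicates how to build it. A secondary issue: the extra $A_0(x_i)$ sits in the interior of the product $\cdots\cQ_n(x_2)\cQ_n(x_1)$, interleaved with all the other $A_j$ operators, so the difference between $\prod_i\cQ_n(x_i)$ and $\prod_i\cP_n(x_i)$ is not a local perturbation that can be factored out; one would have to commute the extra factors past the rest using Lemma~\ref{yang-baxter-lem}, and it is exactly this commutation that generates the nontrivial signs and the column statistic $c(\lambda/\mu)$. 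Your small-case sanity checks (e.g.\ $\bGQ_{(1)}=2\bGP_{(1)}+\beta\bGP_{(2)}$) do confirm the formula, but the general argument is not yet there.
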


For example, one has 
$\bGQ_{(3,2)} = 4 \bGP_{(3,2)} + 2\beta\cdot  \bGP_{(4,2)} - \beta^2\cdot \bGP_{(4,3)}.$

\begin{remark*}
The coefficients in \eqref{q-to-p-eq}
have the form $\pm 2^i \beta^j$ for $i,j \in \{0,1,\dots,\ell(\mu)\}$,
and if the coefficient of $\bGP_\lambda$ is nonzero, then its sign is $(-1)^{s(\lambda/\mu)}$ where
$s(\lambda/\mu)$ is the number of nonempty columns of $\SD_{\lambda/\mu}$
with an even number of boxes.
\end{remark*}

We end this section with two results involving 
the power series $\bGS_\lambda$.

\begin{corollary} The map $\ttheta$ 
restricts to a morphism $ \mSym \to \mGSym$.
\end{corollary}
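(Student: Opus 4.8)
The plan is to deduce the corollary from the identification $\mGSym = \mSym \cap \mcoPeak$ in Theorem~\ref{cap-thm}, together with the already-established symmetry of the skew $K$-theoretic Schur $P$-functions. Since $\ttheta\colon \mQSym \to \mcoPeak$ is a morphism of LC-Hopf algebras (Corollary~\ref{ttheta-cor}), its image automatically lies in $\mcoPeak$; hence it suffices to show $\ttheta(\mSym) \subseteq \mSym$, after which $\ttheta$ restricts to a morphism of LC-Hopf algebras $\mSym \to \mSym \cap \mcoPeak = \mGSym$, using that $\mSym$ and $\mGSym$ are LC-Hopf subalgebras of $\mQSym$ and of $\mcoPeak$.

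The first step is to note that the stable Grothendieck polynomials $\{G^{(\beta)}_\lambda\}$, with $\lambda$ ranging over all partitions, form a pseudobasis of $\mSym$. Grading $\beta$ in degree $0$ and each $x_i$ in degree $1$, equation~\eqref{tk-eq} shows that the lowest-degree homogeneous component of $G^{(\beta)}_\lambda$ is the Schur function $s_\lambda$ and all other components have strictly larger degree; since $\{s_\lambda\}$ is a homogeneous basis of $\Sym$ with only finitely many elements in each degree, the transition from $\{s_\lambda\}$ to $\{G^{(\beta)}_\lambda\}$ is unitriangular with respect to degree, hence invertible in the completed sense. Thus $\{G^{(\beta)}_\lambda\}$ is a pseudobasis of $\mSym$ (the completion of $\Sym$), so every $f \in \mSym$ has an expansion $f = \sum_\lambda c_\lambda G^{(\beta)}_\lambda$ with $c_\lambda \in \ZZ[\beta]$, convergent in the LC-topology of $\mQSym$ because only finitely many terms contribute to any fixed monomial.

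The second step is to compute $\ttheta(G^{(\beta)}_\lambda)$. As recorded in Section~\ref{shifted-stable-sect}, $G^{(\beta)}_\lambda = \tGamma(\D_\lambda,\theta)$ and $\ttheta(G^{(\beta)}_\lambda) = \tOmega(\D_\lambda,\theta) = \bGS_\lambda = \bGP_{(\lambda+\delta_k)/\delta_k}$ for $k = \ell(\lambda)$. By Theorem~\ref{skew-sym-thm} (equivalently Corollary~\ref{single-cor}) this power series is symmetric, so $\bGS_\lambda \in \mSym$; as it also lies in $\mcoPeak$, Theorem~\ref{cap-thm} gives $\bGS_\lambda \in \mGSym$. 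Applying the continuous $\ZZ[\beta]$-linear map $\ttheta$ to the expansion above yields $\ttheta(f) = \sum_\lambda c_\lambda \bGS_\lambda$. Each $\bGS_\lambda$ involves only monomials of degree at least $|\lambda|$, so only finitely many summands contribute to each fixed degree; the partial sums are finite $\ZZ[\beta]$-combinations of elements of $\mGSym$, hence lie in $\mGSym$, and since $\mGSym = \mSym \cap \mcoPeak$ is a closed submodule of $\mQSym$ (being an intersection of LC-Hopf subalgebras), the limit $\ttheta(f)$ lies in $\mGSym$ as well.

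I do not anticipate a genuine obstacle: the proof is essentially bookkeeping once its two inputs are in hand, namely that $\{G^{(\beta)}_\lambda\}$ is a pseudobasis of $\mSym$ and that $\bGS_\lambda$ is symmetric. The only mildly delicate point is the passage from the pseudobasis expansion to arbitrary $f \in \mSym$, which requires knowing that $\mGSym$ is closed in $\mQSym$ and the degree-filtration convergence estimate above. The real substance of the statement is carried by Theorems~\ref{skew-sym-thm} and \ref{cap-thm}, which are used here as black boxes.
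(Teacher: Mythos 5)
Your proof is correct and follows essentially the same route as the paper, whose entire argument is the one-line observation that $\ttheta(G^{(\beta)}_\lambda)=\bGS_\lambda\in\mGSym$ by Corollary~\ref{single-cor}. The extra steps you supply --- that $\{G^{(\beta)}_\lambda\}$ is a pseudobasis of $\mSym$ by degree-unitriangularity over $\{s_\lambda\}$, and that continuity/closedness lets you pass from the pseudobasis to arbitrary $f\in\mSym$ --- are exactly the details the paper leaves implicit.
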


\begin{proof}
This holds since $ \ttheta(G^{(\beta)}_\lambda)=\bGS_\lambda \in \mGSym$ by Corollary~\ref{single-cor}.
\end{proof}

\begin{corollary}
The set of $K$-theoretic Schur $S$-functions $\{\bGS_\lambda\}$,
with $\lambda$ ranging over all strict partitions,
is another pseudobasis for $\mGSym$.
\end{corollary}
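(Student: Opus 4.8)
The plan is to compare the family $\{\bGS_\lambda\}$, indexed by strict partitions $\lambda$, with the pseudobasis $\{\bGQ_\nu\}$ of $\mGSym$ already in hand, and to show that the ``change of coordinates'' between the two is invertible. Throughout I would use the grading in which each $x_i$ has degree $1$ and $\beta$ has degree $0$, so that the lowest nonzero homogeneous component of $\bGS_\lambda$ is the Schur $S$-function $S_\lambda$ and that of $\bGQ_\nu$ is the Schur $Q$-function $Q_\nu$.

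First I would record that $\bGS_\lambda\in\mGSym$ for every strict partition $\lambda$: writing $k=\ell(\lambda)$ we have $\bGS_\lambda=\bGQ_{(\lambda+\delta_k)/\delta_k}$ with $\delta_k\subseteq\lambda+\delta_k$ both strict, so this is immediate from Corollary~\ref{single-cor} (it is in fact already invoked in the proof of the preceding corollary). Since $\{\bGQ_\nu\}$ is a pseudobasis of $\mGSym$, we may therefore write $\bGS_\lambda=\sum_\nu c_{\lambda\nu}\,\bGQ_\nu$ for unique coefficients $c_{\lambda\nu}\in\ZZ[\beta]$, with the sum over strict partitions $\nu$ converging in $\mGSym$.

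Next I would pin down the shape of the matrix $C=(c_{\lambda\nu})$. Every monomial of $\bGS_\lambda$ has degree $\geq|\lambda|$, with degree-$|\lambda|$ component $S_\lambda$, and every monomial of $\bGQ_\nu$ has degree $\geq|\nu|$, with degree-$|\nu|$ component $Q_\nu$. Extracting homogeneous components of the identity $\bGS_\lambda=\sum_\nu c_{\lambda\nu}\bGQ_\nu$ and using that $\{Q_\nu:|\nu|=m\}$ is $\ZZ[\beta]$-linearly independent for each $m$, a straightforward induction on $m=0,1,2,\dots$ shows that $c_{\lambda\nu}=0$ whenever $|\nu|<|\lambda|$, and that for $|\nu|=|\lambda|$ the scalar $c_{\lambda\nu}$ lies in $\ZZ$ (it does not involve $\beta$) and equals the coefficient of $Q_\nu$ in the classical expansion of $S_\lambda$ in Schur $Q$-functions. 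Thus, ordering strict partitions by size, $C$ is block-triangular with finite diagonal blocks $C_n:=(c_{\lambda\nu})_{|\lambda|=|\nu|=n}$ that are integer matrices independent of $\beta$.

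Finally I would invoke the classical fact that, for each $n$, the Schur $S$-functions $\{S_\lambda:\lambda\text{ a strict partition of }n\}$ form a $\ZZ$-basis of the $\ZZ$-span $\langle Q_\nu:\nu\text{ a strict partition of }n\rangle$ --- equivalently, that $\det C_n=\pm1$; see \cite[\S A.4]{Stembridge1997a}, \cite{Dewitt}, or \cite[\S III.8]{Macdonald}. Granting this, each $C_n$ is invertible over $\ZZ[\beta]$, and a block-triangular matrix with invertible finite diagonal blocks can be inverted degree by degree, exactly as for the pseudobases $\{L^{(\beta)}_\alpha\}$ and $\{K^{(\beta)}_\alpha\}$ treated in Sections~\ref{multi-sect}--\ref{multipeak-sect}; this produces a continuous inverse to the coordinate change, so $\{\bGS_\lambda\}$ is a pseudobasis of $\mGSym$. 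The one genuinely delicate ingredient is the \emph{integrality} in the last step: a purely rational version of the classical fact would only yield a pseudobasis of $\mGSym_{\QQ[\beta]}$. This integrality rests on the unitriangularity, with respect to dominance order, of the transition matrix between $\{S_\lambda\}$ and $\{Q_\lambda\}$ over strict partitions, which can if necessary be derived directly from the Jacobi--Trudi-type formula $S_\lambda=\det(q_{\lambda_i-i+j})$ together with the Pfaffian formula for $Q_\lambda$.
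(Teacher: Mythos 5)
Your argument is correct and takes essentially the same approach as the paper: compare lowest-degree ($\beta=0$) components to reduce to the classical unimodularity of the change of basis between $\{S_\lambda\}$ and $\{Q_\nu\}$ over strict partitions, then invert the resulting triangular expansion degree by degree. The paper packages the classical input slightly differently, citing Macdonald~\cite[(8.8$'$) and Ex.~7, \S III.8]{Macdonald} for the unitriangularity $S_\lambda \in Q_\lambda + \sum_{\mu>\lambda}\ZZ Q_\mu$ with respect to dominance order (which gives $\det C_n=1$ directly), but this is the same fact you invoke.
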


\begin{proof}
Fix a strict partition $\lambda$.
We have $\bGS_\lambda \in \mGSym$,
and it follows from \cite[(8.8$'$), \S III.8]{Macdonald}
and \cite[Ex.\ 7, \S III.8]{Macdonald}
that $S_\lambda \in Q_\lambda + \sum_{\mu > \lambda} \ZZ Q_\mu$ 
where $>$ is the dominance order on strict partitions.
Comparing lowest-degree terms, we deduce that $\bGS_\lambda \in \bGQ_\lambda + \sum_{\mu} \ZZ[\beta] \bGQ_\mu$ 
where the sum is over strict partitions $\mu$ with $\mu > \lambda$ or $|\mu|>|\lambda|$,
so the corollary follows.
\end{proof}

\section{Antipode formulas}\label{antipode-sect}

In this section, we show how to expand the multipeak quasisymmetric functions $\{ K^{(\beta)}_\alpha\}$
in terms of the multifundamental quasisymmetric functions $\{ L^{(\beta)}_\alpha\}$. 
We then derive formulas for several 
involutions of $\mQSym$,
including the antipode.

\subsection{Mirroring operators}

We start by defining an operation on posets that adds ``mirror images'' of certain vertices.
Let $(P,\gamma)$ be a labeled poset. Assume
the vertices of $P$ are all positive integers (not necessarily with the usual order on $\ZZ$)
and $\gamma $ takes only positive integer values.
We refer to labeled posets with these properties as \emph{positive}.
Write $\prec$ for the partial order on $P$. 
For each pair $(I, J)$ of subsets such that $P = I\cup J$, we 
define $\fkM_{IJ}(P,\gamma)$ 
to be the labeled poset $\fkM_{IJ}(P,\gamma) := (Q,\delta)$ with the following properties.
\begin{itemize}
\item As a set, we have $Q := I \cup (-J)$.
\item The partial order on $Q$ has $s\prec t$ if and only if $|s| \prec |t|$ in $P$.
\item We have $\delta(s) =\sign(s) \gamma(|s|)$
where $\sign(s) := s/|s| \in \{\pm 1\}$.
\end{itemize}
Thus $|Q| = |P| + |I\cap J|$, and if $s \in I \cap J$ then $-s$ and $s$ are incomparable in $Q$.
If $J=\varnothing$ then $\fkM_{P \varnothing}(P,\gamma)  = (P,\gamma)$, while if $I = \varnothing$ then 
$\fkM_{\varnothing P}(P,\gamma)  = (P, -\gamma)$ reverses all arrows in the oriented Hasse diagram of $(P, \gamma)$.

\begin{example}
Drawing labeled posets as oriented Hasse diagrams,
we have
\[
\fkM_{\{1, 2,3\}\{2, 3, 4\}}\(\
\begin{tikzpicture}[baseline=(center.base), xscale=0.7, yscale=0.5]
\tikzset{edge/.style = {<-}}
\node (center) at (0,0) {};
\node (d) at (0,3) {$4$};
\node (c) at (0,1) {$3$};
  \node (b) at (0,-1) {$2$};
  \node (a) at (0,-3) {$1$};
  \draw[edge] (b) -- (a);
  \draw[edge] (c) -- (b);
  \draw[edge] (c) -- (d);
\end{tikzpicture}\ \)\ =\ 
\begin{tikzpicture}[baseline=(center.base), xscale=0.7, yscale=0.5]
\tikzset{edge/.style = {<-}}
\node (center) at (0,0) {};
\node (d) at (0,3) {$-4$};
\node (c) at (0,1) {$3$};
\node (c2) at (2,1) {$-3$};
  \node (b) at (0,-1) {$2$};
  \node (b2) at (2,-1) {$-2$};
  \node (a) at (0,-3) {$1$};
  \draw[edge] (b) -- (a);   \draw[edge] (b2) -- (a);
  \draw[edge] (c) -- (b);    \draw[edge] (c2) -- (b);
    \draw[edge] (b2) -- (c);    \draw[edge] (b2) -- (c2);
  \draw[edge] (d) -- (c);     \draw[edge] (d) -- (c2);
\end{tikzpicture}
\ .
\]
\end{example}

Now, given a positive labeled poset $(P,\gamma)$,
define
\be
\Psi^{(\beta)}(P,\gamma) :=
\sum_{I \cup J = P} \beta^{|I \cap J|} \cdot [\fkM_{IJ}(P,\gamma)] \in\mLPSet.
\ee
When $\beta=0$, we have
$
\Psi^{(0)}(P,\gamma) = \sum_{\epsilon: P \to \{\pm 1\}} [(P,\epsilon \gamma)]
$
where $\epsilon \gamma$ is the map $s \mapsto \epsilon(s) \gamma(s)$; compare with \cite[Thm.~3.6]{Stembridge1997a}.

Any isomorphism class of labeled posets contains at least one positive element $(P,\gamma)$,
and the value of $\Psi^{(\beta)}(P,\gamma) $ does not depend on the choice of this representative.
The formula for $\Psi^{(\beta)}$ therefore extends uniquely to a continuous $\ZZ[\beta]$-linear map
$\mLPSet \to \mLPSet$.

Write $\Phi_<: \mLPSet \to \mQSym$ and $\Phi_{>|<} :\mLPSet^+ \to \mQSym$ 
for the
morphisms of combinatorial LC-Hopf algebras from Theorems~\ref{<-thm} and \ref{><-thm}.

\begin{theorem}\label{psi-thm}
The map $\Psi^{(\beta)} : \mLPSet \to \mLPSet$ is an LC-Hopf algebra morphism
making the following diagram commute:
\[
\begin{tikzcd}
\mLPSet \arrow[d, "\Phi_<"]  \arrow[drr, "\Phi_{>|<}"] \arrow[rr, "\Psi^{(\beta)}"] && \mLPSet   \arrow[d, "\Phi_{<}"]  
\\   
\mQSym \arrow[rr, "\ttheta"] && \mQSym  
\end{tikzcd}
\]
Consequently, if $(P,\gamma)$ is a positive labeled poset 
then
\be\label{ijk-eq} \tOmega(P,\gamma) = \sum_{I \cup J = P} \beta^{|I \cap J|} \cdot \tGamma(\fkM_{IJ}(P,\gamma)).
\ee
\end{theorem}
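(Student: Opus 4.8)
The plan is to verify directly that $\Psi^{(\beta)}$ is an LC-Hopf algebra morphism, then to identify $\Phi_<\circ\Psi^{(\beta)}$ with $\Phi_{>|<}|_{\mLPSet}$ via the universal property of $\mQSym$ (Theorem~\ref{abs-thm}), and finally to read off \eqref{ijk-eq} by evaluating $\Phi_<$ on the pseudobasis element $[(P,\gamma)]$. That $\Psi^{(\beta)}$ is an algebra morphism is immediate: the mirror construction distributes over disjoint unions, i.e.\ if $I\cup J = P\sqcup Q$ with $I = I_P\sqcup I_Q$ and $J = J_P\sqcup J_Q$, then $\fkM_{IJ}\big((P,\gamma)\sqcup(Q,\delta)\big) = \fkM_{I_PJ_P}(P,\gamma)\sqcup\fkM_{I_QJ_Q}(Q,\delta)$ and $|I\cap J| = |I_P\cap J_P| + |I_Q\cap J_Q|$, so the defining sums factor; the empty poset and the counit are visibly preserved.

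The substantive point is that $\Psi^{(\beta)}$ is a coalgebra morphism, $\Delta\circ\Psi^{(\beta)} = (\Psi^{(\beta)}\htimes\Psi^{(\beta)})\circ\Delta$, which I would prove by expanding both sides on $[(P,\gamma)]$ as sums over combinatorial data and exhibiting a weight-preserving bijection of indexing sets. A summand of $(\Psi^{(\beta)}\htimes\Psi^{(\beta)})(\Delta[(P,\gamma)])$ is indexed by a lower set $S$ and upper set $T$ with $S\cup T = P$ and $S\cap T$ an antichain, together with covers $S = I_S\cup J_S$ and $T = I_T\cup J_T$; to this I associate $I := I_S\cup I_T$, $J := J_S\cup J_T$ and the subsets $S' := I_S\cup(-J_S)$, $T' := I_T\cup(-J_T)$ of $Q := I\cup(-J) = \fkM_{IJ}(P,\gamma)$, which index a summand of $\Delta(\Psi^{(\beta)}[(P,\gamma)])$. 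The checks that $S'$ is a lower set of $Q$, that $T'$ is an upper set, and that $S'\cap T'$ is an antichain all hinge on the antichain condition on $S\cap T$: for instance, if $|s|\prec|t|$ in $P$ with $|t|\in S$, then $|s|$ cannot lie in $S\cap T$ (else $|s|\prec|t|$ would both be in the antichain), which forces the sign of $s$ to be compatible with $S'$. The weight identity $|I\cap J| + |S'\cap T'| = |S\cap T| + |I_S\cap J_S| + |I_T\cap J_T|$ follows by a routine vertex-by-vertex check, and the inverse map sends $(I,J,S',T')$ to the images of $S',T'$ under $|\cdot|$ together with the sign data. Granting this, $\Psi^{(\beta)}$ is an LC-bialgebra morphism; it is then an LC-Hopf algebra morphism because its adjoint is a bialgebra endomorphism of the ordinary Hopf algebra $\LPSet$, which necessarily commutes with the antipode, and hence so does $\Psi^{(\beta)}$. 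I expect this bijection to be the main obstacle — not conceptually deep, but requiring care with signs and with the antichain condition on the cut pieces.

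For the diagram, the lower-right triangle $\Phi_{>|<}|_{\mLPSet} = \ttheta\circ\Phi_<$ holds since both maps send $[(P,\gamma)]$ to $\tOmega(P,\gamma)$, by Theorems~\ref{<-thm} and \ref{><-thm} together with Corollary~\ref{ttheta-cor}. For the outer square it suffices to prove $\Phi_<\circ\Psi^{(\beta)} = \Phi_{>|<}|_{\mLPSet}$; both are morphisms of LC-bialgebras $\mLPSet\to\mQSym$, and $\Phi_{>|<}|_{\mLPSet}$ is a morphism of combinatorial LC-bialgebras $(\mLPSet,\uzetaLP|_{\mLPSet})\to(\mQSym,\zetaq)$ (the restriction of $\Phi_{>|<}$, and $\uzetaLP|_{\mLPSet}$ is a character since $\uzetaLP$ is one on $\mLPSet^+$). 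By the uniqueness in Theorem~\ref{abs-thm}, it is enough to check that $\zetaq\circ\Phi_<\circ\Psi^{(\beta)} = \uzetaLP|_{\mLPSet}$, i.e.\ (using $\zetaq\circ\Phi_< = \zetaLP$) that $\sum_{I\cup J = P}\beta^{|I\cap J|}\zetaLP([\fkM_{IJ}(P,\gamma)]) = \uzetaLP([(P,\gamma,\varnothing)])$ for every $(P,\gamma)$. Here I would observe that, because $\gamma$ is positive-valued, $\zetaLP([\fkM_{IJ}(P,\gamma)])$ vanishes unless no cover $x\lessdot y$ of $P$ has $x\in I$ and $y\in J$ (such a cover would demand the impossible inequality $\gamma(x) < -\gamma(y) < 0$) and $\gamma$ is increasing along covers inside $I$ and decreasing along covers inside $J$; the first condition forces $I$ to be an upper set, $J$ a lower set, and $I\cap J$ an antichain, and then the term equals $\beta^{|I\cap J|}t^{|P|+|I\cap J|}$. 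Substituting $(S,T) := (J,I)$ turns this into precisely the sum appearing in the proof of Proposition~\ref{><-prop} with $V = \varnothing$, whose value is $\uzetaLP([(P,\gamma,\varnothing)]) = [\PeakSet(P,\gamma) = \varnothing]\cdot t^{|P|}(2+\beta t)^{|\ValSet(P,\gamma)|}$.

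Finally, applying the continuous linear map $\Phi_<$ to $\Psi^{(\beta)}([(P,\gamma)]) = \sum_{I\cup J = P}\beta^{|I\cap J|}[\fkM_{IJ}(P,\gamma)]$ and using $\Phi_<([\fkM_{IJ}(P,\gamma)]) = \tGamma(\fkM_{IJ}(P,\gamma))$ (Theorem~\ref{<-thm}) gives the right-hand side of \eqref{ijk-eq}, while the commuting diagram identifies the left-hand side as $\Phi_{>|<}([(P,\gamma)]) = \tOmega(P,\gamma)$; this is \eqref{ijk-eq}. An alternative that avoids the universal-property step (though not the Hopf-morphism verification) is to prove \eqref{ijk-eq} directly by a length- and weight-preserving bijection $\tE(P,\gamma)\xrightarrow{\sim}\bigsqcup_{I\cup J = P}\tA(\fkM_{IJ}(P,\gamma))$, splitting each $\sigma\in\tE(P,\gamma)$ according to which vertices carry primed versus unprimed entries, in the style of Theorems~\ref{p-thm1} and \ref{ep-thm1}.
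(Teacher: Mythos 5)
Your proposal is correct and follows essentially the same route as the paper: establish the lower triangle via Theorems~\ref{<-thm}, \ref{><-thm}, and Corollary~\ref{ttheta-cor}; reduce the outer square to the character identity $\zetaq\circ\Phi_<\circ\Psi^{(\beta)}=\uzetaLP$ via the uniqueness in Theorem~\ref{abs-thm}; verify that identity by analyzing which $\fkM_{IJ}(P,\gamma)$ have nonvanishing $\zetaLP$ (the paper writes out the resulting count directly, whereas you identify it with the sum in Proposition~\ref{><-prop}, which is equivalent); and prove that $\Psi^{(\beta)}$ is a coalgebra morphism by the same $(I,J,S_1,S_2)\leftrightarrow(T_1,T_2,I_1,J_1,I_2,J_2)$ bijection, merely checked starting from the opposite side. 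The antichain verification you sketch and the vertex-by-vertex weight count both match what the paper does.
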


\begin{proof}
The lower triangle in the diagram commutes by 
Theorems~\ref{<-thm} and \ref{><-thm} and Corollary~\ref{ttheta-cor}.
To complete the proof, it suffices by Theorem~\ref{><-thm}
to show that $\Psi^{(\beta)}$ is an LC-Hopf algebra morphism  
and that $\zetaq \circ \Phi_< \circ \Psi^{(\beta)} = \uzetaLP$.

We check the second property first. 
Fix a positive labeled poset  $(P,\gamma)$ 
and note that $\zetaq \circ \Phi_< = \zetaLP$.
If $P = I\cup J$
then $ \zetaLP  ([\fkM_{IJ}(P,\gamma)])$
is zero unless 
(i) $y \notin I$ whenever $y\lessdot z$ in $P$ and $\gamma(y) > \gamma(z)$
and 
(ii) $y \notin J$ whenever $x\lessdot y $ in $P$ and $\gamma(x) < \gamma(y)$.
These conditions can only hold if $\PeakSet(P,\gamma)$ is empty (since $I\cup J=P$).
In this case, if $(I, J)$ satisfies conditions (i) and (ii) then 
$ \zetaLP  (\beta^{|I\cap J|} \cdot [\fkM_{IJ}(P,\gamma)])=t^{|P|}(\beta t)^{|I\cap J|}$.
Moreover, the decompositions $I \cup J = P$ with this nonzero contribution are uniquely determined
by independently assigning each element of $\ValSet(P, \gamma)$ to $I \setminus J$, to $J \setminus I$, or to $I \cap J$,
and so 
\[
 \zetaq \circ \Phi_< \circ \Psi^{(\beta)}([(P,\gamma)])
 = \begin{cases}
 t^{|P|}(1 + 1 + \beta t)^{|\ValSet(P, \gamma)|}
&\text{if $\PeakSet(P,\gamma) = \varnothing$},
\\
0&\text{otherwise}.
\end{cases}
\]
This agrees with the formula for $\uzetaLP([(P,\gamma)]) := \uzetaLP([(P,\gamma,\varnothing)])$
from Proposition~\ref{><-prop}.
We conclude that $\zetaq \circ \Phi_< \circ \Psi^{(\beta)} = \uzetaLP$, as desired.

Finally, we must check that the map $\Psi^{(\beta)}$ is an LC-Hopf algebra morphism.
It clearly commutes with the unit, counit, and product maps of $\mLPSet$,
so we only need to check that
$(\Psi^{(\beta)}\htimes \Psi^{(\beta)}) \circ \Delta = \Delta \circ \Psi^{(\beta)}$.
Continue to let $(P,\gamma)$ be a positive labeled poset.
Let $\delta : P \cup (-P) \to \ZZ$ be the map with $\delta(s) = \sign(s) \delta(|s|)$.
Write $\prec$ for the partial order on $P\cup (-P)$ that has $x\prec y$ if and only if $|x| \prec |y|$ in $P$. 
Relative to this order, the labeled poset $(P\cup -P, \delta)$ is the same thing as $\fkM_{PP}(P,\gamma)$.

Define $\sJ(P,\gamma)$ to be the set of tuples
$(I,J,S_1,S_2)$ 
where $I \cup J = P$ 
and, if $(Q,\delta) := \fkM_{IJ}(P,\gamma)$,
then
$S_1$ is a lower set in $Q$ and $S_2$ is an upper set in $Q$ such that
$Q= S_1 \cup S_2$ and $S_1\cap S_2$ is an antichain.
The value of $\Delta \circ \Psi^{(\beta)}([(P,\gamma)])$ is  
\be
\label{cop-eq1}
  \sum
\beta^{|I \cap J| + |S_1\cap S_2|} \cdot [(S_1,\delta)]\otimes [(S_2,\delta)]
\ee
where the sum is
over all $(I,J,S_1,S_2) \in \sJ(P,\gamma)$,
where each $S_i$  is partially ordered by $\prec$.

Next, define $\sK(P,\gamma)$ to be the set of tuples 
$(T_1,T_2,I_1,J_1,I_2,J_2)$
where $T_1$ is a lower set in $P$ 
and $T_2$ is an upper set in $P$
such that
$P = T_1 \cup T_2$ and $T_1 \cap T_2$ is antichain,
and where $T_i = I_i \cup J_i$.
The value of $(\Psi^{(\beta)}\htimes \Psi^{(\beta)}) \circ \Delta([P,\gamma]) $ is 
\be
\label{cop-eq2}
\sum
\beta^{|I_1 \cap J_1| + |I_2 \cap J_2| + |T_1\cap T_2|} \cdot [\fkM_{I_1J_1}(T_1,\gamma)]\otimes 
[\fkM_{I_2J_2}(T_2,\gamma)]
\ee
where the sum is
over all $(T_1,T_2,I_1,J_1,I_2,J_2) \in \sK(P,\gamma)$.

We must show that \eqref{cop-eq1} and \eqref{cop-eq2} coincide.
It suffices
to exhibit a bijection 
$\sJ(P,\gamma)  \xrightarrow{\sim}  \sK(P,\gamma)$
such that if
\be\label{form-eq}(I,J,S_1,S_2) \mapsto (T_1,T_2,I_1,J_1,I_2,J_2)
\ee
then for $i\in\{1,2\}$ it holds that
\be\label{form-eq2}
(S_i,\delta) = \fkM_{I_iJ_i}(T_i, \gamma) \text{ and }
|I \cap J| + |S_1 \cap S_2| = |I_1 \cap J_1| + |I_2 \cap J_2| + |T_1\cap T_2|.
\ee
The desired map 
is as follows.
Given $(I,J,S_1,S_2) \in \sJ(P,\gamma)$, let
$I_i := \{s \in P : s \in S_i\},$ 
$J_i := \{s \in P : -s \in S_i\},$ and
$T_i := \{ s \in P : \{\pm s\} \cap S_i\neq \varnothing\}$
for $i \in \{1,2\}$.
Clearly $T_i = I_i \cup J_i$, 
$P = T_1 \cup T_2$, and $(S_i,\delta) = \fkM_{I_iJ_i}(T_i,\gamma)$ as labeled posets.
Moreover, $T_1$ is a lower set in $P$ and $T_2$ is an upper set,
and
{\small\begin{align*}
|I \cap J| + |S_1 \cap S_2| 
& =  |I \cap J| + |S_1| + |S_2| - |Q| \\
& =   |S_1| + |S_2| - |P| \\
& = (|T_1| + |I_1 \cap J_1|) + (|T_2| + |I_2 \cap J_2|) - (|T_1| + |T_2| - |T_1 \cap T_2|) \\
& = |I_1 \cap J_1| +  |I_2 \cap J_2| +  |T_1 \cap T_2|.
\end{align*}}%
We must also check that $T_1\cap T_2$ is an antichain in $P$.
For this, suppose $x \in T_1 \cap T_2$ and $y \in P$ and $x\prec y$. 
Since $x \in T_1 \cap T_2$, both $S_1$ and $S_2$ must contain at least one of $\pm x$.  
From our assumptions that 
$S_2$ is an upper set
and $S_1\cap S_2$ is an antichain,
it follows that
$\{ \pm y\} \cap (S_1 \cup S_2) \subseteq S_2\setminus S_1$, so $y \in T_2\setminus T_1$.
We conclude that $T_1 \cap T_2$ is an antichain,
so \eqref{form-eq} is at least a well-defined map $\sJ(P,\gamma) \to \sK(P,\gamma)$
satisfying \eqref{form-eq2}.

To invert \eqref{form-eq},
suppose $(T_1,T_2,I_1,J_1,I_2,J_2) \in \sK(P,\gamma)$.
Let
$S_i := I_i \cup (-J_i) $ for $i \in \{1,2\}$
so that
$(S_i,\delta) = \fkM_{I_iJ_i}(T_i,\gamma)$,
and define
\[
I := \{s \in P : s \in S_1 \cup S_2 \}
\quand 
J := \{s \in P : -s \in S_1 \cup S_2 \}.
\]
Since every $s \in P$ has $\{\pm s\} \cap (S_1 \cup S_2) \neq \varnothing$,
we have $P = I \cup J$ and
$(S_1\cup S_2,\delta)= \fkM_{IJ}(P,\gamma)$,
and it is clear that in this labeled poset 
$S_1$ is a lower set, $S_2$ is an upper set,
and $S_1\cap S_2$ is an antichain.
The correspondence 
$
(T_1,T_2,I_1,J_1,I_2,J_2)
\mapsto 
(I,J,S_1,S_2)
$
defined in this way
is a map $\sK(P,\gamma) \to \sJ(P,\gamma)$.
This is the inverse of \eqref{form-eq}, so \eqref{form-eq}
is the required bijection.
\end{proof}

\subsection{Automorphisms}
\label{antipode-subsection}

If $w$ is a finite sequence then we write $w^\r$ for its reversal.
Given a composition $\alpha \vDash n$,
let $\alpha^\c$ denote the unique composition of $n$ with $I(\alpha^\c) =[n - 1] \setminus I(\alpha)$,
and define $\alpha^\t := (\alpha^\c)^\r = (\alpha^\r)^\c$.
If $w$ is a finite sequence of integers with no adjacent repeated entries and $\alpha \vDash \ell(w)$
has $I(\alpha) = \Des(w) := \{ i : w_i > w_{i+1}\}$,
then $I(\alpha^\t) = \Des(w^\r)$.

Recall that the quasisymmetric functions
$L_\alpha := L^{(0)}_\alpha = \sum_{\alpha \leq \alpha'} M_{\alpha'}$
form a homogeneous basis for $\QSym$ and a pseudobasis for $\mQSym$.
Following \cite[\S3.6]{LMW}, we write $\omega, \psi, \rho : \QSym \to \QSym$
for the linear maps with
\be\label{omega-eq1}
\omega(L_\alpha) := L_{\alpha^\t}
\qquand
\psi(L_\alpha) := L_{\alpha^\c}
\qquand
\rho(L_\alpha) := L_{\alpha^\r}
\ee
for all compositions $\alpha$.
Each of these operators is an algebra morphism;
$\psi$ and $\rho$ are coalgebra anti-automorphisms;
and $\omega = \psi \circ \rho = \rho \circ \psi$ is a Hopf algebra automorphism.
Given a peak composition  $\alpha =(\alpha_1,\alpha_2,\dots,\alpha_k)$, let
 \be 
 \label{flat-eq}
 \alpha^\flat  := (\alpha_k + 1 , \alpha_{k-1},\dots, \alpha_2,\alpha_1 -1).
 \ee
If $\lambda$ is a partition, 
 $\alpha$ is a peak composition,
 and  $K_\alpha := K^{(0)}_{\alpha}$,
then one has
\be\label{omega-eq2}
\ba \omega(s_\lambda) =\psi(s_\lambda) &= s_{\lambda^T} \\
\rho(s_\lambda) &= s_\lambda 
\ea
\qquand
\ba
\omega(K_\alpha) =\rho(K_\alpha)&= K_{\alpha^\flat} \\
\psi(K_\alpha) & =K_\alpha
\ea
\ee
by  \cite[\S3.6]{LMW} and \cite[Prop.~3.5]{Stembridge1997a}.
 These maps extend uniquely by continuity to involutions 
  of $\mQSym$ preserving $\mSym$; we denote the extensions by the same symbols.
  
We can evaluate $\omega$, $\psi$, and $\rho$ at other
quasisymmetric functions of interest.
If $f \in \ZZ[\beta]\llbracket x_1,x_2,\dots \rrbracket$ and $a,b \in \ZZ[\beta]$
  then we abbreviate by writing
  \[f(\tfrac{ax}{1-b x}) :=f( \tfrac{ax_1}{1-b x_1}, \tfrac{ax_2}{1 -bx_2},\dots)\] for the power series
  obtained by substituting $x_i \mapsto \frac{ax_i}{1-b x_i} = ax_i + abx_i^2 + \dots$
  for each $i \in \PP$.
  If $(P,\gamma)$ is a labeled poset then let $P^*$ be the dual poset, in which all order relations
are reversed, and define $\gamma^*(s) = -\gamma(s)$ for $s \in P$.

  \begin{proposition}\label{omega-prop}
If $\alpha$ is a composition and $(P,\gamma)$ is a labeled poset  then 
%
%
%
\[
\ba 
\omega\( L^{(\beta)}_\alpha\) &=L^{(\beta)}_{\alpha^\t} (\tfrac{x}{1-\beta x}) \\
\psi\( L^{(\beta)}_\alpha\) &=L^{(\beta)}_{\alpha^\c} (\tfrac{x}{1-\beta x}) \\
\rho\( L^{(\beta)}_\alpha\) &=L^{(\beta)}_{\alpha^\r}   \\
\ea
\qquand 
\ba
  \omega\(\tGamma(P,\gamma)\) &= \tGamma(P^*,\gamma)(\tfrac{x}{1-\beta x}) \\
   \psi\(\tGamma(P,\gamma)\) &= \tGamma(P,\gamma^*)(\tfrac{x}{1-\beta x}) \\
    \rho\(\tGamma(P,\gamma)\) &= \tGamma(P^*,\gamma^*).
    \ea
 \]
  \end{proposition}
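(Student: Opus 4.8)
The plan is to establish the six identities in two groups of three, and within each group exploit the fact that $\omega = \psi \circ \rho$ together with the structure of the relevant combinatorial Hopf algebra. First I would observe that the three statements about $\tGamma(P,\gamma)$ and the three about $L^{(\beta)}_\alpha$ are in fact equivalent: by Theorem~\ref{p-thm1} (and the identification of $\tGamma(w,\gamma)$ with $L^{(\beta)}_\alpha$ via Proposition~\ref{by-def-prop}), every $\tGamma(P,\gamma)$ is an infinite $\NN[\beta]$-linear combination of multifundamental functions $L^{(\beta)}_\alpha$, and conversely each $L^{(\beta)}_\alpha = \tGamma(w,\gamma)$ for a suitable chain $(w,\gamma)$; moreover the operations $P\mapsto P^*$ and $\gamma\mapsto\gamma^*$ match up with the composition operations $\alpha\mapsto\alpha^\r$, $\alpha\mapsto\alpha^\c$, $\alpha\mapsto\alpha^\t$ under $\Des$. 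So it suffices to prove, say, the $L^{(\beta)}_\alpha$ identities and then transport them along $\Phi_<$, being careful that $\omega,\psi,\rho$ are continuous (hence commute with infinite $\ZZ[\beta]$-linear combinations) and that the substitution $x_i\mapsto x_i/(1-\beta x_i)$ is a continuous ring endomorphism of $\mQSym$.

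The core computation is then the three $L^{(\beta)}_\alpha$ identities. For $\rho$, I would argue directly from the definition \eqref{lbeta-eq}: reversing a chain sends the tuple $(S_1\preceq\cdots\preceq S_N)$ with primed/unprimed descent data indexed by $I(\alpha)$ to one indexed by $I(\alpha^\r)$ (reading the inequalities backwards replaces $\preceq$-chains by $\succeq$-chains, which after relabeling $i_1<\cdots<i_n$ is the same set of monomials with the same $\beta$-exponents), and no variable substitution is needed since $\rho$ does not involve the antipode's "$x\mapsto -x/(1+\beta x)$"-type twist. For $\psi$ and $\omega$ the substitution $x_i\mapsto x_i/(1-\beta x_i)$ enters because of the set-valued multiplicities: here I would use the known degree-zero facts $\psi(L_\alpha)=L_{\alpha^\c}$, $\omega(L_\alpha)=L_{\alpha^\t}$ from \eqref{omega-eq1}, combine them with the relation \eqref{tilde-l-eq} expressing $L^{(\beta)}_\alpha$ in terms of Lam--Pylyavskyy's $\tilde L_\alpha = L^{(1)}_\alpha$, and track how $\psi$ interacts with the rescaling $x_i\mapsto\beta x_i$. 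Concretely, the clean route is to define $\psi$ on $\mQSym$ as the continuous algebra map extending $\psi(L_\alpha)=L_{\alpha^\c}$ and then check that $\psi\bigl(L^{(\beta)}_\alpha\bigr)$ and $L^{(\beta)}_{\alpha^\c}(\tfrac{x}{1-\beta x})$ agree by comparing their images under setting $\beta=0$ and their behaviour under the scaling map — or, more robustly, by verifying both sides have the same expansion in the $M$-basis using the explicit monomial formula for $L^{(\beta)}_\alpha$; then $\omega=\psi\circ\rho$ gives the $\omega$ case for free, with the substitutions composing correctly since $\rho$ commutes with $x_i\mapsto x_i/(1-\beta x_i)$.

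**The main obstacle** will be pinning down exactly why the substitution $x_i\mapsto x_i/(1-\beta x_i)$ is the correct correction factor for $\psi$ and $\omega$ but not for $\rho$ — i.e., giving a clean bijective or generating-function argument rather than an opaque $M$-basis coefficient comparison. The conceptual reason is that $\psi$ (and hence $\omega$) must reconcile the asymmetry between primed and unprimed entries / between "weakly increasing along rows" and "strictly increasing along columns" in the set-valued world, and this asymmetry manifests as the geometric-series factor $1 + \beta x_i + \beta^2 x_i^2 + \cdots$. I would handle this by giving the argument at the level of a single chain $(w,\gamma)$: reversing the Hasse orientation (the effect of $\gamma\mapsto\gamma^*$) swaps which adjacent pairs $S_i,S_{i+1}$ are required to intersect inside $\{1,2,\dots\}$ versus inside $\{1',2',\dots\}$, which is not the same as the degree-zero picture, and the "extra" repeated entries that can accumulate at a given value $x_i$ are governed by a geometric series — this is precisely the source of $x_i\mapsto x_i/(1-\beta x_i)$. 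Making this count rigorous is the one genuinely technical point; everything else is bookkeeping plus an appeal to continuity and to the already-established degree-zero identities \eqref{omega-eq1}. Finally I would record that all three maps preserve $\mSym\subseteq\mQSym$ and restrict there to the classical $\omega$, so the labeled-poset formulas specialize correctly to the Schur-function formulas in \eqref{omega-eq2}.
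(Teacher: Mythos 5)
Your overall architecture matches the paper's: reduce everything to the linearly ordered case via Theorem~\ref{p-thm1} and Proposition~\ref{by-def-prop} (using $\tL(P^*)=\{w^\r : w\in\tL(P)\}$ and continuity of $\omega,\psi,\rho$), prove the $L^{(\beta)}_\alpha$ identities, and get $\omega$ from $\psi\circ\rho$. The $\rho$ case is fine provided you make explicit the fact you are implicitly using, namely that $\rho$ is realized on finite-variable truncations by reversing the variables $x_1,\dots,x_n\mapsto x_n,\dots,x_1$; without that characterization, ``reversing the chain'' is an operation on monomials, not an application of the map $\rho$, which is defined only on the $L_\alpha$-basis.

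The genuine gap is the identity $\omega\(L^{(\beta)}_\alpha\)=L^{(\beta)}_{\alpha^\t}(\tfrac{x}{1-\beta x})$ (equivalently the $\psi$ version), which is the entire content of the proposition: this is exactly where the paper leans on an external result, namely \cite[Props.~8.2 and 8.5]{Marberg2018}, which compare the images of a word $w$ and of the formal sum $[[w^\r]]$ of all words collapsing to $w^\r$ under the three descent functionals $\phi_\leq,\phi_<,\phi_>$; the geometric series $x\mapsto x/(1-x)$ arises there from counting the repeated letters in $[[w^\r]]$, and the $\beta$-version follows by the rescaling \eqref{tilde-l-eq}. Your proposal gestures at this mechanism (``the extra repeated entries\dots are governed by a geometric series'') but does not prove it. Of the two verification routes you offer, comparing $\beta=0$ specializations is insufficient (it only pins down the lowest-degree homogeneous component, and many distinct elements of $\mQSym$ share it), and the $M$-basis comparison is precisely the nontrivial computation: to evaluate $\psi$ or $\omega$ on $L^{(\beta)}_\alpha$ you must first expand it in the $L_{\alpha'}$-basis (since that is where these maps are defined), apply the degree-zero formulas \eqref{omega-eq1} termwise, and then resum and match against the monomial expansion of $L^{(\beta)}_{\alpha^\t}(\tfrac{x}{1-\beta x})$. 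None of this is carried out, so as written the proof of the central identity is missing; you would need either to supply that computation or to import the word-collapsing lemmas that the paper cites.
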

  
When $\beta=1$, the top formulas are closely related to \cite[Prop.~38]{Patrias}.
  
  \begin{proof}
  We use the term \emph{word} in this proof to mean a finite sequence of positive integers.
Let $W$ be the linearly compact $\ZZ[\beta]$-module
with the set of all words as a pseudobasis.
Define $\phi_\leq$, $\phi_<$, and $\phi_\geq$
to be the continuous linear maps $W \to \mQSym$
such that if $v$ is a word 
and $\alpha\vDash \ell(v)$ has $I(\alpha) = \Des(v)$, then
\[
\phi_\leq(v) = L_\alpha,
\qquad
\phi_<(v^\r) = L_{\alpha^\t},
\qquand
\phi_>(v) = L_{\alpha^\c}.
\]
Fix a word $w$ with no adjacent repeated letters and suppose $\alpha\vDash \ell(w)$
has $I(\alpha) = \Des(w)$.
Let $[[w]] \in W$ be the sum of all words that yield $w$ when adjacent repeated letters are combined,
so that, for example, $[[21]] = 21 + 221 + 211 + 2221 + 2211 + 2111 + \dots$.
Then \cite[Props.~8.2 and 8.5]{Marberg2018}
assert that
\[ 
\phi_\leq([[w^\r]]) = \tilde L_{\alpha^\t}(\tfrac{x}{1-x}),
\qquad
\phi_<([[w]]) = \tilde L_{\alpha},
\qquand
\phi_>([[w^\r]]) = \tilde L_{\alpha^\r},
\]
where $\tilde L_\alpha := L^{(1)}_\alpha$.
As $\omega\circ \phi_<(v)  = \phi_\leq(v^\r)$ for any word $v$, 
we therefore have
\be\omega(\tilde L_\alpha) = \omega\circ \phi_<([[w]]) =\phi_\leq([[w^\r]]) = \tilde L_{\alpha^\t}(\tfrac{x}{1-x}).\ee
Similarly, 
since $ \rho\circ \phi_<(v) = \phi_>(v^\r)$ for any word $v$, 
we have 
\be \rho(\tilde L_\alpha) = \rho \circ \phi_<([[w]]) = \phi_>([[w^\r]]) = \tilde L_{\alpha^\r}.\ee
Substituting $x_i \mapsto \beta x_i$ 
and applying \eqref{tilde-l-eq} gives the 
desired expressions for $\omega( L^{(\beta)}_\alpha)$ and $\rho( L^{(\beta)}_\alpha)$.
The formulas for 
$ \omega\(\tGamma(P,\gamma)\) $ and $\rho\(\tGamma(P,\gamma)\)$
then follow from
Theorem~\ref{p-thm1} and Proposition~\ref{by-def-prop}
since  $\tL(P^*) = \{w^\r : w \in \tL(P)\}$
and  $\tGamma(w^\r,\gamma^*) =L^{(\beta)}_{\alpha^\r}$ for $w \in \tL(P)$.
Finally, we compute $\psi(L^{(\beta)}_\alpha)$
and $\psi\(\tGamma(P,\gamma)\)$ using the identity $\psi= \omega \circ \rho$.
  \end{proof}
  
It follows from \eqref{omega-eq2} that if $f \in \mSym$ then $\omega(f) = \psi(f)$ and $\rho(f) = f$, so there is only one nontrivial computation to make on symmetric functions.

    \begin{corollary}\label{omega-g-cor}
If $\mu\subseteq \lambda$ are partitions then $\omega\( G^{(\beta)}_{\lambda/\mu}\)
=G^{(\beta)}_{\lambda^T/\mu^T} (\tfrac{x}{1-\beta x})$.
  \end{corollary}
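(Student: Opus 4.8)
The plan is to reduce the statement to Proposition~\ref{omega-prop}, exploiting the fact that $G^{(\beta)}_{\lambda/\mu}$ is already known to be symmetric so that we may replace the operator $\omega$ by the more tractable operator $\psi$. First I would recall that $G^{(\beta)}_{\lambda/\mu} = \tGamma(\D_{\lambda/\mu},\theta)$ for any labeling $\theta$ of the skew diagram satisfying \eqref{canonical-eq}, and that this power series lies in $\mSym$ (the symmetry of $G^{(\beta)}_{\lambda/\mu}$ recalled after \eqref{tk-eq}). Since $\omega$ and $\psi$ agree on $\mSym$ by \eqref{omega-eq2}, we have $\omega\bigl(G^{(\beta)}_{\lambda/\mu}\bigr) = \psi\bigl(\tGamma(\D_{\lambda/\mu},\theta)\bigr)$, and Proposition~\ref{omega-prop} evaluates the right-hand side as $\tGamma(\D_{\lambda/\mu},\theta^*)\bigl(\tfrac{x}{1-\beta x}\bigr)$, where $\theta^*(s) := -\theta(s)$.

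The crux is then to identify the labeled poset $(\D_{\lambda/\mu},\theta^*)$ with $(\D_{\lambda^T/\mu^T},\theta')$ for some labeling $\theta'$ of the transposed diagram that again satisfies \eqref{canonical-eq}. For this I would use the transpose map $(i,j)\mapsto(j,i)$, which I would check restricts to a bijection $\D_{\lambda/\mu}\to\D_{\lambda^T/\mu^T}$ (using that $i\le(\lambda^T)_j\Leftrightarrow\lambda_i\ge j$ and $(\mu^T)_j<i\Leftrightarrow\mu_i<j$) and is an isomorphism for the componentwise order, since it swaps the two coordinates simultaneously. Setting $\theta'(j,i):=-\theta(i,j)$, a short verification shows $\theta'$ satisfies \eqref{canonical-eq}: transposition interchanges the ``row'' and ``column'' covering relations while negation interchanges ``increasing'' and ``decreasing'', and these two effects cancel, turning \eqref{canonical-eq} for $\theta$ into \eqref{canonical-eq} for $\theta'$. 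Because $\theta^*$ agrees with $\theta'$ composed with the transpose map (hence is equivalent to it as a labeling), the transpose map is an isomorphism of labeled posets $(\D_{\lambda/\mu},\theta^*)\cong(\D_{\lambda^T/\mu^T},\theta')$, so $\tGamma(\D_{\lambda/\mu},\theta^*)=\tGamma(\D_{\lambda^T/\mu^T},\theta')=G^{(\beta)}_{\lambda^T/\mu^T}$, again invoking independence of the choice of $\theta'$. Combining the two displays gives $\omega\bigl(G^{(\beta)}_{\lambda/\mu}\bigr) = G^{(\beta)}_{\lambda^T/\mu^T}\bigl(\tfrac{x}{1-\beta x}\bigr)$.

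I do not expect a serious obstacle; the one point requiring care is that one must route through $\psi$ rather than $\omega$. The identity $\omega(\tGamma(P,\gamma)) = \tGamma(P^*,\gamma)(\tfrac{x}{1-\beta x})$ of Proposition~\ref{omega-prop} cannot be applied directly, because the dual poset $(\D_{\lambda/\mu})^*$ is in general \emph{not} isomorphic to $\D_{\lambda^T/\mu^T}$ (already for $\lambda=(2,1)$ one of them has a unique minimal element and the other has two); only after passing to $\psi$ does the relevant labeled poset become the transpose diagram. Alternatively, one could argue through the expansion \eqref{tilde-G-eq}, the formula $\omega(L^{(\beta)}_\alpha)=L^{(\beta)}_{\alpha^\t}(\tfrac{x}{1-\beta x})$ of Proposition~\ref{omega-prop}, and a transpose bijection on standard set-valued tableaux, but that route additionally requires the identity $f^{\alpha}_{\nu/\kappa}=f^{\alpha^\r}_{\nu/\kappa}$ (itself a consequence of the symmetry of $G^{(\beta)}_{\nu/\kappa}$ together with the fact that $\rho$ fixes $\mSym$), so the poset argument above is the more economical one to write up.
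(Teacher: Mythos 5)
Your proof is correct and follows essentially the same route as the paper: replace $\omega$ by $\psi$ using $\omega=\psi$ on $\mSym$, apply Proposition~\ref{omega-prop} to $\tGamma(\D_{\lambda/\mu},\theta)$, and identify $(\D_{\lambda/\mu},\theta^*)$ with $(\D_{\lambda^T/\mu^T},\vartheta)$ via transposition. Your added remark that the dual-poset formula for $\omega$ cannot be applied directly (since $(\D_{\lambda/\mu})^*\not\cong\D_{\lambda^T/\mu^T}$ in general) is a correct and worthwhile observation that the paper leaves implicit.
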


When $\beta=1$ this identity is essentially \cite[Prop.~9.22]{LamPyl}.
  \begin{proof}
 If $\theta$ is a labeling of $\D_{\lambda/\mu}$ satisfying \eqref{canonical-eq} and $\vartheta$ is an analogous labeling of
$\D_{\lambda^T/\mu^T}$, then $(\D_{\lambda/\mu}, \theta^*)$
 and $(\D_{\lambda^T/\mu^T},\vartheta)$ are equivalent labeled posets.
Since we have $ G^{(\beta)}_{\lambda/\mu} = \tGamma(\D_{\lambda/\mu}, \theta)$ by \eqref{tk-eq},
 the result follows from Proposition~\ref{omega-prop}.
  \end{proof}
    
Next, we state some formulas for the multipeak quasisymmetric functions.
  
    \begin{proposition}\label{omega-peak-prop}
If $\alpha$ is a peak composition  
and $(P,\gamma)$ is a labeled poset
 then 
\[
\ba 
\omega\( K^{(\beta)}_\alpha\)&=K^{(\beta)}_{\alpha^\flat} (\tfrac{x}{1-\beta x}) \\
\psi\( K^{(\beta)}_\alpha\) &= K^{(\beta)}_{\alpha} (\tfrac{x}{1-\beta x}) \\
\rho\( K^{(\beta)}_\alpha\) &=K^{(\beta)}_{\alpha^\flat}   \\
\ea
\qquand 
\ba
\omega\(\tOmega(P,\gamma)\) &=\tOmega(P^*,\gamma)(\tfrac{x}{1-\beta x}) \\
\psi\(\tOmega(P,\gamma)\)&= \tOmega(P,\gamma)(\tfrac{x}{1-\beta x})  \\
\rho\(\tOmega(P,\gamma)\)&=\tOmega(P^*,\gamma).
    \ea
 \]
 Moreover, the formulas on the left
 also hold if we replace ``$K^{(\beta)}$'' by ``$\oK^{(\beta)}$''.
%
  \end{proposition}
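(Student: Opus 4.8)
The plan is to deduce everything from Proposition~\ref{omega-prop}, proving the statements for $\rho$ and for $\psi$ and then reading off those for $\omega$ from $\omega = \psi\circ\rho$ (using also $\rho^2 = \id$). For the formulas involving $\tOmega(P,\gamma)$, observe first that both sides, as well as the maps $\omega,\psi,\rho$, are insensitive to replacing $(P,\gamma)$ by an isomorphic labeled poset, so I may assume $(P,\gamma)$ is positive; then Theorem~\ref{psi-thm} applies and gives the finite expansion $\tOmega(P,\gamma) = \sum_{I\cup J = P}\beta^{|I\cap J|}\tGamma(\fkM_{IJ}(P,\gamma))$ of \eqref{ijk-eq}, to which I would apply the continuous algebra maps $\rho$ and $\omega$ term by term.

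Applying $\rho$ termwise and using Proposition~\ref{omega-prop}, the summand $\tGamma(\fkM_{IJ}(P,\gamma))$ becomes $\tGamma$ of the dual poset of $\fkM_{IJ}(P,\gamma)$ with its labels negated; the key elementary check is that $s\mapsto -s$ is a labeling-preserving isomorphism from this onto $\fkM_{JI}(P^*,\gamma)$. After reindexing $(I,J)\mapsto(J,I)$ --- legitimate since $|I\cap J| = |J\cap I|$ and $P^*$ has the same underlying set as $P$ and is again positive --- the sum reassembles into $\tOmega(P^*,\gamma)$ by \eqref{ijk-eq} for $(P^*,\gamma)$. For $\omega$ the same scheme works, now with the simpler observation that dualizing the poset underlying $\fkM_{IJ}(P,\gamma)$ while keeping its labels returns $\fkM_{IJ}(P^*,\gamma)$ exactly, so that $\omega(\tOmega(P,\gamma)) = \tOmega(P^*,\gamma)(\tfrac{x}{1-\beta x})$; the $\psi$ formula then drops out as $\psi(\tOmega(P,\gamma)) = \omega(\rho(\tOmega(P,\gamma))) = \omega(\tOmega(P^*,\gamma)) = \tOmega(P,\gamma)(\tfrac{x}{1-\beta x})$.

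For the left-hand formulas I would pass through the right-hand ones. Given a peak composition $\alpha\vDash N$, Proposition~\ref{k1-prop} identifies $K^{(\beta)}_\alpha$ with $\tOmega$ of the chain on $[N]$ equipped with any labeling whose sequence of consecutive comparisons realizes $I(\alpha)$ (the peak hypothesis is precisely what makes $\PeakSet$ of this data equal $I(\alpha)$). Feeding this into the previous step gives $\psi(K^{(\beta)}_\alpha) = K^{(\beta)}_\alpha(\tfrac{x}{1-\beta x})$ and $\rho(K^{(\beta)}_\alpha) = \tOmega([N]^*,\delta)$; reading off the descent pattern of the dual chain shows it equals $[N-1]\setminus(N - I(\alpha)) = [N-1]\setminus I(\alpha^\r) = I(\alpha^\t)$, so Proposition~\ref{k1-prop} again gives $\rho(K^{(\beta)}_\alpha) = K^{(\beta)}_{\Lambda(\alpha^\t)}$, whereupon it remains to verify the purely combinatorial identity $\Lambda(\alpha^\t) = \alpha^\flat$ for peak $\alpha$ (which also confirms that $\alpha^\flat$ is a peak composition, so that the right-hand side is meaningful). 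The $\omega$ formula follows from $\omega = \psi\circ\rho$. Finally, for $\oK^{(\beta)}_\alpha$ I would start from $\oK^{(\beta)}_\alpha = 2^{-n}\sum_{\delta\in\NN^n}(-\beta/2)^{|\delta|}K^{(\beta)}_{\alpha+\delta}$ (Corollary~\ref{equiexp-cor}), a series convergent in the linearly compact topology, to which $\rho$, $\psi$ and the substitution $x_i\mapsto\tfrac{x_i}{1-\beta x_i}$ all pass term by term; using the $K^{(\beta)}$ formulas together with $(\alpha+\delta)^\flat = \alpha^\flat+\delta^\r$ and the fact that $\delta\mapsto\delta^\r$ is a weight-preserving bijection of $\NN^n$, the transformed series reassembles --- again via Corollary~\ref{equiexp-cor}, now applied to $\alpha^\flat$ --- into $\oK^{(\beta)}_{\alpha^\flat}$, $\oK^{(\beta)}_\alpha(\tfrac{x}{1-\beta x})$, and $\oK^{(\beta)}_{\alpha^\flat}(\tfrac{x}{1-\beta x})$, respectively.

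The hardest part is the bookkeeping rather than any single calculation: one must keep straight which of $\omega,\psi,\rho$ corresponds to dualizing the poset, to negating the labels, or to both (and whether the substitution $x_i\mapsto\tfrac{x_i}{1-\beta x_i}$ appears), and make the two isomorphism claims about $\fkM_{IJ}$ precise enough that the $\beta$-weights and the peak/descent data line up after reindexing. The most genuinely combinatorial ingredient, and the most likely source of a slip, is the identity $\Lambda(\alpha^\t)=\alpha^\flat$.
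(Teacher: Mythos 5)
Your proof is correct and follows essentially the same route as the paper's: both reduce the right-hand formulas to Theorem~\ref{psi-thm} and Proposition~\ref{omega-prop} via the isomorphisms relating $\fkM_{IJ}(P,\gamma)$ under dualization and label-negation to $\fkM_{JI}(P,\gamma)$, $\fkM_{IJ}(P^*,\gamma)$, and $\fkM_{JI}(P^*,\gamma)$, then specialize to chains via Proposition~\ref{k1-prop} and handle $\oK^{(\beta)}_\alpha$ by continuity through Corollary~\ref{equiexp-cor}. The only cosmetic differences are that the paper establishes $\psi$ and $\rho$ directly and obtains $\omega=\psi\circ\rho$ (you do $\rho$ and $\omega$ and derive $\psi$), and it states your flagged identity $\Lambda(\alpha^\t)=\alpha^\flat$ in the equivalent form $\PeakSet(w^\r,\gamma)=I(\alpha^\flat)$; that identity does check out.
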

  
  \begin{proof}
 If $P = I \cup J $  and $(Q,\delta) := \fkM_{IJ}(P,\gamma)$,
  then $(Q,\delta^*) \cong \fkM_{JI}(P,\gamma)$ and $(Q^*,\delta^*) \cong \fkM_{JI}(P^*,\gamma)$
  as labeled posets. 
 The formulas for
  $\psi\(\tOmega(P,\gamma)\) $
  and
  $\rho\(\tOmega(P,\gamma)\)$
  are immediate 
  from Theorem~\ref{psi-thm} and Proposition~\ref{omega-prop},
  and  $\omega\(\tOmega(P,\gamma)\)  = \psi \circ \rho\(\tOmega(P,\gamma)\) =
  \psi \(\tOmega(P^*,\gamma)\) = \tOmega(P^*,\gamma)(\tfrac{x}{1-\beta x})$.
  
  Given these formulas, the expressions for 
  $\omega (K^{(\beta)}_\alpha)$, 
  $\psi (K^{(\beta)}_\alpha)$, and 
  $\rho (K^{(\beta)}_\alpha)$ are clear
  from Proposition~\ref{k1-prop}, noting that
  if $w \in \tL(P)$ has $\PeakSet(w,\gamma) = I(\alpha)$
  then $\PeakSet(w^\r,\gamma) = I(\alpha^\flat)$.
  The analogous set of identities involving $\oK^{(\beta)}_\alpha$
follow in turn by continuous linearity in view of Corollary~\ref{equiexp-cor}.
  \end{proof}

      \begin{corollary}
If $\mu\subseteq \lambda$ are strict partitions then 
\[
\omega\( \bGP_{\lambda/\mu}\)
=\bGP_{\lambda/\mu} (\tfrac{x}{1-\beta x})
\qquand
\omega\( \bGQ_{\lambda/\mu}\)
=\bGQ_{\lambda/\mu} (\tfrac{x}{1-\beta x}).
\]
  \end{corollary}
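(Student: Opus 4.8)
The plan is to deduce this statement directly from Proposition~\ref{omega-peak-prop} by realizing $\bGP_{\lambda/\mu}$ and $\bGQ_{\lambda/\mu}$ as enriched set-valued weight enumerators $\tOmega(\SD_{\lambda/\mu},\theta,V)$ of appropriate labeled posets and then checking that the relevant dualization operation fixes the underlying poset up to isomorphism.

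First I would recall from Section~\ref{shifted-stable-sect} that $\bGQ_{\lambda/\mu} = \tOmega(\SD_{\lambda/\mu},\theta)$ and $\bGP_{\lambda/\mu} = \tOmega(\SD_{\lambda/\mu},\theta,V_{\lambda/\mu})$ for a labeling $\theta$ satisfying \eqref{canonical-eq}. The key geometric observation is that the poset $\SD_{\lambda/\mu}$ with its partial order $(i,j)\leq (i',j')$ iff $i\leq i'$ and $j\leq j'$ is \emph{self-dual}: reflecting a shifted skew diagram across its main anti-diagonal sends $\SD_{\lambda/\mu}$ to $\SD_{\tilde\lambda/\tilde\mu}$ for the conjugate-type strict partitions, but one checks that in fact the reversed poset $(\SD_{\lambda/\mu})^*$ is isomorphic to $(\SD_{\lambda/\mu},\theta^*)$ — more precisely, the rotation by $180^\circ$ of the shifted skew shape is again (the poset of) a shifted skew shape of the same form, and the labeling $\theta^*$ obtained by negating $\theta$ is equivalent to a canonical labeling of the dual. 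The cleanest route is: apply $\psi$ rather than $\omega$. By Proposition~\ref{omega-peak-prop}, $\psi(\tOmega(P,\gamma)) = \tOmega(P,\gamma)(\tfrac{x}{1-\beta x})$ — crucially, $\psi$ leaves the poset \emph{unchanged}, only negating the labeling $\gamma \mapsto \gamma^*$. Since $\tOmega(P,\gamma)$ (and likewise $\tOmega(P,\gamma,V)$, by the analogous computation) is independent of the choice of labeling among those equivalent to $\gamma$, and since $\gamma^*$ is still a valid labeling of the \emph{same} poset $P$ (just with all covering comparisons reversed), I need to verify that $(\SD_{\lambda/\mu},\theta^*)$ is again of the form required in \eqref{gq-eq} and \eqref{gp-eq} with the same shape. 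This holds because negating $\theta$ produces a labeling that is increasing down columns and decreasing along rows — precisely \eqref{canonical-eq} read for the dual poset, and the dual poset of $\SD_{\lambda/\mu}$ is order-isomorphic to $\SD_{\lambda/\mu}$ itself via $180^\circ$ rotation. For the $P$-version I also check that this isomorphism carries $V_{\lambda/\mu}$ (the main-diagonal cells, which are the valleys) to the set of valleys of the rotated shape, which is again its main diagonal.

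Concretely, then, the proof runs: (1) invoke $\bGQ_{\lambda/\mu} = \tOmega(\SD_{\lambda/\mu},\theta)$ and $\bGP_{\lambda/\mu} = \tOmega(\SD_{\lambda/\mu},\theta,V_{\lambda/\mu})$; (2) by Proposition~\ref{omega-peak-prop} (and its stated analogue for $\oOmega$, together with Theorem~\ref{op-thm} to handle intermediate $V$), $\omega(\tOmega(P,\gamma,V)) = \tOmega(P^*,\gamma,V)(\tfrac{x}{1-\beta x})$; (3) exhibit the order isomorphism $\phi : \SD_{\lambda/\mu} \xrightarrow{\sim} (\SD_{\lambda/\mu})^*$ given by $180^\circ$ rotation, check that $\theta \circ \phi$ is equivalent to $\theta$ as a labeling of $(\SD_{\lambda/\mu})^*$ (equivalently that $\theta^*$ is a valid \eqref{canonical-eq}-labeling after transport), and check $\phi(V_{\lambda/\mu}) = V_{\lambda/\mu}$; (4) conclude $\tOmega((\SD_{\lambda/\mu})^*,\theta) = \tOmega(\SD_{\lambda/\mu},\theta) = \bGQ_{\lambda/\mu}$ and likewise for $\bGP$. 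Then $\omega(\bGQ_{\lambda/\mu}) = \bGQ_{\lambda/\mu}(\tfrac{x}{1-\beta x})$, and similarly for $\bGP$.

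The main obstacle will be step (3): verifying carefully that the $180^\circ$ rotation of a shifted skew diagram $\SD_{\lambda/\mu}$, as an abstract poset with the coordinatewise order, is isomorphic to its own order dual, and that under this isomorphism the canonical labeling $\theta$ transports to something equivalent to $\theta$ (so that no spurious twist of the quasisymmetric function occurs beyond the substitution $x\mapsto \tfrac{x}{1-\beta x}$), and that the diagonal cells go to diagonal cells. This is a purely combinatorial check about shifted shapes — straightforward but slightly fiddly because rotating a shifted skew shape does not literally return the same array of boxes; one must argue at the level of the Hasse diagram / partial order rather than the array. An alternative, perhaps cleaner, phrasing avoids rotation entirely: observe directly that $(\SD_{\lambda/\mu})^*$ with labeling $\theta^*$ satisfies the hypotheses of \eqref{gq-eq}/\eqref{gp-eq} for the \emph{same} strict partitions $\lambda,\mu$ — this follows since $\tOmega$ depends only on the oriented Hasse diagram, and reversing all edges of the oriented Hasse diagram of $(\SD_{\lambda/\mu},\theta)$ and then relabeling produces the oriented Hasse diagram of $(\SD_{\lambda/\mu},\theta)$ read "from the opposite corner", which is the oriented Hasse diagram of another canonical labeling of $\SD_{\lambda/\mu}$. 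I would write it this way to sidestep coordinate bookkeeping.
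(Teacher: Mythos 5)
Your central step (3) is false: the poset $\SD_{\lambda/\mu}$ is \emph{not} in general isomorphic to its order dual, and the $180^\circ$ rotation of a shifted skew diagram is not again a shifted skew diagram. Already for $\lambda=(3,1)$, $\mu=\emptyset$ the poset $\SD_\lambda$ has one minimal element and two maximal elements, while $(\SD_\lambda)^*$ has two minimal elements and one maximal element, so no isomorphism $\SD_{\lambda/\mu}\xrightarrow{\sim}(\SD_{\lambda/\mu})^*$ exists. (This is exactly the point where shifted shapes differ from ordinary skew shapes, where rotation \emph{does} produce another skew shape and yields $\omega(G^{(\beta)}_{\lambda/\mu})=G^{(\beta)}_{\lambda^T/\mu^T}(\tfrac{x}{1-\beta x})$ as in Corollary~\ref{omega-g-cor}.) So the route through $\omega(\tOmega(P,\gamma))=\tOmega(P^*,\gamma)(\tfrac{x}{1-\beta x})$ plus self-duality cannot be completed, and even your ``cleanest'' variant still leans on this nonexistent isomorphism when you try to compare $(\SD_{\lambda/\mu},\theta^*)$ with $(\SD_{\lambda/\mu},\theta)$.

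The ingredient you are missing is the symmetry of $\bGP_{\lambda/\mu}$ and $\bGQ_{\lambda/\mu}$ (Theorem~\ref{skew-sym-thm}). By \eqref{omega-eq2} the involutions $\omega$ and $\psi$ agree on $\mSym$, so it suffices to compute $\psi$ of these functions; and the $\psi$-formula in Proposition~\ref{omega-peak-prop} requires no change of poset or labeling whatsoever---it reads $\psi(\tOmega(P,\gamma))=\tOmega(P,\gamma)(\tfrac{x}{1-\beta x})$, and likewise $\psi(\oK^{(\beta)}_\alpha)=\oK^{(\beta)}_\alpha(\tfrac{x}{1-\beta x})$, which handles $\bGP_{\lambda/\mu}=\tOmega(\SD_{\lambda/\mu},\theta,V_{\lambda/\mu})$ via its expansion in the $\oK^{(\beta)}_\alpha$ (Corollary~\ref{omco-cor}) or via Theorem~\ref{op-thm} as you suggest. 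With that substitution for your step (3), the argument closes in two lines and matches the paper's proof; without an appeal to symmetry there is no way to convert the genuine poset dualization in the $\omega$-formula into the identity map.
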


  \begin{proof}
  Since $\omega$ and $\psi$ take the same value on the symmetric functions
  $\bGP_{\lambda/\mu}$ and $\bGQ_{\lambda/\mu}$, 
this follows from \eqref{gq-eq}, \eqref{gp-eq}, and Proposition~\ref{omega-peak-prop}.
  \end{proof}
  
We can use these formulas 
to prove two facts about stable Grothendieck polynomials.
The following statement generalizes  independent results of
Ardila and Serrano \cite[Thm.~4.3]{ArdilaSerrano} and DeWitt \cite[Thm.~V.5]{Dewitt}
which show that
$s_{\delta_n/\mu} \in \bigoplus_\nu \NN P_\nu$ for all partitions  $\mu \subseteq \delta_n := (n,n-1,\dots,2,1)$.

\begin{theorem}
If $n \in \NN$ and $\mu \subseteq \delta_n$ then $G^{(\beta)}_{\delta_n/\mu} \in \bigoplus_\nu \NN[\beta] \bGP_\nu$.
\end{theorem}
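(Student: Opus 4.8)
The plan is to combine the classical theorem with the structural machinery of this section. As input, recall the result of Ardila--Serrano \cite{ArdilaSerrano} and DeWitt \cite{Dewitt}: $s_{\delta_n/\mu} = \sum_\nu c_{\mu\nu} P_\nu$ with all $c_{\mu\nu}\in\NN$. It is also worth keeping in mind the identity coming from the superfication map $\ttheta$ of Corollary~\ref{ttheta-cor}: since $\delta_n$ has exactly $n$ parts and $\delta_n+\delta_n = (2n,2n-2,\dots,2)$, we get $\ttheta\bigl(G^{(\beta)}_{\delta_n/\mu}\bigr) = \bGS_{\delta_n/\mu} = \bGP_{(2\delta_n)/(\mu+\delta_n)}$, placing $G^{(\beta)}_{\delta_n/\mu}$ in close contact with the shifted theory. (This identity alone will not suffice, because $\ttheta$ is far from injective; but it explains why the shifted functions are the right target.)

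First I would show $G^{(\beta)}_{\delta_n/\mu}\in\omGSym$, i.e. that it is a (possibly infinite) $\ZZ[\beta]$-combination of the $\bGP_\nu$. Since $G^{(\beta)}_{\delta_n/\mu}$ is symmetric (\cite{Buch2002}), by Theorem~\ref{q-cancel-thm} it is enough to verify the $K$-theoretic $Q$-cancellation identity $G^{(\beta)}_{\delta_n/\mu}(t,\ominus t,x_3,x_4,\dots) = G^{(\beta)}_{\delta_n/\mu}(x_3,x_4,\dots)$. For this I would expand in the first two variables using the coproduct of Corollary~\ref{sv-products-cor}, obtaining an expression $\sum_{\nu} \beta^{\bullet}\,G^{(\beta)}_{\nu/\mu}(x_1,x_2)\,G^{(\beta)}_{\delta_n/\nu'}(x_3,x_4,\dots)$ indexed by intermediate partitions $\mu\subseteq\nu'\subseteq\nu\subseteq\delta_n$ with $\nu/\nu'$ an antichain, and then set up a sign-reversing involution on this index set obtained by transposing a domino at a fixed inner corner of the region $\D_{\delta_n}\setminus\D_\mu$. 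The key inputs are the one-box identity $G^{(\beta)}_{(1)}(t,\ominus t) = t\oplus\ominus t = 0$ (configurations with an isolated box on the relevant diagonal contribute zero), the identity $G^{(\beta)}_{(1,1)}(t,\ominus t) = -\,G^{(\beta)}_{(2)}(t,\ominus t)$ relating a vertical and a horizontal domino, and the transpose-symmetry $\delta_n = \delta_n^T$, which guarantees $(\D_{\delta_n/\nu},\theta)\cong(\D_{\delta_n/\nu'},\theta)$ as labeled posets and hence $G^{(\beta)}_{\delta_n/\nu} = G^{(\beta)}_{\delta_n/\nu'}$; only the decomposition with $\nu=\mu$ survives. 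This is the $K$-theoretic analogue of the standard easy argument that $s_{\delta_n/\mu}$ is $Q$-cancellation-invariant. Granting it, $G^{(\beta)}_{\delta_n/\mu} = \sum_\nu d_{\mu\nu}(\beta)\,\bGP_\nu$ with $d_{\mu\nu}(\beta)\in\ZZ[\beta]$, and assigning $\beta$ degree $-1$ makes every $\bGP_\nu$ and $G^{(\beta)}_{\delta_n/\mu}$ homogeneous; a degree comparison then forces $d_{\mu\nu}(\beta) = d_{\mu\nu}\,\beta^{\,|\nu|-|\delta_n/\mu|}$ with $d_{\mu\nu}\in\ZZ$ and $|\nu|\geq|\delta_n/\mu|$.

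It remains to prove each $d_{\mu\nu}\geq 0$, and this is the main obstacle. For $|\nu| = |\delta_n/\mu|$ the integer $d_{\mu\nu}$ is exactly the classical coefficient $c_{\mu\nu}$, hence nonnegative. For $|\nu| > |\delta_n/\mu|$, the genuinely $K$-theoretic correction terms, I would exhibit $d_{\mu\nu}$ as a cardinality by lifting the combinatorial bijection underlying the Ardila--Serrano/DeWitt theorem to set-valued objects: a length- and weight-preserving ``folding''/uncrowding map $\SetSSYT(\delta_n/\mu)\to\bigsqcup_\nu \SetSSMT(\nu)$ (equivalently, on standardizations, $\SetSYT(\delta_n/\mu)\to\bigsqcup_\nu\SetSYT_{\text{shifted}}(\nu)$ preserving $\PeakSet$), so that $d_{\mu\nu}$ counts the tableaux landing in shifted shape $\nu$; comparing with \eqref{tilde-GQ-eq}-style expansions then identifies the coefficients. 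Here the $\omega$-formulas are useful: Corollary~\ref{omega-g-cor} gives $\omega\bigl(G^{(\beta)}_{\delta_n/\mu}\bigr) = G^{(\beta)}_{\delta_n/\mu^T}\bigl(\tfrac{x}{1-\beta x}\bigr)$ and the corollary following Proposition~\ref{omega-peak-prop} gives $\omega(\bGP_\nu) = \bGP_\nu\bigl(\tfrac{x}{1-\beta x}\bigr)$, so $d_{\mu\nu} = d_{\mu^T\nu}$; this symmetry lets me normalize the chosen corner in the folding map to lie weakly on one side of the main diagonal, halving the case analysis. The principal difficulty is checking that this set-valued folding is well defined and bijective: the primed and unprimed entries of the marked alphabet $\MM$ must be routed consistently and without collisions as boxes are carried across the diagonal, and it is there, rather than in the cancellation/degree bookkeeping, that the real work of the proof lies.
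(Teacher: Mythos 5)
You identify the right broad strategy (show the power series lies in $\omGSym$, then prove positivity of the $\bGP$-coefficients), but the proposal is incomplete in exactly the place where the theorem's content lies, and the route you sketch differs substantially from the paper's.

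\textbf{Step 1 ($\omGSym$ membership).} Your local calculations $G^{(\beta)}_{(1)}(t,\ominus t)=0$ and $G^{(\beta)}_{(1,1)}(t,\ominus t)=-G^{(\beta)}_{(2)}(t,\ominus t)$ are correct. But the global sign-reversing involution ``transposing a domino at a fixed inner corner'' is only asserted, not constructed, and in the $K$-theoretic setting the bookkeeping is genuinely delicate: the coproduct from Corollary~\ref{sv-products-cor} carries the weight $\beta^{|S\cap T|}=\beta^{|\nu/\nu'|}$, the overlap $\nu/\nu'$ can be an arbitrary antichain (not just a single box), and the intended pairing must respect both the $\beta$-grading and the intermediate-shape data $\mu\subseteq\nu'\subseteq\nu\subseteq\delta_n$. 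Invoking $\delta_n=\delta_n^T$ to get $G^{(\beta)}_{\delta_n/\nu}=G^{(\beta)}_{\delta_n/\nu^T}$ is a true identity, but that is not the same as pairing decompositions whose two-variable parts cancel. You would need to exhibit the involution and verify its sign and weight behavior; as written this is a sketch, not an argument.

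\textbf{Step 2 (positivity).} This is where the theorem actually lives, and you concede it: ``The principal difficulty is checking that this set-valued folding is well defined and bijective \dots{} it is there, rather than in the cancellation/degree bookkeeping, that the real work of the proof lies.'' Lifting the Ardila--Serrano/DeWitt correspondence to set-valued or marked set-valued tableaux is a reasonable thing to want, but no such uncrowding/folding map is constructed, and to my knowledge none is known to exist with the required properties. Without it, you have a plan, not a proof.

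\textbf{What the paper actually does.} The paper does not construct a bijection at all. It first reduces, via Corollary~\ref{sv-products-cor} and the Pieri-type positivity results of \cite{CTY,HKPWZZ}, to the case where $\mu$ is \emph{strictly} contained in $\delta_n$ (so that $\D_{\delta_n/\mu}$ is a single ``staircase ribbon'' block). For such $\mu$ it identifies $G^{(\beta)}_{\delta_n/\mu}$ with an orthogonal $K$-Stanley symmetric function $\GO_y$ for a suitable involution $y$, using \cite[Thm.~3.1]{Matsumura} (to recognize $G^{(\beta)}_{\delta_n/\mu^T}$ as a Grothendieck polynomial $G_w$), \cite[Thm.~2.3, Prop.~5.5, Lem.~5.3]{Marberg2019a}, and Corollary~\ref{omega-g-cor}. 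The positivity is then immediate from \cite[Thm.~1.9]{Marberg2019a}, which already puts $\GO_y$ in $\bigoplus_\nu \NN[\beta]\bGP_\nu$. In short: the paper imports positivity from the Stanley-symmetric-function world; your proposal tries to rederive it combinatorially from scratch, and that derivation is missing. Note also that your outline skips the reduction to strict containment, which your approach would in principle not need — a genuine advantage if the bijection existed, but it does not make the proof complete.Your proposal is incomplete in the step that carries the theorem's content, and it also takes a route quite different from the paper's.

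\textbf{Step 1 ($\omGSym$ membership).} The local identities you record are correct: $G^{(\beta)}_{(1)}(t,\ominus t)=t\oplus(\ominus t)=0$ and a short computation gives $G^{(\beta)}_{(1,1)}(t,\ominus t)=-t^2/(1+\beta t)=-G^{(\beta)}_{(2)}(t,\ominus t)$. But the global sign-reversing involution ``transposing a domino at a fixed inner corner'' is asserted, not constructed. In the $K$-theoretic coproduct from Corollary~\ref{sv-products-cor} the summands carry a weight $\beta^{|S\cap T|}=\beta^{|\nu/\nu'|}$, the overlap $\nu/\nu'$ ranges over arbitrary antichains (not just single boxes or dominoes), and the intended pairing has to respect both the $\beta$-grading and the nested data $\mu\subseteq\nu'\subseteq\nu\subseteq\delta_n$. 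Invoking $\delta_n=\delta_n^T$ gives the identity $G^{(\beta)}_{\delta_n/\nu}=G^{(\beta)}_{\delta_n/\nu^T}$, but that does not by itself pair cancelling terms. As written, this is a plausible-looking sketch, not a proof that $G^{(\beta)}_{\delta_n/\mu}$ satisfies the $K$-theoretic $Q$-cancellation of Theorem~\ref{q-cancel-thm}.

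\textbf{Step 2 (positivity).} This is where the real content lies, and you concede it yourself: ``The principal difficulty is checking that this set-valued folding is well defined and bijective \dots{} it is there, rather than in the cancellation/degree bookkeeping, that the real work of the proof lies.'' Lifting the Ardila--Serrano/DeWitt correspondence to a weight- and length-preserving map $\SetSSYT(\delta_n/\mu)\to\bigsqcup_\nu\SetSSMT(\nu)$ is exactly what would be needed, but no such map is constructed, and none is supplied by the cited classical results. Without it, the claim $d_{\mu\nu}\geq 0$ is unestablished.

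\textbf{Comparison with the paper.} The paper avoids constructing any bijection. It first reduces, using Corollary~\ref{sv-products-cor} together with the Pieri-positivity results of \cite{CTY,HKPWZZ}, to the case where $\mu$ is \emph{strictly} contained in $\delta_n$. For such $\mu$ it identifies $G^{(\beta)}_{\delta_n/\mu}$ with an orthogonal $K$-theoretic Stanley symmetric function $\GO_y$: \cite[Thm.~3.1]{Matsumura} recognizes $G^{(\beta)}_{\delta_n/\mu^T}$ as a Grothendieck polynomial $G_w$, and \cite[Thm.~2.3, Prop.~5.5, Lem.~5.3]{Marberg2019a} together with Corollary~\ref{omega-g-cor} convert this to $G^{(\beta)}_{\delta_n/\mu}=\GO_y$. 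Positivity is then immediate from \cite[Thm.~1.9]{Marberg2019a}, which already gives $\GO_y\in\bigoplus_\nu\NN[\beta]\bGP_\nu$. In short, the paper imports positivity from the involution Stanley symmetric function theory, whereas your proposal tries to rederive it combinatorially from scratch and leaves precisely that derivation undone. You also skip the reduction to strict containment — your bijective approach, had it worked, would not need it, which would be an improvement, but as it stands the gap is fatal.
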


\begin{proof}
Say that a partition $\mu$ is \emph{strictly contained} in $\lambda$ if $\mu_i < \lambda_i$ for $1 \leq i \leq \ell(\mu)$.
It suffices to prove the proposition when $\mu$ is strictly contained in $\delta_n$,
since Corollary~\ref{sv-products-cor} implies that
in general $G^{(\beta)}_{\delta_n/\mu}$ is a product of power series of the form $G^{(\beta)}_{\delta_m/\nu}$
with $\nu$ strictly contained in $\delta_m$,
and
results in \cite{CTY,HKPWZZ}
show that
products of $K$-theoretic Schur $P$-functions are finite $\NN[\beta]$-linear combinations of 
$K$-theoretic Schur $P$-functions. 

Suppose $\mu$ is a partition strictly contained in $\delta_n$.
Define $b_i  = n - \mu^T_i + i$ for $i \in [n]$ and let $a_1<a_2<\dots<a_n$ 
be the elements of $[2n] \setminus \{b_1,b_2,\dots,b_n\}$.
One has $1<b_1<b_2<\dots<b_n=2n$ and $a_i<b_i$ for each $i$.
The second author's paper \cite{Marberg2019a} considers certain power series
$G_w$ and  $\GO_y$ in $\mSym$
indexed by permutations $w \in S_n$ and $y=y^{-1} \in S_{n}$.
Let 
\[y := (a_1,b_1)(a_2,b_2)\cdots(a_n,b_n) \in S_{2n}
\quand w:= b_1a_1 b_2 a_2 \cdots b_n a_n \in S_{2n}.\]
It follows from \cite[Thm.~2.3 and Prop.~5.5]{Marberg2019a} that 
$\GO_y = G_{w^{-1}}$,
while
 \cite[Thm.~3.1]{Matsumura} implies\footnote{
 Matsumura's result concerns certain polynomials $\fk G_\sigma(x,\xi) \in \ZZ[\beta][x_1,x_2,\dots,\xi_1,\xi_2,\dots]$
indexed by permutations $w \in S_\infty$.
These are related to the power series $G_w$ by the 
identity $G_w = \lim_{m\to \infty} \fk G_{1^m\times w}(x,0)$,
where convergence is in the sense of formal power series.
 } that 
$G_w = G^{(\beta)}_{\delta_n/\mu^T}$.
By  \cite[Lem.~5.3]{Marberg2019a} and Corollary~\ref{omega-g-cor}, we have 
$
\omega(\GO_y)=\omega(G_{w^{-1}}) = G_w(\frac{x}{1-\beta x}) =\omega(G^{(\beta)}_{\delta_n/\mu})$,
so $G^{(\beta)}_{\delta_n/\mu} = \GO_y$.
Finally,  $\GO_y \in \bigoplus_\nu \NN[\beta] \bGP_\nu$ by \cite[Thm.~1.9]{Marberg2019a}.
\end{proof}

The previous result holds in a stronger form when $\mu = \emptyset$.

\begin{proposition}
For a partition $\lambda$ and a strict partition $\nu$, one has that
 $G^{(\beta)}_{\lambda} = \bGP_{\nu}$ if and only if $\lambda =\nu= \delta_n$ for some $n \in \NN$.
\end{proposition}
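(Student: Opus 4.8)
I would prove the two implications separately. The ``only if'' direction is quick: it follows by setting $\beta=0$ and then invoking a purely classical statement about Schur and Schur $P$-functions. The ``if'' direction is the one with real content, and I would deduce it from the identification of $G^{(\beta)}_{\delta_n}$ with an involution Grothendieck polynomial $\GO_y$ that is already present inside the proof of the previous theorem.

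\textbf{The ``only if'' direction.} Assume $G^{(\beta)}_\lambda=\bGP_\nu$. Specializing $\beta=0$ in \eqref{tk-eq} and in \eqref{gp-eq} (the latter with $\mu=\emptyset$) turns this into the identity $s_\lambda=P_\nu$ in $\Sym$. Both sides are unitriangular with respect to the monomial basis and dominance order, with leading terms $m_\lambda$ and $m_\nu$ respectively: one has $s_\lambda=m_\lambda+\sum K_{\lambda\mu}m_\mu$ and $P_\nu=m_\nu+\sum L_{\nu\mu}m_\mu$, the sums running over ordinary partitions $\mu$ strictly below $\lambda$, respectively $\nu$, in dominance order. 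Comparing the dominance-largest monomial on each side forces $\lambda=\nu$; in particular $\lambda$ is strict and $s_\lambda=P_\lambda$. Now $P_\lambda$ lies in $\QQ[p_1,p_3,p_5,\dots]$, on which $\omega$ (acting on $\Sym\subset\mQSym$ by $s_\kappa\mapsto s_{\kappa^T}$) is the identity since $\omega(p_{2k-1})=p_{2k-1}$. Hence $s_{\lambda^T}=\omega(s_\lambda)=\omega(P_\lambda)=P_\lambda=s_\lambda$, so $\lambda=\lambda^T$. Finally I would observe that a strict self-conjugate partition must be a staircase: writing $\ell=\ell(\lambda)$, we have $\lambda_1=\lambda^T_1=\ell$, and strictness gives both $\lambda_i\le \ell-i+1$ and $\lambda_i\ge\lambda_\ell+(\ell-i)\ge \ell-i+1$, so $\lambda_i=\ell-i+1$ for every $i$, i.e.\ $\lambda=\delta_\ell$. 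Thus $\lambda=\nu=\delta_\ell$.

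\textbf{The ``if'' direction.} Here I must show $G^{(\beta)}_{\delta_n}=\bGP_{\delta_n}$. Taking $\mu=\emptyset$ in the proof of the previous theorem, the integers defined there become $b_i=n+i$ and $a_i=i$, so the involution constructed is $y=(1,n+1)(2,n+2)\cdots(n,2n)\in S_{2n}$, and that proof already establishes $G^{(\beta)}_{\delta_n}=\GO_y$. The one-line word of $y$ is $(n+1,n+2,\dots,2n,1,2,\dots,n)$, a concatenation of two increasing runs, so $y$ is $321$-avoiding and therefore vexillary as an involution; moreover its involution Rothe diagram is $\{(i,j):1\le j\le i\le n\}$, whose associated strict partition (obtained by sorting the involution code $(1,2,\dots,n,0,0,\dots)$) is $\delta_n$. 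Invoking the result of \cite{Marberg2019a} that $\GO_y$ equals the $K$-theoretic Schur $P$-function of the shape of $y$ whenever $y$ is vexillary, we conclude $\GO_y=\bGP_{\delta_n}$, hence $G^{(\beta)}_{\delta_n}=\bGP_{\delta_n}$. Specializing to $\lambda=\nu=\delta_n$ finishes the proof.

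\textbf{Expected obstacle.} The delicate point is the last identification in the ``if'' direction. The previous theorem already gives $G^{(\beta)}_{\delta_n}\in\bigoplus_\nu\NN[\beta]\bGP_\nu$, and homogeneity under the grading $\deg\beta=-1$, $\deg x_i=1$ shows each coefficient is a single monomial $c_\nu\beta^{|\nu|-|\delta_n|}$ with $c_\nu\in\NN$; specializing $\beta=0$ and using the classical identity $s_{\delta_n}=P_{\delta_n}$ then pins down the coefficients of those $\bGP_\nu$ with $|\nu|=|\delta_n|$ (forcing $c_{\delta_n}=1$ and $c_\nu=0$ otherwise in that degree). But ruling out contributions of $\bGP_\nu$ with $|\nu|>|\delta_n|$ does not follow from these formal manipulations: the $\omega$-symmetry $\omega(G^{(\beta)}_{\delta_n})=G^{(\beta)}_{\delta_n}(\tfrac{x}{1-\beta x})$ (Corollary~\ref{omega-g-cor}, using $\delta_n^T=\delta_n$) is shared by every $\bGP_\nu$ and yields no further constraint, and I do not see a self-contained argument that avoids it. So one genuinely needs the combinatorial input that $y$ is vexillary, equivalently the underlying geometry of the maximal orthogonal Grassmannian, to complete this direction.
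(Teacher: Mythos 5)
Your ``only if'' direction is correct and takes a genuinely different, more elementary route than the paper. The paper also reduces to $s_\lambda = P_\nu$ by taking $\beta$-degree-zero terms, but then finishes by citing results on involution Stanley symmetric functions (every $P_\nu$ equals some $\iF_z$ by \cite[Thm.~4.20]{HMP4}, and $s_\lambda = \iF_z$ forces $\lambda = \delta_n$ by \cite{HMP1}). Your chain --- unitriangularity of $s_\lambda$ and $P_\nu$ over the monomial basis in dominance order to get $\lambda=\nu$, then $\omega(P_\lambda)=P_\lambda$ because $P_\lambda \in \QQ[p_1,p_3,p_5,\dots]$ to get $\lambda=\lambda^T$, then the observation that a strict self-conjugate partition is a staircase --- is complete, uses only classical facts from \cite[\S III.8]{Macdonald}, and is arguably preferable to the paper's appeal to \cite{HMP1,HMP4}.

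The gap is in the ``if'' direction. The identification $G^{(\beta)}_{\delta_n} = \GO_y$ for $y=(1,n+1)\cdots(n,2n)$ is indeed already available from the proof of the preceding theorem (with $\mu=\emptyset$, so $a_i=i$ and $b_i=n+i$), and your description of the one-line word and involution diagram of $y$ is right. But the decisive step $\GO_y = \bGP_{\delta_n}$ rests entirely on a result you attribute to \cite{Marberg2019a} --- that $\GO_y$ equals the $K$-theoretic Schur $P$-function of the shape of $y$ whenever $y$ is vexillary --- and that identity is not what \cite{Marberg2019a} provides; the only statement about the $\bGP$-expansion of $\GO_y$ used anywhere in this paper is the positivity result $\GO_y \in \bigoplus_\nu \NN[\beta]\,\bGP_\nu$ of its Thm.~1.9, which (as your ``expected obstacle'' correctly explains) does not rule out contributions from $\bGP_\nu$ with $|\nu|>|\delta_n|$. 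Tellingly, the paper does not take your route even though it already has $G^{(\beta)}_{\delta_n}=\GO_y$ in hand: it instead passes to the symplectic functions, using \cite[Thm.~4.17]{MP2019b} for $\GSp_z = \bGP_{\delta_{n-1}}$ and \cite[Thms.~2.5 and 5.2, Prop.~5.5]{Marberg2019a} for $\GSp_z = G^{(\beta)}_{\delta_{n-1}}$, where $z=(1,n+1)\cdots(n,2n)$. So to repair your argument you must either supply a genuine reference (with a precise statement) for the orthogonal vexillary identity you invoke, or replace the last step by the symplectic chain of equalities. Your diagnosis that degree counting, the $\beta=0$ specialization, and $\omega$-invariance cannot eliminate the higher-degree $\bGP_\nu$ terms is accurate; this direction really does require a nontrivial external identity, and as written yours is not supplied.
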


For a short proof that $s_{\delta_n} = P_{\delta_n}$, see \cite[Ex.~3, \S III.8]{Macdonald}.


\begin{proof}
First, suppose $G^{(\beta)}_{\lambda} = \bGP_{\nu}$.  The $\beta$-degree-$0$ terms of the left and right sides are respectively $s_\lambda$ and $P_\nu$,
and the lowest-degree component of $P_\nu$ is $s_\nu$, so it must be the case that $\lambda = \nu$.
The papers \cite{HMP1,HMP4} study certain symmetric functions $\iF_z$
indexed by involutions $z \in S_\infty$.
It follows from \cite[Thm.~4.20]{HMP4} that every Schur $P$-function $P_\nu$ occurs as $\iF_z$ for some $z$, while
\cite[Prop.~3.34 and Thm.~3.35]{HMP1} assert that  $s_\lambda= \iF_z$ for some $ z $ 
only if $\lambda =\delta_n$ for some $ n \in \NN$.  Thus it must be that $\lambda = \nu = \delta_n$ for some $n \in \NN$.

Conversely, the papers \cite{Marberg2019a,MP2019b} study certain symmetric functions $\GSp_z$ 
indexed by fixed-point-free involutions $z \in S_{2n}$.
When $z = (1,n+1)\cdots(n,2n) \in S_{2n}$,  
 \cite[Thm.~4.17]{MP2019b} shows
that $\GSp_z = \bGP_{\delta_{n-1}}$
while \cite[Thms.~2.5 and 5.2 and Prop.~5.5]{Marberg2019a} 
imply that $\GSp_z = G^{(\beta)}_{\delta_{n-1}}$.
Thus $\bGP_{\delta_n} = G^{(\beta)}_{\delta_n}$.
\end{proof}

The antipode $\antipode$
of the Hopf algebra $\QSym$ is the linear map $\QSym \to \QSym$ with $L_\alpha \mapsto (-1)^{|\alpha|} L_{\alpha^\t}$
for all compositions $\alpha$ \cite[\S3.6]{LMW};
the antipode of $\mQSym$ is the unique continuous extension of this map.
One can evaluate $\antipode$ at many elements 
of interest in $\mQSym$ using the following observation.
Let $f^{(\beta)} \in \ZZ[\beta]\llbracket x_1,x_2,\dots \rrbracket$ and define $f^{(-\beta)} := f^{(\beta)}|_{\beta\mapsto -\beta}$.

\begin{observation}
Assume $f^{(\beta)} \in \mQSym$ is homogeneous of degree $n$
when we define $\deg(\beta) = -1$ and $\deg (x_i) = 1$.
Then $\antipode\(f^{(\beta)}\) = (-1)^n \omega\( f^{(-\beta)}\)$.
\end{observation}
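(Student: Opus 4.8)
The plan is to expand $f^{(\beta)}$ in the pseudobasis $\{L_\alpha := L^{(0)}_\alpha\}$ of $\mQSym$ and use the homogeneity hypothesis to pin down each coefficient to a single monomial in $\beta$. Concretely, write $f^{(\beta)} = \sum_\alpha c_\alpha(\beta)\,L_\alpha$ with $c_\alpha(\beta)\in\ZZ[\beta]$, the sum over all compositions $\alpha$ and convergent in the LC-topology. Since each $L_\alpha$ is homogeneous of ordinary $x$-degree $|\alpha|$ and carries no $\beta$, every monomial $\beta^k x^S$ occurring in $c_\alpha(\beta)L_\alpha$ has $|S| = |\alpha|$, so the hypothesis ``$|S|-k=n$'' forces $k = |\alpha|-n$. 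Writing $c_\alpha(\beta) = \sum_{j\geq 0} a_{\alpha,j}\beta^j$ and using that $\{\beta^j L_\alpha\}_{\alpha,\,j\geq 0}$ is a $\ZZ$-pseudobasis of $\mQSym$ (the $\{L_\alpha\}$ forming a $\ZZ[\beta]$-pseudobasis and $\{\beta^j\}$ a $\ZZ$-basis of $\ZZ[\beta]$), we conclude $a_{\alpha,j}=0$ unless $j = |\alpha|-n$. In particular $c_\alpha = 0$ whenever $|\alpha|<n$, and otherwise $c_\alpha(\beta) = a_\alpha\beta^{|\alpha|-n}$ for a unique $a_\alpha\in\ZZ$.

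With this normal form in hand, the computation is immediate and can be carried out term by term. Since $\antipode$ is the continuous $\ZZ[\beta]$-linear extension of $L_\alpha\mapsto(-1)^{|\alpha|}L_{\alpha^\t}$ and $\omega$ is the continuous $\ZZ[\beta]$-linear extension of $L_\alpha\mapsto L_{\alpha^\t}$, applying each map to the expansion gives
\[
\antipode(f^{(\beta)}) = \sum_\alpha a_\alpha(-1)^{|\alpha|}\beta^{|\alpha|-n}\,L_{\alpha^\t}
\qquand
\omega(f^{(-\beta)}) = \sum_\alpha a_\alpha(-\beta)^{|\alpha|-n}\,L_{\alpha^\t},
\]
where $f^{(-\beta)} = \sum_\alpha a_\alpha(-\beta)^{|\alpha|-n}L_\alpha$ precisely because $L_\alpha$ involves no $\beta$, and where $\omega$ may be brought inside the sum by continuity. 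Writing $(-1)^{|\alpha|} = (-1)^n(-1)^{|\alpha|-n}$ in the first sum identifies it with $(-1)^n$ times the second, i.e.\ with $(-1)^n\omega(f^{(-\beta)})$, which is the claim.

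The one step deserving care is the passage in the first paragraph: that homogeneity of $f^{(\beta)}$ forces homogeneity of each summand $c_\alpha(\beta)L_\alpha$. This is elementary but should be argued rather than asserted, because $\mQSym$ is not literally a graded module in the grading with $\deg\beta = -1$ (that grading is unbounded below); the clean route is the $\ZZ$-pseudobasis $\{\beta^j L_\alpha\}$ mentioned above, in which ``homogeneous of degree $n$'' means exactly that all coordinates outside $\{(\alpha,j):|\alpha|-j=n\}$ vanish. Beyond that, the term-by-term application of $\antipode$ and $\omega$, the substitution $\beta\mapsto-\beta$, and the underlying $\beta=0$ identity $\antipode = (-1)^{\deg}\omega$ (built directly into the definitions of $\antipode$ and $\omega$ on the $L_\alpha$) are all routine, and I do not expect a genuine obstacle here.
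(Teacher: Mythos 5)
Your proof is correct. The paper states this as an unproved observation, and your argument — expand $f^{(\beta)}$ in the pseudobasis $\{L_\alpha\}$, note that homogeneity (interpreted via the $\ZZ$-pseudobasis $\{\beta^j L_\alpha\}$, using that $\{L_\alpha : |\alpha|=m\}$ spans the $x$-degree-$m$ component) forces $c_\alpha(\beta)=a_\alpha\beta^{|\alpha|-n}$ with $c_\alpha=0$ for $|\alpha|<n$, and then compare $\antipode$ and $(-1)^n\omega(\cdot|_{\beta\mapsto-\beta})$ term by term using their defining actions $L_\alpha\mapsto(-1)^{|\alpha|}L_{\alpha^\t}$ and $L_\alpha\mapsto L_{\alpha^\t}$ — is exactly the intended justification, with the one genuinely non-routine point (why homogeneity of $f^{(\beta)}$ passes to each summand) correctly identified and handled.
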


This hypothesis 
applies whenever  $f^{(\beta)} \in \{ L^{(\beta)}_\alpha, 
K^{(\beta)}_\alpha, \oK^{(\beta)}_{\alpha},\bGQ_{\lambda/\mu}, \dots\}$, 
in particular.
Taking $f^{(\beta)} = L^{(\beta)}_\alpha$
 and then specializing to $\beta=1$ recovers \cite[Thm.~41]{Patrias}, while 
taking $f^{(\beta)} = \bGP_\lambda$ gives 
\be
\antipode\(\bGP_\lambda\) = (-1)^{|\lambda|} \GP^{(-\beta)}_\lambda(\tfrac{x}{1+\beta x})
\ee
for all strict partitions $\lambda$.
Comparing with \cite[Prop.~3.5]{HKPWZZ}
shows that the power series which Hamaker \emph{et al.}
denote as $\GP_\lambda$ and $K_\lambda$
may be given in our notation as $\GP_\lambda := \GP_\lambda^{(-1)}$ and 
$K_\lambda :=\GP_\lambda^{(1)}(\tfrac{x}{1-x})$,
and are related by the identity $K_\lambda = (-1)^{|\lambda|}\antipode(\GP_\lambda)$.

Using these formulas, one can check directly that $\antipode$ 
 preserves the cancelation laws in Lemma~\ref{q-cancel-lem} and Theorem~\ref{q-cancel-thm},
 as must hold for $\mGSym$ and
 $\omGSym$
to be LC-Hopf subalgebras of $\mSym$.
By contrast, the involutions $\omega$ and $\psi$ of $\mQSym$
do not preserve $\mGSym$, $\omGSym$, $\mcoPeak$, or $\omcoPeak$.

Although $\omega(\mSym) =\mSym$,
the family of stable Grothendieck polynomials $\{G^{(\beta)}_\lambda\}$ 
is not itself preserved by $\omega$.
To correct this,
Yessulizov \cite{Yeliussizov2017} has introduced a two-parameter generalization $G^{(\alpha,\beta)}_\lambda \in \ZZ[\alpha,\beta]\llbracket x_1,x_2,\dots \rrbracket$
of $G^{(\beta)}_\lambda =: G^{(0,\beta)}_\lambda$ that has $\omega(G^{(\alpha,\beta)}_\lambda)  = G^{(\beta,\alpha)}_{\lambda^T}$. 
It would be interesting to describe analogous
  two-parameter generalizations $\GP^{(\alpha,\beta)}_\lambda$ and $\GQ^{(\alpha,\beta)}_\lambda$
  of the $K$-theoretic Schur $P$- and $Q$-functions
  with $\omega(\GP^{(\alpha,\beta)}_\lambda) = \GP^{(\beta,\alpha)}_\lambda$
  and $\omega(\GQ^{(\alpha,\beta)}_\lambda) = \GQ^{(\beta,\alpha)}_\lambda$.

\printbibliography

\end{document}